\newtheorem{theorem}{Theorem}[section]
\newtheorem{lemma}[theorem]{Lemma}
\newtheorem{claim}[theorem]{Claim}
\newtheorem{corollary}[theorem]{Corollary}
\newtheorem{proposition}[theorem]{Proposition}
\theoremstyle{definition}
\newtheorem{definition}[theorem]{Definition}
\newtheorem{remark}[theorem]{Remark}
\newtheorem{example}[theorem]{Example}
\newcommand{\Sing}{\rm Sing}
\newcommand{\Seg}{\rm Seg}
\newcommand{\Bsl}{\rm Bsl}
\newcommand{\cal}{\mathcal}
\newcommand{\R}{{\mathcal R}}
\renewcommand{\L}{{\mathcal L}}
\newcommand{\Cc}{{\mathcal C}}
\newcommand{\FF}{{\mathbb F}}
\newcommand{\G}{{\mathbb G}}
\newcommand{\C}{{\mathbb C}}
\newcommand{\p}{{\mathbb P}}
\newcommand{\Reg}{\operatorname{Reg}}
\newcommand{\map}{\dasharrow}
\def\geq{\geqslant}
\def\leq{\leqslant}
\begin{document}
 
\title[On secant defective varieties]{On secant defective varieties, in particular of dimension 4}

\author{Luca Chiantini}
\address{Dipartimento di Ingegneria dell'Informazione
e Scienze Matematiche,
Universit\`a di Siena, Via Roma, 56,
53100 Siena, Italia}
\email{chiantini@unisi.it}

\author{Ciro Ciliberto}
\address{Largo Bientina 8,
 00199 Roma, Italia}
\email{cilibert@axp.mat.uniroma2.it}

\author{Francesco Russo}
\address{Dipartimento di Matematica e Informatica, Universit\`a di Catania, Citt\`a Universitaria, Viale A. Doria 6, 95125 Catania, Italia}
\email{frusso@dmi.unict.it}
 
\subjclass{Primary 14N05; Secondary 14C20, 14M20}
 
\keywords{Secant varieties, defective varieties}
 
\maketitle

\begin{abstract} In this paper we prove some general results on secant defective varieties. Then we focus on the 4--dimensional case and we give the full classification of secant defective 4--folds. This paper has been inspired by classical work by G. Scorza.
\end{abstract}

\section*{Introduction} 
Let $X\subset \p^r$ be an $n$--dimensional, irreducible, reduced, projective variety. The variety $X$ is said to be \emph{secant defective}, or simply defective, if its \emph{secant variety} $S(X)$ has dimension $s(X)$ smaller than the expected dimension, which is $\sigma(X)= \min\{r,2n+1\}$. The study and classification of secant defective varieties is a classical subject which goes back to the Italian school of algebraic geometry and received in modern times a lot of attention by various authors, too many to be mentioned here. Curves are never defective. The classification of defective surfaces is classical (for a modern account, see \cite {WDV}): a surface is defective if and only if it is either a cone or the Veronese surface of conics in $\p^5$. The classification of secant defective threefolds goes back to G. Scorza in \cite {Scorza}, and a modern proof has been given in \cite {ChCi} (see also \cite {CC3}). The case of 4--folds has been again considered by Scorza in the memoir \cite {Scorza1}, in which he outlines the classification of secant defective 4--folds, with the use of beautiful geometric ideas. However Scorza's argument are often longwinded and obscure, and his classification eventually contains some gaps, which would be too long to precisely indicate here. The present paper is devoted to the full classification of secant defective 4--folds, filling up Scorza's gaps and simplifying, in several points, his arguments, though often we follow his line of thinking. Our tools, as for Scorza, consist in delicate projective geometric analyses of the various cases which defective 4--folds  fall in. These cases correspond to the different values certain invariants of the 4--fold $X$ acquire. The main invariants in question are basically $f(X)=9-s(X)$ and $\gamma(X)$, which is the dimension of the contact locus with $X$ of a general bitangent space to $X$, and one has $3\geq \gamma(X)\geq f(X)\geq 1$. The difference $4-\gamma(X)$ is dubbed the \emph{species} of $X$.

The outcome of our analysis can be summarized in the following classification theorem:

\begin{theorem}\label{thm:main} let $X\subset \p^r$ be a secant defective fourfold. Then the following cases occur:
\begin {itemize}
\item [(i)] $X$ is a cone;

\item [(ii)] $X$ sits in a $5$ or 6--dimensional cone over a curve;

\item [(iii)]  $X$ sits in a $5$--dimensional cone over a surface;

\item [(iv)] $X$ is the Segre variety $\Seg(2,2)$ product of $\p^2$ times $\p^2$ in   $\p^ 8$;

\item [(v)] $X$ is a scroll in 3--spaces as in Example \ref {ex:ex1} below;

 \item [(vi)] $X$ is a scroll in 3--spaces as as in Example \ref {ex:ex2} below;
 
 \item [(vii)] $r=9$ and $X$ is a scroll in 3--spaces  as in Example \ref {ex:ex3} below;
 
 \item [(viii)]  $X$ is an internal projection of the Veronese 4--fold of quadrics in  $\p^{14}$ from finitely many points 
with the property that it contains at most finitely many lines through its general point;

\item [(ix)] $r=11$ and $X$ is the projection   of the Veronese 4--fold of quadrics in $\p^{14}$ from the plane spanned by a conic on it;

\item [(x)]  $r=9$ and $X$ is the projection  of the Veronese 4--fold of quadrics in $\p^{14}$ from a $4$--space spanned by a rational normal quartic curve on it;

\item [(xi)] $r=10$ and $X$ is a hyperplane section of  the Segre variety ${\rm Seg}(2,3) \subset\p^{11}$;

\item [(xii)] $r=9$ and either $X$ sits in a cone with vertex a line over a hyperplane section of ${\rm Seg}(2,2)$ in $\p^7$ or  $X$ sits  in a cone with vertex a point over the Segre variety ${\rm Seg}(2,2)\subset \p^ 8$;

\item [(xiii)]    $9\leq r\leq 11$ and $X$ sits in a cone with vertex a line over the projection in $\p^{r-2}$ of the Veronese $3$--fold $V_{3,2}$;

\item [(xiv)]  $r=9$ and $X$ sits in a $6$--dimensional cone over the Veronese 
surface $V_{2,2}$ in $\p^ 5$.

\item [(xv)]  $r=9$ and $X$ sits in a cone with vertex a line over a (defective) 3--fold in $\p^7$ sitting in a cone with vertex a line over the Veronese surface $V_{2,2}$;

\item [(xvi)]  $X$ is swept out by a $3$--dimensional family $\cal R$ of lines and
it is singular along a linear space $\Pi$ of dimension $\varepsilon$, with
$2\leq \varepsilon \leq 3$, which is intersected in one point by the general line in $\cal R$, and $X$ projects from $\Pi$ to a 3--dimensional variety $Y\subset \p^{8-\varepsilon}$ (with general fibres unions of lines of $\cal R$), which contains a $4$--dimensional family $\cal C$ of (generically irreducible) conics such that there is a conic in $\cal C$ passing through two general points of $Y$ and the counterimage of the general conic of $\cal C$ via the projection from $\Pi$ is a non--developable scroll spanning a 4--space with a line directrix sitting in $\Pi$;

\item [(xvii)]  $X$ is swept out by a 4--dimensional family $\cal S$ of surfaces spanning a 4--space, such that two general surfaces in $\cal S$ intersect at a point. In this case the general surface in $\cal S$ is rational and $X$ itself is rational; 

\item [(xviii)] the general projection of $X$ in $\p^9$ sits in a 6--dimensional cone with vertex a 3--space over the Veronese surface $V_{2,2}$.
\end{itemize}

If $X$ is smooth, only the cases (iv), (viii), (ix), (x), (xi), (xii) and $X$ sits  in a cone with vertex a point over the Segre variety ${\rm Seg}(2,2)\subset \p^ 8$, and may be case (xvii) occur.

\end{theorem}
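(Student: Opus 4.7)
The plan is to stratify the defective 4-folds by the pair of invariants $(f(X),\gamma(X))$, which by hypothesis satisfy $3\geq \gamma(X)\geq f(X)\geq 1$, and to treat each stratum using Terracini's lemma, the geometry of the contact locus, and tangential projection. The starting point is Terracini: for general $p,q\in X$ and a general $z\in \langle p,q\rangle$, the tangent space $T_zS(X)=\langle T_pX,T_qX\rangle$ has dimension $9-f(X)$, and the locus $\Sigma_{p,q}\subset X$ where this tangent space is tangent to $X$ has pure dimension $\gamma(X)$ and contains $p,q$. Once cones (case (i)) are set aside, the components $\Sigma_p,\Sigma_q$ of $\Sigma_{p,q}$ through $p,q$ are the fundamental objects governing $X$: they control the family of bisecant loci swept out as $(p,q)$ varies, and the tangential projection $\pi_p\colon X\map X'$ from $T_pX$ contracts $\Sigma_p$, reducing the classification to lower-dimensional varieties.

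The case $\gamma(X)=3$ (species $1$) is the top stratum. Here the contact locus through a general point is itself a 3-fold, so $X$ contains a very large family of subvarieties meeting a general tangent space in dimension $3$. Combining the theory of Scorza varieties and their degenerations with the Chiantini--Ciliberto classification of defective $3$-folds applied to general hyperplane sections of $X$, one obtains in this stratum the Segre variety (iv), the Veronese projections (viii)--(x), the hyperplane section of $\Seg(2,3)$ in (xi), and the cones (xii)--(xv). The distinction between these cases is dictated by the nature of an ambient cone, which is detected by the behaviour of $\pi_p$ and by the position of the $3$-dimensional $\Sigma_p$ relative to the singular locus.

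The case $\gamma(X)=2$ (species $2$) is the most delicate. Now $\Sigma_p$ is a surface and the tangential projection has a surface as general fibre, so its image has dimension $2$. By analyzing the family of contact surfaces and its incidence with secant lines one recovers: the scroll-in-$3$-spaces cases (v), (vi), (vii); the singular scroll (xvi), where $X$ is ruled by lines meeting a linear singular locus $\Pi$ of dimension $2$ or $3$, and the projection from $\Pi$ is a $3$-fold covered by a very rich family of conics; case (xvii), where $X$ is swept by a $4$-dimensional family of surfaces spanning a $\p^4$, which forces rationality of both the surfaces and of $X$; and case (xviii), where a general projection of $X$ to $\p^9$ lies in a $6$-dimensional cone over $V_{2,2}$. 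The remaining stratum $\gamma(X)=f(X)=1$ (species $3$) has $1$-dimensional contact locus, and the classical developable/non-developable dichotomy for the associated ruling forces $X$ to sit in a low-dimensional cone over a curve or a surface, giving cases (ii) and (iii).

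The main obstacle lies in cases (xvi) and (xvii), precisely the points where Scorza's original argument in \cite{Scorza1} has gaps. For (xvi) one must prove that a species-$2$ defective $4$-fold which is neither a smooth scroll nor fits in any of the cone cases is necessarily singular along a specific linear space $\Pi$, with lines through $\Pi$ ruling $X$, and that the resulting $3$-fold $Y=\pi_\Pi(X)$ carries the prescribed conic-covering; this requires delicate incidence-variety arguments together with a control of which smooth surfaces can occur as general hyperplane sections. For (xvii) one must establish the existence and incidence properties of the $4$-dimensional family $\cal S$, with two of its members through two general points of $X$, and deduce rationality. The smoothness addendum is then read off a posteriori by going through each case: the cone-type items, the species-$3$ items, the scrolls (v)--(vii), the singular scroll (xvi), and (xviii) must all be singular by their very construction, leaving only (iv), (viii)--(xii), the cone with point vertex over $\Seg(2,2)$ inside (xii), and possibly (xvii) as candidates for smooth realizations.
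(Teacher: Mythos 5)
Your overall strategy --- set cones aside, stratify by the invariants $f(X)$ and $\gamma(X)$, and analyse each stratum via Terracini's Lemma, the contact loci $\Gamma_{p_0,p_1}$ and the tangential projection --- is indeed the skeleton of the paper. But the way you distribute the output cases among the strata is inverted, and this is a genuine error, not a matter of presentation. In the paper's (Scorza's) terminology the species is $4-\gamma(X)$, so the \emph{top} species is $\gamma(X)=f(X)=1$: there the contact loci are \emph{conics}, and it is precisely this stratum that produces the richest varieties, namely the scroll examples and, via the Scorza map $S_x$ (whose inverse one must show is given by a linear system of quadrics, using the second fundamental form and the analysis of lines through a general point), the internal/external projections of $V_{4,2}$ in (viii)--(x) and the hyperplane section of $\Seg(2,3)$ in (xi). You instead attribute these cases to $\gamma(X)=3$, i.e.\ to $3$--dimensional contact loci; this cannot work, since $V_{4,2}$ and its projections have one--dimensional entry loci. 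Conversely, the stratum $\gamma(X)=3$ (first species), where the contact loci are divisors, is handled by projecting tangentially to a $3$--fold in $\p^4$ with degenerate Gauss map and using Bertini--type rank formulas for rational curves in $\p^4$; it yields case (xviii) in the irreducible case and cone cases of type (ii)--(iii) in the reducible case --- the exact opposite of your assignment, under which your proposed ``developable/non--developable dichotomy'' for $\gamma=f=1$ would be contradicted already by $V_{4,2}$ itself, which lies in no low--dimensional cone over a curve or surface. Case (xviii) likewise belongs to $\gamma(X)=3$, not to the second species as you claim.

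A second structural point: $\Seg(2,2)$ (case (iv)) has $f(X)=2$, and the paper disposes of all cases with $f(X)\geq 2$ \emph{before} the species stratification (cones over curves or $V_{2,2}$ when $f=3$; the codimension--$4$ analysis, reducing to the classified defective $3$--folds by hyperplane sections, when $f=2$, which also produces (i)--(iv) and parts of (xii)--(xiii)). Your plan has no mechanism producing $\Seg(2,2)$ inside a stratum with $f(X)=1$. Finally, even where your assignment agrees with the paper (the second species giving (xvi) and (xvii), and the a posteriori smoothness discussion), the proposal remains a programme: the decisive technical inputs --- birationality of the Scorza map and quadraticity of its inverse, the base--locus analysis of the second fundamental form, Morin's lemma on families of pairwise incident planes, and the Segre--Rogora results on $4$--folds with a $4$-- or $5$--dimensional family of lines --- are not identified, and without them the case analysis in each stratum cannot be carried out.
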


Theorem \ref {thm:main} summarizes the results contained in Proposition  \ref {terracini} and Theorems \ref {thm:lowcod}, \ref {prop:lowrk}, \ref {nu1}, \ref {prop:acc2}, \ref {prop:acc5}, \ref {thm:lof}, \ref {thm:wd5}, \ref {thm:wd6}. All the cases listed in Theorem \ref {thm:main} do occur, except, may be, case (xvii) for which we do not have any example.

The paper is organized as follows. In \S \ref {sec:not} we collect some notation. In \S \ref {sec:cont} we introduce some of the main invariants of a defective variety and we recall some important facts and notions, like Terracini's Lemma, the second fundamental form of a variety, etc. In \S \ref {sec:4} we prove the classification Theorem \ref {thm:lowcod} for $n$--dimensional varieties with very low dimensional secant variety. Section \ref {sec:scrolls} is devoted to the classification of defective 4--folds which are scrolls in 3--spaces. Sections \ref  {sec:top}, \ref  {sec:top2} are devoted to the classification of $n$--folds, in particular 4--folds, of the top species. In \S \ref {sec:WD} we classify 4--folds of the second species and in \S \ref {sec:first} we classify 4--folds of the first species. In an Appendix we prove some formulae, originally due to E. Bertini, valid for non--degenerate rational curves in $\p^4$ which are used in \S \ref {sec:first} for the classification of 4--folds of the first species.

Till possible, we kept the discussion as general as possible, referring to an $n$--fold and not simply to a $4$--fold. \medskip

{\bf Acknowledgements:} Ciro Ciliberto acknowledges the MIUR Excellence Department Project awarded to the Department of Mathematics, University of Rome Tor Vergata, CUP E83C18000100006.   Francesco Russo has been partially supported by the PTR Project of Unict ``Propriet\`a algebriche locali e globali di anelli associati a curve e ipersuperfici''. The authors are members of the GNSAGA of CNR.

\section {notation}\label {sec:not}

\subsection {}  In this paper we  work over the complex field
$\mathbb C$. Let $X\subseteq\p^r$ be an irreducible  projective
scheme. We will denote by $\deg(X)$ the {\it degree} of
$X$ and  by $\dim(X)$ the {\it dimension} of $X$. If $X$ is
reducible, by $\dim(X)$ we mean the maximum of the dimensions of
its irreducible components. We will denote by $\mathcal O_X$ the
structure sheaf of $X$ and by $\mathcal I_X$ the ideal sheaf of
$X$ in $\p^r$. 

We will denote by $\Reg(X)$ the open Zariski subset of smooth points of $X$ and by
$\Sing(X)$ the closed subset of $X$ formed by its singular points. If $x\in X$ we denote by 
$T_x(X)$  the \emph {Zariski tangent space} to $x$ at $X$. If $x\in \Reg(X)$ we denote by $T_{X,x}\subseteq \p^r$ the \emph{embedded tangent space} to $X$ at $x$, which is a linear subspace of $\p^r$ of dimension equal to $\dim(X)$. If either $P\subseteq T_{X,x}$ is a linear subspace passing through $x$ or $T_{X,x}\subseteq P$, then $P$ is called a \emph{tangent subspace} to $X$ at $x$.
If $p_0,\ldots, p_m\in \Reg(X)$, then 
we denote by $T_{X,p_0,\ldots, p_m}$ the join of $T_{X,p_i}$ with $1\leq i\leq m$.  

Let $X \subseteq \p^r$ be a variety. For $h$ a positive integer, an \emph{$h$--secant line}  to $X$ is any line $\ell$ not contained in $X$ such that the intersection scheme of $X$ with $\ell$ has length at least $h$. An $h$--secant line is called \emph{proper} if it cuts $X$ at exactly $h$ distinct points. 

Let $X \subseteq \p^r$ be a variety. We will denote by ${\rm Sym}^2(X)$ the \emph{2--fold symmetric product} of $X$, i.e., the variety of all unordered pairs $[x,y]$, with $x,y\in X$. 

 \subsection {} Let $v_{d,r}: \p^r \to \p^{N(d,r)}$, with $N(d,r)={{d+r}\choose r}-1$,  be the \emph{d--th Veronese map} of $\p^r$. Its image is denoted by $V_{d,r}$ and is called the \emph {$(d,r)$--Veronese variety}.
 
 Let $r_1,r_2$ be non--negative integers. One has the \emph{Segre map} $s_{r_1,r_2}: \p^{r_1}\times \p^{r_2} \to \p^{r_1r_2+r_1+r_2}$, whose image is denoted by $\Seg(r_1,r_2)$ and called the \emph{$(r_1,r_2)$--Segre variety}.
 
 Let $n,r$ be two non--negative integers, with $r\geq n$. Then $\mathbb G(n,r)$ denotes the \emph{Grassmann variety} of type $(n,r)$. A point of $\mathbb G(n,r)$ represents a linear subspace of dimension $n$ in $\p^r$, which we also call a $n$--(sub)space of $\p^r$. 
 
 If $Z\subseteq \p^r$, the \emph{span} of $Z$ is denoted by $\langle Z\rangle$.
 
 A $2$--dimensional irreducible, projective subvariety $S$ of $\mathbb G(1,n)$  is called a \emph{congruence} of lines in $\p^n$. In particular, for $n=3$, a congruence $S$ of lines in $\p^3$ is called of type $(a,b)$ if given a general point $x\in \p^3$ and a general plane $P\subset \p^3$, there are exactly $a$ [resp. $b$] lines of the congruence passing through $x$ [resp. contained in $P$]. It is well known that congruences of type (1,1) consist of the set of lines intersecting two given skew lines. 
 
 \subsection{} If $X\subseteq \p^r$ is a variety and $P$ is a linear space of dimension $k$ in $\p^r$, we can consider the \emph{projection} $\pi: X\dasharrow \p^{r-k-1}$ of $X$ from $P$. Recall that the projection is a finite morphism onto its image if $P\cap X=\emptyset$. 
 In this case we will say that the projection is \emph{external}. If $P\subseteq X$ we say that the projection is \emph{internal}. 
 
 The variety $X\subseteq \p^r$ is said to be \emph{non--degenerate} if  $\langle X\rangle =\p^r$.
 
If $X\subseteq \p^r$ is a variety, an \emph{extension} of $X$ is a variety $X'\subseteq \p^{r+k}$, with $k\geq 1$, such that $X$ is the scheme theoretical intersection of $X'$ with a $\p^r$, but $X'$ is not a cone.  If there is an extension of $X$, then $X$ is said to be \emph{extendable}.

 \subsection{} If $X$ is a variety, it is well known the notion of linearly equivalent divisors on $X$ and the relation of this notion with line bundles and linear systems.

 Let $X$ be a variety and $L$ a line bundle on $X$. Then we can consider the \emph{complete linear system} $|L|$ determined by $L$, as well as linear subsystems of it. If $\cal L$ is a linear system, we can consider its \emph{base locus} $\Bsl(\cal L)$, formed by all points $x\in X$ belonging to all divisors in $\cal L$. The divisorial part of $\Bsl(\cal L)$ is called the \emph{divisorial base locus} of $\cal L$. The \emph{movable part} of $\cal L$ is the linear system consisting of all divisors in $\cal L$ minus the divisorial base locus.
 
 If $X\subseteq \p^r$ is a variety, we can consider the \emph{hyperplane line bundle} $\cal O_X(1)$ of $X$. The \emph{hyperplane linear system} $\cal H_X$ (or simply $\cal H$ if there is no ambiguity) of $X$ is the subsystem of $|\cal O_X(1)|$ which is cut out on $X$ by the hyperplanes of $\p^r$.  The variety $X\subseteq \p^r$ is said to be \emph{linearly normal} if there is no variety $X'\subseteq \p^{r+1}$ such that $X$ is isomorphic to $X'$ via an external projection from a point. Equivalently, $X\subseteq \p^r$ is {linearly normal} if $h^0(X,\mathcal O_X(1))=r+1$, i.e., if $\cal H_X=|\cal O_X(1)|$.  
 
 A linear system $\cal L$ of dimension $r$ determines a map $\phi_\cal L: X\dasharrow \p^r$. The linear system $\cal L$ is said to be \emph{birational} if $\phi_\cal L$ is birational onto its image. 
 
 If $\cal L_1,\cal L_2$ are linear systems on the variety $X$, then we can consider their \emph{minimal sum} $\cal L_1\oplus \cal L_2$, which consists of the minimum linear system which contains all divisors of the form $D_1+D_2$ with $D_i\in \cal L_i$ with $i=1,2$. 
 
 If $\cal L$ is a linear system on the variety $X$ and $Z$ is a subscheme of $X$, we denote by $\cal L(-Z)$ the sublinear system of $\cal L$ consisting of all divisors of $\cal L$ containing the scheme $Z$. In particular, if $x\in X$ is a point, then $\cal L(-x)$ consists of all divisors in $\cal L$ containing $x$, whereas we denote by $\cal L(-2x)$ the set of all divisors in $\cal L$ containing the scheme defined by $\mathfrak m_x^2$, where $\mathfrak m_x$ is the maximal ideal corresponding to the point $x$. In particular, if $X\subseteq \p^r$ and $x\in X$, then $\cal H_X(-x)$ is cut out on $X$ by all hyperplanes passing through $x$, whereas, if $x\in \Reg(X)$, then $\cal H_X(-2x)$ is cut out on $X$ by all hyperplanes containing $T_{X,x}$, and it is called the  hyperplane system of $X$  \emph{tangent} at $x$. 
 
 \subsection{} Let $X \subseteq \p^r$. A \emph{family} $\cal F$ of subschemes of $X$ is for us a closed subscheme of the Hilbert scheme of $X$. The family is said to be \emph{filling} $X$ if, given a general point $x\in X$, there is a scheme of $\cal F$, i.e., parameterized by a point of $\cal F$, passing through $x$.

\section{Secant varieties and contact loci}\label{sec:cont}

 \subsection{} \label {sbs:a} Let $X\subseteq\p^r$ be a non--degenerate, projective variety of dimension $n$.
We will denote by $S(X)$ its {\it secant
variety}, namely the Zariski closure in $\p^r$ of the
set of all  \emph{proper secant lines} to $X$, i.e. lines
$\langle p_0,p_1\rangle$ with  $p_0,p_1$ distinct points in $X$. One has  $X=S(X)$ if and 
only if $X=\p^ r$.

One can consider the {\it abstract  secant variety} $S_X$
of $X$, i.e. $S_X\subseteq  {\rm Sym}^2(X)\times\p^r$ is the
Zariski closure of the set of all pairs $([p_0,p_1],x)$ such
that $p_0,p_1\in X$ are distinct points and $x\in
\langle p_0,p_1\rangle$. One has the surjective morphism $p_X:S_X\to
S(X)\subseteq\p^r$, i.e., the projection to the second factor.
Hence:
\begin{equation} \label{defect} s(X):=  \dim (S(X))\leq \sigma(X):=
\min\{r,\dim(S_X)\}= \min\{r,2n+1\}.\end{equation}

The integer $\sigma(X)$ is called the {\it expected
dimension} of $S(X)$. One says that $X$  is $1$--{\it defective}, 
or simply \emph{defective}, when
strict inequality holds in (\ref{defect}). One defines
$$\delta(X):=\sigma(X)-s(X)$$

\noindent to be the {\it defect} of $X$. A slightly different concept of defect, is the
one of \emph {fibre defect} defined as
$$f(X):=2n+1-s(X)$$
which is the dimension of the general fibre of $p_X$, i.e., the dimension of the
family of secant lines to $X$
passing through the general point of $S(X)$. 

\begin{remark}\label {rem:def} One has 
$$f(X)=\delta(X) \quad {\rm  if }\quad r\geq 2n+1$$
\noindent otherwise
\begin{equation}\label{eq:part}
f(X)=\delta(X)+2n+1-r.
\end{equation}
In  any case $f(X)\geq \delta(X)$ hence $f(X)=0$ implies non--defectiveness. 
\end{remark}

\subsection{} Let $X_0,X_1$ be reduced projective schemes in $\p^ r$.
The \emph{join} $J(X_0,X_1)$
of $X_0,X_1$ is the closure
in $\p^ r$ of the set\medskip

\centerline{$\{ x\in \p^r: x\in \langle p_0,p_1\rangle$ with
$p_i\in X_i$ distinct points, for $0\leq i\leq 1$,  $\}$.} \medskip

If $X$ is a variety, then  $S(X)=J(X,X)$.
If $X$ is a reducible projective scheme, then $J(X,X)$ is in general reducible
and not pure. 
It is convenient for us to denote this by $S(X)$
and call it the  {\it secant variety} of $X$.  
The reduced schemes $X$ we will be considering
 next, even if reducible, will have the property that $S(X)$ is pure. 
 For them all the considerations in \S \ref {sbs:a} can be repeated verbatim. 
 
 \subsection{}  There are several notions
 of \emph {tangential variety} to a given variety $X$ (see \cite {Zak} or  \cite {harris}, \S 15 as general references). We will adopt the following.
 
Let $X\subset \p^ r$ be an irreducible variety of dimension $n$. Consider the rational map
 
 $$s: X\times X\map \G(1,r)$$
 \noindent which sends the point $(p_0,p_1)$ off the diagonal $\Delta$ to the line
 $\langle p_0,p_1\rangle$. Resolve the indeterminacies of $s$ obtaining a morphism $\tilde s$ defined on
 some blow--up $\cal X$ of $X\times X$.

 \begin{definition}\label{def:tan} A \emph { tangent line} to $X$ is any line corresponding
 to images via the map $\tilde s$ of points of $\mathcal X$ mapping to $\Delta$. 
 
 We will denote by ${\rm Tan}(X)$ the locus of points of $\p^ r$ lying on some
 tangent line to $X$. This is the \emph{tangential variety} to $X$. It does not depend on the blow--up $\cal X$ of $X\times X$.
 \end{definition}
 
 \begin{remark}\label {rem:tan} Tangent lines are all limits of secants $\langle p_0,p_1\rangle$
 with $p_0,p_1\in X$ distinct points, when $p_0,p_1$ come toghether. Therefore 
 ${\rm Tan}(X) \subseteq S(X)$. Moreover 
${\rm Tan}(X)$ is a closed subvariety of $\p^ r$, containing the variety

$$T(X):=\overline {\bigcup_{x\in \Reg(X)} T_{X,x}}.$$

It is a well known consequence of Fulton--Hansen connectedness theorem (see \cite  {fh})
that if $X\subset \p^r$, with $r\geq 2n+1$, is smooth, then either $X$ is not defective, in which case $\dim ({\rm Tan}(X))=2n$ or $S(X)={\rm Tan}(X)$. 

Note that ${\rm Tan}(X)=T(X)$ if $X$ is smooth.
 \end{remark}

\subsection{}  

Terracini's Lemma
describes the tangent space to a secant variety
at a general point of it
(see \cite{Terr1} or, for modern versions, \cite{Adl, WDV, Dale1, Zak}). In the case of interest for us, we may state it as follows.

\begin{theorem} [Terracini's Lemma] \label{terracini1}  Let $X\subset\p^r$ be a  projective variety. 
If $p_0,p_1\in X$ are
general points and $x\in \langle p_0,p_1\rangle$ is a general point of $S(X)$, then
$$T_{S(X),x}=\langle T_{X,p_0},T_{X,p_1}\rangle = T_{X,p_0,p_1}.$$
\end{theorem}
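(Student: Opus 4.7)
My approach would be the standard one based on parametrizing the secant variety and computing the differential of this parametrization. Consider the abstract secant variety $S_X$ introduced in \S\ref{sbs:a}, together with its projection $p_X : S_X \to S(X)$. Since $X$ is irreducible, so is $S_X$, of dimension $2n+1$; at a general point $([p_0,p_1], x) \in S_X$ the scheme $S_X$ is smooth and $p_X$ is surjective, so in characteristic zero generic smoothness guarantees that the image of the differential $dp_X$ at such a point coincides with the full tangent space $T_{S(X),x}$. The whole problem thus reduces to computing this image.

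To compute the image, I would pass to an affine chart with $x$ finite and both $p_0,p_1$ in the affine part, and parametrize $S_X$ locally as $(p_0(t), p_1(t), \lambda(t)) \mapsto \lambda(t) p_0(t) + (1-\lambda(t)) p_1(t)$, where $p_i(t)$ is a smooth arc in $X$ through $p_i$ with $p_i'(0)=v_i$ an arbitrary tangent vector to $X$ at $p_i$, and $\lambda(t)$ is a scalar arc with $\lambda(0)=\lambda_0$ chosen so that $\lambda_0 p_0 + (1-\lambda_0) p_1 = x$. Differentiating at $t=0$ yields
$$x'(0) \;=\; \lambda'(0)(p_0-p_1) \;+\; \lambda_0\, v_0 \;+\; (1-\lambda_0)\, v_1,$$
which plainly lies in the affine span of $T_{X,p_0}$, $T_{X,p_1}$, and the secant line $\langle p_0,p_1\rangle$. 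Since each projective tangent space $T_{X,p_i}$ contains $p_i$, the line $\langle p_0,p_1\rangle$ is already contained in the join $\langle T_{X,p_0}, T_{X,p_1}\rangle$, so the image of $dp_X$ is contained in this join. Conversely, by varying the three ingredients independently — letting $v_0$ range freely in $T_{X,p_0}$ with $p_1(t)\equiv p_1$ and $\lambda(t)\equiv\lambda_0$ fixed, symmetrically for $v_1$, and finally varying only $\lambda(t)$ to recover the direction $p_0-p_1$ — every generator of $\langle T_{X,p_0}, T_{X,p_1}\rangle$ is realized as an image tangent vector. Hence the image of $dp_X$ equals $\langle T_{X,p_0}, T_{X,p_1}\rangle$, and combined with the first paragraph this gives $T_{S(X),x} = \langle T_{X,p_0}, T_{X,p_1}\rangle = T_{X,p_0,p_1}$.

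The main technical care required is the bookkeeping between the embedded projective tangent spaces $T_{X,p_i}\subset \p^r$ and their affine translates used in the differential calculation: one must check that the final equality of linear spans in $\p^r$ is independent of the choice of affine chart avoiding $x$, and that all genericity assumptions (smoothness of $X$ at $p_0,p_1$, smoothness of $S_X$ at $([p_0,p_1],x)$, smoothness of $S(X)$ at $x$, and the applicability of generic smoothness to $p_X$) are satisfied. Since each of these is an open condition on $S_X$ or $X\times X$, they hold at a general point of $S_X$. No deeper obstacle arises; the content of the lemma is essentially the direct computation of $dp_X$ carried out above.
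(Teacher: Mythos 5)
Your argument is the standard proof of Terracini's Lemma in characteristic zero (parametrize the secant variety, apply generic smoothness to identify the image of the differential with $T_{S(X),x}$, and compute that image via arcs), and it is correct; the only genericity facts you need — $\lambda_0\neq 0,1$, smoothness of $X$ at $p_0,p_1$ and of $S(X)$ at $x$ — are exactly the ones you flag. The paper does not prove the lemma at all: it quotes it, citing Terracini's original note and modern accounts (\AA dlandsvik, Dale, Zak, Chiantini--Ciliberto), whose proofs follow essentially the same differential computation you carried out, so there is no divergence of method to report.
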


\begin{remark}\label {rem:ff} Let us keep the notation of Terracini's Lemma. One computes 

$$f(X)=\dim (T_{X,p_0}\cap T_{X,p_1})+1.$$
\end{remark}

\begin{lemma}\label {lem:ff} Let $X\subseteq \p^r$ be a non--degenerate variety of dimension $n\geq 2$ and let $Y$ be a general hyperplane section of $X$. Then 
$$f(Y)=\max \{0, f(X)-1\}.$$
\end{lemma}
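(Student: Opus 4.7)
The plan is to use Remark \ref{rem:ff} on both $X$ and its hyperplane section $Y$ to reduce the equality $f(Y) = \max\{0, f(X) - 1\}$ to a statement about intersections of embedded tangent spaces. Fix a general hyperplane $H$ and set $Y = X \cap H$, which by Bertini is irreducible of dimension $n-1$. Choose generic $q_0, q_1 \in Y$; by the genericity of $H$, these points are also generic on $X$. Since $T_{Y, q_i} = T_{X, q_i} \cap H$, one has
$$T_{Y, q_0} \cap T_{Y, q_1} \;=\; L \cap H, \qquad L := T_{X, q_0} \cap T_{X, q_1},$$
so Remark \ref{rem:ff} gives $f(Y) = \dim(L \cap H) + 1$ and $\dim L = f(X) - 1$.

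If $f(X) = 0$, then $L$ is empty, hence so is $L \cap H$, and $f(Y) = 0 = \max\{0, f(X) - 1\}$. Assume now $f(X) \geq 1$, so $\dim L \geq 0$; I need to show $\dim(L \cap H) = \dim L - 1$, equivalently $L \not\subseteq H$. Because $H$ is a general hyperplane through $q_0, q_1$ and the common intersection of all such hyperplanes is the secant line $\langle q_0, q_1\rangle$, this non-inclusion is equivalent to $L \not\subseteq \langle q_0, q_1\rangle$. For $\dim L \geq 2$ this is automatic. For $\dim L = 1$, $L \subseteq \langle q_0, q_1\rangle$ would force $L = \langle q_0, q_1\rangle$, whence $q_1 \in L \subseteq T_{X, q_0}$. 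For $\dim L = 0$, write $L = \{P\}$: generically $q_0 \notin T_{X, q_1}$ so $P \neq q_0$, and $P \in \langle q_0, q_1\rangle \cap T_{X, q_0}$ together with $P \neq q_0$ forces $\langle q_0, q_1\rangle = \langle q_0, P\rangle \subseteq T_{X, q_0}$, so again $q_1 \in T_{X, q_0}$.

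In both low-dimensional subcases, $L \subseteq \langle q_0, q_1\rangle$ would force $q_1 \in T_{X, q_0}$ for a generic pair $(q_0, q_1) \in X \times X$. This is a Zariski-closed condition, so it would imply $X \subseteq T_{X, q_0}$ for every $q_0 \in \Reg(X)$; since $\dim T_{X, q_0} = n$ and $\langle X\rangle = \p^r$, this is possible only if $X = \p^r$, a trivial situation in which $Y = \p^{r-1}$, $f(X) = r+1$, $f(Y) = r$, and the formula holds directly. Hence, outside this trivial case, $L \not\subseteq \langle q_0, q_1\rangle$ and one concludes $f(Y) = \dim L = f(X) - 1$. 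The main obstacle is the subcase $\dim L = 0$, where $L$ could a priori reduce to a single point on the secant line $\langle q_0, q_1\rangle$; the key step eliminating this pathology is that once $P \neq q_0$ lies on both $T_{X, q_0}$ and $\langle q_0, q_1\rangle$, the full secant line is forced into $T_{X, q_0}$, which is ruled out by non-degeneracy.
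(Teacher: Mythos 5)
Your proof is correct and follows essentially the same route as the paper: reduce to Terracini's formula $f=\dim(T_{X,p_0}\cap T_{X,p_1})+1$, observe $T_{Y,p_i}=H\cap T_{X,p_i}$, and show that a general hyperplane through $p_0,p_1$ cuts the intersection of tangent spaces in codimension one because that intersection cannot sit inside the secant line $\langle p_0,p_1\rangle$ unless $X$ is a linear space, which non-degeneracy forbids outside the trivial case $X=\p^r$. The paper proves the slightly stronger fact that the intersection is disjoint from $\langle p_0,p_1\rangle$, but the key ingredient (a general tangent space cannot contain a general point of $X$) is the same as yours.
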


\begin {proof} The assertion is trivial if $X=\p^r$, so we may assume that $X$ is strictly contained in $\p^r$ and it is not a linear subspace because it is non--degenerate. 

Let $Y$ be the section of $X$ with a general hyperplane $H$ and let $p_0,p_1$ be general points of $Y$. Then $p_0,p_1$ are also general points of $X$.  One has $T_{Y,p_i}=H\cap T_{X,p_i}$ for $i=0,1$. 

If $T_{X,p_0}\cap T_{X,p_1}=\emptyset$, so that $f(X)=0$, then also $T_{Y,p_0}\cap T_{Y,p_1}=\emptyset$ and $f(Y)=0$, and the assertion holds. 

Suppose now $T_{X,p_0}\cap T_{X,p_1}\neq \emptyset$. Note that neither $p_0$ nor $p_1$ lie in $T_{X,p_0}\cap T_{X,p_1}$, otherwise the general tangent space to $X$ would contain the general point of $X$ and therefore $X$ would coincide with its general tangent space, so it would be a linear space, a contradiction. Then $T_{X,p_0}\cap T_{X,p_1}$ is skew with the line $\langle p_0,p_1\rangle$, because $p_0,p_1$ are not in $T_{X,p_0}\cap T_{X,p_1}$ and if a point $p\in \langle p_0,p_1\rangle$ different from $p_0,p_1$ lies in $T_{X,p_0}\cap T_{X,p_1}$, then the whole line $\langle p_0,p_1\rangle=\langle p,p_1\rangle=\langle p_0,p\rangle$ would be contained in $T_{X,p_0}\cap T_{X,p_1}$, a contradiction. Then by the generality of the hyperplane $H$, $H\cap T_{X,p_0}\cap T_{X,p_1}=T_{Y,p_0}\cap T_{Y,p_1}$ has dimension $\dim (T_{X,p_0}\cap T_{X,p_1})-1$ and the assertion follows. \end{proof}

\subsection{}  Given an irreducible projective variety $X\subseteq \p^ r$ of dimension $n$, 
the \emph{Gauss map} of $X$ is the rational map
$$g_X: X\dasharrow \G(n,r)$$

\noindent  defined at the smooth points of $X$ by mapping $x\in \Reg(X)$
to the tangent space $T_{X,x}$.  It is well known that, if $x\in X$ is a general point, then the closure of the
fibre of $g_X$ through $x$ is a linear subspace $\Gamma_{X,x}$ of $\p^ r$, which we  denote by $\Gamma_x$ if there is no danger of confusion.

\begin{definition}\label {def:gauss} In the above setting, $\Gamma_{x}$ is called the general \emph {Gauss fibre}
of $X$ and its dimension is called the \emph{tangential defect} of $X$,  denoted by $t(X)$.
We will set $\theta(X)=t(S(X))$. \end{definition}

Note that, if $X$ is smooth, then $t(X)=0$ (see \cite  {Zak}).

\begin{remark} \label {rem:hg} In the above setting, there is a dense open set $U\subset \Reg(X)$, which via $g_{X|U}$ is fibred over $g_X(U)$ in open subsets of projective spaces of the same dimension $t(X)$. This implies that  
if $Y=X\cap H$ is the section of $X$ with a general hyperplane and $x\in Y$, then 
$\Gamma_{X,x}\cap H= \Gamma_{Y,x}$. Hence

$$t(Y)= \max \{0,t(X)-1\}.$$
\end{remark}


\subsection {} \label{ssec:paf}Given $x\in S(X)$ a general point,
i.e., $x\in \langle p_0,p_1\rangle$ is a general point, with $p_0,p_1\in X$
general points, consider the Zariski closure of the set\medskip

\centerline{  $\{ p\in \Reg(X):  T_{X,p} \subseteq T_{S(X),x}=T_{X,p_0,p_1} \}$. }\medskip

\noindent  Note that this set depends only on $p_0,p_1$ and not on $x\in S(X)$. We will denote by $\Gamma_{X,p_0,p_1}$ the union of all irreducible components of this locus containing $p_0$ and $p_1$. Since $p_0,p_1$ are general points, by monodromy $\Gamma_{X,p_0,p_1}$ is equidimensional and we let  $\gamma(X)$ be its dimension, which, by generality of  $p_0,p_1$, does not depend on $p_0,p_1$.
Note that $\Gamma_{X,p_i}\subseteq \Gamma_{X,p_0,p_1}$
for $i=0,1$. We set
$$\Pi_{X,p_0,p_1}= \langle \Gamma_{X,p_0,p_1} \rangle.$$
\noindent Since  $\Pi_{X,p_0,p_1}$ contains $\langle p_0,p_1\rangle$,
then it contains $x$.

If there is no danger of confusion, we write $\Gamma_{p_0,p_1}$ and $\Pi_{p_0,p_1}$
instead of $\Gamma_{X,p_0,p_1}$ and  $\Pi_{X,p_0,p_1}$.

\begin{definition} In the above setting, we will call
$ \Gamma_{X,p_0,p_1}$  the {\it tangential contact locus} of
$X$ at $p_0,p_1$. We will call
$\gamma(X)$ the {\it contact defect} of $X$.
\end{definition}

\begin{proposition}\label {rem:hh} Let $X$ be a projective, defective variety, of dimension $n$,   let $Y=X\cap H$ with $H$ a general hyperplane and let $p_0,p_1$ be general points of $Y$. Then 
$\Gamma_{X,p_0,p_1}\cap H \subseteq \Gamma_{Y,p_0,p_1}$.

In particular, $\gamma(Y)\geq \gamma(X)-1$ and if the equality holds then 
$\Gamma_{X,p_0,p_1}\cap H =\Gamma_{Y,p_0,p_1}$. \end{proposition}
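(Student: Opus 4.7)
The plan is to first compare the relevant tangent spans and then apply Bertini componentwise. Step one is to establish
\[
T_{Y,p_0,p_1}=H\cap T_{X,p_0,p_1}.
\]
The containment $\subseteq$ is immediate from $T_{Y,p_i}=H\cap T_{X,p_i}$ at the smooth points $p_0,p_1$ of $Y$. For the equality I would match dimensions: Terracini's Lemma together with Remark \ref{rem:ff} gives $\dim T_{X,p_0,p_1}=2n+1-f(X)$, and Lemma \ref{lem:ff} (applicable since $X$ defective implies $f(X)\geq 1$) gives $\dim T_{Y,p_0,p_1}=2n-f(X)$. Since $T_{X,p_0}\not\subseteq H$ by smoothness of $Y$ at $p_0$, also $T_{X,p_0,p_1}\not\subseteq H$, and hence $\dim(H\cap T_{X,p_0,p_1})=2n-f(X)$ as well.

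Step two deduces the inclusion. For any $q\in\Gamma_{X,p_0,p_1}\cap H\cap\Reg(Y)$, the defining condition $T_{X,q}\subseteq T_{X,p_0,p_1}$, intersected with $H$, yields the tangency $T_{Y,q}\subseteq T_{Y,p_0,p_1}$ by step one. To place $q$ in $\Gamma_{Y,p_0,p_1}$ (i.e.\ in a component through $p_0$ or $p_1$), I would apply Bertini to each irreducible component $W$ of $\Gamma_{X,p_0,p_1}$ through $p_i$: for a general $H$ and $p_i\in W\cap H$, the component of $W\cap H$ through $p_i$ is irreducible of dimension $\gamma(X)-1$, lies in the $Y$--contact locus by the above, and meets the definitional requirement. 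Running this over all components of $\Gamma_{X,p_0,p_1}$ yields the inclusion $\Gamma_{X,p_0,p_1}\cap H\subseteq \Gamma_{Y,p_0,p_1}$.

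The inequality $\gamma(Y)\geq \gamma(X)-1$ then follows from the equidimensionality of $\Gamma_{X,p_0,p_1}$ and Bertini, which force $\dim(\Gamma_{X,p_0,p_1}\cap H)=\gamma(X)-1$. When the equality $\gamma(Y)=\gamma(X)-1$ holds, $\Gamma_{X,p_0,p_1}\cap H$ is equidimensional of dimension $\gamma(Y)$ and contained in the equidimensional $\Gamma_{Y,p_0,p_1}$, so it is a union of entire components of the latter. The main obstacle I anticipate is upgrading this to full set equality, namely showing that no component of $\Gamma_{Y,p_0,p_1}$ escapes $\Gamma_{X,p_0,p_1}\cap H$; I would address this via monodromy on the pair $(p_0,p_1)$ already used in the definition of $\Gamma$, combined with a count that, under the dimension equality, matches the top--dimensional components through $p_0$ (and through $p_1$) on either side bijectively.
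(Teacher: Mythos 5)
Your steps one and two are exactly the paper's argument: the paper proves $T_{Y,p_0,p_1}=T_{X,p_0,p_1}\cap H$ by the same dimension count (phrased there via $\dim(T_{X,p_0}\cap T_{X,p_1})$ rather than via $f(X)$ and Lemma \ref{lem:ff}, which is equivalent by Remark \ref{rem:ff}), and then transfers the tangency $T_{X,x}\subseteq T_{X,p_0,p_1}$ to $T_{Y,x}\subseteq T_{Y,p_0,p_1}$ for $x\in \Gamma_{X,p_0,p_1}\cap H$, after which it stops, treating the component bookkeeping and the ``in particular'' clause as immediate. So the substantive core of your proposal is correct and coincides with the paper's proof.

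The gap is at the end: the clause ``if $\gamma(Y)=\gamma(X)-1$ then $\Gamma_{X,p_0,p_1}\cap H=\Gamma_{Y,p_0,p_1}$'' is part of the statement, and you leave it as an anticipated obstacle with only a vague monodromy--plus--counting plan; monodromy on $(p_0,p_1)$ alone cannot exclude a component of $\Gamma_{Y,p_0,p_1}$ through, say, $p_0$ that is not of the form of a component of $\Gamma_{X,p_0,p_1}\cap H$. The missing ingredient is Proposition \ref{lem:a}(ii) applied to $Y$: $\Gamma_{Y,p_0,p_1}$ is smooth at $p_0$ and $p_1$, and it is either irreducible or consists of exactly two components, each containing one of the two points; since by definition every component of $\Gamma_{Y,p_0,p_1}$ passes through $p_0$ or $p_1$, smoothness gives a unique component through each $p_i$ and none elsewhere. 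Under the dimension equality, every component of $\Gamma_{X,p_0,p_1}\cap H$ has dimension $\geq \gamma(X)-1=\gamma(Y)$ (Krull) and is contained in the equidimensional $\gamma(Y)$--dimensional $\Gamma_{Y,p_0,p_1}$, hence is a component of it; as $p_i\in \Gamma_{X,p_0,p_1}\cap H$, the component through $p_i$ is the unique component of $\Gamma_{Y,p_0,p_1}$ through $p_i$, and these exhaust $\Gamma_{Y,p_0,p_1}$. This yields the reverse inclusion in two lines, with no Bertini or counting. Relatedly, your intermediate Bertini step does not quite deliver what you claim: for a component $W$ of $\Gamma_{X,p_0,p_1}$, keeping only the component of $W\cap H$ through $p_i$ does not account for components of $W\cap H$ missing both points, so ``running this over all components'' does not by itself give the inclusion; the inclusion comes, as in the paper, directly from the tangency transfer (the paper is itself brisk about placing those points inside the components of the $Y$--locus through $p_0,p_1$, so on this point your attempt is no worse than the original).
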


\begin{proof} First we prove that $T_{Y,p_0,p_1}=T_{X,p_0,p_1}\cap H$. It is clear that 
$T_{Y,p_0,p_1}\subseteq T_{X,p_0,p_1}\cap H$, so it suffices to prove that the two spaces have te same dimension. By the same argument we made in the proof of Lemma \ref {lem:ff}, we have that $T_{Y,p_0}\cap T_{Y,p_1}=H\cap T_{X,p_0}\cap T_{X,p_1}$. Then 
\[
\begin{aligned}
&\dim (T_{X,p_0,p_1}\cap H)=2n-\dim(T_{X,p_0}\cap T_{X,p_1})-1=&\cr
&=2n-(\dim(T_{Y,p_0}\cap T_{Y,p_1})+1)-1=2(n-1)-\dim(T_{Y,p_0}\cap T_{Y,p_1})=\dim(T_{Y,p_0,p_1})& \cr
\end{aligned}
\]
as wanted.

Now, let $x\in \Gamma_{X,p_0,p_1}\cap H$. Then $T_{X,x}\subseteq T_{X,p_0,p_1}$, hence
\[
T_{Y,x}\subseteq T_{X,x}\cap H\subseteq T_{X,p_0,p_1}\cap H=T_{Y,p_0,p_1},
\]
proving the assertion. 
\end{proof}

The following result is contained in \cite {CC4}, Proposition 3.9.

\begin{proposition}\label {lem:a} Let $X\subset \p^ r$ be a
 non--degenerate, projective variety such that $s(X)<r$. Let $p_0,p_1\in X$ be general
points. Then:

\begin {itemize}

\item  [(i)] Let  $q_0,q_1$ be general points on $\Gamma_{p_0,p_1}$,
such that $q_i$ specializes to $p_i$, for $i=0,1$.
Then $\Gamma_{p_0,p_1}=\Gamma_{q_0,q_1}$;

\item [(ii)] $\Gamma_{p_0,p_1}$ is smooth at $p_0,p_1$; moreover it is
either irreducible or it consists of two irreducible components of the same dimension $\gamma(X)$
each containing one of the points $p_0,p_1$ as its general point;

\item [(iii)] $f(\Gamma_{p_0,p_1})=f(X)$;

\item [(iii)] $\Pi_{p_0,p_1}=S(\Gamma_{p_0,p_1})$ equals the general Gauss fibre $\Gamma_{S(X),x}$ of $S(X)$ (in particular $S(\Gamma_{p_0,p_1})$ is a linear space), whereas  $\Gamma_{p_0,p_1}\neq \Pi_{p_0,p_1}$;

\item [(iv)] $\theta(X)= \dim( \Pi_{p_0,p_1})=2\gamma(X)+1-f(X)$.
\end{itemize}
\end{proposition}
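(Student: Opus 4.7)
The unifying principle is to apply Terracini's Lemma at various general pairs of points, and then read the resulting equalities of tangent spaces back into the definition of the contact locus. Throughout, set $\Pi_0:=T_{X,p_0,p_1}$, which by Terracini's Lemma coincides with $T_{S(X),x}$ and has dimension $s(X)$.

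\medskip

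For (i), if $q_0,q_1\in \Gamma_{p_0,p_1}$ specialize to $p_0,p_1$, then Terracini applied at a general point of $\langle q_0,q_1\rangle$ shows that $T_{X,q_0,q_1}$ is a linear space of dimension $s(X)=\dim\Pi_0$; since each $T_{X,q_i}\subseteq \Pi_0$ by the very definition of $\Gamma_{p_0,p_1}$, we deduce $T_{X,q_0,q_1}=\Pi_0$, and the defining condition $T_{X,p}\subseteq \Pi_0$ is the same for both pairs, yielding $\Gamma_{p_0,p_1}=\Gamma_{q_0,q_1}$. Part (ii) combines the genericity of $p_0,p_1$ as points of the components of the contact locus containing them (forcing smoothness there) with a monodromy argument on $X\times X$: since the base pair can be swapped, two distinct components must be exchanged by monodromy and therefore have the same dimension, while no further components can appear by construction.

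\medskip

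For the identification $\Pi_{p_0,p_1}=S(\Gamma_{p_0,p_1})=\Gamma_{S(X),x}$, I would argue by double inclusion. For a general pair $(q_0,q_1)$ in $\Gamma_{p_0,p_1}$, part (i) together with Terracini shows that every point of $\langle q_0,q_1\rangle$ has tangent space $\Pi_0$ in $S(X)$, hence lies in the Gauss fibre $\Gamma_{S(X),x}$, yielding $S(\Gamma_{p_0,p_1})\subseteq \Gamma_{S(X),x}$. Conversely, any pair $(q_0,q_1)\in X\times X$ whose secant meets a general $y\in\Gamma_{S(X),x}$ satisfies $T_{X,q_0,q_1}\subseteq T_{S(X),y}=\Pi_0$, whence $q_0,q_1\in \Gamma_{p_0,p_1}$ by a dimension count; this gives the opposite inclusion. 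Since $\Gamma_{S(X),x}$ is linear, the three sets coincide. The inequality $\Gamma_{p_0,p_1}\neq \Pi_{p_0,p_1}$ will be read off from formula (iv) below, since the bound $\gamma(X)\geq f(X)$ forces $\dim \Pi_{p_0,p_1}=2\gamma(X)+1-f(X)\geq \gamma(X)+1>\dim\Gamma_{p_0,p_1}$.

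\medskip

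The same double-inclusion argument is the key to $f(\Gamma_{p_0,p_1})=f(X)$: the fibres $p_X^{-1}(y)$ and $p_{\Gamma_{p_0,p_1}}^{-1}(y)$ coincide for $y$ general in $S(X)\supseteq S(\Gamma_{p_0,p_1})$, since by the above any pair of points of $X$ with secant through such $y$ actually lies in $\Gamma_{p_0,p_1}\times\Gamma_{p_0,p_1}$; so their dimensions agree. Finally, for (iv), Terracini applied to $\Gamma_{p_0,p_1}$, whose secant variety is the linear space $\Pi_{p_0,p_1}$, gives
\[
\dim \Pi_{p_0,p_1}=\dim\bigl(T_{\Gamma_{p_0,p_1},p_0}+T_{\Gamma_{p_0,p_1},p_1}\bigr)=2\gamma(X)-\bigl(f(\Gamma_{p_0,p_1})-1\bigr)=2\gamma(X)+1-f(X),
\]
while $\theta(X)=\dim\Gamma_{S(X),x}=\dim \Pi_{p_0,p_1}$ is immediate from the identification already established. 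I expect the most delicate point to be the clean matching of fibres of $p_X$ and $p_{\Gamma_{p_0,p_1}}$, which simultaneously underpins the equality $f(\Gamma_{p_0,p_1})=f(X)$ and, through it, the numerical formula in (iv); all other assertions reduce to careful bookkeeping of Terracini's Lemma on $X$ versus on $\Gamma_{p_0,p_1}$.
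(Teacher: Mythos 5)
First, a remark on the comparison: the paper does not prove this proposition at all — it is quoted from \cite{CC4}, Proposition 3.9 — so there is no internal proof to measure your plan against; I can only judge it on its own terms. Your overall strategy (Terracini at various pairs, constancy of the tangent space to $S(X)$ along $S(\Gamma_{p_0,p_1})$, then Terracini applied to $\Gamma_{p_0,p_1}$ itself to obtain (iv)) is the right circle of ideas, but several of the delicate steps are not actually carried by the tools you invoke.

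Concretely: in (i) you claim that Terracini at a general point of $\langle q_0,q_1\rangle$ gives $\dim T_{X,q_0,q_1}=s(X)$. Terracini's Lemma (Theorem \ref{terracini1}) holds for a \emph{general} pair of points of $X$; for $q_0,q_1$ general only on $\Gamma_{p_0,p_1}$ it yields just the inclusion $T_{X,q_0,q_1}\subseteq T_{S(X),y}$, i.e.\ an upper bound you already have from $T_{X,q_i}\subseteq T_{X,p_0,p_1}$. What is needed is the lower bound $\dim T_{X,q_0,q_1}\geq s(X)$, and this comes from lower semicontinuity of $\dim\langle T_{X,q_0},T_{X,q_1}\rangle$ under the specialization of $q_i$ to $p_i$ — precisely the hypothesis in the statement, which you restate but never use. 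Second, there is a recurring component issue: $\Gamma_{p_0,p_1}$ is only the union of the components \emph{through $p_0,p_1$} of the locus $\{p:T_{X,p}\subseteq T_{X,p_0,p_1}\}$, so showing that a pair $(q_0,q_1)$ with $T_{X,q_i}\subseteq T_{X,p_0,p_1}$ actually lies on $\Gamma_{p_0,p_1}$ (as needed for your reverse inclusion $\Gamma_{S(X),x}\subseteq S(\Gamma_{p_0,p_1})$ and for the fibre identification in (iii)) is exactly the hard point, and ``by a dimension count'' is not an argument; the same issue is hidden in (ii), where uniqueness of the component through each $p_i$ (needed both for smoothness and for the ``at most two components'' assertion) requires proving that $p_i$ is a general point of every component containing it, a consequence of (i) plus an incidence/monodromy argument you only allude to. Third, in (iii) the fibre comparison ``for $y$ general in $S(X)\supseteq S(\Gamma_{p_0,p_1})$'' conflates two different genericity conditions: a general point of $S(X)$ does not lie on $S(\Gamma_{p_0,p_1})$, while over a general point of $S(\Gamma_{p_0,p_1})$ the fibre of $p_X$ has a priori dimension at least $f(X)$ only, since fibre dimension can jump up on the special locus; the missing bridge is that the point $x\in\langle p_0,p_1\rangle$ is simultaneously general in $S(X)$ and, thanks to the genericity of $(p_0,p_1)$ on $\Gamma_{p_0,p_1}$ established in (ii), general in the relevant component of $S(\Gamma_{p_0,p_1})$. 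With these repairs the plan does go through, and the concluding computation for (iv) via Terracini applied to $\Gamma_{p_0,p_1}$, together with $S(\Gamma_{p_0,p_1})=\Pi_{p_0,p_1}$, is fine.
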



\subsection {} Let $X\subset\p^r$ be a non--degenerate, 
projective variety and let $p\in X$
 be a general point. Consider
the projection of $X$ with centre $T_{X,p}$ to $\p^ {r-n-1}$. We call
this a {\it general tangential projection} of $X$, and we
denote it by $\tau_{X,p}$, or by $\tau_{p}$, or simply by $\tau$.
 We denote by $X_{p}$, or by $X_1$, its image, and by $n_1$ its dimension. As a consequence of Terracini's Lemma, we have
 \begin{equation}\label{eq:tl}
 n_1=n-f(X).
 \end{equation}

  
 \begin{definition} Let $p_0,p_1\in X$ be general points. Let $
\Psi_{X,p_0,p_1}$ be the component of the fibre of $\tau_{X,p_1}$
containing $p_0$. 
It is called the \emph{projection contact locus}
of $X$ at $p_0,p_1$ and, by \eqref {eq:tl}, its dimension is 
$f(X)$.
\end {definition}

If there is no danger of confusion, we will write $\Psi_{p_0,p_1}$
rather than $\Psi_{X,p_0,p_1}$.

\begin{remark}\label{conctactin} One has that  $\Gamma_{p_0,p_1}$
contains $\Psi_{p_0,p_1}$. Indeed, if $p\in \Psi_{X,p_0,p_1}$ is a general point (in particular if $p=p_0$), then the 
projection of $T_{X,p}$ from $T_{X,p_1}$ is the tangent space to $X_1$ at the point $\tau(p)=\tau(p_0)$. In particular $T_{X,p}\subset T_{X,p_0,p_1}$.
Hence
\begin{equation}\label{eq:cc}
\gamma(X) \geq f(X).
\end{equation}
Actually the above argument implies that $\gamma(X)=t(X_1)+f(X)$.
Therefore $\gamma(X) =f(X)$ holds
if and only if the Gauss map of $X_1$ is generically finite to its image,
which is equivalent to say that it is birational to its image (see \cite {CC4}, 
Remark 3.6). Thus, if $\gamma(X) =f(X)$, then $\Psi_{p_0,p_1}$ is the
irreducible component of $\Gamma_{p_0,p_1}$ containing $p_0$.

If $X$ is defective, then $f(X)>0$, so that
 $\gamma(X)$  is also positive and $X_1$ is strictly
 contained in $\p^ {r-n-1}$. Otherwise $n_1=r-n-1$ by  \eqref {eq:tl} yields $f(X)=2n+1-r$ which, together with \eqref {eq:part}, implies $\delta(X)=0$, a contradiction. 
 
By Lemma \ref {lem:ff}, if $Y=X\cap H$ with $H$  a general hyperplane, then
$$f(Y)= \max \{0,f(X)-1\}.$$
This implies that, if $p_0,p_1$ are
general points of $Y$, then $\Psi_{Y,p_0,p_1}= H\cap \Psi_{X,p_0,p_1}$. 
\end{remark}

\subsection {} Other relevant items related to the secant variety
$S(X)$ are the so called \emph
    {entry loci}.

\begin{definition}\label{def:el} Let $x\in  S(X)$ be a point.
We define the \emph{entry locus} $E_{x}$
of $x$ with respect to $X$ as the closure of the set\medskip

\centerline{ $\{ z\in X:$  there is $y\in X$ with $y \neq z$ and $ x\in \langle y,z\rangle\}$.}
\end{definition}

A useful information about the entry loci is provided by the following result
(see \cite {CC4}, Proposition 3.13 or \cite {IR}, Proposition
2.2).

\begin{proposition}\label {lem:dimel} Let $X\subset \p^ r$ be an irreducible
variety with $s(X)<r$ and let $x\in S(X)$ be a general point. Then
$E_x$ is pure of dimension $f(X)$.  
\end{proposition}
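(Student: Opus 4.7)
The plan is to realize $E_x$ as the image of the general fibre of the secant map under a projection to $X$, and then to check that this projection is generically finite on each irreducible component of that fibre.

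First I would pass to the ordered secant correspondence $\hat S_X\subseteq X\times X\times \p^r$, defined as the closure of the locus of triples $(p_0,p_1,x)$ with $p_0\neq p_1$ in $X$ and $x\in \langle p_0,p_1\rangle$. This is an irreducible variety of dimension $2n+1$ which is a generically $2$--to--$1$ cover of $S_X$ via the obvious $\Z/2$--quotient on the first two factors. Let $\hat p_X\colon \hat S_X\to S(X)$ be the projection to the third factor and $\pi_0,\pi_1\colon \hat S_X\to X$ be the two projections to $X$. Straight from the definition of the entry locus one has $E_x=\overline{\pi_0(\hat F_x)}$, where $\hat F_x:=\hat p_X^{-1}(x)$, and for a general $x\in S(X)$ the fibre $\hat F_x$ is equidimensional of dimension $f(X)$, being a generically $2$--to--$1$ cover of the general fibre of $p_X$.

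The key step is to prove that the restriction $\pi_0|_C\colon C\to X$ is generically finite for every irreducible component $C$ of $\hat F_x$. For a general $z\in \pi_0(C)$ the fibre $(\pi_0|_C)^{-1}(z)$ injects into the set $\{y\in X : y\neq z,\ x\in \langle y,z\rangle\}$, which coincides with $(\ell\cap X)\setminus\{z\}$ where $\ell=\langle z,x\rangle$. Were this fibre positive--dimensional, the scheme $\ell\cap X$ would be infinite, so $\ell\subseteq X$, and in particular $x\in X$. However the hypothesis $s(X)<r$ together with the tacit non--degeneracy of $X$ forces $\dim S(X)>\dim X$ (the only way to have $S(X)=X$ is for $X$ to be a linear subspace, which is excluded), so a general $x\in S(X)$ does not lie on $X$, giving the desired contradiction.

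Once generic finiteness on each component of $\hat F_x$ is established, the conclusion is immediate: $\overline{\pi_0(C)}$ is irreducible of dimension $\dim C=f(X)$ for every component $C$, and $E_x=\bigcup_C \overline{\pi_0(C)}$ is a finite union of such pieces, hence pure of dimension $f(X)$. The main obstacle is precisely the generic finiteness step: one has to rule out a family of secant lines through $x$ all sharing a common endpoint $z$, which would force a pencil of points on the line $\langle z,x\rangle$ to lie on $X$; the hypothesis $s(X)<r$ is exactly what excludes this pathology through the observation that $x\notin X$.
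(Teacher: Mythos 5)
Your proof is correct, and note that the paper does not prove this proposition itself: it quotes it from \cite{CC4}, Proposition 3.13, and \cite{IR}, Proposition 2.2, and your incidence--variety argument (purity of the general fibre of the secant map, plus generic finiteness of the projection to $X$ obtained by observing that an infinite fibre over $z$ would force the line $\langle z,x\rangle$ to lie in $X$ and hence $x\in X$, impossible for $x\in S(X)$ general once $X$ is non--degenerate and $s(X)<r$) is essentially the standard one used in those references. The only point to tidy up is the claim that $E_x=\overline{\pi_0(\hat F_x)}$ holds ``straight from the definition'': a priori $\hat F_x$ could contain components lying over the diagonal of $X\times X$ (limits of secants, i.e.\ tangent lines through $x$), whose images under $\pi_0$ need not lie in $E_x$. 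This is harmless and fixable in one line: either observe that for general $x$ no such component exists, because the boundary $\hat S_X\setminus U$ (where $U$ is the open locus of triples with $p_0\neq p_1$) has dimension at most $2n$, so its fibre over a general $x$ has dimension at most $f(X)-1$, while every component of $\hat F_x$ has dimension $f(X)$; or simply take your final union only over the components of $\hat F_x$ meeting $U$, since $E_x$ is exactly the union of $\overline{\pi_0(C)}$ for those components, which is all that is needed to conclude that $E_x$ is pure of dimension $f(X)$.
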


\begin{remark}\label {rem:elocus} Terracini's Lemma implies that the entry locus $E_x$ is contained
in the tangential contact locus $\Gamma_{p_0,p_1}$ for all distinct
pairs of points $p_0,p_1\in X$ such that $x\in \langle p_0,p_1\rangle.$
Therefore again $f(X)\leq \gamma(X)$.\end{remark}

\subsection {} \label {susec:difetti} Let $X\subset\p^r$ be a
projective variety. Let $x\in X$ be a general point and let $H$ be a general hyperplane 
containing $T_{X,x}$. This is a \emph {general tangent hyperplane} to $X$.
Consider the Zariski closure $\Gamma_{X,x}(H)$ of the set
$$\{ p\in \Reg(X):  T_{X,p} \subseteq H \}.$$
By the genericity assumptions and by the aforementioned properties of the Gauss map,
this is a linear subspace of $\p^ r$. We call it the \emph{general tangent hyperplane contact locus} of $X$ at
$x$ and we will denote by $d(X)$ its dimension, which does not depend on $x$ and $H$. If there is no danger of confusion, we may write  $\Gamma_{x}(H)$ rather than  $\Gamma_{X,x}(H)$.
Of course $\Gamma_x(H)$ contains
$\Gamma_{x}$, thus 
$$d(X)\geq t(X).$$ 

\noindent Moreover 
the \emph {dual variety} $X^ *$ of $X$ has dimension 
$$\dim(X^ *)=r-1-d(X)$$
\noindent and therefore $d(X)$ is called the \emph {dual defect} of $X$.

Similarly, let  $p_0,p_1\in X$ be general points, and let $H$ be
a general hyperplane containing $T_{X,p_0,p_1}$, i.e. a \emph {general 
bitangent hyperplane} to $X$.
Consider the Zariski closure of the set
$$\{ p\in \Reg(X):  T_{X,p} \subseteq H \}.$$

\begin{definition}\label {def:bit}  The union
 $\Gamma_{X,p_0,p_1}(H)$ of all irreducible components
of the aforementioned locus  containing either $p_0$ or $p_1$ is called the \emph{general bitangent hyperplane contact locus} of $X$ at
$p_0,p_1$. We will denote by $\epsilon(X)$ its dimension, which, by genericity,  does not depend on $p_0,p_1$ and $H$. We will set  
$$\Pi_{X,p_0,p_1}(H):=\langle \Gamma_{X,p_0,p_1}(H)\rangle.$$
\end{definition}

We will write $\Gamma_{p_0,p_1}(H)$ and $\Pi_{p_0,p_1}(H)$ instead of $\Gamma_{X,p_0,p_1}(H)$ and $\Pi_{X,p_0,p_1}(H)$ if there is no danger of confusion.

Of course $\Gamma_{p_0,p_1}(H)$ contains
$\Gamma_{p_0,p_1}$, and therefore 

\begin{equation}\label {eq:epsi}
\epsilon(X)\geq \gamma(X).\end{equation}

\begin{remark}\label {rem:acca}  By  \eqref {eq:cc} and \eqref {eq:epsi}  we see that 
\[
\epsilon(X)\geq \gamma(X)\geq f(X).
\]
More precisely, looking at the general tangential projection, we have that $\epsilon(X)> f(X)$ if and only if $d(X_1)=\epsilon(X)-f(X)>0$, i.e., if and only if
$X_1$ is dual defective. 

\end{remark}

Recall the following result which parallels Proposition \ref  {lem:a} (see  \cite {WDV}, Theorem 1.1 and \cite {CC5}, Theorem 2.4):

\begin{theorem}\label{terracini}  Let $X\subset\p^r$
be a defective, projective variety. Let $p_0,p_1\in X$ be
general points and $H$  a general hyperplane 
containing $ T_{X,p_0,p_1}$. One has:

\begin{itemize}
\item [(i)] $\Gamma_{p_0,p_1}(H)$ is smooth at $p_0,p_1$; moreover it is
either irreducible or it consists of two irreducible components of the same dimension $\epsilon(X)$
each containing one of the points $p_0,p_1$ as its general point;

\item [(ii)] $f(\Gamma_{p_0,p_1}(H))=f(X)$;

\item [(ii)] $\Pi_{p_0,p_1}(H)=S(\Gamma_{p_0,p_1}(H))$, whereas  $\Gamma_{p_0,p_1}(H)\neq \Pi_{p_0,p_1}$;

\item [(iv)] $\dim( \Pi_{p_0,p_1}(H))\leq 2\epsilon(X)+1-f(X)$.
\end{itemize}
\end{theorem}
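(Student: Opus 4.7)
The statement is the ``bitangent hyperplane'' counterpart of Proposition~\ref{lem:a}, and the plan is to mimic the strategy of that proof, replacing the tangential contact locus $\Gamma_{p_0,p_1}$ by $W := \Gamma_{p_0,p_1}(H)$ and the linear join $T_{X,p_0,p_1}$ by the ambient hyperplane $H$. By construction, every $q\in\Reg(X)\cap W$ satisfies $T_{X,q}\subseteq H$, so a fortiori $T_{W,q}\subseteq T_{X,q}$.

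For (i), I would introduce the incidence correspondence
\[
\mathcal I = \{(q_0,q_1,H,q)\in X^3\times(\p^r)^\vee : T_{X,q_0,q_1}\subseteq H,\ T_{X,q}\subseteq H\}
\]
and analyze the projection forgetting $q$. Bertini applied to the linear system of hyperplanes containing $T_{X,p_0,p_1}$ yields smoothness of the fibre $W$ at $p_0,p_1$, while the standard involution $p_0\leftrightarrow p_1$ acting on the monodromy of the family of components of $W$ through the base points forces the alternative ``irreducible, or two components of equal dimension $\epsilon(X)$'', exactly as in Proposition~\ref{lem:a}.

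For (ii), the inclusion $T_{W,p_i}\subseteq T_{X,p_i}$ combined with Terracini's Lemma immediately gives $f(W)\leq f(X)$. For the reverse inequality I would prove that every $v\in T_{X,p_0}\cap T_{X,p_1}$ lies in $T_{W,p_0}$: an infinitesimal deformation of $p_0$ in $X$ in the direction $v$ preserves the condition $T_{X,p_0}\subseteq H$ to first order, since $H$ already contains the entire join $T_{X,p_0,p_1}$; hence such a deformation stays inside the locus $\{q\in\Reg(X):T_{X,q}\subseteq H\}$, which is $W$ near $p_0$. Symmetrically for $T_{W,p_1}$, giving $f(W)=f(X)$.

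For (iii) and (iv), once $f(W)=f(X)$ is known I would argue that $S(W)$ is linear. Applying (i) to general pairs $(q_0,q_1)$ on $W$ specializing to $(p_0,p_1)$, the bitangent join $T_{W,q_0,q_1}\subseteq H$ has dimension $2\epsilon(X)-f(X)+1$ independent of the pair, so by Terracini the tangent space to $S(W)$ is generically constant and $S(W)$ must be a linear subspace of $\p^r$. Since $W\subseteq S(W)$, one gets $\langle W\rangle=S(W)=\Pi_{p_0,p_1}(H)$, proving (iii) and simultaneously yielding (iv) in the form $\dim \Pi_{p_0,p_1}(H)=\dim S(W)\leq 2\dim W+1-f(W)=2\epsilon(X)+1-f(X)$. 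The inequality $W\neq \Pi_{p_0,p_1}(H)$ follows because if $W$ were linear, the Gauss map of $X$ would be constant along the positive-dimensional $W$ through two generic points of $X$, contradicting the genericity of $p_0,p_1$ and the non-degeneracy of $X$.

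The main obstacle is part (ii): a priori $W$ can be strictly larger than the tangential contact locus $\Gamma_{p_0,p_1}$, since $H$ is only a hyperplane, not the span $T_{X,p_0,p_1}$; this extra freedom risks enlarging $T_{W,p_i}$ past $T_{X,p_0}\cap T_{X,p_1}$ and breaking the equality $f(W)=f(X)$. The first-order deformation argument sketched above is designed precisely to rule this out, and is the delicate step of the proof. Everything else is a careful transcription of the arguments of Proposition~\ref{lem:a}, with $H$ playing the role previously played by the smaller subspace $T_{X,p_0,p_1}$.
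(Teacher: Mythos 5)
First, note that the paper does not prove this statement at all: it is quoted from \cite{WDV}, Theorem 1.1 and \cite{CC5}, Theorem 2.4, so your proposal has to be judged as a self-contained argument rather than against an internal proof. As such it has two genuine gaps, both at the points you yourself identify as the crux. In (ii), the reverse inequality rests on the claim that moving $p_0$ along $v\in T_{X,p_0}\cap T_{X,p_1}$ preserves the tangency condition $T_{X,q}\subseteq H$ to first order ``since $H$ already contains the entire join''. This is a non sequitur: preserving tangency to first order means that $v$ lies in the vertex of the quadric of the second fundamental form ${\rm II}_{X,p_0}$ cut out by $H$, a second--order condition, while containment of the join $T_{X,p_0,p_1}$ is a linear condition at the two points and gives no control on how $T_{X,q}$ rotates as $q$ leaves $p_0$. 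The conclusion you want is true, but the natural way to get it with the paper's toolkit is not infinitesimal: use $\Psi_{p_0,p_1}\subseteq\Gamma_{p_0,p_1}\subseteq\Gamma_{p_0,p_1}(H)$ (Remark \ref{conctactin} and \eqref{eq:epsi}); $\Psi_{p_0,p_1}$ is the fibre of $\tau_{p_1}$ through the general point $p_0$, has dimension $f(X)$, is smooth at $p_0$ by generic smoothness, and its embedded tangent space there is $\langle p_0, T_{X,p_0}\cap T_{X,p_1}\rangle$; the symmetric statement at $p_1$, together with (i), yields $f(\Gamma_{p_0,p_1}(H))\geq f(X)$. (Equivalently one can use the entry locus $E_x$ for $x\in\langle p_0,p_1\rangle$ general, via Proposition \ref{lem:dimel} and Remark \ref{rem:elocus}.)

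The second gap is in (iii): from the fact that $T_{W,q_0,q_1}$ has dimension independent of the pair you conclude that ``the tangent space to $S(W)$ is generically constant and $S(W)$ must be a linear subspace''. Constancy of the dimension of the general tangent space holds for every variety and never implies that the tangent space itself is a fixed linear subspace; that degeneration of the Gauss map of $S(\Gamma_{p_0,p_1}(H))$ to a point is exactly the nontrivial content of the cited theorems, and in Proposition \ref{lem:a} it is obtained by identifying $S(\Gamma_{p_0,p_1})$ with a Gauss fibre of $S(X)$ -- an identification that is not available for the a priori larger locus $\Gamma_{p_0,p_1}(H)$ (which is why (iv) is only an inequality here, not an equality as in Proposition \ref{lem:a}). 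Without a substitute argument, (iii) and hence your derivation of (iv) do not follow. A smaller soft spot of the same kind occurs in (i): $p_0,p_1$ lie in the base locus of the linear system of hyperplanes through $T_{X,p_0,p_1}$, and $\Gamma_{p_0,p_1}(H)$ is a contact (singular) locus, so Bertini does not directly give smoothness at $p_0,p_1$; one needs the incidence-variety/generic-smoothness and monodromy analysis carried out in \cite{WDV}.
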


Moreover we have the following:

\begin{proposition}\label{difetti} Let $X\subset \p^r$ be a defective
projective variety of dimension $n$. Then:

\begin{itemize}

\item [(i)] $r\geq n+3$ and $f(X) \leq n-1$. Moreover if $f(X)=n-1$, 
which occurs if $r=n+3$, then 
$X$ is either a cone over a curve or a cone over the Veronese surface $V_{2,2}$ in $\p^ 5$;

\item [(ii)] if $f (X)=\gamma(X):=m$
then for  $p_0,p_1\in X$ general, $\Gamma_{p_0,p_1}$ is a quadric of rank at least $2$ in $\p^ {m+1}$. Furthermore $\Psi_{p_0,p_1}$ is an
irreducible component of $\Gamma_{p_0,p_1}$, hence  either it coincides with  $\Gamma_{p_0,p_1}$ or 
it is a linear subspace of dimension $m$;

\item [(iii)] in particular if $f (X)=\epsilon(X):=m$ then for general choices of $p_0,p_1$ and $H$ containing $T_{X,p_0,p_1}$, one has  $\Gamma_{p_0,p_1}(H)=\Gamma_{p_0,p_1}$ and (ii) holds. 
\end{itemize}
\end{proposition}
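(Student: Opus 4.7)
The plan is to prove the three parts in sequence, using Terracini's Lemma, the structural results of Proposition \ref{lem:a}, and induction on $n$ for the classification at the boundary $f(X)=n-1$. For part (i), the bound $f(X)\leq n-1$ follows from Remark \ref{rem:ff}: if $f(X)=n$, then the general tangent spaces $T_{X,p_0}$ and $T_{X,p_1}$ would intersect in a hyperplane of each, and a standard Gauss-map argument forces $X$ to coincide with its general tangent space, contradicting non-degeneracy together with defectiveness. The inequality $r\geq n+3$ is handled by ruling out low codimensions: a nondegenerate $n$-dimensional hypersurface in $\p^{n+1}$ has $S(X)=\p^{n+1}$, and a nondegenerate codimension-two $X\subset\p^{n+2}$ is known not to be defective. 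When $r=n+3$, from $f(X)=2n+1-s(X)\geq 2n+1-r=n-2$ combined with defectiveness ($s(X)<r$, since $r\leq 2n+1$ for $n\geq 2$) one gets $f(X)\geq n-1$; together with $f(X)\leq n-1$ this forces equality.

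Now assume $f(X)=n-1$. Since $n-1=f(X)\leq\gamma(X)\leq n$, we have $\gamma(X)\in\{n-1,n\}$. By Proposition \ref{lem:a}(iv), $\dim\Pi_{p_0,p_1}=2\gamma(X)+1-f(X)$, so $\gamma(X)=n$ would give $\dim\Pi_{p_0,p_1}=n+2$ and $\Gamma_{p_0,p_1}=X$, so $\Pi_{p_0,p_1}=\langle X\rangle=\p^r$ of dimension $r\geq n+3$, a contradiction. Hence $\gamma(X)=f(X)=n-1$. To identify $X$ as a cone I would argue by induction on $n$: the base case $n=2$ is the classical classification of defective surfaces (cones over curves or the Veronese $V_{2,2}$) recalled in the Introduction. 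For $n\geq 3$, a general hyperplane section $Y=X\cap H$ satisfies $f(Y)=n-2=\dim(Y)-1$ by Lemma \ref{lem:ff} and is itself defective; by induction $Y$ is a cone over a curve or over $V_{2,2}$. Letting $H$ vary and tracking the vertices of the hyperplane sections by monodromy, one obtains a linear subvariety that is a vertex for $X$ with base of the required type.

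For part (ii), assume $f(X)=\gamma(X)=m$. Proposition \ref{lem:a}(iv) gives $\dim\Pi_{p_0,p_1}=m+1$, so $\Gamma_{p_0,p_1}$ is an $m$-dimensional, non-degenerate subvariety of $\Pi_{p_0,p_1}\cong\p^{m+1}$, hence a hypersurface. Remark \ref{conctactin} shows that $\Psi_{p_0,p_1}$ is an irreducible component of $\Gamma_{p_0,p_1}$, and Proposition \ref{lem:a}(ii) ensures $\Gamma_{p_0,p_1}$ is either irreducible or has exactly two components, each of dimension $m$ and containing one of $p_0,p_1$. To pin down the degree I would combine $\Pi_{p_0,p_1}=S(\Gamma_{p_0,p_1})$ with $f(\Gamma_{p_0,p_1})=f(X)=m$ (Proposition \ref{lem:a}(iii)) and the self-reproducing property $\Gamma_{q_0,q_1}=\Gamma_{p_0,p_1}$ for general $q_0,q_1\in\Gamma_{p_0,p_1}$ (Proposition \ref{lem:a}(i)): the resulting bi-tangentiality condition rigidifies $\Gamma_{p_0,p_1}$ and forces it to be a quadric. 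In the reducible case each component is then a hyperplane $\p^m$, so $\Gamma_{p_0,p_1}$ is a rank-$2$ quadric and $\Psi_{p_0,p_1}$ is linear of dimension $m$; in the irreducible case $\Gamma_{p_0,p_1}$ is a quadric of rank $\geq 3$ and $\Psi_{p_0,p_1}=\Gamma_{p_0,p_1}$.

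For part (iii), if $f(X)=\epsilon(X)=m$, Remark \ref{rem:acca} gives $f(X)\leq\gamma(X)\leq\epsilon(X)=m$, hence $\gamma(X)=m$ and (ii) applies. The inclusion $\Gamma_{p_0,p_1}\subseteq\Gamma_{p_0,p_1}(H)$ is automatic, and by Theorem \ref{terracini}(i) the right-hand side is pure of dimension $\epsilon(X)=m=\dim\Gamma_{p_0,p_1}$ with the same irreducible-component structure (irreducible or two components through $p_0,p_1$), forcing equality. The main obstacle is the degree-$2$ assertion in part (ii): ruling out higher-degree irreducible hypersurfaces $\Gamma_{p_0,p_1}\subset\p^{m+1}$ requires combining the bi-tangentiality condition $T_{X,p}\subseteq T_{X,p_0,p_1}$ for $p\in\Gamma_{p_0,p_1}$ with the identification of $\Psi_{p_0,p_1}$ as a fibre of the tangential projection inside the linear space $\langle T_{X,p_1},p_0\rangle$, and extracting the precise quadratic relation from the second fundamental form of $X$ at $p_1$.
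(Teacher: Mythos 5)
The heart of part (ii) --- that the hypersurface $\Gamma_{p_0,p_1}\subset\Pi_{p_0,p_1}\cong\p^{m+1}$ has degree exactly $2$ --- is precisely the step you leave open: your ``rigidification'' via the self-reproducing property and the appeal to the second fundamental form are not carried out, and you yourself flag this as ``the main obstacle''. The paper settles it in one line: by Proposition \ref{lem:a}, $\Pi_{p_0,p_1}=S(\Gamma_{p_0,p_1})$, so the general secant line $\langle p_0,p_1\rangle$ of $X$ lies in $\Pi_{p_0,p_1}$; if the hypersurface $\Gamma_{p_0,p_1}\subset X$ had degree at least $3$, this line would meet $X$ in at least three points, i.e.\ the general secant of $X$ would be a trisecant, which the Trisecant Lemma excludes since $r\geq n+3$ (see \cite{CC}); and degree $1$ is impossible because $S(\Gamma_{p_0,p_1})=\Pi_{p_0,p_1}\neq \Gamma_{p_0,p_1}$. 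Without this (or an equivalent) argument, part (ii) is unproved, and the gap propagates: the rank--$2$/linear-component dichotomy in the reducible case, and part (iii), both rest on the degree bound.

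There are also problems in part (i). Your justification of $f(X)\leq n-1$ is incorrect as stated: two general tangent spaces meeting in a hyperplane of each does not force $X$ to coincide with its tangent space (any nonlinear hypersurface in $\p^{n+1}$ has this tangency behaviour); the clean route, which is the paper's, is through the general tangential projection: $X_1$ is nondegenerate and strictly contained in $\p^{r-n-1}$ (Remark \ref{conctactin}), which yields at once $r\geq n+3$ and $n_1=n-f(X)\geq 1$, with no separate case analysis for codimension one and two. Finally, the classification when $f(X)=n-1$ is quoted by the paper from \cite{ChCi}, Theorem 3.5; your induction-plus-monodromy sketch (``tracking the vertices'' of the hyperplane sections) omits the genuinely delicate point, namely lifting ``the general hyperplane section is a cone with positive-dimensional vertex'' to ``$X$ is a cone'' --- exactly the kind of analysis that takes several pages in the proof of Theorem \ref{thm:lowcod} one defect lower --- so as written it is a plan, not a proof.
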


\begin{proof} Since $X_1$ is non--degenerate and strictly contained 
in $\p^ {r-n-1}$ (see Remark \ref {conctactin}), 
we have $r-n-1\geq 2$, proving the first assertion in (i).
Similarly $n_1=n-f(X)\geq 1$, proving the second assertion in (i).
The rest of (i) follows from  \cite {ChCi}, Theorem 3.5.

 Let us prove (ii). 
By Theorem \ref {lem:a}, (iv), 
we have that $\Gamma_{p_0,p_1}$ is a hypersurface
in $\Pi_{p_0,p_1}$, which has dimension $m+1$.  It has to be a quadric otherwise the general
secant to $X$ would be a trisecant. By the Trisecant Lemma, this cannot happen unless 
$r=n+1$ (see \cite {CC}), which is not the case.  Then $\Gamma_{p_0,p_1}$ is a quadric 
but it cannot be a linear space, hence $\Gamma_{p_0,p_1}$ 
has rank at least 2. The rest of  assertion (ii) is clear.

Part (iii) follows from (ii) and Remark \ref {rem:acca}.  \end{proof} 

Varieties $X$ for which part (ii) of Proposition \ref {difetti} holds, i.e., for  $p_0,p_1\in X$ general $\Gamma_{p_0,p_1}$ is a quadric of rank at least $2$,
are \emph {locally quadratic entry locus (LQEL)} varieties in the sense of \cite {rus} (see \cite{R2} for the theory of LQEL varieties and applications). If
$\gamma(X)=f(X)=1$, we will say that they present
the  \emph{locally conic entry locus (LCEL)} case.

According to Scorza's terminology (see  \cite {Scorza1}), we will say that
a defective variety $X\subset \p^ r$ of dimension $n$ is of the \emph{species}
$k$ if $k=n-\gamma(X)$. The \emph{top species} for varieties of
dimension $n$ is $n-1$. Such a variety $X$ has $\gamma(X)=f(X)=1$, hence
it presents the LCEL case. Note that varieties $X$ of dimension $n$ 
of a given species $k$ could be
further classified according to the value of the 
invariant $\epsilon(X)\geq n-k=\gamma(X)$. There are however some constraints, as the the following
lemma indicates.

\begin{lemma}\label {lem:mbus}  Let $X\subset \p^ r$ be a non--degenerate variety if dimension $n$.  Then $\epsilon(X)=n-1$ if and only if $\gamma(X)=n-1$.
\end{lemma}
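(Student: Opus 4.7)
My plan is to reduce the statement, via the general tangential projection, to the analogous dichotomy between the Gauss defect and the dual defect of a single variety. Set $\tau=\tau_{p_1}\colon X\dashrightarrow X_1\subset\p^{r-n-1}$ and $n_1=n-f(X)=\dim X_1$. By Remark \ref{conctactin} we have $\gamma(X)=t(X_1)+f(X)$, and by Remark \ref{rem:acca} we also have $\epsilon(X)=d(X_1)+f(X)$. The lemma thus becomes the assertion that a variety $Y:=X_1$ of dimension $n_1$ satisfies $d(Y)=n_1-1$ if and only if $t(Y)=n_1-1$.

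The easy halves go at once. For the original statement, $\gamma(X)=n-1$ forces $\epsilon(X)=n-1$ because $\gamma(X)\leq\epsilon(X)$ by \eqref{eq:epsi}, whereas $\epsilon(X)\leq n-1$ since a hyperplane cannot contain the non-degenerate variety $X$. On the $Y$ side $t(Y)\leq d(Y)$ (a Gauss fibre lies in every tangent hyperplane through the common tangent space), and $d(Y)\leq n_1-1$ unless $Y$ is a linear subspace (the only case for which $t(Y)=n_1$ is possible).

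For the hard direction I argue using the projective second fundamental form. Pick a generic smooth point $q\in Y$ and local analytic coordinates $(x_1,\ldots,x_{n_1},y_1,\ldots,y_{N-n_1})$ in which $T_{Y,q}$ is the coordinate subspace $\{y=0\}$ and $Y$ is the graph $y_j=f_j(x)$ with each $f_j$ vanishing to order two at the origin. A hyperplane $H\supset T_{Y,q}$ is described by a normal covector $\ell=(\ell_j)$, and a direct computation shows that the Zariski tangent space at $q$ to the contact locus $\Gamma_{Y,q}(H)$ equals $\ker\bigl(\sum_j\ell_jH_j\bigr)$, where $H_j$ denotes the Hessian of $f_j$ at $q$. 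The assumption $d(Y)=n_1-1$ then forces this kernel to have dimension $n_1-1$ for a generic $\ell$, i.e.\ that $\sum_j\ell_jH_j$ has rank one for generic~$\ell$.

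The crucial step, which I expect to be the main obstacle to present cleanly, is a piece of linear algebra on symmetric matrices. The image of the linear map $\ell\mapsto\sum_j\ell_jH_j$ is a linear subspace of $\mathrm{Sym}^2(\mathbb{C}^{n_1})^*$ contained entirely in the affine cone of rank-$\leq 1$ symmetric forms; since two non-proportional rank-one forms $\phi^2$ and $\psi^2$ sum to a form of rank two, this subspace has dimension at most one. It is also nonzero: otherwise every $H_j$ would vanish, the whole second fundamental form of $Y$ would be identically zero, and $Y$ would be a linear space, forcing $d(Y)=n_1\neq n_1-1$. Hence all the $H_j$ are scalar multiples of a single rank-one symmetric tensor $\phi\otimes\phi$, the Gauss kernel at $q$ contains $\ker\phi\subseteq T_{Y,q}$ of dimension $n_1-1$, and consequently the Gauss fibre through $q$ is at least $(n_1-1)$-dimensional. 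Non-linearity of $Y$ excludes $t(Y)=n_1$, so $t(Y)=n_1-1$ and $\gamma(X)=n-1$ as desired.
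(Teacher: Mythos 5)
Your argument is correct, and after the reduction to the general tangential projection $X_1$ --- which you perform exactly as the paper does, via the identities $\gamma(X)=t(X_1)+f(X)$ and $\epsilon(X)=d(X_1)+f(X)$ of Remarks \ref{conctactin} and \ref{rem:acca}, together with the trivial bounds $\gamma(X)\leq\epsilon(X)\leq n-1$ --- it proves the key implication $d(X_1)=n_1-1\Rightarrow t(X_1)=n_1-1$ by a genuinely different route. The paper asserts at this point that $X_1$ ``has a curve as a dual'', hence is a hypersurface whose Gauss map coincides with the dual map and therefore has $(n_1-1)$-dimensional fibres. You instead argue intrinsically with the second fundamental form: the contact condition forces each quadric $\sum_j\ell_jH_j$ to have corank at least $n_1-1$, the observation that $\phi^2+\psi^2=(\phi+i\psi)(\phi-i\psi)$ has rank two for independent $\phi,\psi$ shows that a linear space of rank-$\leq 1$ symmetric forms is at most a pencil through a single $\phi\otimes\phi$, and hence the differential of the Gauss map has rank at most one at a general point. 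Your version is the more robust of the two: the paper's intermediate claim that $X_1^*$ is a curve does not follow from $d(X_1)=n_1-1$ alone and can actually fail (a cone in $\p^4$ over a space curve has $d=\dim-1$ yet its dual is a surface), whereas your computation covers hypersurfaces, cones and everything else uniformly, at the modest cost of the local Hessian computation and the linear-algebra step, both of which you carry out correctly over $\C$.
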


\begin{proof} One implication is trivial. As for the other, assume $\epsilon(X)=n-1$. If $f(X)=n-1$ then also $\gamma(X)=n-1$ and we are done. If $f(X)<n-1$ then  the image $X_1$ of a general tangential projection of $X$ has a curve as a dual
(see Remark \ref {rem:acca}). Then $X_1$, which is dual to a curve, is a hypersurface, and $t(X_1)=n_1-1=n-f(X)-1$. By Remark \ref {conctactin} we have $\gamma(X)=t(X_1)+f(X)=n-1$, as wanted. \end{proof}

\subsection {}\label {subsec:sff} Let $X\subset \p^ r$ be a non--degenerate variety. 
Let $x$ be a smooth point of $X$. One can consider the \emph{second fundamental
form} ${\rm II}_{X,x}$ of $X$ at $x$ (see \cite {GrHarr} as a general reference). 
This is a linear system of quadrics in $\p_{X,x}:=\p(T_x(X))$. Recall that 
$T_x(X)$ is the \emph {Zariski tangent space} to $x$ at $X$, hence $\p_{X,x}$, the projective space associated to $T(X)$, is a projective space of dimension $n-1$.

Consider the general tangential projection $\tau_x: X\map X_1$.
This is not defined at $x$. Let us blow--up $x$ in $X$, getting $\pi_x: \tilde X\to X$
 and consider the exceptional divisor
$E:=E_x$, which may be identified with $\p_{X,x}$. We have therefore the rational map
$$\tilde \tau_x=\tau_x\circ \pi_x: \tilde X\map X_1\subseteq \p^ {r-n-1}$$
\noindent whose restriction to $E$ is defined by a linear system of quadrics, which is
precisely ${\rm II}_{X,x}$. We will denote by $B_{X,x}$ its base locus scheme, which consists of 
the points of $E_x$ corresponding to directions of lines having intersection multiplicity 
larger than $2$ with $X$ at $x$ (see \cite {GrHarr}). Hence it contains
the subscheme $L_{X,x}$ of $E$ whose points correspond to tangent directions of the lines 
on $X$ passing through $x$.

We may write ${\rm II}_{x}, L_x, B_x$ raher than ${\rm II}_{X,x}$, $L_{X,x}$, $B_{X,x}$ etc., if there is no danger of confusion. We will set 
\[
b(X)=\dim(B_x)\quad \text{and}\quad \ell(X)=\dim(L_x).
\]
 Note that if $\ell(X)\geq 0$, then 
 $\ell(X)+n-1$ is the dimension of the union of components of the Hilbert scheme of lines contained in $X$ and filling $X$. 

It is useful to collect here the main results about the second fundamental form 
that we will use later.  First of all, it is a classical result by Terracini  (see \cite {GrHarr} for a 
modern treatment) that

\begin{equation*}\label {eq:terr}
\dim (T(X))=n+1+\dim(\tilde \tau_x(E)) 
\end{equation*}
Therefore, if $X$ is smooth and  defective, then by Remarks \ref {rem:def} and \ref {rem:tan} one has 

\begin{equation}\label {eq:terr2}
\dim(\tilde \tau_x(E))=n-f(X)=\dim(X_1).
\end{equation}
Thus $\tilde \tau_{x\vert E}$ is dominant onto $X_1$, with general fibres of dimension $f(X)-1$. Since $X_1$ is non--degenerate in $\p^ {r-n-1}$, one has

\begin{equation}\label {eq:terr3}
\dim  ({\rm II}_{X,x} )=r-n-1.
\end{equation}

As the following result shows, \eqref  {eq:terr3} holds, to a certain extent, even relaxing the smoothness assumption on $X$.

\begin{lemma}\label {lem:secondff} Let $X\subset \p^ r$ be a 
non--degenerate, defective
variety of dimension $n$. Let $x\in X$ be a general point and assume that the general fibre of the tangential projection $\tau_x$ is irreducible and contains $x$. Then
$\tilde \tau_{x\vert E}$ is dominant onto $X_1$, with general fibres of dimension $f(X)-1$. Hence
 \eqref  {eq:terr2} and \eqref {eq:terr3} hold. Furthermore
 \begin{equation}\label{eq:bound}
 r\leq \frac {n(n+3)}2.
 \end{equation}
\end{lemma}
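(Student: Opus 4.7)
My approach is to first establish the dominance claim for $\tilde\tau_{x|E}$, then deduce \eqref{eq:terr2} and \eqref{eq:terr3} essentially for free, and finally read off \eqref{eq:bound} from the fact that ${\rm II}_{X,x}$ is a linear system of quadrics on $\p^{n-1}$.

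For dominance, I would unpack the hypothesis as follows. Given a general $y\in X_1$, let $F_y:=\overline{\tau_x^{-1}(y)}\subseteq X$ be the corresponding fibre; by \eqref{eq:tl}, $\dim F_y=f(X)$, and by assumption $F_y$ is irreducible and passes through $x$. Let $\tilde F_y\subseteq \tilde X$ denote its proper transform under $\pi_x$. Since $F_y$ is irreducible of dimension $f(X)$ and contains $x$, which is a smooth point of $X$ by generality, the intersection $\tilde F_y\cap E$ is the projectivized tangent cone to $F_y$ at $x$ and thus has dimension exactly $f(X)-1$. Because $\tilde\tau_x$ collapses $\tilde F_y$ onto the single point $y$, the fibre of $\tilde\tau_{x|E}$ over $y$ contains $\tilde F_y\cap E$ and therefore has dimension at least $f(X)-1$. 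Combining $\dim E=n-1$ and $\dim X_1=n-f(X)$ with this generic fibre dimension bound gives
\[
\dim\bigl(\tilde\tau_x(E)\bigr)\geq (n-1)-(f(X)-1)=n-f(X)=\dim X_1.
\]
Since $\tilde\tau_x(E)\subseteq X_1$ and $X_1$ is irreducible, $\tilde\tau_{x|E}$ must be dominant onto $X_1$, and the general fibre has dimension exactly $f(X)-1$. This is precisely \eqref{eq:terr2}.

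For \eqref{eq:terr3}, the map $\tilde\tau_{x|E}$ is by definition given by the linear system ${\rm II}_{X,x}$ on $E\cong\p^{n-1}$, and its image is dense in $X_1\subseteq \p^{r-n-1}$. Since $X_1$ is non--degenerate, this image spans $\p^{r-n-1}$, and because the dimension of a linear system equals the dimension of the linear span of the image of the associated map, $\dim({\rm II}_{X,x})=r-n-1$. Since ${\rm II}_{X,x}$ is a sub--linear system of the complete linear system of quadrics on $\p^{n-1}$, we get
\[
r-n-1=\dim({\rm II}_{X,x})\leq \binom{n+1}{2}-1=\frac{n(n+1)}{2}-1,
\]
which rearranges to $r\leq \frac{n(n+3)}{2}$, giving \eqref{eq:bound}.

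The main conceptual hurdle is justifying that $\tilde F_y\cap E$ has the expected dimension $f(X)-1$: this uses both the irreducibility of the general fibre $F_y$ (so that no component escapes to the exceptional locus and $F_y$ genuinely passes through $x$ with full dimension) and the smoothness of $X$ at the general point $x$ (so that $E\cong \p^{n-1}$ and the projectivized tangent cone to $F_y$ sits naturally inside it with dimension one less than that of $F_y$). Once this step is in place, the remaining arguments are routine dimension counts and the tautological upper bound on the dimension of a linear system of quadrics.
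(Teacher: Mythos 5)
Your proposal is correct and follows essentially the same route as the paper: the hypothesis that the general fibre of $\tau_x$ is irreducible and passes through $x$ is used to show that $\tilde\tau_{x\vert E}$ dominates $X_1$ (the paper asserts this in one line, and your projectivized tangent--cone argument is the intended justification), and \eqref{eq:bound} then follows by comparing $\dim({\rm II}_{X,x})=r-n-1$ with the dimension of the full system of quadrics on $\p^{n-1}$. One small wording caveat: the inference ``general fibre of $\tilde\tau_{x\vert E}$ has dimension $\geq f(X)-1$, hence the image has dimension $\geq n-f(X)$'' is backwards as an implication schema; the correct deduction, which you already have in hand, is that the fibre over a general $y\in X_1$ is nonempty, which gives dominance at once, and then the theorem on dimensions of fibres yields that the general fibre has dimension exactly $f(X)-1$.
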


\begin{proof} The assumption  implies that $\tilde \tau_{x\vert E}: E\map X_1$ is dominant. Since
$\tilde \tau_{x\vert E}$ is defined by a linear system of quadrics, one has
$$ r-n-1\leq \frac {(n-1)(n+2)} 2$$
and \eqref {eq:bound} follows. The rest of the assertion is clear. \end{proof}

\section{The codimension 4 case}\label{sec:4}

Let  $X\subset \p^ r$ be an irreducible, non--degenerate, defective
variety of dimension $n$ which is not a cone over a curve or over the Veronese
surface $V_{2,2}$. Since defective surfaces and threefolds are classified (see \cite {WDV, ChCi, CC3, Scorza}), we may assume $n\geq 4$. 

 According to Proposition \ref {difetti},
we have $r\geq n+4$. The next theorem  classifies the cases in with the equality holds. 

\begin{theorem}\label {thm:lowcod} Let $X\subset \p^ {r}$, $r\geq n+4$,
be a non--degenerate, projective, defective variety of dimension $n\geq 4$. If $f(X)=n-2$, which happens in particular if $r=n+4$, then $X$ is of one of
the following types:

\begin{itemize} 

\item [(i)] $X$ is a cone over a non--defective surface;

\item [(ii)] $X$ sits in a $(n+1)$--dimensional cone over a curve;

\item [(iii)]  $X$ sits in a $(n+1)$--dimensional cone over the Veronese 
surface $V_{2,2}$ in $\p^ 5$;

\item [(iv)] $X$ is a cone over the Veronese threefold $V_{3,2}$ in $\p^ 9$ or over a projection of $V_{3,2}$ in $\p^8$ or $\p^7$;

\item [(v)] $X$ is either a cone over the Segre variety ${\rm Seg}(2,2)\subset \p^ 8$ with vertex of dimension $n-5$, or a cone over a hyperplane section of ${\rm Seg}(2,2)$.
\end{itemize}
\end{theorem}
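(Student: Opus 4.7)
The plan is to reduce to the classification of defective threefolds (Scorza, see \cite{Scorza, ChCi, CC3}) by iterated hyperplane sections, and then to lift the resulting structure to $X$ through an extension analysis.

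First, I would take $Y=X\cap L$ with $L$ a general linear subspace of codimension $n-3$, obtaining a non--degenerate threefold $Y\subset\p^{r-n+3}$ with $r-n+3\geq 7$. Iterated application of Lemma \ref{lem:ff}, which is valid throughout because the intermediate fibre defects remain positive (they decrease by $1$ at each step, starting from $f(X)=n-2\geq 2$), yields $f(Y)=f(X)-(n-3)=1$, so $Y$ is defective. When $r=n+4$, the identity \eqref{eq:part} combined with $\delta(X)\geq 1$ and Proposition \ref{difetti}(i) forces $f(X)=n-2$, so this case is automatically included in the hypothesis. By Scorza's classification, $Y$ is then either a cone (over a curve, over $V_{2,2}$, or over a non--defective surface), sits in a $4$--dimensional cone over a curve or over $V_{2,2}$, is the Veronese threefold $V_{3,2}$ in $\p^9$ or one of its projections to $\p^8$ or $\p^7$, or is a hyperplane section of $\Seg(2,2)\subset\p^8$ sitting in $\p^7$.

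Next, I would lift each possibility for $Y$ to the $n$--dimensional $X$. When $Y$ is contained in a $4$--dimensional cone over a curve or over $V_{2,2}$, the generality of $L$ forces the extra $n-3$ directions of $X$ to align with the vertex of an ambient $(n+1)$--dimensional cone containing $X$, yielding cases (ii) and (iii). When $Y$ is itself a cone, the same type of argument produces a cone structure on $X$, giving case (i) and the cone parts of (v). When $Y$ is one of the non--cone special threefolds---$V_{3,2}$, a projection of it, or a hyperplane section of $\Seg(2,2)$---one must establish that every $n$--dimensional variety with prescribed defect $f(X)=n-2$ and with general $3$--dimensional linear section equal to such a $Y$ is itself a cone over $Y$ (or over $\Seg(2,2)$, in the hyperplane section case), yielding cases (iv) and (v).

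This last rigidity step is the main obstacle. I would address it by analysing the second fundamental form ${\rm II}_{X,x}$ at a general point $x$ via Lemma \ref{lem:secondff}: under the irreducibility hypothesis on the fibre of $\tau_x$, the image $\tilde\tau_x(E_x)=X_1$ is a surface pinned down by the tangential projection of $Y$, and hence by Scorza's classification. Combining this with the bound on $\dim\Pi_{p_0,p_1}$ from Proposition \ref{lem:a}(iv) and with the classical rigidity of Veronese and Segre varieties under extension, one excludes non--cone $n$--dimensional extensions of the special threefolds. A direct case--by--case check that each listed variety indeed has $f(X)=n-2$ closes the argument.
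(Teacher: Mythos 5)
Your skeleton coincides with the paper's (general threefold linear section $Y$ with $f(Y)=1$ via Lemma \ref{lem:ff}, Scorza's/\cite{ChCi}'s classification of defective threefolds, then lifting to $X$), but the lifting step, which is the actual content of the proof, is asserted rather than proved, and the tools you name would not carry it. The phrase ``the generality of $L$ forces the extra $n-3$ directions of $X$ to align with the vertex of an ambient $(n+1)$--dimensional cone'' is not an argument: when $Y$ sits in a $4$--dimensional cone, different linear sections of $X$ a priori sit in cones with unrelated vertices, and excluding this is exactly where the work lies. The paper does it by transferring the structure of the contact loci from $Y$ to $X$ (Proposition \ref{rem:hh}), invoking Morin's lemma (Lemma 4.1 of \cite{ChCi}, \cite{CC3}) on families of linear spaces pairwise meeting to get case (ii), and, for the $V_{2,2}$--cone case (iii), running an induction on $n$ whose key point (Claims \ref{cl:parton} and \ref{cl:parton2}) is precisely that the vertex of the cone containing a section cannot move, so that the vertices of the cones attached to a pencil of hyperplane sections sweep out a single linear space. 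None of this follows from ``generality of $L$''.

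The rigidity step for the non--cone threefolds is likewise not reducible to ``classical rigidity of Veronese and Segre varieties under extension'' plus Lemma \ref{lem:secondff}. Non--extendability of $V_{2,2}$ and ${\rm Seg}(2,2)$ (\cite{R2}) is used in the paper only at isolated points; the substance is different: for $Y$ a projection of $V_{3,2}$ one shows $\gamma(X)=2$, hence $X$ is swept by $2$--dimensional contact quadrics, and then one analyses the image of such a quadric under a general tangential projection, ruling out the linear and quadric cases and concluding the quadrics are cones with vertex in $T_{X,x}$, so $X$ is a cone (case (iv)); for $Y$ a hyperplane section of ${\rm Seg}(2,2)$ one needs Claim \ref{cl:pop1}, the birationality of the projection from the span of a contact quadric, the identification of the inverse as given by quadrics, and a congruence--of--lines argument pinning the base locus down to two skew lines, whence $X={\rm Seg}(2,2)$ for $n=4$ (case (v)). Moreover Lemma \ref{lem:secondff} requires the general fibre of $\tau_x$ to be irreducible and to contain $x$, which is not available in the cases with reducible contact loci, so your proposed second--fundamental--form route needs a separate justification even to get started. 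Finally, by skipping the paper's initial generic projection to $\p^{n+4}$ (Claim \ref{cl:part}) you must classify defective threefolds in $\p^{r-n+3}$ with $r-n+3$ possibly larger than $7$, which enlarges the case list you then have to lift; this is workable but is nowhere addressed in your outline.
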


\begin{proof}  First let us prove that if $r=n+4$, then $f(X)=n-2$. Indeed, if $r=n+4$, then  $r<2n+1$ because $n\geq 4$, so that by \eqref {eq:part} we have $f(X)=\delta(X)+n-3\geq n-2$. By Proposition \ref {difetti}, we have  $f(X)<n-1$, hence $f(X)=n-2$. 

Next, if $f(X)=n-2$ then $s(X)=2n+1-f(X)=n+3$. So we may project generically to $\p^{n+4}$ getting an  image $X'$ of $X$. Note that the projection $X\to X'$ is finite and bijective. 

\begin{claim}\label{cl:part} If $X'$ is as in one of the cases (i)--(v), then also $X$ is as in the corresponding case (i)--(v).
\end{claim}

\begin{proof}[Proof of the Claim \ref {cl:part}] The assertion is trivial if $X'$ is as in (i), (iv) or (v). 

Let us assume that $X'$ is as in (ii), so that $X'$ lies in a cone $V'$ over a curve. Then $X'$ is swept out by a 1--dimensional family $\cal F'$ of varieties of dimension $n-1$, each lying in a $n$--space, i.e., a ruling of $V'$. 

Suppose first that the general variety in $\cal F'$ spans a $n$--space.
Since the projection $X\to X'$ is finite and birational, then $X$ is also swept out by the $(n-1)$--dimensional varieties of a family $\cal F$, which are in one--to--one correspondence with the varieties in $\cal F'$. Since the projection from $X$ to $X'$ is generic, also the projection of the general member of $\cal F$ to the general member of $\cal F'$ is finite and bijective, and  they both span a $n$--space. These $n$--spaces, as well as their projections to $\p^{n+4}$, meet along a $(n-1)$--space, hence they form a cone $V$ over a curve, in which $X$ sits. So $X$ is as in (ii). 

Suppose next that the general variety in $\cal F'$ does not span a $n$--space. This means that the general variety in $\cal F'$ is a $(n-1)$--space, and the same happens for the general member of $\cal F$. Note now that two general members of $\cal F'$ intersect along a $(n-3)$--space. Since $X'$ is a general projection of $X$, then also  the members of $\cal F$  intersect along a $(n-3)$--space. Then by Lemma 4.1 in \cite {CC3}, one of the following cases occur:
\begin{itemize}
\item [(a)] the span of the linear spaces in $\cal F$ is a $k$--space with $k\leq n+2$;
\item [(b)] all the linear spaces in $\cal F$ contain the same $(n-3)$--space;
\item [(c)] the general member of $\cal F$ cuts a fixed $(n-1)$--space along a $(n-2)$--space.
\end{itemize}

Case (a) cannot happen, because then $X$ would span at most a $(n+2)$--space, a contradiction. In case (b), $X$ would be a cone, as well as $X'$, a contradiction again. So only case (c) happens, which implies that $X$ also lies on a cone $V$ over a curve, which projects to $V'$.  

The case in which $X$ is as in (iii) is similar and can be left to the reader. \end{proof}

By Claim \ref {cl:part}, by substituting $X$ with $X'$, we may assume $r=n+4$.

Let $Y=X\cap P$ be the threefold section of $X$ with a general 7--dimensional linear subspace $P$.  By Lemma \ref  {lem:ff}, one has $f(Y)=1$, so $Y$ is 
a defective threefold which is non--degenerate in $\p^ 7$, hence $Y$ is in the list of Theorem 1.1 of  \cite {ChCi}.

If $Y$ is a cone, then we are in case (i). In fact, if $X$ has degree $d$, also $Y$ has degree $d$ and the vertex $v$ of $Y$ is a point of $Y$ of multiplicity $d$. Since $Y$ is a general 3--fold section of $X$, we see that $v$ is also a point of multiplicity $d$ for $X$, and therefore $X$ has a locus $\Pi$ of points of multiplicity $d$ which is cut out by a general $\p^7$ in a point, so $\Pi$ is a linear space of dimension $n-3$, and $X$ is a cone with vertex $\Pi$ over a surface.

Suppose $Y$ is not a cone and sits in a $4$--dimensional cone over a curve with vertex a plane. In this case we have $\gamma(Y)=2$. Indeed, if $p_0,p_1$ are general points in $Y$, 
then  $\Gamma_{Y,p_0,p_1}$  consists of a pair of irreducible surfaces, each passing through one of the points $p_0,p_1$, both spanning at most a $3$--space and together spanning $\Pi_{Y,p_0,p_1}$ which has dimension 4 (see Proposition \ref {lem:a}).

Let $x\in Y$ be a general point, which is also a general point of $X$. Consider the tangential projection $\tau_{X,x}$. The image $X_1$ of $X$ has dimension $n-f(X)=2$. The restriction of $\tau_{X,x}$ to $Y$ coincides with $\tau_{Y,x}$, whose image $Y_1$ has also dimension $3-f(Y)=2$. This implies that $X_1=Y_1$. On the other hand it is immediate to see that $Y_1$ is a cone over a curve. Then $t(X_1)=t(Y_1)=1$. By Remark \ref {conctactin}, we have $\gamma(X)=t(X_1)+f(X)=n-1$. 

Fix again two general points $p_0,p_1$ in $Y$, which are also general points on $X$.
By iterated applications of Proposition \ref {rem:hh}, we have that $\Gamma_{Y,p_0,p_1}=\Gamma_{X,p_0,p_1}\cap P$.  
By the above description of $\Gamma_{Y,p_0,p_1}$ and by Proposition \ref  {rem:hh}, $\Gamma_{X,p_0,p_1}$  is the union of a pair of irreducible Weil divisors
on $X$, each passing through one of the points $p_0,p_1$, each spanning at most a $n$--space, and together spanning $\Pi_{X,p_0,p_1}$ which has dimension $n+1$. 

 Suppose first the components of $\Gamma_{Y,p_0,p_1}$  both span a $3$--space, so that the components of $\Gamma_{X,p_0,p_1}$ both span a $n$--space. 
Then $X$ is swept out by a $1$--dimensional
family $\mathcal Z$ of irreducible divisors such that the general $Z$ in $\mathcal Z$ spans a $n$--space and 
two general such $n$--spaces span a $(n+1)$--space. This implies that all these $n$--spaces contain the same $(n-1)$--dimensional subspace $\Pi$. By projecting $X$ form $\Pi$ we obtain a curve, so we are in case (ii). 

Suppose next that the components of $\Gamma_{Y,p_0,p_1}$  do not span a $3$--space, so they are planes, pairwise meeting at a point. Then $X$ is swept out by a $1$--dimensional
family $\mathcal Z$ such that the general $Z$ in $\mathcal Z$ is a $(n-1)$--space and 
two general such $(n-1)$--spaces span a $(n+1)$--space, hence they meet along a $(n-3)$--space. Then the same argument based on Lemma 4.1 in \cite {CC3} we made in Claim \ref {cl:part}, implies that either $X$ is a cone with vertex a $(n-3)$--space, hence $Y$ is also a cone, a contradiction, or $X$ sits in a cone with vertex a $(n-1)$--space over a curve, and we are in case (ii) again.

Suppose now that $Y$ is not a cone and is contained in a $4$--dimensional cone $V$ over $V_{2,2}$ with vertex a line $L$. Then we claim we are in case (iii). 
Again we have $\gamma(Y)=2$. Indeed, if $p_0,p_1$ are general points in $Y$,  then  $\Gamma_{Y,p_0,p_1}$  varies in a 2--dimensional family $\cal F$ of irreducible surfaces spanning linear spaces of dimension 4 and contained in 3--dimensional quadric cones with vertex $L$ projecting a  conic of $V_{2,2}$.
Let $S=Y\cap \Pi$ be  the intersection of $Y$ with a general hyperplane $\Pi$ of $P\cong \p^7$, which spans the linear space $\Pi$ of dimension 6 and sits in the cone over $V_{2,2}$ with vertex  $v=\Pi\cap L$. Then $S$ has a 2--dimensional family $\cal C$ of irreducible curves spanning linear spaces of dimension 3 and contained in quadric cones with vertex $v$ projecting a conic of $V_{2,2}$: they are the hyperplane sections  of the surfaces   in $\cal F$. 

\begin{claim}\label{cl:parton} Let $S$ be the surface as above. Then there is no irreducible curve $\Gamma$ in $\Pi$ containing $v$ and such that the projection of $S$ from a general point of $\Gamma$ is a cone over the Veronese surface $V_{2,2}$.
\end{claim}

\begin{proof}[Proof of the Claim \ref {cl:parton}] We argue by contradiction and assume there is such a curve $\Gamma$. Then for a general point $x\in \Gamma$, the surface $S$ sits in the cone over $V_{2,2}$ with vertex $x$, and therefore it contains a family $\cal C_x$ of curves spanning a linear space of dimension 3 and contained in a quadric cone with vertex $x$ projecting a conic of $V_{2,2}$. 

The family $\cal C_x$ cannot be independent on $x$. Suppose in fact that $\cal C_x=\cal C$ is constant. Then if $C$ is a general curve in $\cal C$, it sits in infinitely many quadric cones with vertices moving on $\Gamma$. Then $C$ would have either degree 3 or 4. However, if $\deg(C)=4$ then $C$ is the complete intersection of two quadrics in $\p^3$, and it cannot lie on infinitely many cones, so this case is not possible. If $\deg(C)=3$, then $C$ could lie on infinitely many quadric cones, but their vertices should move on $C$. This implies that $\Gamma=C$, which is impossible, since $C$ is the general curve in $\cal C$. 

Since the family $\cal C_x$ depends on $x$, this implies that $S$ should have a 3--dimensional family $\mathcal D$ (described by all families $\cal C_x$ when $x$ moves on $\Gamma$) of irreducible curves spanning a linear space of dimension 3. Let then $p\in S$ be a general point, and consider the projection in $\p^5$ of $S$ from $p$, whose image, birational to $S$ by the Trisecant Lemma, we denote by $S'$. Then $S'$ has a family of dimension 2 of irreducible plane curves, the images of the curves of $\mathcal D$ passing through $p$. 
By the Trisecant Lemma the curves in question are conics and $S'$, spanning a $\p^5$, is defective since it contains an irreducible conic passing through two general points of it. Then $S'$ is the Veronese surface $V_{2,2}$, but this is a contradiction, because $S'$ should also contain the line corresponding to the exceptional divisor of the blow--up at $p$. This contradiction proves the Claim.\end{proof}

Next we argue by induction on the dimension $n$. If $n=4$,  let $\mathcal P$ a general pencil of hyperplanes in $\p^8$. Fix general $P\in \mathcal P$,  which cuts $Y$ on $X$ lying on a cone over $V_{2,2}$ with vertex a line $L$. If $P'$ is another general element in $\mathcal P$ it cuts $X$ along $Y'$ lying on a cone over $V_{2,2}$ with vertex a line $L'$.
The surface section $S$ of $X$ with $P\cap P':=\Pi$ (which depends only on $\mathcal P$) a priori sits inside $\Pi$ in two cones over $V_{2,2}$, one with vertex $v=L\cap \Pi$ and another with vertex $v'=L'\cap \Pi$. However, when $P'$ moves in $\mathcal P$, by Claim \ref {cl:parton} the point $v'$ cannot move, so that $v=v'$ and therefore $L\cap L'=v$. 
 This implies that the lines which are vertices of the $4$--dimensional cones over $V_{2,2}$ in which the hyperplane sections of $X$ sit, pairwise intersect each other, and this intersection is not a fixed point because the hyperplanes of $\p^8$ have no fixed point. This implies that all these lines sit in a plane, which is the vertex of a cone over $V_{2,2}$ containing $X$, so that we are in case (iii).

Suppose next we are in dimension $n>4$ and let $Z=X\cap H$ be the intersection of $X$ with a general hyperplane $H$. By induction $Z$ is in case (iii), hence it sits in a $n$--dimensional cone over $V_{2,2}$ with vertex a linear space $T$ of dimension $n-3$. If $p_0,p_1$ are general points in $Z$,  then  $\Gamma_{Z,p_0,p_1}$  varies in a 2--dimensional family $\mathfrak F$ of irreducible varieties of dimension $n-2$ spanning linear spaces of dimension $n$ and contained in quadric cones of dimension $n-1$ with vertex $T$  projecting a  conic of $V_{2,2}$. 

Let $V=Z\cap \Lambda$ be  the intersection of $Z$ with a general subspace $\Lambda$ of $H$ of dimension $n+2$, sitting in the cone over $V_{2,2}$ with vertex  $W=\Lambda\cap T$. Then $V$ has a 2--dimensional family $\mathfrak C$ of irreducible varieties of dimension $n-3$ spanning linear spaces of dimension $n-1$ and contained in quadric cones of dimension $n-2$ with vertex $W$  projecting a  conic of $V_{2,2}$: they are the hyperplane sections  of the varieties   in $\mathfrak F$. 

\begin{claim}\label{cl:parton2} Let $V$ be the variety as above. Then there is no irreducible 1--dimensional family $\mathfrak G$ of subspaces of dimension $n-4$ in $\Lambda$, containing $W$ such that the projection of $V$ from a general subspace of $\mathfrak G$ is a cone over the Veronese surface $V_{2,2}$.
\end{claim}

\begin{proof}[Proof of the Claim \ref {cl:parton2}] The proof is similar to the one of Claim \ref 
{cl:parton}, so we will be brief. 

We argue by contradiction and assume there is such a 1--dimensional family $\mathfrak G$. Then for a general space $\Xi\in \mathfrak G$, $V$ sits in the cone over $V_{2,2}$ with vertex $\Xi$, and therefore it contains a family $\mathfrak C_\Xi$ of irreducible varieties of dimension $n-2$ spanning linear spaces of dimension $n-1$ and contained in quadric cones of dimension $n-2$ with vertex $W$  projecting a  conic of $V_{2,2}$.

As in the proof of Claim \ref {cl:parton}, one proves that $\mathfrak C_\Xi$ is not constant: we leave the details to the reader. 
This implies that $V$ contains a 3--dimensional family $\mathfrak D$ of  varieties of dimension $n-3$ spanning linear spaces of dimension $n-1$.  Let  $p\in V$ be a general point and consider the projection in $\p^{n+1}$ of $V$ from $p$, whose image, birational to $V$ by the Trisecant Lemma, we denote by $V'$. Then $V'$ has a family of dimension 2 of varieties of dimension $n-3$ spanning linear spaces of dimension $n-2$, i.e., the images of the varieties in $\mathfrak D$  containing $p$. The general surface section $S'$ of $V'$ spans a $\p^5$ and it has a 2--dimensional family of irreducible plane curves, so it is the Veronese surface $V_{2,2}$. It is well known that $V_{2,2}$ is not extendable (see cite [Corollary 2.4.5]{R2}), so $V'$ is a cone over $V_{2,2}$ with vertex a linear space of dimension $n-5$. This is however impossible because $V'$ should contain the linear space of dimension $n-3$ which is the image of the exceptional divisor corresponding to $p$, whereas the cone over $V_{2,2}$ with vertex a linear space of dimension $n-5$ contains no such subspace.  This contradiction proves the Claim.\end{proof}

At this point we can argue exactly as in the case $n=4$ to see that the 
spaces $T$ of dimension $n-3$ vertices of the cones over $V_{2,2}$ containing the hyperplane section $Z$ of $X$, pairwise intersect each other in a variable space of dimension  $n-4$, hence they lie in a space of dimension $n-2$, which is the vertex of a $(n+1)$--dimensional cone over $V_{2,2}$ containing $X$, so we are in case (iii).

Suppose that $Y$ is a projection in $\p^ 7$ of the Veronese threefold $V_{3,2}$ and not a hyperplane section of the Segre variety ${\rm Seg}(2,2)$. By taking into account Example 2.5 of \cite {ChCi}, we may assume that $Y$ is not the projection of $V_{3,2}$ from a secant line to it. 
We have to prove that we are in case (iv), and it suffices to do it when $X$ has dimension 4. Indeed, if $n>4$ then by arguing by induction we have that the general hyperplane section of $X$ is a cone and therefore $X$ itself is a cone. 

We have $f(Y)=1$, hence $f(X)=2$. Then $\gamma(X)\geq f(X)=2$. On the other hand, by 
Proposition \ref {rem:hh}, we have $\gamma(X)\leq \gamma(Y)+1$, and since $\gamma(Y)=1$ we get $\gamma(X)\leq 2$ so $\gamma(X)=2$. Then for general choice of $p_0,p_1\in X$, one has  that $\Gamma_{p_0,p_1}$ is a $2$--dimensional quadric (see Proposition \ref {difetti}). Thus $X$ is swept out by a family $\mathcal Q$ of 2--dimensional quadrics such that, given two general points $p_0,p_1\in X$, there is a quadric $Q$ of $\mathcal Q$ containing $p_0,p_1$. Note that the general quadric $Q$ is irreducible, since so is the conic passing through two general points of $Y$. Thus $\Gamma_{p_0,p_1}
=\Psi_{p_0,p_1}$, because $\Psi_{p_0,p_1}$ is an irreducible component of $\Gamma_{p_0,p_1}$. 

Let $x$ be a general point of $X$ and consider the general tangential projection $\tau_x$ of $X$ and $X_1$ its image, which is a non--degenerate surface in $\p^ 3$.  Given a general quadric $Q$ in $\mathcal Q$, let us denote by $Q_1$ its image via $\tau_x$. There are  three possibilities:

\begin{itemize}
\item [(a)] $Q_1$ is a linear subspace;
\item [(b)] $Q_1$ is a 2--dimensional quadric;
\item [(c)] $Q_1$ is a conic.
\end{itemize}

Case (a) is not possible, because it would imply that the line joining two general points of $X_1$ lies in $X_1$, so $X_1$ itself is a linear space, a contradiction. Also case (b) is not possible. Indeed, this would imply that $X_1=Q_1$ is a quadric. On the other hand $X_1$ equals $Y_1$, the general tangential projection of $Y$ passing through $x$, which in turn is a projection to $\p^ 3$ of the Veronese surface $V_{2,2}$. The only way this could be  a quadric is that it is the projection of $V_{2,2}$
from a secant line. But then $Y$ would be the projection of $V_{3,2}$ from a secant line to it, contrary to our assumptions. Thus we are left with case (c), in which $Q_1$ is irreducible, otherwise $Q$ itself would be reducible, a contradiction. So $Q$ is a cone with vertex a point $v$ lying in $T_{X,x}$. 
But then $X$ is a cone, since its general tangent space contains $v$. Thus we have proved we are in case (iv).

Suppose finally that $Y$ is a hyperplane section of the Segre variety ${\rm Seg}(2,2)$, and let $H$ be the hyperplane spanned by $Y$ in the $\p^8$ where ${\rm Seg}(2,2)$ sits. Recall that ${\rm Seg}(2,2)$ contains a 4--dimensional family $\mathcal F$ of rank 4 quadric surfaces parameterized by $(\p^ 2)^ *\times (\p^ 2)^ *$: the general element $Q_{L,N}$ of $\mathcal F$ is the image of $L\times N$, with $L, N$ lines in the two factors. 

\begin{claim}\label{cl:pop1} In the above setting, $H$ is not tangent to all quadrics in $\mathcal F$.
\end{claim}

\begin{proof}[Proof of Claim \ref {cl:pop1}] Suppose by contradiction $H$ is tangent to all quadrics in $\mathcal F$. Then for all lines $L,N$ of $\p^2$, $H$ would cut $Q_{L,N}$ in a pair of lines of the type $L\times \{x\}$, $\{y\}\times N$, with $x\in N, y\in L$. 
Fix $L$ and let $N$ vary. Then we see that $H$ contains lines of the form $L\times \{x\}$, with $x$ varying on a line $N_L$ in the second factor. Now we have three possibilities:

\begin{itemize}
\item [(1)] when $L$ moves, $N_L$ moves describing the whole of $(\p^ 2)^ *$;
\item [(2)] when $L$ moves, $N_L$ moves describing a 1--dimensional family in $(\p^ 2)^ *$;
\item [(3)] when $L$ moves, $N_L$ stays fixed.
\end{itemize}

In case (1),  $H$ would contain the whole of ${\rm Seg}(2,2)$, a contradiction. One finds the same contradiction also in case (2). In case (3), $H$ would contain the image of $\p^ 2\times \p^1$ and, by symmetry, also of $\p^ 1\times \p^2$, hence $Y$ would be reducible, again a contradiction. This proves the claim.\end{proof}

We prove now that we are in case (v), and once more we may assume that $X$ has dimension 4 (recall that ${\rm Seg}(2,2)$ is not extendable, see e.g. \cite [Corollary 2.4.5]{R2}).  Then $X$ sits in $\p^8$ and, having $Y$ as hyperplane section, it has degree 6.  Again  $f(X)=\gamma(X)=2$, and $X$ is swept out by a $4$--dimensional family $\mathcal Q$ of 2--dimensional quadrics such that, given two general points $p_0,p_1\in X$, there is a unique quadric $Q$ of $\mathcal Q$ containing $p_0,p_1$. The general quadric $Q$ is irreducible, since so is the conic passing through two general points of $Y$, because of the previous claim.

Let $x$ be a general point of $Y$, hence of $X$, and consider the general tangential projection $\tau_x$ of $X$ and $X_1$ its image. This coincides with the image of a general tangential projection $Y_1$ of $Y$ through $x$, which is the same as the image of a tangential projection of ${\rm Seg}(2,2)$. Hence $X_1$ is a smooth quadric surface.  For a general quadric $Q$ in $\mathcal Q$, let, as above, $Q_1$ be its image via $\tau$.  As before, $Q_1$ cannot be a linear space. If $Q_1$ is a conic, then again $X$ is a cone and we are done. If $Q_1$ is an irreducible quadric, then it coincides with $X_1$, and therefore it is smooth. Moreover, since the general fibres of $\tau_x$ are quadrics in $\mathcal Q$, we conclude that two general quadrics in 
$\mathcal Q$ intersect transversally at a point.

Let $Q$ be the general quadric in $\mathcal Q$ and let $\Pi$ be its span. Let $\pi: X\dasharrow \p^ 4$ be the projection of $X$ from $\Pi$. Arguing as in \cite {CC4}, Claim 8.13, one sees that $\pi$ is a birational map. 

The restriction of $\pi$ to the hyperplane section $Y$ of $X$ is the projection of $Y$ to $\p^4$ from the plane spanned by a conic of $Y$, and it is well known, and easy to see,  that the image of $Y$ under this map is a quadric in $\p^4$. Hence the birational map $\pi^{-1}: \p^4\dasharrow X$ is defined by a linear system $\L$ of quadrics.

Since the general hyperplane through $\Pi$ is mapped to the general hyperplane of $\p^4$, 
the hyperplanes of $\p^4$, plus a fixed hyperplane $P'$, form a sublinear system of $\L$.   This implies that $2P'\in \L$, hence the base locus $B$ of $\L$ sits in $P'$. Moreover $P'$ has to be the image of the exceptional divisor of the blow--up of $X$ along $Q$, and therefore $P'$ is the image of the hyperplane $P$ tangent to $X$ along $Q$. Note that $P$ cuts $X$ along a threefold $Z$ of degree 6 singular along $Q$. 

The general quadric of $\mathcal Q$ maps via $\pi$ to a plane which intersects $P'$ along a line. The planes thus obtained vary in a $4$--dimensional family $\mathcal Q'$ with the property that given two general points $x,y$ of $\p^ 4$ there is a unique plane of
the family $\mathcal Q'$ containing $x,y$. The line $\langle x,y\rangle$ cuts $P'$ in a point $z$. There is then a unique secant line $r$ to the base locus $B$ of $\L$ passing through $z$ (because of the uniqueness of the quadric in $\mathcal Q$ containing two general points of $X$), so that the unique plane of $\mathcal Q'$ through $x,y$ is $\langle x,y,r\rangle$. This implies that the intersection lines of the planes in $\mathcal Q'$ with $P'$ vary in a congruence $\mathcal C$ of secants to $B$ such that given a general point $z\in P'$, there is a unique line in $\mathcal C$ containing $z$.

We claim that $\cal C$ is a congruence of type $(1,1)$.
Let  in fact $S$ be a general plane in $P'$, which is the projection via $\pi$ of a general subspace $T$ of dimension 6 contained in $P$ and containing $\Pi$. The intersection of $T$ with $X$ is surface consisting of $Q$ counted twice, plus another quadric $Q'$, with $Q$ and $Q'$ intersecting along a line.
Thus the projection of $Q'$ from $\Pi$ is a line which clearly belongs to the congruence $\cal C$, and it is the unique line on $S$ with this property. This proves our claim, hence $\cal C$ is given by all lines in $P'$ which meet two skew lines $L_1,L_2$ of $P'$ which form the base locus $B$ of $\L$.

In conclusion $\L$ is the 8--dimensional linear system of quadrics in $\p^4$ passing through two skew lines. It is well known that the image of $\p^4$ via this linear system of quadrics is $\Seg(2,2)$.  
\end{proof}

\begin {remark}\label{rem:ggg} Suppose $X$ is as in one of the cases listed in Theorem \ref {thm:lowcod}. 

In case (i) is it is clear that $X$ is defective with $f(X)=n-2$. 

Suppose we are in case (ii) so that $X$ lies in a cone $V$ over a curve $C$ with vertex a $(n-1)$--space $\Pi$. We claim that $X$ is defective with $f(X)=n-2$, unless $X$ is a cone over a curve, in which case $f(X)=n-1$ (see Proposition \ref {difetti}). In fact, 
let $x\in X$ be a general point. We may and will assume that $x$ also belong to $C$.  
Let $P$ be the ruling of $V$ through $x$. Then 
\begin{equation}\label{eq:ax1}
T_{V,x}=\langle \Pi, T_{C,x}\rangle.
\end{equation} 
Moreover $T_x:=\Pi\cap T_{X,x}$ has dimension $n-2$ and  
\begin{equation}\label{eq:ax2}
T_{X,x}=\langle T_x, T_{C,x}\rangle.
\end{equation} 
If  $T_x$ stays fixed as $x$ varies on $X$, then $X$ is a cone over $C$. If this is not the case, then 
if $p_0,p_1\in X$ are general points, then $\Pi=\langle T_{p_0}, T_{p_1}\rangle$, hence $T_{X,p_0,p_1}$ contains $\Pi$, it  also contains $P_i$ the rulings of $V$ through $p_i$ for $i=0,1$, and finally $T_{V,p_0,p_1}$ because of \eqref {eq:ax1} and \eqref {eq:ax2}. In conclusion $T_{X,p_0,p_1}=T_{V,p_0,p_1}$. Hence 
\[
\begin{split}
\dim(T_{X,p_0,p_1})&=\dim(T_{V,p_0,p_1})=\dim(T_{V,p_0})+\dim(T_{V,p_1})-\dim(\Pi)=\\
&=2(n+1)-(n-1)=n+3,
\end{split}\]
which implies $f(X)=n-2$.

In case (iii), with a similar argument, one proves again that  $X$ is defective with $f(X)=n-2$, unless $X$ is a cone over a Veronese surface $V_{2,2}$, in which case $f(X)=n-1$.  In cases (iv) and (v) it is immediate to check that $X$ is defective with $f(X)=n-2$. 
\end{remark}

\begin{remark}\label {rem:smooth} In the hypoteses of Theorem \ref {thm:lowcod}, if in addition we assume that
$X$ is smooth, then $X={\rm Seg}(2,2)$. 

Indeed, since $s(X)=2n+1-f(X)=n+3$, $X$ can be isomorphically projected to a $\p^{n+3}$ and if $X$ is smooth, Zak's theorem on linear normality (see Chapter I, Corollary 2.11 of \cite  {Zak}) implies that $3n\leq 2(n+2)$ hence $n=4$. Then the generic projection of $X$ to $\p^8$ is a Severi variety (see again Definition 1.2 and Theorem 4.7, Chapter IV, \cite  {Zak}; see also \cite {Scorza}),
whence the assertion. \end{remark}

\section{Scrolls}\label{sec:scrolls}

In this section  we will dispose of the classification of defective 4--folds $X$ non--degenerate in $\p^r$ which are scrolls in $3$--spaces over a curve. The cases $f(X)\geq 2$ (and $r=8$) have been dealt with with Proposition \ref {difetti} and Theorem \ref {thm:lowcod}. So we will assume here that $f(X)=1$ and $r\geq 9$. 

Before stating and proving the classification theorem, let us make a few examples.

  \begin{example}\label{ex:ex1} Let us consider in $\p^r$, with $r\geq 9$, a plane $\Pi$. 
  Consider  an irreducible scroll surface $\Sigma$ sitting in a $(r-3)$--space skew with $\Pi$. Let  $\cal C$ be the curve parameterizing the lines of $\Sigma$. Let $C\subset \Pi^*$ be an irreducible  curve. Suppose there is a rational dominant map $\phi: \cal C\dasharrow C$. Then we can consider the $4$--fold scroll $X$ which is the Zariski closure of the union of the $3$--spaces joining a line of $\Sigma$ corresponding to a point $c$ of $\cal C$ where $\phi$ is defined to the line corresponding to the point $\phi(c)$. Assume that $C\subset \Pi^*$ is not a line, so that the rulings of $X$ do not pass through the same point, hence $X$ is not a cone. Since the rulings of $X$ pairwise meet at a point, then also the tangent spaces to $X$ pairwise meet, hence  $X$ is defective. 
 
 Note that $X$ is singular at all points where two rulings meet.
  \end{example}
  
   \begin{example}\label{ex:ex2} 
   
   Consider an irreducible  1--dimensional family $\cal P$ of planes in $\p^r$, with $r\geq 9$,  such that two general planes of the family intersect at a point, and it does not happen that either all the planes of $\cal P$ pass through the same point or
all the planes of  $\cal P$ intersect the same plane along a line. Then by Lemma 4.1 from \cite  {ChCi} (Morin's theorem), the planes of $\cal P$ span at most a $5$--space. 
   
  Consider  an irreducible curve $C\subset \p^r$ which has a dominant rational map $\phi: C\dasharrow \cal P$, and 
    is in a sufficiently general position with respect with the planes of the family $\cal P$. Then consider the 4--dimensional scroll $X$ which is the Zariski closure of the union of all 
    $3$--spaces joining a point $x\in C$ where $\phi$ is defined, with the plane corresponding to $\phi(c)$. Then $X$ is not a cone and 
 it is defective.  As in Example \ref {ex:ex1}, $X$ is singular.
  \end{example}
  
  \begin{example}\label{ex:ex3} Let $S\subset \p^9$ be an irreducible, projective surface, and let $\Cc$ be an irreducible 1--dimensional family of curves on $S$ not passing through the same point,  such that the general curve in $\Cc$ is irreducible, spans a 3--dimensional space and two general curves of $\Cc$ intersect at a point.  Let $X$ be the 4--fold scroll which is swept out by the 3--dimensional spaces spanned by the curves of $\Cc$. Then $X$ is not a cone and 
 it is defective.  As in Example \ref {ex:ex1}, $X$ is singular.

For a specific example of this type, consider the Veronese surface $V_{2,3}\subset \p^9$. 
It contains the images of the lines of $\p^2$ which are rational normal cubics, each spanning a 3--dimensional linear space. Let $\Cc$ be an irreducible 1--dimensional family of these curves corresponding to a family of lines which do not pass through the same point. 
 
 Other infinitely many examples are obtained in the following way. Consider the Veronese 3--fold $V_{3,2}\subset \p^9$, which is defective, any two of its tangent planes meeting at a point.
We can view $V_{2,3}$ as the image of the \emph{dual Veronese map} which sends a point $H$ of the dual  $(\p^3)^*$ of $\p^3$ (i.e., $H$ is a plane of $\p^3$) to quadric $2H$ in the $\p^9$ parameterizing the quadrics of $\p^3$. Then the tangent space to  $V_{2,3}$ at the point $2H$ of consists of all the points of $\p^9$ corresponding to reducible quadrics of the form $H+H'$ with $H'$ varying in $(\p^3)^*$.
 Consider any irreducible curve $\Gamma\subset(\p^3)^*$ and consider the 4--fold $X$ which is swept out by the tangent spaces to  
 $V_{3,2}$ at the points $2H$ corresponding to points $H$ of $\Gamma$. We claim that $X$ is of the type described above. Indeed, consider the surface $S\subset \p^9$, birational to the symmetric product $\Gamma(2)$ of the curve $\Gamma$, consisting of all points $H+H'$ with $H, H'\in \Gamma$. Then the tangent space to $V_{3,2}$ at the point $2H$, where $H\in \Gamma$, cuts $S$ in the curve $C_H$, isomorphic to $\Gamma$, described by the reducible quadrics $H+H'$, with $H'$ varying in $\Gamma$. When $H$ varies in $ \Gamma$ the curve $C_H$ varies in a 1--dimensional family $\cal C$ of curves on $S$, enjoying the required property.  \end{example}
  
 \begin{theorem}\label{prop:lowrk} Let $X\subset \p^ r$, $r\geq 9$, be
 an irreducible, non--degenerate, linearly normal, defective $4$--fold scroll in 3--spaces over a curve with $f(X)=1$. 
 Then one of the following cases occurs:
 \begin {itemize}
 \item [(i)] $X$  is a cone  with vertex a point
 over a non--defective threefold;
 \item [(ii)] $X$ is as in Example \ref {ex:ex1};
 \item [(iii)] $X$ is as in Example \ref {ex:ex2};
 \item [(iv)] $r=9$ and $X$ is as in Example \ref {ex:ex3}.
  \end{itemize}
  In all these cases $X$ is singular.
  \end{theorem}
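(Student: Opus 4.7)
The plan is to combine Terracini's Lemma with a tangential projection argument to force two general rulings of $X$ to meet in exactly one point, and then to run a case analysis on the linear span of the locus on each ruling where nearby rulings meet. First, for general $p_0,p_1\in X$ on rulings $R_0,R_1$, Terracini plus $f(X)=1$ give $T_{X,p_0}\cap T_{X,p_1}=\{q^*\}$, a single point; since $R_i\subset T_{X,p_i}$, this forces $R_0\cap R_1\subseteq\{q^*\}$. Conversely, if $R_0\cap T_{X,p_1}$ were empty, then $\tau_{X,p_1}$ would embed $R_0$ as a $3$-plane in the irreducible, non-degenerate $3$-fold $X_1\subset \p^{r-5}$, forcing $X_1=\p^{r-5}$ and $r=8$, contradicting $r\geq 9$. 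Hence $R_0\cap T_{X,p_1}=\{q^*\}$, and by symmetry $q^*\in R_0\cap R_1$, so two general rulings of $X$ meet in a single point.

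Next, for each $c\in C$ let $\Gamma_c\subset R_c\simeq \p^3$ be the Zariski closure of $\{R_c\cap R_{c'}:c'\neq c\}$, a curve. By the symmetry of the intersection, $\Sigma:=\bigcup_c\Gamma_c\subset X$ is a surface birational to $\mathrm{Sym}^2 C$. Let $d:=\dim\langle\Gamma_c\rangle\in\{0,1,2,3\}$, which is constant in $c$ by monodromy. The four cases of the theorem correspond to $d=0,1,2,3$. For $d=0$, the equality $R_c\cap R_{c'}=v_c=v_{c'}$ forces all $v_c$ to coincide with a common point $v$, so $X$ is a cone with vertex $v$; since $f(X)=f(\text{base})+1=1$, the base $3$-fold is non-defective, giving case (i).

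For $d=1$, the lines $\ell_c:=\langle\Gamma_c\rangle$ form a $1$-parameter family of pairwise meeting lines, so by the classical lemma (Lemma 4.1 of \cite{ChCi}) either all $\ell_c$ share a common point (reducing to a cone) or they lie in a common plane $\Pi$; in the latter subcase the projection of $R_c$ from $\Pi$ produces a line $\tilde R_c\subset \p^{r-3}$, and these sweep out a scroll surface whose lift to a complementary $(r-3)$-space presents $X$ as in Example \ref{ex:ex1}, i.e.\ case (ii). For $d=2$, the planes $\pi_c:=\langle\Gamma_c\rangle$ pairwise meet in a point; after excluding the degenerate subcases (all planes through a point, or all meeting a fixed plane along a line) already absorbed into (i) and (ii), Morin's theorem (Lemma 4.1 of \cite{ChCi}) forces $\cal P=\{\pi_c\}$ to span at most a $5$-space, and writing $R_c=\langle\pi_c,y_c\rangle$ with $y_c$ moving on a curve presents $X$ as in Example \ref{ex:ex2}, i.e.\ case (iii).

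The remaining and most delicate case is $d=3$: here $R_c=\langle\Gamma_c\rangle$ is determined by the curve $\Gamma_c$ on the surface $S:=\Sigma$, and $X$ matches the construction of Example \ref{ex:ex3} up to the constraint $r=9$; establishing $r=9$ is the main obstacle. To this end I would analyze the tangential projection $\tau_{X,p_0}$: since $R_c\cap T_{X,p_0}$ reduces to the single point $\Gamma_c\cap\Gamma_{c_0}$, the image $\tau_{X,p_0}(R_c)$ is a $2$-plane and $X_1\subset \p^{r-5}$ is itself a $3$-fold scroll in pairwise meeting $2$-planes; applying to $X_1$ the analog of Lemma 4.1 of \cite{ChCi} for $2$-planes (and ruling out the cone sub-subcase, which would force $X$ itself to be a cone) shows that $X_1$ spans at most a $\p^4$, whence $r-5\leq 4$ and, with $r\geq 9$, $r=9$. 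Finally, in cases (ii)--(iv) a general point $p\in\Sigma$ lies on two distinct rulings $R_c,R_{c'}$, so the tangent cone to $X$ at $p$ contains their $6$-dimensional span, which exceeds $\dim T_{X,p}=4$; hence $X$ is singular along $\Sigma$, while in case (i) the vertex is trivially a singular point.
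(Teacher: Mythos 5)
Your proposal follows the paper's route for most of the argument: the fact that two general rulings meet in a single point is obtained, as in the paper, from Terracini ($f(X)=1$) together with the tangential projection (a ruling disjoint from $T_{X,p_1}$ would force the threefold $X_1$ to contain, hence equal, a $3$--space, contradicting non--degeneracy and $r\geq 9$); the subsequent trichotomy on $d=\dim\langle\Gamma_c\rangle$ is exactly the paper's case division on the span of the curve $C_P$, and the cases $d=0,1,2$ are treated, as in the paper, by Lemma 4.1 of \cite{ChCi}. (Your identification with Example \ref{ex:ex1} via a ``lift to a complementary $(r-3)$--space'' is asserted rather than proved, but the paper is equally terse at that point, so I do not count it against you.) The singularity statement via the $6$--dimensional span of two rulings through a general point of $\Sigma$ is fine.

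The genuine gap is in the case $d=3$, i.e.\ case (iv), whose whole content is the equality $r=9$. The paper's argument is a direct span computation: since $C_P$ is non--degenerate in the general ruling $P$, the four intersection points of $P$ with four fixed general rulings $P_1,\dots,P_4$ are linearly independent, so $P\subseteq\langle P_1,\dots,P_4\rangle$; using the pairwise intersections one gets $\dim\langle P_1,\dots,P_4\rangle\leq 9$, hence $X$ spans at most a $\p^9$ and $r=9$. Your substitute does not reach this conclusion. After the tangential projection, $X_1\subset\p^{r-5}$ is indeed a threefold swept by a $1$--dimensional family of pairwise meeting planes, but the Morin/ChCi trichotomy for planes yields, outside the two special configurations, only that the planes span at most a $\p^5$, not a $\p^4$: for instance the planes of the conics of $V_{2,2}$ corresponding to the tangent lines of a smooth conic of $\p^2$ pairwise meet, sweep a threefold and span $\p^5$. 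So even granting the rest you only obtain $r\leq 10$. Moreover the two special configurations are not disposed of: if all the image planes pass through a common point, this only says that every ruling meets a fixed $5$--space containing $T_{X,p_0}$ in a point off the centre of projection, which does not obviously make $X$ a cone; and the configuration in which all the image planes meet a fixed plane along a line is not addressed at all. Hence the key conclusion $r=9$ is not established by your argument; some version of the paper's four--ruling span computation (or an equivalent) is needed.
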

 
 \begin{proof}  If $X$ is a cone, the assumption $f(X)=1$ implies that $X$ has only one point as vertex and that its general hyperplane section is non--defective.
 Assume next that $X$ is not a cone. 
 
 \begin{claim}\label{cl:op1} Two general rulings of $X$ meet at a single point.
 \end{claim}
 
 \begin{proof} [Proof of the Claim \ref {cl:op1}] Let $p_0,p_1\in X$ be general points, and let $P_0,P_1$ be the rulings of $X$ through $p_0,p_1$ respectively. Consider the tangential projection $\tau_{p_0}: X\dasharrow X_1$. Since $X$ is defective with $f(X)=1$, then $X_1$ is a non--degenerate 3--fold in $\p^{r-5}$. Since $X$ is a scroll and $X_1$ is non--degenerate, the rulings of $X$ are mapped by $\tau_{p_0}$ to planes, hence  the ruling $P_1$ through $p_1$ intersects $T_{X,p_0}$ at a point $p$ which is the unique intersection point of $T_{X,p_0}$ and $T_{X,p_1}$. Similarly, $P_0$ has to intersect $T_{X,p_1}$ at $p$, hence $P_0$ and $P_1$ have to intersect at $p$.  \end{proof}

 Let $P$ be a general ruling of $X$. The intersection point of $P$ with another general ruling $P'$  cannot stay fixed as $P'$ varies, because we are assuming $X$ is not a cone. Then this intersection describes an irreducible curve $C_P$. Moreover if $P$ and $P'$ are general rulings, then $C_P$ and $C_{P'}$ intersect at one point, i.e., the intersection point of $P$ and $P'$. 
 
If $C_P$ is a line, then these lines pairwise meet at a point. Since we are assuming $X$ is not a cone, then they lie in one and the same plane. Then we are in case (ii). 

If $C_P$ is not a line, but spans a plane, then these planes pairwise meet at a point but cannot pass through the same point because $X$ is not a cone. So, by applying Lemma 4.1 from \cite  {ChCi},  either they intersect the same plane along a line, or they span at most a $5$--space. In the former case  we are again in case (ii) and the curves $C_P$ are lines, contrary to our assumption. So we are in the latter case and in case (iii).

Finally, assume $C_P$ spans $P$. Then take four general rulings $P_i$, for $1\leq i\leq 4$. If $P$ is another ruling and $p_i=P\cap P_i$, for $1\leq i\leq 4$, then the points $p_1,\ldots, p_4$ are independent, so they span $P$. This implies that $X$ is contained in $\langle P_1,P_2,P_3,P_4\rangle$ which has dimension at most 9. On the other hand, we have $r\geq 9$, hence $r=9$, and we are in case (iv). 
\end{proof}

\section{Top species: the irreducible case}\label {sec:top}

\subsection {} Here we will consider the case of an irreducible, non--degenerate,
 defective variety $X\subset \p^ r$ of dimension $n\geq 4$, with $r\geq 2n+1$.  
 Then we have $\delta(X)=f(X)$ (see Remark \ref {rem:def}).  The case $\delta(X)=n-2$
 is covered by Theorem \ref  {thm:lowcod}. So we may assume $1\leq \delta(X)\leq n-3$.
 
  In this section we will assume $f(X)=\delta(X)=\gamma(X)$.  
  Hence,
  for  $p_0,p_1\in X$ general points, one has that  $\Gamma_{p_0,p_1}=\Psi_{p_0,p_1}$ equals the general entry locus and it
 is a quadric of rank at least 2  in $\p^ {\delta(X)+1}$ (see Proposition \ref {difetti}, (ii)). Therefore
 $X$ is a, perhaps singular, LQEL variety (see \S \ref  {susec:difetti}). If $\delta(X)=1$, then $X$
  presents the  LCEL case. This happens when
 $X$ is a variety of the top species. 
 
 By taking a general linear section $Y$ of $X$ of dimension  $m=n-\delta(X)+1$
 we have $Y\subset \p^ s$, with $s=r-\delta(X)+1\geq  2m+1$,
 and $\delta(Y)=1$. Hence the varieties $X$  in question are extensions of varieties  
 of the top species, thus presenting the LCEL case.  Therefore, as a first instance, we 
 will limit ourselves to consider only the top species case. In this section we will also
 stick to the \emph{irreducible case}, i.e. the case 
 in which the general entry locus is an irreducible conic.  

\subsection{} From now on in this section we let  $X\subset \p^ r$ be an irreducible, non-degenerate,
 defective variety of dimension $n\geq 4$, with $r\geq 2n+1$.  We will assume that 
 $X$ is of the top species, so that $\delta(X)=f(X)=\gamma(X)=1$, and presents the irreducible case.   Hence there is an irreducible family $\Cc$ of dimension $2n-2$ of generically irreducible conics such that, given two general points $p_0,p_1\in X$, there is a unique conic of $\Cc$ containing $p_0,p_1$. 
 
\begin{lemma}\label{lem:sort} In the above setting, let $x\in X$ be a general point and let $t$ be a general tangent direction to $X$ at $x$. Then there is a unique irreducible conic of the family $\Cc$ passing through $x$ and tangent to $t$. 
\end{lemma}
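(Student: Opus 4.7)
The plan is to show the tangent map
\[
\alpha: \mathcal{C}_x \dashrightarrow \mathbb{P}(T_xX), \qquad C \mapsto T_{C,x},
\]
is birational, by factoring it through the general tangential projection $\tau_x: X \dashrightarrow X_1 \subset \mathbb{P}^{r-n-1}$ and performing a degree computation.

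First, a standard incidence count exploiting that two general points of $X$ lie on a unique conic of $\mathcal{C}$ gives $\dim \mathcal{C}_x = n-1$, matching $\dim \mathbb{P}(T_xX)$, so $\alpha$ is either birational or fails to be dominant. The key observation is that, for each $C \in \mathcal{C}_x$, the plane $\langle C\rangle$ meets $T_{X,x}$ along the tangent line $T_{C,x}$; hence $\tau_x$ contracts $C$ to a single point $q_C \in X_1$. This yields a dominant morphism $\beta: \mathcal{C}_x \to X_1$, $C \mapsto q_C$, which is generically finite because $\dim X_1 = n - f(X) = n-1$. Blowing up $x$ with exceptional divisor $E \cong \mathbb{P}(T_xX)$, the proper transform $\tilde C$ meets $E$ at the single point $\alpha(C)$, and continuity gives $\tilde\tau_x(\alpha(C)) = q_C$; thus $\beta = \tilde\tau_x|_E \circ \alpha$. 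By Lemma \ref{lem:secondff}, whose hypothesis is met here since the general fibre of $\tau_x$ contains the irreducible conic through $x$ and a generic second point, $\tilde\tau_x|_E$ is generically finite of some degree $d \geq 1$. With $e := \deg \alpha \geq 1$, we have $\deg \beta = d\,e$, so proving $\deg \beta = 1$ forces $d = e = 1$ and delivers the lemma.

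To show $\deg \beta = 1$, fix a general $q \in X_1$ and a general $p \in \tau_x^{-1}(q)$. By Proposition \ref{lem:a} together with the irreducibility hypothesis on $\Gamma_{p,x}$, the component $\Psi_{p,x}$ of the $1$-dimensional fibre $\tau_x^{-1}(q)$ through $p$ coincides with $\Gamma_{p,x}$, i.e.\ with the unique irreducible conic $C_{x,p} \in \mathcal{C}_x$; hence every irreducible component of $\tau_x^{-1}(q)$ is a conic of $\mathcal{C}_x$ through $x$. The main obstacle is ruling out that two distinct such components $C_1, C_2$ coexist: I expect this to follow from Proposition \ref{lem:a}(iii), identifying $\langle C_{p_0,p_1}\rangle$ with the general Gauss fibre of $S(X)$, so that two distinct conics $C_1, C_2 \in \mathcal{C}_x$ lying in the same fibre $\tau_x^{-1}(q)$ yield two distinct Gauss fibres of $S(X)$ both contained in $\langle T_{X,x}, q\rangle$ and sharing $x$, a configuration which, combined with the uniqueness of the conic of $\mathcal{C}$ through two general points, should lead to a contradiction via a Trisecant-type argument.
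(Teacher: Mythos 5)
Your reduction is not complete, and the missing step is precisely the hard point. Writing $\beta=\tilde\tau_{x|E}\circ\alpha$ and $\deg\beta=d\cdot e$ is fine, but the lemma only needs $e=1$, whereas you propose to prove $\deg\beta=1$, which is \emph{strictly stronger}: it would also force the map $\tilde\tau_{x|E}$ defined by the second fundamental form to be birational onto $X_1$, something that is not known at this stage of the paper (it is essentially what Lemma \ref{Veronese} and Theorem \ref{nu1f} are devoted to, and only under the extra hypothesis $\chi(X)\leq 0$). Your argument for $\deg\beta=1$ is only an expectation: if $C_1,C_2\in\mathcal C_x$ lie in the same fibre $\tau_x^{-1}(q)$, then for $p_1\in C_1$, $p_2\in C_2$ the pair $(p_1,p_2)$ is \emph{not} a general pair of points of $X$ (it is constrained to a single fibre), so neither the uniqueness of the conic of $\mathcal C$ through two general points nor the Trisecant Lemma applies directly; and two distinct Gauss fibres of $S(X)$ meeting at $x$ is not absurd by itself, since $x$ lies in the locus where distinct fibres are allowed to meet. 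In addition, the assertion that \emph{every} irreducible component of $\tau_x^{-1}(q)$ is a conic of $\mathcal C_x$ only follows, via $\Psi_{p,x}=\Gamma_{p,x}$, for components through a general point of $X$; components sweeping out a proper subvariety of $X$ are not controlled by that argument. Finally, a smaller point: the hypothesis of Lemma \ref{lem:secondff} is that the general fibre of $\tau_x$ be irreducible and contain $x$, not merely that it contain an irreducible conic through $x$; since $X$ is not assumed smooth, you need some such verification before concluding that $\tilde\tau_{x|E}$ is dominant.

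For comparison, the paper's proof avoids the tangential projection altogether: it considers the incidence variety $\Omega\subset \mathcal C\times X[2]$ of pairs $(\Gamma,\xi)$ with $\xi\subset\Gamma$, notes that the second projection is birational, and observes that if the conic through a general $(x,t)$ were not unique, then by Zariski's Main Theorem the locus $\Omega'$ of pairs with $\xi$ non--reduced would have a component of dimension at least $2n$; but the first projection $\Omega'\to\mathcal C$ has $1$--dimensional fibres, bounding $\dim(\Omega')$ by $2n-1$ since $\dim(\mathcal C)=2n-2$. If you want to salvage your approach, you would have to replace the claim $\deg\beta=1$ by an argument proving only $e=\deg\alpha=1$, and an elementary incidence/dimension count of the above type is the natural way to do that.
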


\begin{proof} Let $X[2]$ be the Hilbert scheme of $X$ parameterizing the 0--dimensional subschemes of $X$ of length 2. Consider the set 
 \[
 \Omega=\{(\Gamma, \xi)\in \Cc\times X[2]: \xi\subset \Gamma\}
 \]
 The second projection $p_2: \Omega\to X[2]$ is birational. Suppose given $x$ and $t$ general, there is more than one conic through $x$ tangent to $t$. Then, by Zariski's Main Theorem,  given a general non--reduced length 2 subscheme $\xi$ of $X$ (which is a smooth point of $X[2]$), there would be a positive dimensional family of conics in $\Cc$ containing $\xi$. This would imply that the subset $\Omega'$ of $\Omega$ consisting of pairs $(\Gamma, \xi)$ such that $\xi$ is non--reduced has a component of dimension at least $2n$. The first projection $p_1: \Omega'\to \Cc$ has fibres of dimension 1, so $p_1(\Omega')$ would have dimension at least $2n-1$, a contradiction. 
 
 The irreducibility of the conic through $x$ and $t$ is clear. \end{proof}  
 
 If $x\in X$ is a general point, 
define the \emph {Scorza's map}
$$S_x:X\map T_{X,x}$$
\noindent  sending a general point $y\in X$ to
the point $T_{X,x}\cap T_{X,y}$. 
If $C_{x,y}$ is the unique conic
in $X$ passing through $x$ and $y$, then 
$$S_x(y)=
T_{C_{x,y},x}\cap T_{C_{x,y},y}.$$
If there is no danger of confusion, we write
$S$ instead of $S_x$.

\begin{proposition}\label{prop:scorza} In the above setting, Scorza's map $S_x$ is birational, hence $X$ is rational. Moreover its inverse $S_x^ {-1}$  is defined by a linear system of hypersurfaces of degree $d$ having a point of 
multiplicity $d-2$ at $x$. 
\end{proposition}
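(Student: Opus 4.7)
The plan is to construct an explicit inverse for $S_x$ using Lemma~\ref{lem:sort}, yielding both birationality and rationality, and then to read off the relation between the degree and the multiplicity at $x$ of the defining linear system by observing that general lines through $x$ in $T_{X,x}$ pull back to conics of $\Cc$.

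For birationality, I would take a general point $z\in T_{X,x}$ and describe the fibre $S_x^{-1}(z)$. The line $\ell:=\langle x,z\rangle\subset T_{X,x}$ represents a general tangent direction to $X$ at $x$, so by Lemma~\ref{lem:sort} there is a unique conic $C\in\Cc$ through $x$ with $T_{C,x}=\ell$. The restriction $S_x|_C\colon C\dasharrow\ell$ sends $y$ to $T_{C,x}\cap T_{C,y}$, and inside the plane $\langle C\rangle$ this is the classical polar map of a smooth conic from one of its points: on an affine parameterization of $C$ it is linear, hence birational. Consequently there is a unique $y\in C$ with $S_x(y)=z$, giving the inverse rule $z\mapsto \ell\mapsto C\mapsto y$. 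Thus $S_x$ is birational, and since $T_{X,x}\simeq\p^n$, the variety $X$ is rational.

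For the last assertion, let $\mathcal L$ be the linear system on $\p^n=T_{X,x}$ defining $S_x^{-1}\colon\p^n\dasharrow X\subset\p^r$, set $d:=\deg(\mathcal L)$ and $m:=\mult_x(\mathcal L)$. For a general line $\ell\subset T_{X,x}$ through $x$, the inverse construction above identifies $S_x^{-1}(\ell)$ with the unique conic $C\in\Cc$ determined by $T_{C,x}=\ell$, and $S_x^{-1}|_\ell\colon\ell\dasharrow C$ is birational (it is the inverse of $S_x|_C$). A general hyperplane $H\subset\p^r$ pulls back to a divisor $D_H\in\mathcal L$; its restriction to $\ell$ has degree $d$, contains $x$ with multiplicity exactly $m$, and the residual $d-m$ points on $\ell\setminus\{x\}$ correspond via $S_x^{-1}|_\ell$ to $H\cap C$. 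Since $C$ is a conic, $H\cap C$ consists of $2$ points; therefore $d-m=2$, i.e.\ $m=d-2$, as claimed.

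The only subtle point is the birationality of $S_x|_C\colon C\dasharrow T_{C,x}$, but this is a purely planar statement about a smooth conic and its tangent lines and is immediate from the polar duality (e.g.\ for $C=\{Y^2=XZ\}$ with $x=[1{:}0{:}0]$, the point $y=[1{:}t{:}t^2]$ maps to $[2{:}t{:}0]\in T_{C,x}=\{Z=0\}$, linearly in $t$). Everything else in the argument is formal, given Lemma~\ref{lem:sort}.
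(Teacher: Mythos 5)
Your proof is correct and follows essentially the same route as the paper: the inverse is constructed exactly as there, by taking the unique conic of $\Cc$ tangent at $x$ to $\langle x,z\rangle$ (Lemma~\ref{lem:sort}) and recovering $y$ as the contact point of the second tangent line from $z$, which is your planar polar-map observation in different words. Your last paragraph merely makes explicit the paper's one-line remark that $S_x^{-1}$ sends lines through $x$ to conics, yielding $d-m=2$.
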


\begin{proof} We prove that $S_x$ is birational by constructing its inverse $S_x^ {-1}$. Indeed, let $y\in T_{X,x}$ be a general point. Let $r=\langle x,y\rangle$. By Lemma \ref 
{lem:sort}, there is a unique irreducible conic $C\in \Cc$ passing through $x$ and tangent to $r$ at $x$. Let $s$ be the tangent line to $C$ through $y$ other than $r$. It is tangent to $C$ at a point $z$. Then $z=S_x^ {-1}(y)$. This proves that $S_x$ is birational.

As for the rest of the assertion, it follows from the fact that $S_x^ {-1}$ maps the lines through $x$ to conics on $X$.\end{proof}

The main point of our analysis is to study Scorza's map. Specifically, following the ideas of Scorza's in \cite {Scorza1}, we will prove that, under suitable
hypotheses, its inverse is defined by a linear system of quadrics. 

 \subsection{} We will study now the family of conics $\Cc$. In doing so, we will use some ideas contained in \cite {IR}, where however $X$ is assumed to be smooth.
 
Let $x\in X$ be a general  point. We will denote by $p_x: \mathcal{F}_x\to \mathcal{C}_x$ the flat family of conics in $\Cc$ passing through $x$.  
One has $\cal F_x\subseteq \cal C_x\times X$. Let $\phi_x: \mathcal{F}_x \to X$ be the projection to the second factor, which is  birational. Then $\cal F_x$ is irreducible, so also $\cal C_x$ is irreducible with $\dim (\cal C_x)=n-1$ and 
 $\dim(\cal F_x)=n$.

The map $p_x$ has a natural section $\sigma_x$ mapping a point of $\cal C_x$ corresponding to a conic $C$, to the point $x\in C\subset \cal F_x$. The image
 ${\mathcal E}_x$ of $\sigma_x$ is contracted by $\phi_x$ to the point $x$.  Consider the blow--up $\pi_x: \tilde X\to X$ of $X$ at $x$.
So we have the following commutative diagram

\begin{equation*}\label{joindiagram1}\raisebox{.7cm}{\xymatrix{
&{\mathcal{F}_x}  \ar[d]^{p_x} \ar[dr]^{\phi_x}\ar@{-->}[r]^{\psi_x}&{\tilde X}\ar[d]^{\pi_x}\\
&\mathcal{C}_x\ar@/^1pc/[u]^{\sigma_x}&X
}}
\end{equation*}
where $\psi_x$ is  a birational map whose indeterminacy locus is contained in $\cal E_x$. 
Let $\cal S_x$ be the closed subscheme of $\cal C_x$ parameterizing those conics in $\cal C_x$ which are singular at $x$. We will denote by $\cal D_x$ the image of $\cal S_x$ via the section $\sigma_x$.
Then $\cal D_x$ is exactly the  indeterminacy locus of $\psi_x$: this follows by 
the universal property of the blow--up and by Lemma 4.3 from \cite {IN}. 
Thus $\psi_x$ is defined at the general point of $\cal E_x$ and its  inverse is defined at the general point of the exceptional divisor $E_x$ of the blow--up. Hence $\psi_x$ induces a birational map 
$\psi _{x,0} : {\mathcal E}_x \map E_x$ (see also Lemma \ref {lem:sort}).  
The geometric meaning of the map $\psi_{x,0}$ 
 is clear: it associates to a point $(c,x)\in \mathcal E_x\setminus \mathcal D_x$ the point of $E_x$ corresponding to  the direction of the tangent line 
at $x$ to the conic $C$ corresponding to the point $c\in \cal C_x$. 

Let $D_x$ be the closure in $E_x$ of the points corresponding to all directions at $x$
of irreducible components of singular conics in $\cal S_x$. We set
\[
\chi(X)= \dim (D_x),
\]
which is independent on $x\in X$.
Note that $D_x\subseteq L_x$, where $L_x$
was introduced in \S \ref  {subsec:sff}, thus $\chi(X)\leq \ell(X)$.  Given a point in $\cal S_x$ corresponding to a conic $C$ singular at $x$ the plane $\Pi_C$ spanned by $C$ is tangent to $X$ at $x$ and the points corresponding to the directions of the lines in $\Pi_C$ through $x$ describe a line $r_C$ in $E_x$.
It may either happen that $C$ has rank 2 or rank 1. In the former case we get two distinct points $p,q\in D_x$ and the line $r_C$ joins the points $p$ and $q$. In the latter case $C$ consists of a double line $Z$, whose direction corresponds to a unique point $z\in E_x$. The plane $\Pi_C$ still gives rise to a line
$r_C$ in $E_x$  through $z$. The following lemma shows that  something similar to the rank 2 case for $C$ 
happens in this situation (recall the definitions and notation from  \S \ref  {subsec:sff}).

\begin{lemma}\label {lem:tech} In the above setting, i.e., when $C$ consists of a double line $Z$, the line $r_C$ belongs to the tangent space
to the scheme $L_{x}$ at the point $z$. Every point  $z\in L_{x}$ is a base point for ${\rm II}_{x}$ and all quadrics in ${\rm II}_{x}$ are tangent to $r_C$  at $z$. \end{lemma}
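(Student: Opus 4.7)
The plan is to choose power-series coordinates adapted to the data at $x$ and then deduce all three assertions from a single algebraic condition expressing $2Z\subset X$. Set up affine coordinates $(x_1,\dots,x_r)$ centered at $x$ so that $T_{X,x}=\{x_{n+1}=\cdots=x_r=0\}$, the line $Z$ is the $x_1$-axis and the plane $\Pi_C$ is the $(x_1,x_2)$-plane. Since $x$ is a general, hence smooth, point of $X$, locally $X$ is the graph $x_{n+i}=\phi_i(x_1,\dots,x_n)$ for $i=1,\dots,r-n$, with $\phi_i=q_i^{(2)}+q_i^{(3)}+\cdots$ and the quadrics $q_i:=q_i^{(2)}$ generating ${\rm II}_x$ as a linear system on $E_x$. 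In the affine chart $[1:y_2:\cdots:y_n]$ of $E_x$ centered at $z=[1:0:\cdots:0]$, the line $r_C$ is the $y_2$-axis, with tangent direction $\partial/\partial y_2$ at $z$.

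The middle assertion is immediate: if $z\in L_x$ then some line $L\subset X$ through $x$ has direction represented by a nonzero vector $v$, so $\phi_i(tv)\equiv 0$ in $t$ for every $i$; looking at the degree-two part gives $q_i(z)=0$, hence $z\in B_x$, proving $L_x\subseteq B_x$.

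The remaining two assertions both come from translating $2Z\subset X$ into a power-series identity. Since $\Pi_C$ has coordinates $(x_1,x_2)$ and $2Z$ is the subscheme $\{x_2^2=0\}$ of $\Pi_C$, the inclusion reads $\phi_i|_{\Pi_C}\in(x_2^2)\subset\C[[x_1,x_2]]$ for every $i$. Degree by degree, this forces $q_i^{(d)}(x_1,x_2,0,\dots,0)$ to be divisible by $x_2^2$, equivalently
\[
q_i^{(d)}(1,0,\dots,0)=0\qquad\text{and}\qquad \frac{\partial q_i^{(d)}}{\partial x_2}(1,0,\dots,0)=0\qquad\text{for all }d\geq 2.
\]
Reading these identities in degree $d=2$ gives $z\in Q_i$ and $\partial/\partial y_2\in T_zQ_i$ for every generator $q_i$ of ${\rm II}_x$, i.e. every quadric in ${\rm II}_x$ is tangent to $r_C$ at $z$. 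For arbitrary $d\geq 2$ they say that $\partial/\partial y_2$ annihilates both the value and the first-order part at $z$ of every defining equation $q_i^{(d)}(1,y_2,\dots,y_n)=0$ of $L_x$, hence $\partial/\partial y_2\in T_zL_x$, which is exactly $r_C\subseteq T_zL_x$. The only subtle step is the algebraic encoding of the scheme-theoretic condition $2Z\subset X$; once $\phi_i|_{\Pi_C}\in(x_2^2)$ is in place, the whole lemma reduces to reading off, in each degree, the vanishing of the coefficients of $x_1^d$ and $x_1^{d-1}x_2$ in $q_i^{(d)}$, so no genuine obstacle is expected.
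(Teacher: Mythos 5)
Your proof is correct, but it takes a more explicit route than the paper. The paper's argument is a two-line deformation-theoretic one: since $\Pi_C$ is tangent to $X$ along $Z$ (equivalently, $2Z\subset X$), there is an inclusion of normal sheaves $N_{Z,\Pi_C}\hookrightarrow N_{Z,X}$, hence $H^0(Z,N_{Z,\Pi_C}(-x))\subseteq H^0(Z,N_{Z,X}(-x))$; the left side is the one-dimensional space of deformations of $Z$ through $x$ inside $\Pi_C$, whose direction in $E_x$ is exactly $r_C$, and the right side is the tangent space at $z$ to the scheme of lines through $x$, which gives the first assertion, the tangency statement for ${\rm II}_x$ being then dismissed as a straightforward consequence. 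You instead encode the same geometric input ($2Z\subset X$, i.e.\ $\phi_i|_{\Pi_C}\in(x_2^2)$) in adapted coordinates and read off, degree by degree, the vanishing of the coefficients of $x_1^d$ and $x_1^{d-1}x_2$: the degree-two case gives the tangency of every quadric of ${\rm II}_x$ to $r_C$ at $z$ completely explicitly, and the higher degrees give $r_C\subseteq T_zL_x$. The two proofs rest on the same fact (the rank-one member of the flat family $\cal F_x$ is the full double line, scheme-theoretically contained in $X$), so your ``subtle step'' is exactly the paper's tangency claim and is justified the same way. The only point you should make explicit is the scheme structure on $L_x$: your tangent-space computation uses that $L_x$ is cut out in $E_x$ by all the equations $q_i^{(d)}(1,y_2,\dots,y_n)=0$ (equivalently, that $T_{[Z]}L_x=H^0(Z,N_{Z,X}(-x))$), which is the natural structure and the one the paper implicitly uses; with that stated, your argument is complete, and it has the small advantage of proving the ``straightforward consequence'' in the same stroke.
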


\begin{proof} The plane $\Pi_C$ is tangent to $X$ at the general point of the line $Z$. Therefore we have an inclusion of normal sheaves $N_{Z,\Pi_C}\to N_{Z,X}$, hence an inclusion $N_{Z,\Pi_C}(-x)\to N_{Z,X}(-x)$, which induces an inclusion $H^ 0(Z, N_{Z,\Pi_C}(-x))\subseteq  H^ 0(Z, N_{Z,X}(-x))$. 
The first assertion immediately follows and the second is a straightforward consequence. \end{proof}

Let us denote by $J_{X,x}$, or simply by $J_x$, the Zariski closure of the union of all tangent spaces to 
$L_{x}$ in $E_x$ at the points of $D_x$.

\begin{lemma}\label{finitepsi} In the above setting, 
let $D\subseteq \cal E_x\setminus \mathcal D_x$ be an irreducible complete curve contracted to a point
by $\psi_{x,0}$. Then the general point $(c,x)\in D$ corresponds to a rank 2 conic $C=N+M$, where
$N$ is a fixed line through $x$ and $M$ decribes a cone not containing $x$, with vertex a point of $N$. 

If $q\in E_x\setminus D_x$ and if there is a curve $D$ in $\cal E_x\setminus \mathcal D_x$ such that
$\psi_{x,0}(D)=\{q\}$, then $q\in J(D_x,D_x)\cup J_{x}$.
\end{lemma}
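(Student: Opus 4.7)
For the first assertion, I would invoke Lemma~\ref{lem:sort}: at most one irreducible conic of $\Cc$ passes through $x$ with a given tangent direction, so the general conic $C_c$ parameterized by $D$ must be reducible. Since $C_c$ is smooth at $x$ (because $D\subseteq\mathcal{E}_x\setminus\mathcal{D}_x$), it is a union of two lines with $x$ on exactly one; that component is determined by $x$ and the contracted direction $q=\psi_{x,0}(D)$, hence equals a fixed line $N$, and I write $C_c=N+M_c$. Each $M_c$ is a line of $X$ not through $x$, meeting $N$ at a unique point $v_c\in N\setminus\{x\}$. To see $v_c$ is constant, I would observe that $c\mapsto v_c$ is a morphism from the complete irreducible curve $D$ to $N\cong \p^1$; were it non-constant it would be surjective, producing some $c^*$ with $v_{c^*}=x$ and therefore $x\in M_{c^*}$, contradicting the smoothness of $C_{c^*}$ at $x$. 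Hence all the $M_c$'s pass through a common point $v\in N\setminus\{x\}$ and sweep out a cone, as required.

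For the second assertion, I would assume $q\in E_x\setminus D_x$ and analyze the pencil of $2$-planes $\Pi_c=\langle N,M_c\rangle$, all containing $N$, as $c$ varies in $D$. Each $\Pi_c$ meets the embedded tangent space $T_{X,x}$ either along $N$ only or (in the special case $\Pi_c\subset T_{X,x}$) along all of $\Pi_c$. Passing through the blow-up $\pi_x$, they define a $1$-parameter family of linear subspaces of $E_x$ through $q$ encoding the second-order position of $C_c$ at $x$. The plan is then to establish the following dichotomy by letting $c$ specialize in a larger family within $\mathcal{C}_x$ so that $C_c$ acquires a singularity at $x$: either the limit subspace meets $L_x$ in two distinct points of $D_x$, placing $q$ on a secant and hence in $J(D_x,D_x)$; or it becomes tangent to $L_x$ at a single point of $D_x$, placing $q$ in $J_x$.

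The main obstacle is this limiting argument: I would need to control carefully how the family of lines $\{M_c\}$ deforms at the cone vertex $v$, and how that deformation propagates along $N$ to directional information at $x$. I would approach this through a normal sheaf computation on $N\subset \Pi_c\subset X$, in direct analogy with the tangency statement of Lemma~\ref{lem:tech}, now adapted to rank-$2$ conic components in place of rank-$1$ ones.
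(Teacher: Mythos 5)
The decisive gap is in your very first step. Lemma \ref{lem:sort} asserts uniqueness of the irreducible conic through $x$ tangent to a \emph{general} direction $t$, whereas the direction $q=\psi_{x,0}(D)$ attached to a contracted curve is precisely a special one (the whole content of the lemma is to locate such directions inside $J(D_x,D_x)\cup J_{x}$). So you cannot deduce from Lemma \ref{lem:sort} that the conics parameterized by $D$ are reducible: a priori there could be a one--dimensional family of \emph{irreducible} conics through $x$, all tangent at $x$ to this special direction $q$. Excluding this is the heart of the paper's proof, via Kebekus' argument: after base--change to the normalization $\widetilde D$ one gets a surface $Y\to \widetilde D$ with a section $D_0$ which is contracted by $\phi_Y$ to $x$, hence $N_{D_0,Y}$ is non--trivial, and the map $N_{D_0,Y}\to l_q\simeq \C$ induced by the differential of $\phi_Y$ must vanish somewhere, producing a conic of the family singular at $x$, against $D\cap\mathcal D_x=\emptyset$. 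Your proposal has no substitute for this step. The rest of your first part is fine: that the component through $x$ is the fixed line $N$ of direction $q$, and that the vertex map $c\mapsto v_c$ from the complete curve $D$ to $N$ must be constant because it cannot hit $x$, is correct and in fact makes explicit a point the paper states tersely.

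The second assertion is also not established by your sketch. There $D$ is no longer complete, and the case one really has to treat (after assuming $q\notin J_{x}$) is that of a family of irreducible conics: by the first part $D$ cannot then be complete, so its closure meets $\mathcal D_x$ at some point $c_0$; the limit conic $C_0$ is singular at $x$, the direction $q$ lies in the Zariski tangent space to $C_0$ at $x$, and one concludes $q\in J(D_x,D_x)$ when $C_0$ has rank $2$ (the corresponding line of $E_x$ joins two points of $D_x$) and $q\in J_{x}$ when $C_0$ is a double line, by Lemma \ref{lem:tech}. Your plan instead keeps the reducible picture $N+M_c$ of the first part and proposes to ``specialize in a larger family within $\mathcal C_x$ so that $C_c$ acquires a singularity at $x$''; but nothing guarantees that such a specialization exists while keeping the tangent direction equal to $q$, unless one degenerates along $\overline D$ itself, which is exactly the mechanism you do not supply and which you yourself flag as the ``main obstacle''. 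So the dichotomy you aim at is the right target, but the production of the degenerate conic singular at $x$ with direction $q$ in its tangent space, which is where all the work lies, is missing from the proposal.
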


\begin{proof}
We shall adapt to our situation  a   remark of Kebekus (see \cite{Kebekus}, proof of Theorem 3.4). Let $\psi_{x,0}(D)= p$. Take $\widetilde D$, the normalization of $D$, and consider the scheme $Y$ over $\widetilde D$ which is gotten by base--change from our family. 

Suppose first the general fiber of $\pi_Y:Y\to \widetilde D$ is irreducible, so that $Y$ is an irreducible surface.
Let $D_0\subset Y$ be the section of $\pi_Y : Y\to \widetilde D$ induced by $\sigma_x$. Note that $Y$ is smooth along $D_0$ by the hypothesis that $D\cap  \mathcal D_x$ is empty. The restriction of the differential of $\phi_Y: Y\to X$ induces a morphism $T_{\phi_Y}: N_{D_0,Y} \to l_p\simeq \C$, where  $l_p$ is the line in $ T _x (X)$ corresponding to $p\in E_x$. Since $N_{D_0,Y}$ is not trivial, the map $T_{\phi_Y}$ is not constant, so it has a zero. The corresponding curve of the family is singular at $x$. This is a contradiction.

Suppose now that every fiber of $\pi_Y:Y \to\widetilde D$ is reducible, consisting of two irreducible components each one isomorphically mapping to different lines contained in $X$. Only one of these two lines, call it $N$,  contains $x$ and has the direction corresponding to $p$.  So $N$ does not move.
Let $M$ be the other line, which varies in a 1--dimensional family. By the assumption
$D\cap \cal D_x=\emptyset$, the intersection point $y$ of $N$ with $M$
has to stay fixed when $M$ varies. 
Then $Y$ has one irreducible component of dimension 1 corresponding to $N$ and a two dimensional component mapping to a cone with vertex $y$ not passing through $x$. 

Let $q\in E_x$ be an indeterminacy point of $\psi_{x,0}^ {-1}$, and let
$D\subseteq \mathcal E_x\setminus \mathcal D_x$ be an irreducible curve such that $\psi_0(D)=q$. Assume that $q\not \in J_{x}$, in particular $q\not\in D_{x}$. Then the general point $(c,x)\in D$ corresponds to an irreducible conic $C$ containing $x$, whose tangent line has direction corresponding to $q$. By the above argument, $D$ cannot be complete, hence there is some point  $(c_0,x)\in \overline D\cap\mathcal D_x$. Let $C_0$ be the  conic corresponding to $c_0$, which is singular at $x$. Then the direction corresponding to $q$ has to belong to the Zariski tangent space to $C_0$ at $x$. 
If  $C_0$ has rank 1, then $q\in J_{x}$ by Lemma \ref  {lem:tech}, a contradiction. Then $C_0$  has rank 2, and, by the above argument, $q\in J(D_x,D_x)$. \end{proof}

Next we will restrict our attention to the case $\chi(X)\leq 0$, i.e., $\dim (D_x)\leq 0$. 

\begin{lemma}\label{locuspsi} Suppose  $\dim (D_x)\leq 0$. Then  $\psi_{x,0}^{-1}$ induces an isomorphism between $E_x\setminus J(D_x,D_x)\cup J_{x}$ and its image,  so that for every point $q\in E_x\setminus J(D_x,D_x)\cup J_{x}$ there is a unique conic in $\cal C_x$
with tangent line at $x$ having direction corresponding to $q$.
\end{lemma}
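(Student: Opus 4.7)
The strategy is to work with the closure $\Gamma\subseteq \mathcal{E}_x\times E_x$ of the graph of $\psi_{x,0}$ and its two projections $\pi_1,\pi_2$. Both are proper birational morphisms, and since $E_x$ is smooth, Zariski's Main Theorem ensures that every fibre $\pi_2^{-1}(q)$ is connected. Set $U:=E_x\setminus(J(D_x,D_x)\cup J_x)$; the assumption $\chi(X)\leq 0$ makes $D_x$ finite, so $J(D_x,D_x)$ and $J_x$ are proper closed subsets and $U$ is open and dense. The aim is to show that for each $q\in U$ the fibre $\pi_2^{-1}(q)$ consists of a single reduced point $(c,q)$ with $c\in\mathcal{E}_x\setminus\mathcal{D}_x$; granting this, both $\pi_1$ and $\pi_2$ are local isomorphisms at $(c,q)$, so $\psi_{x,0}^{-1}=\pi_1\circ\pi_2^{-1}$ is a morphism on $U$, automatically injective as a right inverse of $\psi_{x,0}$, hence the desired open immersion onto its image.

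The first step rules out positive--dimensional components of $\pi_2^{-1}(q)$. Such a component $Z$ cannot be mapped by $\pi_1$ to a single point $c$, since then $Z\subseteq \pi_1^{-1}(c)\cap\pi_2^{-1}(q)=\{(c,q)\}$ would be a single point. Thus $\pi_1(Z)$ contains an irreducible curve $D\subseteq\mathcal{E}_x$. If $D\not\subseteq \mathcal{D}_x$, then on the dense open $D\setminus \mathcal{D}_x$ the morphism $\psi_{x,0}$ takes the constant value $q$, which by Lemma \ref{finitepsi}(ii) forces $q\in J(D_x,D_x)\cup J_x$, contradicting $q\in U$. If instead $D\subseteq \mathcal{D}_x$, every point of $D$ parameterises a conic singular at $x$ whose component directions at $x$ lie in the finite set $D_x$; hence these directions are constant along $D$, and since both components of such a conic pass through $x$ they are determined by their directions there, so the singular conics themselves are constant along $D$, contradicting $\dim D=1$. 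Combined with the connectedness of $\pi_2^{-1}(q)$ this forces a single point $(c,q)$.

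It remains to exclude $c\in\mathcal{D}_x$. Were this the case, $q$ would arise as a limit tangent direction at $x$ of smooth conics of $\mathcal{C}$ degenerating to the singular conic $C$ corresponding to $c$; since these conics are planar, such a limit must lie on the line $r_C\subseteq E_x$ associated with the plane $\Pi_C$ spanned by $C$. When $C$ has rank two the two endpoints of $r_C$ lie in $D_x$, so $r_C\subseteq J(D_x,D_x)$; when $C$ has rank one, Lemma \ref{lem:tech} gives $r_C\subseteq J_x$. Either way $q\in J(D_x,D_x)\cup J_x$, again a contradiction, and the conclusion on uniqueness of the conic with tangent direction $q$ follows at once from the injectivity of $\psi_{x,0}^{-1}|_U$. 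The main obstacle in this plan is this last limit--tangent analysis, since it is the only step requiring geometric input beyond the combinatorial consequence of Lemma \ref{finitepsi}; everything else is a formal consequence of birationality and Zariski's Main Theorem.
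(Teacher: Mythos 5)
Your overall route is essentially the paper's: the paper resolves the indeterminacy of $\psi_{x,0}$ and uses Zariski's Main Theorem to produce, at a point $q$ where the inverse would be undefined, a curve of $\mathcal E_x$ contracted to $q$, which contradicts Lemma \ref{finitepsi}; you replace the resolution by the closure of the graph, which is a cosmetic difference (and in fact cleanly forces $\pi_1$ of a positive--dimensional fibre component to be a curve), and your closing analysis of a point $c\in\mathcal D_x$ (the limiting tangent direction lies on the line $r_C$, hence in $J(D_x,D_x)$ in the rank--two case and in $J_x$ via Lemma \ref{lem:tech} in the rank--one case) is exactly the argument with which the paper ends the proof of Lemma \ref{finitepsi}. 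So in substance you are reproducing the same chain of reductions.

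There is, however, one step that fails as written: in the case of a positive--dimensional component $Z$ of $\pi_2^{-1}(q)$ with $\pi_1(Z)=D\subseteq\mathcal D_x$, you argue that the component directions at $x$ lie in the finite set $D_x$, hence are constant along $D$, and that ``since both components of such a conic pass through $x$ they are determined by their directions there,'' so the conics are constant along $D$. This is correct for rank--two members (two distinct lines through $x$ are determined by their directions), but not for rank--one members: a double line is a conic only together with the plane in which it is doubled, so a one--parameter family of double structures $2Z$ on a fixed line $Z$ (hence with a single, fixed direction in $D_x$) lying in varying planes tangent to $X$ along $Z$ is not excluded by $\dim(D_x)\leq 0$, and along such a curve $D$ the conics are not constant. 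Fortunately the case is superfluous in your own setup: for any point $(c,q)$ of the graph closure with $c\in\mathcal D_x$, your final limit--tangent argument applies verbatim, giving $q\in r_C\subseteq J(D_x,D_x)\cup J_x$ (Lemma \ref{lem:tech} in the rank--one case), which is impossible for $q\in E_x\setminus J(D_x,D_x)\cup J_x$. Hence every point of $\pi_2^{-1}(q)$ has first coordinate outside $\mathcal D_x$, positive--dimensional components are then excluded by Lemma \ref{finitepsi} alone, and with this substitution your proof is complete.
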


\begin{proof}  Let 
\begin{equation*}\label{diag}\raisebox{.7cm}{\xymatrix{
&\widetilde{\mathcal{E}_x}  \ar[d]_\alpha \ar[dr]^{\widetilde{\psi}_{x,0}}&\\
&\mathcal E_x\ar@{-->}[r]&E_x,
}}
\end{equation*}
be a resolution of $\psi_{x,0}:\mathcal E_x\map E_x$.

Consider a point $q\in E_x\setminus J(D_x,D_x)\cup J_{x}$, and suppose that
$\widetilde \psi_{x,0}^{-1}$ is not defined at $q$. 
By Zariski Main Theorem, there is an irreducible, complete curve $C\subseteq \widetilde{\psi}_{x,0}^{-1}(q)$. Let $D=\alpha(C)$, which we may assume to be a curve by the assumption that 
$\widetilde \psi_{x,0}^{-1}$ is not defined at $q$. 
Clearly $D$ cannot be contained
in $\mathcal D_x$, which is a finite set, since by assumption $D_X$ is finite. This gives a contradiction to Lemma \ref{finitepsi}.

Thus  $\widetilde \psi_{x,0}^{-1}$ is defined at $q$, hence also $\psi_{x,0}^{-1}$ is defined at $q$.
If $(c,x)=\psi_{x,0}^{-1}(q)$, then the conic $C$ corresponding to $c$ is smooth at $x$, with tangent line having the direction corresponding to $q$, and $C$ is irreducible because $q\not\in D_x$.
\end{proof}

If $\chi(X)\leq 0$ one has

\begin{equation}\label {eq:cac}
\dim (J(D_x,D_x))\leq 1
\end{equation}

We also need an estimate on the dimension of $J_{x}$.

\begin{lemma}\label {lem:estimate}  If $\chi(X)\leq 0$, one has $\dim(J_x)\leq n-3$.
\end{lemma}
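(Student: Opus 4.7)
The plan is to reduce $\dim J_x \leq n-3$ to the standard fact that the exceptional locus of a birational map to a smooth variety has codimension at least $2$, applied to Scorza's map $\psi_{x,0}$.

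By Lemma \ref{lem:sort}, $\psi_{x,0}\colon \mathcal{E}_x \dasharrow E_x$ is birational between varieties of dimension $n-1$. Since $E_x \simeq \p^{n-1}$ is smooth (hence normal), the indeterminacy locus $\Sigma \subseteq E_x$ of $\psi_{x,0}^{-1}$ has codimension at least $2$ in $E_x$, giving $\dim \Sigma \leq n-3$. By Lemma \ref{locuspsi}, $\Sigma \subseteq J(D_x,D_x) \cup J_x$, and by \eqref{eq:cac} we have $\dim J(D_x,D_x) \leq 1$. Consequently the desired estimate will follow once I show that every component of $J_x$ of dimension at least $2$ is contained in $\Sigma$.

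For this, I fix $z \in D_x$---a finite set under the hypothesis $\chi(X)\leq 0$---and a general direction $q \in T_z L_x$, and construct a positive-dimensional family of conics in $\mathcal{E}_x$ whose image under $\psi_{x,0}$ collapses to $q$. Lemma \ref{lem:tech} supplies the starting point: for a rank-$1$ singular conic $C_0 \in \mathcal{S}_x$ with double line $\ell_z$, the line $r_{C_0}$ lies in $T_z L_x$, and blowing up the corresponding $(c_0,x) \in \mathcal{D}_x$ contracts an exceptional divisor onto $r_{C_0}$, so $r_{C_0} \subseteq \Sigma$. By degenerating $C_0$ through rank-$2$ members of $\mathcal{S}_x$ whose components coalesce along tangent directions to $L_x$ at $z$, the associated lines $r_C$ should sweep out all of $T_z L_x$ in the Zariski closure, via an application of Lemma \ref{finitepsi} to the degenerating family.

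The hardest point will be handling the case when $L_x$ is singular at $z$, where $\dim T_z L_x$ may strictly exceed $\ell(X)$. In that situation one must confirm that the $r_C$-lines of the degenerating singular conics actually cover the full Zariski tangent cone to $L_x$ at $z$, which requires a careful scheme-theoretic analysis of $\mathcal{S}_x$ around $C_0$ and an iterated use of Lemma \ref{finitepsi} applied to rank-$2$ conics accumulating at the double line $2\ell_z$.
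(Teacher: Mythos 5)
Your reduction has a genuine gap at its central step, and it is the step you yourself defer. The codimension bound for the indeterminacy locus $\Sigma$ of $\psi_{x,0}^{-1}$ is fine ($E_x\cong\p^{n-1}$ is smooth, so $\dim\Sigma\leq n-3$), but it says nothing about $J_x$ until you prove that every component of $J_x$ of dimension at least $2$ lies inside $\Sigma$. Lemma \ref{locuspsi} gives only the inclusion $\Sigma\subseteq J(D_x,D_x)\cup J_x$; the reverse containment is nowhere available and is not plausible as stated: a general direction $q$ in the linear space $T_{L_x,z}$, $z\in D_x$, may perfectly well carry a unique irreducible conic of $\Cc_x$ tangent to it, so that $\psi_{x,0}^{-1}$ is defined at $q$ --- membership of $q$ in the Zariski tangent space of $L_x$ at the \emph{other} point $z$ does not by itself force any degeneration. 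Your sketch for producing a positive-dimensional contracted family over such a $q$ cannot be run on Lemma \ref{finitepsi}: that lemma only says that \emph{if} a contracted curve exists, the contracted point lies in $J(D_x,D_x)\cup J_x$, an implication in the direction opposite to the one you need; it does not manufacture contracted families over a prescribed point of $T_{L_x,z}$. Moreover, degenerating rank-$2$ conics gives you at best directions in the tangent cone (or in the span of the finitely many lines $r_C$), whereas $J_x$ is built from the full Zariski tangent space $T_{L_x,z}$, which is strictly larger exactly in the non-reduced or singular situations where the lemma has content; your closing paragraph concedes this ("the hardest point") without resolving it, so the proposal is a plan rather than a proof.

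For comparison, the paper does not go through the indeterminacy locus at all; the bound is extracted from the second fundamental form. If $T_{L_x,z}$ were all of $E_x$, then by Lemma \ref{lem:tech} every quadric of ${\rm II}_{X,x}$ is a cone with vertex $z$, so the map they define has positive-dimensional fibres, contradicting the generic finiteness of $\tilde\tau_{x\vert E}$ onto $X_1$ (Lemma \ref{lem:secondff}, using $f(X)=1$). If $T_{L_x,z}$ were a hyperplane, a quadric of ${\rm II}_{X,x}$ singular at a general $y\in E_x$ would be singular along the whole line $\langle y,z\rangle$, forcing positive-dimensional Gauss fibres for $X_1$, against the generic finiteness of the Gauss map of $X_1$ coming from $\gamma(X)=f(X)$. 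If you want to salvage your route, you would have to prove the containment of the large components of $J_x$ in $\Sigma$ by some argument of this analytic/infinitesimal type anyway, which is essentially the paper's proof in disguise.
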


\begin{proof} Since $\dim(D_x)\leq 0$, $J_{x}$ is a union of finitely many linear spaces $T_{L_{x},z}$ 
with $z\in D_x$. We have to prove that none of these spaces has dimension $m\geq n-2$. 

Suppose first  there is a point $z\in D_x$ such that $T_{L_{x},z}$ has dimension $n-1$, i.e., it coincides with 
$E_x$. By Lemma \ref {lem:tech}, all quadrics in ${\rm II}_{X,x}$ are cones with vertex at $z$. Then the map defined by ${\rm II}_{X,x}$ has fibres of positive dimension, a contradiction, since this map coincides with $\tilde \tau_{x|E}: E\dasharrow X_1$ which is dominant (see Lemma \ref {lem:secondff}: remember that $f(X)=1$ so that the general fibre of $\tau_x$ is an irreducible conic), hence generically finite.   

Suppose next there is a point $z\in D_x$ such that $T_{L_{x},z}$ is a hyperplane in $E_x$. Then if a quadric in  ${\rm II}_{X,x}$ has a double point at a general point $y\in E_x$, then it is singular along the whole line $\langle y,z\rangle$. Therefore the general Gauss fibre of  the image of ${\rm II}_{X,x}$  has positive dimension. Since the image of $E$ via the map determined by ${\rm II}_{X,x}$ is $X_1$, we find a contradiction because, being $\gamma(X)=f(X)$, the Gauss map of $X_1$ is generically finite (see Remark \ref  {conctactin}).  \end{proof}

\subsection{} We go back now to the study of Scorza's map $S_x$. If $\chi(X)\leq 0$, by 
\eqref {eq:cac} and Lemma \ref {lem:estimate}, a general line $l \subset E_x$ does not intersect
$J(D_x,D_x)\cup J_{x}$. It does not intersect the base locus $B_{x}$ of ${\rm II}_x$ either: indeed ${\rm II}_x$ has no fixed 
hyperplane because $X_1$ is not a linear space. Hence, by Lemma \ref {locuspsi}, 
$L=\psi_{x,0}^{-1}(l)$ is a smooth irreducible rational curve 
such that for every point $(c,x)\in L$ the conic $C$ corresponding to $c$ is smooth.
By base change
we obtain a  surface $S$ with a morphism $\beta:S\to L=\p^1$, such that every fiber $F$ of $\beta$ is irreducible and maps to a  conic in $X$ through $x$
via the tautological morphism $\phi_S: S\to X$.  We set $Y=\phi_S(S)$. 
The surface $S$ is isomorphic to a surface $\FF_e$, with $e\geq 0$. The section $E_0$ with $E^ 2_0=-e$ is contracted to $x$ on $Y$, hence $e\geq 1$. 

The next lemma is the main tool for the study of Scorza's map.

\begin{lemma}\label{Veronese} In the above setting, suppose again $\chi(X)\leq 0$.
Then $Y$ is a  Veronese surface  $V_{2,2}$.\end{lemma}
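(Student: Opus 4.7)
The plan is to identify $Y$ explicitly by computing the linear system defining $\phi_S\colon S=\FF_e\to Y\subset\p^r$ and then forcing $e=1$ via a normal bundle argument. First I would check that $\phi_S$ is birational onto $Y$: the conics in $L$ all pass through $x$ with pairwise distinct tangent directions there, because $\psi_{x,0}|_L\colon L\to l$ is bijective by generality of $l$ (Lemma~\ref{locuspsi}); by the defining property of $\Cc$, two distinct conics of $\Cc$ cannot share two points, so two conics of $L$ meet only at $x$. Hence through a general $p\in Y\setminus\{x\}$ there passes a unique conic of $L$, giving birationality.

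Next I would compute the pullback of the hyperplane class. Writing $\phi_S^*\O_Y(1)\equiv aE_0+bF$ on $\FF_e$, intersection with a general fiber $F$ yields $a=2$, since $\phi_S|_F\colon F\cong\p^1\to C$ is an isomorphism onto a smooth conic. A general hyperplane section of $Y$ avoids $x=\phi_S(E_0)$, so $\phi_S^*\O_Y(1)\cdot E_0=0$, whence $-2e+b=0$, i.e.\ $b=2e$. Thus $\phi_S^*\O_Y(1)\equiv 2E_0+2eF$ and $\deg Y=(2E_0+2eF)^2=4e$.

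The core step, which I expect to be the main obstacle, is to show that $e=1$ via a normal bundle computation. Since $\phi_S$ contracts $E_0$ to $x$, its differential factors through the normal bundle, giving a morphism of sheaves on $E_0$:
\[
d\phi_S\colon N_{E_0/S}\longrightarrow \phi_S^*T_X|_{E_0}=T_{X,x}\otimes\O_{E_0}.
\]
At a point $(c,x)\in E_0\cap F$ the normal direction to $E_0$ is that of moving along $F$, and since $\phi_S|_F$ is an isomorphism onto the smooth conic $C$, the image of $d\phi_S$ at $(c,x)$ is exactly $T_{C,x}\subset T_{X,x}$. Hence the image of $d\phi_S$ is the line sub-bundle $\L$ of $T_{X,x}\otimes\O_{E_0}$ with fiber $T_{C,x}$ at $(c,x)$. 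Under the identification $E_0\cong L\cong l$ provided by $\beta$ and $\psi_{x,0}|_L$, this $\L$ is the pullback of the tautological sub-bundle $\O_{E_x}(-1)|_l$, so $\L\cong\O_{\p^1}(-1)$. The morphism $N_{E_0/S}\to\L$ is injective and fiberwise nonzero (again because $\phi_S|_F$ is an isomorphism), hence an isomorphism of line bundles on $\p^1$. Therefore $-e=\deg N_{E_0/S}=\deg\L=-1$, i.e.\ $e=1$.

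Finally, with $e=1$ the morphism $\phi_S$ is defined by $|2E_0+2F|$ on $\FF_1$, which is the pullback through the blow-down $\pi\colon\FF_1\to\p^2$ of $|\O_{\p^2}(2)|$. Consequently $Y$ is the image of $\p^2$ under the $2$-Veronese embedding, i.e.\ $Y=V_{2,2}\subset\p^5$, as required.
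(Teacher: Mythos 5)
Your normal-bundle argument for $e=1$ is correct and is a genuinely different route from the paper's: the paper instead introduces the unisecant curve $E_1\in\vert E_0+eF\vert$, shows that $\phi_S(E_1)$ misses $T_{X,x}$ and is projected by $\tau_x$ onto the conic $\tilde\tau_x(l)$ with degree $2$, and gets $2=H\cdot E_1=2e$; your observation that the differential of $\phi_S$ along the contracted section lands fibrewise in the tangent direction of the corresponding conic, so that $N_{E_0/S}$ is identified with the pullback of the tautological subbundle $\O_{E_x}(-1)\vert_l$, reaches $e=1$ more directly and is sound (the fibrewise nonvanishing uses that every conic parameterized by $L$ is smooth at $x$, which Lemma \ref{locuspsi} and the choice of $l$ do guarantee).

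However, there is a genuine gap at the final step. Knowing $\phi_S^*\O_Y(1)\equiv 2E_0+2F$ on $\FF_1$ only says that $\phi_S$ is given by \emph{some} linear subsystem of $\vert 2E_0+2F\vert$, namely the one cut out by the hyperplanes of $\p^r$, whose dimension equals $\dim\langle Y\rangle$; if that subsystem is incomplete, $Y$ is a proper projection of $V_{2,2}$ into $\p^4$ or $\p^3$ (a quartic, possibly a Steiner-type surface), and the lemma as stated fails. So you still have to prove $\dim\langle Y\rangle\geq 5$. This is precisely what the first half of the paper's proof does: it first shows $T_{X,x}\cap Y=\{x\}$ (if $z\in Y\cap T_{X,x}$ with $z\neq x$, the plane of the conic through $x$ and $z$ would lie in $T_{X,x}$ and its tangent direction at $x$ would be a base point of ${\rm II}_x$ lying on $l$, excluded by the generality of $l$), and then uses that $\tau_x$ maps $Y$ onto a conic while the centre $T_{X,x}$ meets $Y$ only at $x$, so that $\langle Y\rangle$ contains the span of the tangent cone at $x$ (at least a plane) together with enough to dominate the plane of the conic, giving $\dim\langle Y\rangle\geq 5$ and hence completeness of the system. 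Without an argument of this kind your conclusion ``$Y=V_{2,2}$'' does not follow. A minor further point: your justification of birationality of $\phi_S$ overreaches, since the defining property of $\Cc$ forbids two conics through two \emph{general} points of $X$, not two conics of $L$ sharing a second, special, point; this is harmless, though, because once completeness is known both birationality and $Y=V_{2,2}$ are automatic, and your $e=1$ step never used it.
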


\begin{proof} First of all we claim that $T_{X,x}\cap Y=\{x\}$. Indeed, suppose $z\in Y$ is different from
$x$. Then there is  a smooth conic $C$ on $Y$ containing $x$ and $z$. The line $\langle x,z \rangle$ intersects transversally $C$ at $x$. 
If $z\in T_{X,x}$, then the plane $\langle C\rangle$ would be contained in $T_{X,x}$ because it containes two distinct tangent lines to $X$ at $x$, namely $\langle x,z \rangle$ and the tangent line to $C$ at $x$. 
Moreover the tangent line to $C$ at $x$ has clearly  intersection multiplicity larger than 2 with $X$ at $x$ (because it is the limit of lines tangent at $x$ and secant to $X$ at a general point of $C$)  and therefore the  point in $E_x$ corresponding to its direction sits in the base locus $B_{x}$ of ${\rm II}_x$, a contradiction. 

Consider now the tangential projection $\tau_x:X\map X_1$. The restriction of $\tau_x$ to $Y$ is thus defined on $Y\setminus\{x\}$ and
$\tau_x(Y)=\tilde \tau_x(l)$ is, by the genericity of $l$, a conic $\Gamma$. This yields $\dim (\langle Y\rangle)\geq 5$, because $Y$ is projected from the span of its tangent cone at $x$, which is at least a plane.

Let $E_1\subset S$ be the general curve in the linear system $\vert E_0+eF\vert$, where $F$ is a fibre of the ruling of $S\cong\FF_e$ and $E_0$ is as usual the negative section.
Then $E_1$ does not intersect $E_0$, therefore 
 $\phi_S(E_1)$ does not pass through $x$ so that it is projected onto $\Gamma$, yielding $\deg(\phi_S(E_1))=2$. Let $H$ be a general divisor in the linear system $\vert \phi_S^*(\cal O_Y(1))\vert$.
 Then $H\equiv 2E_0+bF$ (remember that the curves in $|F|$ are mapped to conics on $Y$, so $H\cdot F=2$). Since $0=H\cdot E_0=-2e+b$, we have $H\equiv 2(E_0+eF)$. Moreover, as we saw, $2=H\cdot E_1=2E_1^ 2=2e$, hence $e=1$. This implies the assertion.\end{proof}

We can now prove our main result about Scorza's map which can be seen as a strong improvement of Lemma \ref {lem:secondff}.

\begin{theorem}\label{nu1f}
Let $X\subset\p^r$, with $r\geq 2n+1$, be an irreducible, non-degenerate, defective variety, of dimension $n\geq 4$. Assume that $X$ is of the top species, presents the irreducible case and $\chi(X)\leq 0$. Then $r\leq {\frac{n(n+3)}{2}}$ and $X$ is a projection to $\p^ r$ of the Veronese variety $V_{n,2}$. \end{theorem}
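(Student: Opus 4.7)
The objective is to show that $S_x^{-1}$ is defined by a linear system of \emph{quadrics} on $T_{X,x}\cong \p^n$; granting this, the statements about $r$ and about $X$ being a projection of $V_{n,2}$ follow immediately, since $\dim |\mathcal O_{\p^n}(2)|=\binom{n+2}{2}-1=\frac{n(n+3)}{2}$ and $V_{n,2}$ is the image of $\p^n$ under the complete linear system of quadrics. By Proposition \ref{prop:scorza}, the rational map $S_x^{-1}:T_{X,x}\dasharrow X\subset\p^r$ is defined by a linear system $\mathcal L$ of hypersurfaces of degree $d$ with a point of multiplicity $d-2$ at $x$; so the heart of the argument is to show $d=2$.

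I would then pick a general $2$--plane $T$ through $x$ in $T_{X,x}$, and consider a line $r\subset T$ with $x\notin r$. The pencil of tangent directions $\ell=\{\langle x,y\rangle : y\in r\}\subset E_x$ is a general line in the exceptional divisor $E_x$, so Lemma \ref{Veronese} applies and the surface $Y$ swept out by the conics in $\mathcal C_x$ with tangent directions in $\ell$ is a Veronese surface $V_{2,2}\subset\p^5$, with $T_{Y,x}=T$. Next I claim that $S_x^{-1}(T)=Y$: for $y\in T\setminus\{x\}$, $S_x^{-1}(y)$ lies on the unique conic $C_y\subset Y$ tangent to $\langle x,y\rangle$ at $x$, and conversely, for $z$ on any conic $C\subset Y$ with tangent direction in $\ell$, $S_x(z)=T_{C,x}\cap T_{C,z}\in T_{C,x}\subset T$. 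Because $S_x$ is birational, the restriction $S_x^{-1}|_T:\p^2\dasharrow Y=V_{2,2}$ is a birational map, defined by the restricted linear system $\mathcal L|_T$ of plane curves of degree $d$ with a point of multiplicity $d-2$ at $x$.

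The key auxiliary point is that, for general $T$, the base scheme of $\mathcal L|_T$ is supported only at $x$. This is where Lemma \ref{lem:estimate} together with \eqref{eq:cac} come in: the indeterminacy locus of $\psi_{x,0}^{-1}$ in $E_x$ has dimension at most $\max\{1,n-3\}=n-3$ (for $n\ge 4$), hence a general line $\ell\subset E_x\cong\p^{n-1}$ avoids it; away from $x$, the points of $T$ at which $S_x^{-1}$ fails to be defined project onto this avoided locus in $E_x$, so $\mathcal L|_T$ has no base point on $T\setminus\{x\}$. Then the standard formula for the degree of the image of a birational map $\p^2\dasharrow\p^N$ defined by a linear system of degree $d$ with a single base point of multiplicity $d-2$ gives
\[
\deg V_{2,2}=d^{2}-(d-2)^{2}=4d-4.
\]
Since $\deg V_{2,2}=4$, this forces $d=2$.

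Consequently $\mathcal L$ is a subsystem of $|\mathcal O_{\p^n}(2)|$, so $X$ is a linear projection of $V_{n,2}\subset\p^{n(n+3)/2}$ and $r\leq \frac{n(n+3)}{2}$. The main obstacle I anticipate is the combination of two things in the intermediate step: identifying $S_x^{-1}(T)$ with the Veronese surface produced by Lemma \ref{Veronese}, and controlling the base locus of $\mathcal L|_T$ so that the Cremona-type degree formula yields exactly $d=2$ rather than a weaker inequality. Both rely essentially on the previously established surface construction (Lemma \ref{Veronese}) and the $\chi(X)\leq 0$ dimension estimates (Lemma \ref{lem:estimate} and \eqref{eq:cac}).
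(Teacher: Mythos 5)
Your overall route is the same as the paper's: restrict $S_x^{-1}$ to a general plane $T$ through $x$, identify its image with the Veronese surface produced by Lemma \ref{Veronese}, and combine this with the structure of the defining system from Proposition \ref{prop:scorza} to force $d=2$; your control of the base locus away from $x$ via \eqref{eq:cac} and Lemma \ref{lem:estimate} is exactly the kind of detail the paper leaves implicit. The gap is in the concluding degree computation. The identity $\deg V_{2,2}=d^{2}-(d-2)^{2}$ is valid only if the \emph{entire} base scheme of $\mathcal L|_T$, including infinitely near points, is the single ordinary point $x$ of multiplicity $d-2$; what you have established is only that the base scheme is \emph{supported} at $x$. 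If there are infinitely near base points over $x$, the correct relation is $4=d^{2}-(d-2)^{2}-\sum m_i^{2}$ with a nonnegative extra sum, i.e. $4\leq 4d-4$, which only yields $d\geq 2$ and is vacuous. Moreover, the facts you list do not by themselves exclude this: the system of quartics whose base cluster is $x$ with multiplicity $2$ together with a chain of two infinitely near double points defines a birational map $\p^2\dasharrow V_{2,2}$ (a quadratic Cremona map followed by the Veronese embedding) whose base scheme is supported at $x$, has multiplicity $d-2=2$ there, and sends general lines through $x$ to conics of $V_{2,2}$; so $d=4$ is compatible with everything you invoke.

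What closes the gap is a further use of the geometry of the conic family, i.e. of Lemma \ref{Veronese} itself rather than only its conclusion. For every $q\in\ell$ (all directions on a general $\ell$ being good), the explicit second--tangent description of $S_x^{-1}$ on the line with direction $q$ shows that $S_x^{-1}|_T(y)\to x$ as $y\to x$; hence on any resolution of $S_x^{-1}|_T$ the whole exceptional locus over $x$ is contracted to the single point $x\in Y$, so the base point free pullback of $\mathcal O_Y(1)$ has degree $0$ on every exceptional curve over $x$. Applied to the last point of any chain of infinitely near base points this is a contradiction, and applied to the exceptional curve of $\mathrm{Bl}_x T$ it gives $d-2=0$. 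Equivalently, the fibrewise second--tangent maps give an identification of $\mathrm{Bl}_x T$ with the surface $S\cong\FF_1$ of Lemma \ref{Veronese}, under which $\mathcal O_Y(1)$ pulls back to $2(E_0+F)$, i.e. to the class of conics of $T$; this is presumably what the paper compresses into the sentence that the restriction to a general plane mapping onto $V_{2,2}$ ``implies that $S_x^{-1}$ is defined by a linear system of quadrics''. With this extra step your argument becomes complete; without it, the degree formula alone does not force $d=2$.
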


\begin{proof} By Lemma \ref {Veronese}, the restriction of $S_x^ {-1}$ to a general plane $\Pi$  through $x$ maps $\Pi$ to a Veronese surface $V_{2,2}$. This implies that $S_{x}^ {-1}$ is defined by a linear system of quadrics (see Proposition \ref  {prop:scorza}), whence the claim follows.
\end{proof}

\subsection{} The previous results suffice for the classification of defective 4--folds of the top species
presenting the irreducible case. Recall the definition of $b(X)$ from \S \ref {subsec:sff}.

\begin{proposition}\label{baselocus} Let $X\subset \p^ r$, with $r\geq 9$, be a
non--degenerate, defective variety, of dimension $4$. Assume that $X$ is of the top species and presents the irreducible case.  Then $b(X)\leq 1$ and if the equality holds then, for a general point $x\in X$, one has:

\begin{itemize}
\item [(i)] either $B_x$ is a smooth conic and $r=9$;
\item [(ii)] or $B_x$ is a line and $r\leq 11$;
\item [(iii)] or $B_x$ is the union of a line $L$ and of a point $p\not\in L$, and $r\leq 10$.
\end{itemize}
In cases (ii) and (iii), one has $\epsilon (X)=2$. 
\end{proposition}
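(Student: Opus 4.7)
The proof centers on the second fundamental form ${\rm II}_x$ of $X$ at a general point $x$. Since $n=4$, ${\rm II}_x$ is a linear system of quadrics on the projective tangent space $\p_{X,x}\cong\p^3$. The assumption $f(X)=1$ together with the fact that the general conic of $\Cc$ through $x$ is irreducible (so the general fibre of $\tau_x$ is irreducible and contains $x$) put us in the hypotheses of Lemma \ref{lem:secondff}: $\dim({\rm II}_x)=r-5\geq 4$ and the rational map $\tilde\tau_{x\vert E}:\p^3\map X_1\subset\p^{r-5}$ induced by ${\rm II}_x$ is dominant onto the non--degenerate 3--fold $X_1$, with generically finite fibres of dimension $f(X)-1=0$.

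To show $b(X)\leq 1$, I would rule out $b(X)\in\{2,3\}$. The case $b(X)=3$ is absurd since $B_x=\p^3$ forces ${\rm II}_x$ to vanish identically. If $b(X)=2$, then $B_x$ contains a hypersurface $S\subset\p^3$; for $\deg(S)\geq 2$ the whole system reduces to a single quadric, contradicting $\dim({\rm II}_x)\geq 4$, while for $S$ a plane $H$ every $Q\in{\rm II}_x$ factors as $Q=H\cdot L_Q$ so, off $H$, the map is governed by the linear forms $\{L_Q\}$ and its image sits in a linear subspace of $\p^{r-5}$. This contradicts the non--degeneracy of $X_1$ in $\p^{r-5}$ with $r\geq 9$.

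Assume $b(X)=1$ and let $Z$ be the reduced $1$--dimensional part of $B_x$. From ${\rm II}_x\subseteq H^0(\mathcal I_Z(2))$ one obtains $h^0(\mathcal I_Z(2))\geq r-4\geq 5$. Standard postulation estimates for curves in $\p^3$ then restrict $Z$ to one of: a line $L$, a smooth planar conic, or a reducible planar conic (two concurrent lines); the cases of two skew lines or an irreducible curve of degree $\geq 3$ give $h^0\leq 4$, hence $r\leq 8$, excluded. To discard the reducible--conic subcase and also to bound the isolated points of $B_x$ off a line, one invokes Remark \ref{conctactin}: since $\gamma(X)=f(X)=1$ one has $t(X_1)=0$, so $X_1$ is not a cone. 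An explicit coordinate computation of the image of $\p^3$ under the quadrics with the prescribed base locus shows that both the reducible--conic configuration and the configuration ``a line plus two or more isolated points off it'' force $X_1$ to be a rank--$4$ quadric threefold in $\p^4$, hence a cone, contradicting $t(X_1)=0$. The three surviving configurations are precisely cases (i), (ii), (iii); the bounds $r=9$, $r\leq 11$, $r\leq 10$ follow from the corresponding values of $h^0(\mathcal I_{B_x}(2))$.

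For the assertion $\epsilon(X)=2$ in cases (ii) and (iii), Remark \ref{rem:acca} reduces it to proving $d(X_1)=1$. In case (iii) the explicit linear system identifies $X_1$ with the Segre 3--fold $\Seg(1,2)\subset\p^5$, whose dual variety is again $\Seg(1,2)\subset(\p^5)^*$, a 3--fold in place of the expected hypersurface, so $d(X_1)=1$; a parallel computation gives $d(X_1)=1$ in case (ii). The opposite inequality $\epsilon(X)\leq 2$ is immediate from Lemma \ref{lem:mbus}, since $\gamma(X)=1<n-1$ forces $\epsilon(X)\neq n-1=3$. The main obstacle I foresee is the case analysis excluding the non--listed base loci --- the reducible planar conic and the ``line plus two extra points'' configuration --- where recognizing the image $X_1$ as a cone (a rank--$4$ quadric in $\p^4$) through explicit coordinates is the decisive step; the computation of $d(X_1)=1$ in the remaining cases is technical but parallels the classical dual--defect analysis of Segre varieties.
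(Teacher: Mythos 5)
Your strategy is essentially the paper's (second fundamental form via Lemma \ref{lem:secondff}, analysis of the base locus of ${\rm II}_x$, exclusion of bad configurations by showing $X_1$ would be a cone against $t(X_1)=0$, and $\epsilon(X)=2$ via Remark \ref{rem:acca}), but your case analysis has a gap: you enumerate only \emph{reduced} base loci, namely a one--dimensional part plus isolated reduced points, and non--reduced structures are not killed by your postulation counts. The crucial missing configuration is $B_x$ equal to a line $L$ with an embedded point, i.e.\ the case in which all quadrics of ${\rm II}_x$ are tangent to a fixed plane $\Pi\supset L$ at some point $p\in L$: with $L=\{x=y=0\}$, $p=[0:0:1:0]$, $\Pi=\{x=0\}$, this is the system $\langle x^2,xy,y^2,xz,xw,yw\rangle$, of projective dimension $5\geq 4$, hence perfectly compatible with $r\geq 9$, and if it occurred the trichotomy (i)--(iii) would fail. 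The paper rules it out with a dedicated argument: the quadrics of ${\rm II}_x$ singular at a general $q\in E_x$ would then be singular along the whole line $\langle p,q\rangle$, so $X_1$ would have positive--dimensional Gauss fibres, contradicting $\gamma(X)=f(X)$ via Remark \ref{conctactin}. (Your own coordinate method would also work here, since $z$ occurs only in the monomial $xz$, so the image of the displayed system is a cone; similarly, ``$L$ plus a non--reduced point off $L$'' turns out to collapse into the reducible--conic case because the corresponding complete system has two concurrent lines as base locus --- but these checks have to be made.)

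A second, smaller point concerns $\epsilon(X)=2$: identifying $X_1$ with $\Seg(1,2)\subset\p^5$ (case (iii)), or with the image of the complete system of quadrics through a line (case (ii)), is only literally correct when ${\rm II}_x$ is the full system, i.e.\ when $r$ is maximal; for smaller $r$ the system is a proper subsystem and $X_1$ is a projection of these models. The paper sidesteps this by noting that in cases (ii) and (iii) $X_1$ is in any event a threefold scroll in planes (the planes of $\p^3$ through $L$ map to planes), hence $d(X_1)>0$, which together with your appeal to Lemma \ref{lem:mbus} gives $\epsilon(X)=2$; you should phrase the argument at that level of generality. Apart from these points your proof runs parallel to the paper's, the only other difference being that you derive the list of admissible one--dimensional base curves by postulation counts where the paper invokes Theorem 6.1 of \cite{WDV}.
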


\begin{proof} Note that \eqref {eq:terr3} holds. Hence $\dim  ({\rm II}_{x} )\geq 4$
and therefore the positive dimensional irreducible components of $B_x$ are one dimensional and
either are lines or conics (see Thm. 6.1 of \cite {WDV}). 

If $B_x$ contains a conic $C$, then $B_x=C$. Moreover $C$ is smooth, otherwise $X_1$ would be a quadric cone in $\p^ 4$, contrary to the assumption that $X$ is of the top species, i.e., $\gamma(X)=f(X)=1$ (see Remark \ref {conctactin}). 

If $B_x$ contains a line $L$, then it cannot contain any other line, because the linear system of quadrics containing two skew lines has only dimension 3 and, as we saw, $B_x$ does not contain a reducible conic. Suppose there is a point $p\in L$ such that all quadrics in ${\rm II}_x$ are tangent to a given plane $\Pi$ at $p$. Then if $q\in  E_x$ is a general point, the quadrics in ${\rm II}_x$ singular at $q$ must be singular along the whole line $\langle p,q\rangle$. This implies that $X_1$ has general Gauss fibres of positive dimension, against the hypothesis that $X$ is of the top species (see again Remark \ref  {rem:acca}).  If there are two distinct points $p, q\in B_x$ off $L$, again $X_1$ would be a quadric cone in $\p^ 4$, 
which leads again to a contradiction. This proves that only cases (i), (ii), (iii) are possible. In case (i), one has $\dim  ({\rm II}_{x} )= 4$ and $r=9$. In case (ii), one has $\dim  ({\rm II}_{x} )\leq  6$ and $r\leq 11$. In case (ii), one has $\dim  ({\rm II}_{x} )\leq  5$ and $r\leq 10$.

In cases (ii), (iii), $X_1$ turns out to be a scroll in planes, and therefore $d(X_1)>0$ (see \S \ref {susec:difetti}). This proves the final assertion (see Remark \ref {rem:acca}).  \end{proof}

We can now prove the classification theorem for defective 4--folds of the top species
presenting the irreducible case. It suffices to assume the 4--fold to be linearly normal.

\begin{theorem}\label{nu1}  Let $X\subset \p^ r$, with $r\geq 9$, be a projective,
non--degenerate, linearly normal, defective variety, of dimension $4$. Assume that $X$ is of the top species and presents the irreducible case.  
Then $X$ is one of the following:

\begin{itemize}
\item [(i)]  an internal projection of the Veronese 4--fold $V_{4,2}\subset \p^{14}$ from finitely many points 
with the property that $\ell(X)\leq 0$;
\item [(ii)] the projection of  $V_{4,2} \subset\p^{14}$ from the plane spanned by a conic on it;
\item [(iii)] the projection of  $V_{4,2} \subset\p^{14}$ from a $4$--space spanned by a rational normal quartic curve on it;
\item [(iv)] a hyperplane section of  ${\rm Seg}(2,3) \subset\p^{11}$.
\end{itemize}
\end{theorem}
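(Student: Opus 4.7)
The argument splits according to the value of $\chi(X)=\dim D_x$.

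Assume first $\chi(X)\leq 0$. Then Theorem \ref{nu1f} applies and yields $r\leq 14$ together with a presentation of $X$ as the image of the Veronese 4-fold $V_{4,2}\subset \p^{14}$ under a projection $\pi_\Lambda$ from some linear subspace $\Lambda\subset \p^{14}$ of dimension $13-r$. The four cases of the theorem then correspond to four different choices of $\Lambda$, to be read off from the base locus $B_x$ of the second fundamental form through Proposition \ref{baselocus}. The guiding principle is that the family $\mathcal{C}_x$ of conics on $X$ through a general point $x$ lifts to the $3$-dimensional family of conics on $V_{4,2}$ through $y=\pi_\Lambda^{-1}(x)$ (these correspond to lines in $\p^4$ through the preimage of $y$), and those conics on $V_{4,2}$ whose plane meets $\Lambda$ project to lines on $X$ through $x$; their tangent directions at $y$ trace out $L_x$ and, in turn, determine $B_x$.

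If $b(X)\leq 0$, i.e. $B_x$ is at most finite, then $\Lambda\cap V_{4,2}$ must itself be finite (since $V_{4,2}$ contains no line), and $\pi_\Lambda$ is the internal projection of $V_{4,2}$ from these finitely many points, forcing $\ell(X)\leq 0$: this is case (i). If $B_x$ is a smooth conic, so by Proposition \ref{baselocus} one has $r=9$ and $\dim\Lambda=4$, then $L_x$ is a conic parameterizing a $\p^1$-family of lines on $X$ through $x$; lifting back, $\Lambda\cap V_{4,2}$ must be a rational normal quartic on $V_{4,2}$ whose $\p^4$-span equals $\Lambda$: this is case (iii). If $B_x$ is a single line, so $r=11$ and $\dim\Lambda=2$, then $L_x$ is a line, the lifted family yields $\Lambda\cap V_{4,2}$ a conic, and $\Lambda$ is the plane it spans: this is case (ii).

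The remaining subcase of Proposition \ref{baselocus}, namely $B_x$ equal to a line plus an isolated point and $r\leq 10$, signals the presence of two distinct families of lines on $X$ through $x$ (a pencil plus one extra line), which is not compatible with a Veronese projection of the above types. Together with the case $\chi(X)\geq 1$, for which Theorem \ref{nu1f} does not apply, it is handled separately. When $\chi(X)\geq 1$, the bound $\chi(X)\leq b(X)\leq 1$ from Proposition \ref{baselocus} forces $\chi(X)=b(X)=1$, and Lemma \ref{finitepsi} provides a $1$-parameter family of singular conics in $\mathcal{C}_x$, one component of each being a line on $X$ through $x$. In both situations one argues, using Scorza's birational map (Proposition \ref{prop:scorza}) and a comparison of Hilbert schemes of lines, that $X$ carries exactly the same structure (two rulings of lines, $4$-dimensional family of conics through two general points, etc.) as a hyperplane section of $\operatorname{Seg}(2,3)\subset \p^{11}$, yielding case (iv).

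The main obstacle is the matching step: recovering the center $\Lambda$ from the second fundamental form in cases (ii)--(iii) requires the tangent-direction calculation lifting $L_x$ from $X$ back to a subscheme of $V_{4,2}$, and disposing of the $\chi(X)\geq 1$ case asks one to rule out any other structure besides a Segre hyperplane section. In both respects, the combination of the rigidity of $V_{4,2}$ (which has no line) and the LQEL geometry of $X$ is what keeps the classification finite.
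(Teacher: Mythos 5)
Your first half follows essentially the paper's route: when the inverse of Scorza's map is quadratic (in particular when $\chi(X)\leq 0$, via Theorem \ref{nu1f}), one reads off the projection centre from Proposition \ref{baselocus} and lands in cases (i)--(iii). Two caveats there: the paper does the "matching step" you flag as an obstacle not by lifting $L_x$ to a subscheme of $V_{4,2}$, but by observing that for $z\in\p^4$ general, ${\rm II}_x$ is the projection from $z$ of the cones of the defining quadric system $\Lambda$ with vertex $z$, so Proposition \ref{baselocus} directly constrains the base locus $B\subset\p^4$ of $\Lambda$ (conic, line, or line plus point), and linear normality then pins down $\Lambda$ and hence $X$; and your dismissal of the line-plus-point case as ``not compatible with a Veronese projection'' is wrong in spirit -- in the quadratic branch this case does occur and is exactly case (iv), since $\Lambda$ is then the minimal sum of the hyperplanes through the line and through the point, exhibiting $X\subset\p^{10}$ inside $\Seg(2,3)$. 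Also note that the paper's primary dichotomy is $\ell(X)\leq 0$ versus $\ell(X)=1$, not $\chi(X)\leq 0$ versus $\chi(X)\geq 1$: one can have $\ell(X)=1$ with $\chi(X)\leq 0$, and then the quadratic analysis still applies, which is how (ii), (iii) and part of (iv) actually arise.

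The genuine gap is the case in which Scorza's inverse is not quadratic, i.e.\ $\chi(X)=\ell(X)=1$. You assert that ``one argues, using Scorza's birational map and a comparison of Hilbert schemes of lines, that $X$ carries exactly the same structure as a hyperplane section of $\Seg(2,3)$,'' but no argument is given, and this is precisely the hard part. The paper needs three specific inputs here: Rogora's classification of fourfolds with a $4$--dimensional family of lines (either a pencil of rank--$5$ quadric threefolds or a $2$--dimensional family of planes); an exclusion of the subcases where $B_x$ is a smooth conic or a single line, done by an intersection computation on a resolution showing that the rank--two conics of $\cal C_x$ singular at $x$ would be forced into the unique quadric (resp.\ plane) through $x$, contradicting $\chi(X)=1$ -- your sketch never rules these out, yet they are a priori possible under $\chi(X)\geq 1$ by Proposition \ref{baselocus}; and, in the surviving line-plus-point subcase, the construction of the two linear systems $\cal N_1,\cal N_2$ (completions of the pullbacks of $\O_{\p^1}(1)$ and $\O_{\p^2}(1)$ from the tangential image $\Seg(1,2)$), the dimension count $\dim\cal N_1=2$, $\dim\cal N_2=3$ using rationality of $X$ (Proposition \ref{prop:scorza}) and linear normality, and the identity $\cal N_1\oplus\cal N_2=\cal H$, which is what actually embeds $X$ as a hyperplane section of $\Seg(2,3)$. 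Lemma \ref{finitepsi} by itself only produces the singular conics; it does not yield the Segre structure. As it stands, the proposal names the expected answer in this branch rather than proving it.
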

\begin{proof}  If $\ell(X)\leq 0$,  then also $\chi(X)\leq 0$. Then, as we saw in Theorem \ref {nu1f}, $S_x^ {-1}$ is defined by a linear system $\Lambda$ of
quadrics in $\p^ 4$ and the base locus $B$  of $\Lambda$ is finite, thus we are in case (i).  

Assume that  $\ell(X)>0$. Since $b(X)\geq \ell(X)$, by Proposition \ref {baselocus} we have $\ell(X)=1$. 
Moreover, there is an irreducible component $\cal R$ of the Hilbert scheme of lines in $X$ of dimension $4$, thus by the results of \cite {Rog}, we have the following possibilities:
\begin{itemize}
\item [(1)] either $X$ is swept out by a 1--dimensional family $\cal Q$ of 3--dimensional quadrics of rank 5;
\item [(2)] or $X$ is swept out by a 2--dimensional family $\cal P$ of planes;
\end{itemize}
and the lines of the family $\cal R$ are the lines contained in the quadrics of $\cal Q$ in case (1), the lines contained in the planes of $\cal P$ in case (2).

By Proposition \ref {baselocus}, in case (1) we have only one quadric $Q$ of the family $\cal Q$ passing through the general point $x\in X$ and in case (2) we have only one plane $P$ of the family $\cal P$ passing through $x$. 

Suppose $S_x^ {-1}$ is still defined by a linear system $\Lambda$ of quadrics, which is the case if $\chi(X)\leq 0$ (see Theorem \ref {nu1f}).  If $z\in \p^ 4$ is a general point, mapped by $S_x^ {-1}$ to a general point $x\in X$, then ${\rm II}_x$ is projectively equivalent to the projection
in $\p^ 3$ from $z$ of the linear system of cones in $\Lambda$ with vertex in $z$. By Proposition \ref {baselocus} we conclude that:
\begin{itemize}
\item [(a)] either $B$ is a smooth conic;
\item [(b)] or $B$ is a line;
\item [(c)] or $B$ is the union of a line $L$ and of a point $p\not\in L$.
\end{itemize}
Cases (a), (b) lead to cases (iii) and (ii) respectively.  In case (c), $X$ is an internal projection of the 4--fold appearing in case (ii), so it sits in $\p^ {10}$. On the other hand  $X$ also sits on ${\rm Seg}(2,3)$, since $\Lambda$ is the sum of two linear systems of hyperplanes, the ones containing the line $L$, the others containing the point $p$, with dimension 2 and 3 respectively. So we are in case (iii). 

Assume now that $S_x^ {-1}$ is not defined by a linear system of quadrics, so that 
$\chi(X)=\ell(X)=1$.  Let us denote by $B$ the base locus of ${\rm II}_x$ for $x\in X$ general. Then by Proposition \ref {baselocus} we have again the three cases (a), (b) and (c). 

Case (a) corresponds to case (1) above, i.e. $X\subset \p^ 9$ is ruled by a 1--dimensional family $\cal Q$ of smooth 3--dimensional quadrics. Let $\pi: \tilde X\to X$ be a desingularization which also resolves the base locus of the pencil $\cal Q$. Let $\tilde {\cal Q}$ be the proper transform of $\cal Q$. Let $x\in X$ be a general point, which can be seen as a general point of $\tilde X$. Recall that $X$ possesses a family $\cal C$ of (generically irreducible) conics such that given two general points of $X$ there is a conic of $\cal C$ containing them. 
Consider the strict transform $\tilde {\cal C_x}$ on $\tilde X$ of the family $\cal C_x$ of conics of $\cal C$ through $x$.  If $C$ corresponds to the general point of $\tilde{\cal C_x}$, then clearly $C\cdot Q>0$ with $Q$  the general quadric in $\tilde {\cal Q}$. On the other hand the proper transform $C'$ on $\tilde X$ of  a reducible conic of $\cal C$ with a double point at $x\in X$  is
contained in the unique element $Q_x$ of $\tilde {\cal Q}$ passing through $x$ (because $Q_x$ contains all the lines in $X$ passing through $x$). Hence $C'\cdot Q=0$.
Therefore these reducible conics are not contained in $\cal C_x$, contrary to the assumption $\chi(X)=1$. Case (a) is thus excluded.

Cases (b) and (c) correspond to case (2) above, i.e., $X$ is ruled by a 2--dimensional family $\cal P$ of planes. In case (b) one comes to a contradiction with an argument similar to the one we made in case (a), which can be left to the reader. In case (c) we have $X\subset \p^ {10}$ and the image of the general tangential projection $\tau_x$ is $X_1={\rm Seg}(1,2)\subset \p^ 5$. Thus we have two rational maps $f_1: X\map \p^ 1$ and $f_2: X\map \p^ 2$. Set $\cal L_1= f_1^* \cal O_{\p^ 1}(1)$ [resp. $\cal L_2=
f_2^* \cal O_{\p^ 2}(1)$]. The minimal sum $\cal L_1\oplus \cal L_2$ coincides with $\cal H(-2x)$ and both linear systems $\cal L_1$ and $\cal L_2$ have a base point at $x$. By moving $x$ we see that 
 $\cal L_1$ and $\cal L_2$ vary in continuous families, and therefore in larger linear systems, since 
 $X$ is rational  (see Proposition \ref {prop:scorza}). If $\cal N_1$ and $\cal N_2$ are the complete linear systems containing $\cal L_1$ and $\cal L_2$, we  have $\dim (\cal N_1)=\dim (\cal L_1)+1=2$ and 
 $\dim (\cal N_1)=\dim (\cal L_1)+1=3$, because  $\cal L_1$ and $\cal L_2$ have the base point $x$ and, by their very definition, are complete under this constraint. Finally, since the minimal sum $\cal L_1\oplus \cal L_2$ coincides with $\cal H(-2x)$, we have $\cal N_1\oplus \cal N_2=\cal H$. By using 
 $\cal N_1$ and $\cal N_2$ we have a map $f: X\map {\rm Seg}(2,3)$, which realizes $X$ as a hyperplane section of  ${\rm Seg}(2,3)$, i.e., we are in case (iv). \end{proof} 
 
 \begin{remark}\label {rem:topsmooth} Let $X$ be as in Theorem \ref {nu1} and assume in addition that $X$ is smooth. Then, either we are in cases (ii), (iii) or (iv),  or  we are in case (i) and either $X$ is the Veronese variety $V_{4,2}$ in $\p^ {14}$, or  $X$ is the projection to $\p^ {13}$ of  $V_{4,2}$  from a point on it. Indeed, any internal projection of $V_{4,2}$ from more than one point is singular, because this projection contracts the unique conic through two (distinct or infinitely near) points
to a singular point. 
 \end{remark}

 \section{Top species: the reducible case}\label{sec:top2}

 \subsection {} Next we continue the analysis of varieties $X\subset \p^ r$ of dimension $n\geq 4$, with $r\geq 2n+1$, of the top species, but we  turn to the reducible case in which the general entry locus is a reducible conic. Remember that still we have $\delta(X)=f(X)=\gamma(X)=1$. 
 
 Let $p_0,p_1$ be general points
 of $X$. Then $\Gamma_{p_0,p_1}$ consists of two incident lines $R_0,R_1$, with
  $p_i\in R_i$ and $p_i\not\in R_{1-i}$, $i=0,1$. In this case $\Psi_{p_0,p_1}=R_0$ (see 
  Proposition \ref {difetti}). This implies, by monodromy, that $R_0,R_1$ move in one and the same irreducible family
  $\R$ of lines inside $X$ filling $X$.
   
 \begin{lemma}\label {lem:move} In the above setting, either $X$ is a cone with vertex a point
 over a non--defective variety of dimension $n-1$ or  $\dim(\R)\geq n$. 
  \end{lemma}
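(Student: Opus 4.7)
The plan is to translate the hypothesis $\dim(\mathcal{R})<n$ into the condition that through a general point of $X$ only finitely many lines of $\mathcal{R}$ pass, and then use the structure of the entry locus together with a classical fact about families of mutually incident lines.

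First I would parametrize the incidence $I=\{(R,p)\in \mathcal{R}\times X : p\in R\}$ and, using that lines of $\mathcal{R}$ fill $X$, record the equality $\dim(\mathcal{R})+1=n+\rho$, where $\rho$ denotes the dimension of the subfamily of $\mathcal{R}$ through a general point of $X$. Hence the assertion $\dim(\mathcal{R})\geq n$ is equivalent to $\rho\geq 1$, and the lemma reduces to: if $\rho=0$, then $X$ is a cone with vertex a point over a non--defective $(n-1)$--fold.

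So assume $\rho=0$, i.e.\ only finitely many lines of $\mathcal{R}$ pass through a general point $p_0\in X$. For $p_1\in X$ general, the hypothesis that the entry locus is reducible of the stated form yields a line $R_0(p_0,p_1)\in \mathcal{R}$ through $p_0$, a line $R_1(p_0,p_1)\in \mathcal{R}$ through $p_1$, and the point $R_0\cap R_1$; as $p_1$ varies in the $n$--dimensional set $X$ while the choice of $R_0(p_0,p_1)$ is constrained to a finite set, an irreducible component argument (equivalently, monodromy on $X\setminus\{p_0\}$) lets me fix one particular line $R_0\in \mathcal{R}$ through $p_0$ and a Zariski open subset $U\subseteq X$ of dimension $n$ such that $R_0(p_0,p_1)=R_0$ for every $p_1\in U$. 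Then the map $U\to \mathcal{R}$, $p_1\mapsto R_1(p_0,p_1)$, has image contained in the subfamily of lines of $\mathcal{R}$ meeting $R_0$; its fibres have dimension at most $\rho+1=1$ (each $R_1$ is hit by at most its own points and each of them meets finitely many lines of $\mathcal{R}$), so the image has dimension at least $n-1=\dim(\mathcal{R})$. Hence a Zariski open dense subset of $\mathcal{R}$ consists of lines meeting $R_0$.

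Doing this for a general $R_0$ and invoking the irreducibility of $\mathcal{R}$, I conclude that two general lines of $\mathcal{R}$ always meet; by closedness all pairs of lines of $\mathcal{R}$ are incident. Now I would invoke the elementary classical fact that an irreducible family of pairwise incident lines in $\mathbb{P}^r$ either lies in a common plane or passes through a common point: the former is impossible since the lines of $\mathcal{R}$ fill an $n$--fold with $n\geq 4$, so there is a common point $v$, and $X$ is a cone with vertex $v$. Writing $X$ as the join of $v$ with a hyperplane section $Y\subset\mathbb{P}^{r-1}$ of dimension $n-1$, Terracini's Lemma gives $T_{X,p_0}\cap T_{X,p_1}=\langle v, T_{Y,q_0}\cap T_{Y,q_1}\rangle$, so $f(X)=1$ forces $T_{Y,q_0}\cap T_{Y,q_1}=\emptyset$, i.e.\ $f(Y)=0$; thus $Y$ is non--defective, as required.

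The main obstacle is the step in which a single line $R_0\in\mathcal{R}$ through $p_0$ is singled out so that the locus of $p_1$ realizing it as the entry--locus component through $p_0$ is $n$--dimensional; this is really an application of irreducibility of components together with the finiteness coming from $\rho=0$, but it must be argued cleanly, since a priori the choice of $R_0(p_0,p_1)$ could be scattered among the finitely many lines of $\mathcal{R}$ through $p_0$.
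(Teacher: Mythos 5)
Your argument is correct, and its overall strategy is the one the paper follows: reduce to the case where only finitely many lines of $\mathcal R$ pass through a general point, use the uniqueness of the reducible tangential contact locus $\Gamma_{p_0,p_1}=R_0\cup R_1$ to pin down the line through $p_0$, deduce that the lines of $\mathcal R$ are pairwise incident, hence concurrent (the coplanar alternative being excluded because the lines fill the $n$--fold $X$), so that $X$ is a cone, with $f(X)=1$ then giving a point vertex and a non--defective base.

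The one place where you genuinely deviate is the mechanism for pairwise incidence. The paper first proves that there is a \emph{unique} line of $\mathcal R$ through the general point ($\ell=1$): the finitely many lines through $p_0$ are transitively permuted by monodromy, while the component $R_0$ of $\Gamma_{p_0,p_1}$ through $p_0$ singles one of them out, forcing $\ell=1$; after that, incidence of any two general lines is immediate, since $R_1$ (the unique line through $p_1$) always meets the fixed $R_0$. You never establish $\ell=1$; instead you fix, for a general $p_0$, the one line $R_0$ that serves as $R_0(p_0,p_1)$ for $p_1$ in a dense subset (your ``main obstacle'' is handled correctly: the level sets of $p_1\mapsto R_0(p_0,p_1)$ are finitely many disjoint constructible sets covering a dense set of an irreducible variety, so exactly one is dense), and then you get incidence with a dense subset of $\mathcal R$ by the fibre--dimension count on $p_1\mapsto R_1(p_0,p_1)$, followed by closedness of the incidence condition in $\mathcal R\times\mathcal R$. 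This buys you independence from the slightly delicate monodromy step (whose justification in the paper is terse, and whose citation there appears to be a misreference), at the price of one small point you leave implicit: to ``do this for a general $R_0$'' you need the distinguished lines $R_0(p_0)$, as $p_0$ varies, to be dense in $\mathcal R$ --- but this follows from the same fibre--dimension count (the fibre of $p_0\mapsto R_0(p_0)$ over a line is contained in that line), so it is a routine fill--in rather than a gap. You also make explicit the final step (vertex a point and $f(Y)=0$ for the base via Terracini), which the paper leaves implicit.
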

  
  \begin{proof} Since there is some line in $\R$ through the general point of $X$,
  one has $\dim(\R)\geq n-1$. Suppose the equality holds. Then that there is a finite number $\ell$ of lines of $\R$ passing through the general point  of $X$, and, by the irreducibility of $\R$, they are exchanged by monodromy.
  
  Let $p_0,p_1$ be general
  points of $X$ and let, as above,  $\Gamma_{p_0,p_1}=R_0\cup R_1$ be the unique tangential contact locus containing $p_0,p_1$, with
  $p_i\in R_i$ and $p_i\not\in R_{1-i}$, $i=0,1$.  Since, given $p_1$, the line $R_0\in \R$ through $p_0$ is uniquely determined, by the  Remark \ref {rem:topsmooth} this is the unique line in $\R$ containing $p_0$ and $\ell=1$. 
  
  If we let $p_0$ stay fixed and move 
  $p_1$, the line $R_0$ stays also fixed, whereas $R_1$, the unique line in $\R$ through $p_1$, always intersects $R_0$.
  This proves that the lines in $\R$ pairwise meet at a point, and therefore they
  all pass through a fixed point, thus $X$ is a cone.   \end{proof}
  
  The previous lemma suggests that our analysis would take advantage from 
  some classification result for  varieties of dimension $n$ with a family of lines of 
  dimension at least $n$. Unfortunately such a classification is missing
  in general, but it is luckily available for $n=4$ (see \cite {Rog}). Therefore
  we turn our attention to the 4--dimensional case. 
  
   \begin{theorem}\label{prop:lowrk2} Let $X\subset \p^ r$, $r\geq 9$, be
 a non--degenerate, linearly normal, defective variety of dimension 4
 of the top species, not a cone, presenting the reducible case. 
 Then $X$ is a scroll over a curve and therefore one of the cases listed in Theorem \ref {prop:lowrk} occurs. 
   \end{theorem}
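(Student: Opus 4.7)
The plan is to exploit Lemma \ref{lem:move} together with the only available classification of 4-folds containing a sufficiently large family of lines, namely Rogora's theorem \cite{Rog}, which the authors themselves advertise in the paragraph preceding the statement. First I would observe that under our hypotheses ($X$ not a cone, top species, reducible case) Lemma \ref{lem:move} gives $\dim(\mathcal R)\geq n=4$, i.e., through a general point of $X$ passes at least a $1$-dimensional irreducible family of lines of $\mathcal R$. Moreover, by Proposition \ref{lem:a}(iv), $\dim \Pi_{p_0,p_1}=2\gamma(X)+1-f(X)=2$, so the two incident lines $R_0,R_1$ of $\mathcal R$ realizing the reducible entry locus span a plane that is a Gauss fibre of $S(X)$; this plane varies in a 5-dimensional family as $(p_0,p_1)$ varies.

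Next I would invoke Rogora's classification of 4-folds with a $4$-dimensional (or larger) family of lines, as was already used in the proof of Theorem \ref{nu1}: the only structural possibilities, for $X$ not a cone, are that $X$ is a scroll in $3$-spaces (i.e., swept by a $1$-dimensional family of linear $\p^3$'s), or $X$ is swept out by a $1$-parameter family of $3$-dimensional quadrics of rank $\geq 4$, or $X$ is swept out by a $2$-parameter family of planes. I would then rule out the two non-scroll alternatives: if $X$ were ruled by a $1$-parameter family of $3$-folds quadrics of rank $\geq 4$, then through a general pair $p_0,p_1$ the two incident lines $R_0,R_1$ would have to belong to two different quadrics of the family (because two general points do not lie on the same such quadric), which contradicts $R_0\cap R_1\neq\emptyset$ combined with the fact that the base locus of the family has dimension at most $2$; similarly, if $X$ were ruled by a $2$-parameter family $\mathcal P$ of planes meeting pairwise at a point, the lines $R_0$ through $p_0$ would all lie in the unique plane $P_{p_0}\in\mathcal P$ through $p_0$, hence $R_1$ (through $p_1\notin P_{p_0}$) could meet $R_0$ only at the unique intersection point $P_{p_0}\cap P_{p_1}$, which, as $p_1$ moves, is incompatible with $f(X)=1$ (the fibre of $\tau_{p_1}$ would not be a curve through $p_0$).

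Having excluded the other Rogora cases, I would conclude that $X$ is a scroll in $3$-spaces over a curve, and then Theorem \ref{prop:lowrk} directly yields the listed possibilities. The main obstacle is the rigorous exclusion of the Rogora alternatives: in each case one must show that the rigid incidence pattern imposed by a \emph{reducible} entry locus of two incident lines of $\mathcal R$, each through one of two general points, is inconsistent with the geometry of that ruling, and this requires a careful tracking of which lines of $\mathcal R$ actually belong to the components of $\Gamma_{p_0,p_1}$ (as opposed to other lines of $X$ coming from the quadric/plane structure). Once the scroll structure is established, the reduction to Examples \ref{ex:ex1}--\ref{ex:ex3} is immediate from Theorem \ref{prop:lowrk}.
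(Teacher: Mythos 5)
Your overall skeleton (Lemma \ref{lem:move} plus the Segre--Rogora classification of $4$--folds carrying an at least $4$--dimensional family of lines, then elimination of the quadric ruling and of the plane ruling) is exactly the paper's strategy, but the two eliminations --- which are the entire content of the proof --- do not work as you state them. In the quadric case, the fact that $R_0\subset Q_{p_0}$ and $R_1\subset Q_{p_1}$ meet at a point is not in tension with anything: the meeting point simply lies in $Q_{p_0}\cap Q_{p_1}$, and there is no reason for two quadrics of the family to be disjoint; your appeal to ``the base locus of the family has dimension at most $2$'' excludes nothing, since a $2$--dimensional intersection is perfectly compatible with your configuration (and indeed the paper proves that $Q_{p_0}\cap Q_{p_1}$ \emph{is} a quadric surface). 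The paper uses the incident lines positively: if $\dim(Q_{p_0}\cap Q_{p_1})\leq 1$ there could be no incident lines through two further general points of the two quadrics, so the intersection is a quadric surface; hence the $4$--spaces spanned by the quadrics pairwise meet along $3$--spaces and therefore all contain a fixed $3$--space $\Pi$, so $X$ lies in a $5$--dimensional cone over a curve with vertex $\Pi$, whence $S(X)$ is a cone over the secant variety of a curve and $s(X)=7$, contradicting $f(X)=1$ (which forces $s(X)=8$).

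In the plane case, the sentence ``which, as $p_1$ moves, is incompatible with $f(X)=1$ (the fibre of $\tau_{p_1}$ would not be a curve through $p_0$)'' is not an argument: for each $p_1$ the fibre of $\tau_{p_1}$ through $p_0$ is precisely the line $R_0$, so no constraint is violated. The actual elimination goes through Morin's lemma (Lemma 4.1 of \cite{ChCi}): since the planes of $\cal P$ pairwise meet, $X$ is not a cone and is non--degenerate in $\p^r$ with $r\geq 9$, the planes must all meet a fixed plane $\Pi$ along lines, so $X$ sits in a $5$--dimensional cone with vertex $\Pi$ over a surface $Y$; then for general $p_0,p_1$ the tangential contact locus $\Gamma_{X,p_0,p_1}$ would be the $2$--dimensional pull--back, via the projection from $\Pi$, of $\Gamma_{Y,q_0,q_1}=\{q_0,q_1\}$, contradicting $\gamma(X)=1$. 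Since you yourself flag the exclusion of the Rogora alternatives as ``the main obstacle'' without supplying it, the proposal has a genuine gap exactly where the proof lives; the final reduction to Theorem \ref{prop:lowrk} once the scroll structure is established is indeed immediate, as you say.
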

  
   \begin{proof}  By Lemma \ref  {lem:move},
 $X$ is covered by lines moving in a family $\R$ of dimension at least 4. Let $\cal S$ be
 an irreducible component of the Hilbert scheme of lines contained in $X$ and
 containing $\R$. By a classical result of B. Segre,  one has $\dim(\cal S)\leq 5$ and if
 the equality holds, then $X$ is swept out by a 1--dimensional family $\cal F$ of 3--spaces
 (see \cite {Rog}, Theorem 1 or \cite {Segre}).  Thus, if $\dim(\cal S)= 5$, then $X$ is a scroll and we are done. 
 
Suppose now that $\cal S=\cal R$ has dimension 4. By Theorem 2 of \cite {Rog}, we have 
 the alternatives (1) and (2) as in the proof of Theorem \ref  {nu1}, from which we keep the notation. 
 
Suppose we are in case (1).  Let $x,y\in X$ be general points and let $Q_x,Q_y$ be the 3--dimensional quadrics in $\cal Q$ containing $x$ and $y$ respectively. Since there are incident lines $r_x\subset Q_x$ and 
$r_y\subset Q_y$, respectively containing $x$ and $y$,  then $Z_{x,y}=Q_x\cap Q_y$ is not empty. We claim that $Z_{x,y}$ is a quadric surface. Indeed, if $\dim(Z_{x,y})\leq 1$, given two more general points $x'\in Q_x$ and $y'\in Q_y$, there are no lines 
$r_{x'}\subset Q_x$ and 
$r_{y'}\subset Q_y$, respectively containing $x'$ and $y'$, which meet along  $Z_{x,y}$. Hence  $Z_{x,y}$ is a surface and therefore it is a quadric (since it cannot be a plane) because $Q_x,Q_y$ have rank 5. 

Consider now the 1--dimensional family $\cal K$ of $4$--spaces spanned by the quadrics of $\cal Q$. Then two general $4$--spaces in $\cal K$ intersect in a $3$--space. Hence  the $4$--spaces in $\cal K$ all contain the same 3--space $\Pi$. Thus $X$  is contained in a 5--dimensional cone over a curve $C$ with vertex $\Pi$ and $S(X)$ would be the cone over $S(C)$ from $\Pi$, yielding $s(X)=7$, which contradicts $\delta(X)=1$.

Suppose we are in case (2). Then the planes in $\cal P$ pairwise meet. 
By Lemma 4.1 from \cite  {ChCi} and taking into account that $X$ is of the top species and is not a cone, 
we see that $X$ should be
contained in a 5--dimensional cone with vertex a plane $\Pi$  over a non--defective surface $Y$ and 
$S(X)$ would be the cone with vertex $\Pi$ over $S(Y)$. Let $\pi$ be the projection of $X$ to $Y$
from $\Pi$. Given two general points
$p_0, p_1\in X$ mapping via $\pi$ to general points  $q_0,q_1\in Y$, then $\Gamma_{X,p_0,p_1}$ is the pull--back via $\pi$ of $\Gamma_{Y,q_0,q_1}=\{q_0,q_1\}$, which contradicts $\gamma(X)=1$.
\end{proof}

\section {Second species}\label {sec:WD}

\subsection {} From now on we will concentrate on the 4--fold case. Let $X\subset \p^ r$, $r\geq 9$, be a non--degenerate, defective, projective  variety of dimension 4. By Proposition \ref {difetti} and Theorem \ref  {thm:lowcod}  we may assume $f(X)=\delta(X)=1$. By Theorem \ref {prop:lowrk} we may also assume $X$ is neither a cone nor a scroll. 

We will consider here the case in which $X$ is of the second species, i.e. $\gamma(X)=2$.
Therefore, given $p_0,p_1\in X$ general points, the tangential contact locus $\Gamma_{p_0,p_1}$ is a surface, spanning a $4$--space $\Pi_{p_0,p_1}$ (see Proposition \ref  {lem:a}). 
Again we have two cases: the \emph {irreducible} and the \emph{reducible} case, according to the possibilities that $\Gamma_{p_0,p_1}$ is irreducible or it consists of two irreducible components each passing through one of the two point $p_0,p_1$. 

\subsection{} We first examine the irreducible case.  In this case the tangential contact surfaces move in a 4--dimensional irreducible family $\cal S$, such that given two general points $p_0,p_1\in X$ there is a unique irreducible surface $\Gamma=\Gamma_{p_0,p_1}$ containing $p_0$ and $p_1$. A priori the surfaces in $\cal S$ may have a base locus scheme $B$
which is the largest subscheme $B$ of $X$ such that $B$ is contained in all surfaces in $\cal S$.
 
There are two subcases to be considered:
\begin{itemize}
\item [(i)] two general surfaces in $\cal S$ have no isolated point in common off the base locus scheme $B$;
\item [(ii)] two general surfaces in $\cal S$ have some isolated point in common off the base locus scheme $B$.
\end{itemize}
Let $x\in X$ be a general point. Then we have a subfamily $\cal S_x$ of surfaces in $\cal S$ passing through $x$. Each irreducible component of $\cal S_x$ has dimension 2. Actually $\cal S_x$ is irreducible, because otherwise if $y\in X$ is another general point, there would be more than one surface in $\cal S$ containing $x$ and $y$. 

If we are in case (i), two general surfaces in $\cal S_x$ have a positive dimensional intersection off $B$, hence they intersect off $B$ in a curve $C$ passing through $x$. Since the surfaces in $\cal S_x$ span in general a $4$--space which varies with the surface, there are three subcases to be considered:
\begin{itemize}
\item [(i1)] the curve $C$ spans a 3--space;
\item [(i2)] the curve $C$ spans a plane;
\item [(i3)] the curve $C$ is a line.
\end{itemize}

\subsection{} In this section we discuss the subcase (i1). We prove that:

\begin{lemma}\label{lem:acc1} The subcase (i1) does not occur.
\end {lemma}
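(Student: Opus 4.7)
\emph{The plan is to argue by contradiction.} Assume subcase (i1) holds, so that for two general $\Gamma_0,\Gamma_1\in\cal S_{p_0}$ the curve $C=\Gamma_0\cap\Gamma_1$ through $p_0$ spans a $3$-space, hence a hyperplane of $\Pi_0=\langle\Gamma_0\rangle\cong\p^4$. I first introduce the \emph{non-uniqueness locus}
\[
\Sigma_{p_0}:=\overline{\{y\in X:\dim\cal S_{p_0,y}>0\}}\subset X.
\]
By the irreducible-case hypothesis (a single surface of $\cal S$ through two general points of $X$), $\Sigma_{p_0}$ is a proper closed subvariety, so $\dim\Sigma_{p_0}\leq 3$.

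For general $y\in C\setminus\{p_0\}$, both $\Gamma_0$ and $\Gamma_1$ lie in $\cal S_{p_0,y}$; letting $\Gamma_1$ range over the $2$-dimensional family $\cal S_{p_0}\setminus\{\Gamma_0\}$, the incidence $J=\{(\Gamma_1,y):y\in C_{\Gamma_1}\}$ has dimension $3$. Its image under the second projection is $U=\bigcup_{\Gamma_1}C_{\Gamma_1}\subseteq\Gamma_0$, and the generic fibre over $y\in U$, which is identified with $\cal S_{p_0,y}\setminus\{\Gamma_0\}$, has dimension $3-\dim U$. If $\dim U=2$, i.e.\ $U=\Gamma_0$, then $\cal S_{p_0,y}$ is positive-dimensional for generic $y\in\Gamma_0$, so $\Gamma_0\subseteq\Sigma_{p_0}$; as $\Gamma_0$ varies in $\cal S_{p_0}$ (whose surfaces cover $X$ via $y\mapsto\Gamma_{p_0,y}$), we obtain $X\subseteq\Sigma_{p_0}$, a contradiction. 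So $\dim U=1$, and the fibre count forces $\cal S_{p_0,y}=\cal S_{p_0}$ for generic $y\in U$. Hence all the $C_{\Gamma_1}$ coincide (as irreducible components through $p_0$) with a common curve $D\subseteq\Gamma_0$ through $p_0$, spanning a $3$-space and contained in every $\Gamma\in\cal S_{p_0}$.

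Since $\Gamma_{p_0,q}\in\cal S_{p_0}$ for every general $q\in X$, the curve $D$ lies in every contact surface $\Gamma_{p_0,q}$, so by the definition of the tangential contact locus $T_{X,p}\subseteq T_{X,p_0,q}$ for every $p\in D$ and every general $q\in X$. Intersecting over $q$, a Terracini-type argument through the tangential projection $\tau_{p_0}$ gives $\bigcap_q T_{X,p_0,q}=T_{X,p_0}$, as long as the image $X_1=\tau_{p_0}(X)$ is not a cone (the intersection of all tangent spaces of a non-degenerate, non-cone variety in $\p^{r-5}$ being empty). Then $T_{X,p}=T_{X,p_0}$ for every $p\in D$, so the Gauss map of $X$ contracts $D$ to a single point. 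Since the Gauss fibre through $p_0$ is a linear subspace of $\p^r$ containing $D$ (which spans a $3$-space), $t(X)\geq 3$; together with $\dim X=4$ and the non-degeneracy of $X$ in $\p^r$ with $r\geq 9$ this forces $t(X)=3$, so $X$ is covered by a $1$-parameter family of $3$-dimensional linear subspaces of $\p^r$, i.e.\ a scroll in $3$-spaces over a curve. This contradicts the standing assumption (by Theorem \ref{prop:lowrk}) that $X$ is not a scroll. \emph{The main obstacle} is the verification of $\bigcap_q T_{X,p_0,q}=T_{X,p_0}$, and especially the parallel treatment of the exceptional case where $X_1$ is a cone, which I expect to reduce via a fibre-of-$\tau_{p_0}$ analysis to an analogous scroll or cone structure incompatible with our assumptions.
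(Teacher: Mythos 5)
Your first half is fine: the incidence count showing that, in subcase (i1), all surfaces of $\cal S_{p_0}$ contain a fixed curve $D$ through $p_0$ spanning a $3$--space is correct (it uses the uniqueness of the contact surface through two general points instead of the paper's observation that the spans of two general surfaces of $\cal S_{p_0}$ meet along a $3$--space, hence all pass through a common $3$--space $P_{p_0}$), and it recovers the paper's intermediate claim. The problem is the second half. Your key identity $\bigcap_q T_{X,p_0,q}=T_{X,p_0}$ is valid only if $X_1=\tau_{p_0}(X)$ is not a cone, and in the present situation this hypothesis cannot be taken for granted: since $\gamma(X)=2>1=f(X)$, Remark \ref{conctactin} gives $t(X_1)=\gamma(X)-f(X)=1$, so $X_1$ is a threefold with degenerate Gauss map, and cones (over a surface, or with a line vertex over a curve) are among the standard types of such threefolds; indeed, tangential images which are cones do occur in the neighbouring configurations of this very section. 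If $X_1$ is a cone with vertex $v$, then $\bigcap_q T_{X,p_0,q}$ is a fixed linear space of dimension at least $5$ containing $T_{X,p_0}$, and all your argument yields is that this fixed space is tangent to $X$ along $D$ --- no contradiction follows, and in particular you cannot conclude $T_{X,p}=T_{X,p_0}$, $t(X)\geq 3$, or the scroll structure. You flag this yourself as ``the main obstacle'' and only express the expectation that it can be reduced away; as it stands this is a genuine, unfilled gap, and filling it is not a routine matter.

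For comparison, the paper's proof bypasses the tangential analysis entirely by letting the base point vary: for two general points $x,y$ the contact surface $\Gamma_{x,y}$ contains both fixed curves $C_x$ and $C_y$, so the $3$--spaces $P_x=\langle C_x\rangle$ and $P_y$ lie in the $4$--space $\langle\Gamma_{x,y}\rangle$ and meet along a plane; this forces a fixed plane $\Pi$ contained in all the $P_x$, and projecting $X$ from $\Pi$ contracts every $C_x$ (so the image has dimension at most $3$) while mapping every contact surface to a line through two general image points, forcing the image to be a linear space although it must span $\p^{r-3}$ with $r-3\geq 6$ --- a contradiction with no case distinction on $X_1$. Either adopt such a global projection argument for your conclusion, or supply a genuine treatment of the case in which $X_1$ is a cone; without one of these your proof of the lemma is incomplete.
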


\begin{proof}
In subcase (i1) the spans of two general surfaces in $\cal S_x$ intersect along a 3--space containing the point $x$. Since they fill up $X$ they cannot lie in a 5--space, so they pass through the same $3$--space $P_x$ containing all curves which are the intersections of two general surfaces in $\cal S_x$. 

We claim that all surfaces in $\cal S_x$ contain the same curve $C_x$, passing through $x$, which spans $P_x$. In fact, let $\Gamma$ be the general surface in $\cal S_x$. If the other surfaces in $\cal S_x$ cut out on $\Gamma$ different curves $C$  passing through $x$, then these curves would, on one side, sweep out $\Gamma$, on the other all lie in $P_x$, hence $\Gamma$ would lie in $P_x$ which is a $3$--space, a contradiction. This proves our claim. 

When $x$ moves, the curve $C_x$ moves in a family $\cal C$ of dimension 3, which fills up $X$. Let $y\in X$ be another general point. Then the surface $\Gamma$ of $\cal S$ which contains $x$ and $y$, contains also $C_x$ and $C_y$ (actually, for a general point $z\in \Gamma$, it contains $C_z$), and this implies that $P_x$ and $P_y$ intersect  in a plane. This yields that there is a plane $\Pi$ such that for $x\in X$ general, the 3--space $P_x$ contains $\Pi$. Let $\pi: X\dasharrow \p^{r-3}$ the projection from $\Pi$ and let $Y$ be the image of $X$, which has to span the $\p^{r-3}$. Moreover $\dim(Y)\leq 3$ because for $x\in X$ general the curve $C_x$ is contracted to a point by $\pi$.  
Let $q_0,q_1\in Y$ be general points and let $p_0,p_1$ be points in the counterimages of $q_0,q_1$ respectively. The surface $\Gamma_{p_0,p_1}$, which spans a 4--space contaning $\Pi$, is mapped by $\pi$ to the line $\langle q_0,q_1\rangle\subset Y$. This implies that $Y$ is a linear space, a contradiction. 
\end{proof}

\subsection{} In this section we discuss the subcase (i2). We keep the notation introduced above. We prove that:

\begin{theorem}\label{prop:acc2} In  subcase (i2) then  either:
\begin{itemize}
\item [(a)]  $r=9$ and either $X$ sits in a cone with vertex a line over a hyperplane section of ${\rm Seg}(2,2)$ in $\p^7$;
\item [(b)] or $9\leq r\leq 11$ and $X$ sits in a cone with vertex a line over the projection in $\p^{r-2}$ of the Veronese $3$--fold $V_{3,2}$.
\end{itemize}
\end {theorem}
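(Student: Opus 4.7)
The plan is to adapt the sweeping argument from the proof of Lemma \ref{lem:acc1} to subcase (i2) and then reduce to Scorza's classification of defective threefolds. First, I replicate the sweeping argument to prove that all surfaces in $\cal S_x$ contain a common plane curve $C_x$ spanning the plane $P_x$: otherwise the different intersection curves cut on a fixed $\Gamma\in \cal S_x$ by the other members of $\cal S_x$ would all lie in $P_x$ while sweeping out $\Gamma$, contradicting that $\Gamma$ is a surface not lying in a plane. One checks moreover that $P_x$ is precisely the common intersection of the $4$-spaces $\langle \Gamma\rangle$, $\Gamma\in \cal S_x$, via a Morin-type analysis (Lemma 4.1 of \cite{ChCi}) of the $2$-parameter family of $4$-spaces pairwise meeting in at least a plane. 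Thus the curves $C_x$ sweep out $X$ in a $3$-dimensional family $\cal C$ of plane curves, and for general $p_0,p_1\in X$ the unique $\Gamma\in \cal S$ through $p_0,p_1$ contains both $C_{p_0}$ and $C_{p_1}$, forcing $P_{p_0}$ and $P_{p_1}$ to both lie in $\langle \Gamma\rangle\simeq \p^4$.

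The technical heart of the argument is to produce a line $L$ contained in every plane $P_x$, $x\in X$ general. Fixing a general $\Gamma\in \cal S$, the planes $P_z$ for $z$ varying on $\Gamma$ form a $1$-parameter family of planes in $\langle \Gamma\rangle\simeq \p^4$, and two general members $P_z,P_{z'}$ meet in a line (which contains the intersection points $C_z\cap C_{z'}$ on $\Gamma$). Applying Lemma 4.1 of \cite{ChCi} to this $1$-parameter family of planes in $\p^4$ with forced line intersections gives either a common line $L_\Gamma$ or a common $\p^3\subset \langle\Gamma\rangle$; the latter would make $\Gamma$ degenerate, hence a common line $L_\Gamma\subset\langle\Gamma\rangle$ exists. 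To show $L_\Gamma$ is independent of $\Gamma$, observe that $L_\Gamma\subset P_x$ for every $x\in \Gamma$, so for fixed $x$ the $2$-parameter family $\{L_\Gamma : \Gamma\in \cal S_x\}$ consists of lines in the plane $P_x$; a dimension count and monodromy argument, combined with the uniqueness of $\Gamma$ through two general points, force this family to reduce to a single line $L_x\subset P_x$, and an analogous argument as $x$ varies gives the common line $L$. This is the main obstacle.

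Next I consider the projection $\pi_L : X\dasharrow \p^{r-2}$. Since $L\subset P_x$ for every general $x$, each plane curve $C_x$ is contracted to a point, so $Y = \pi_L(X)$ is a non-degenerate $3$-fold in $\p^{r-2}$. The $4$-space $\langle \Gamma\rangle$ contains $L$ and thus projects to a $\p^2\subset \p^{r-2}$, so each surface $\Gamma\in \cal S$ projects to a plane curve in $Y$ passing through the images of $p_0,p_1$. By the Trisecant Lemma (applied as in the analogous step of Proposition \ref{baselocus}) these plane curves are irreducible conics, and hence $Y$ is a defective $3$-fold carrying an irreducible conic through two general points.

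To conclude, Scorza's classification of defective $3$-folds (Theorem 1.1 of \cite{ChCi}) applied to $Y$ leaves only two possibilities compatible with the conic structure through two general points and with the fact that $X$ is not a cone: either $Y$ is a hyperplane section of $\Seg(2,2)\subset \p^8$, forcing $r-2=7$ and giving case (a); or $Y$ is a (possibly trivial) projection of $V_{3,2}\subset \p^9$ to $\p^{r-2}$, with $7\leq r-2\leq 9$, giving case (b). In either case, the union of the lines joining $L$ to the points of a fibred lift of $Y$ contains $X$, so $X$ sits in the cone with vertex $L$ over $Y$, as claimed.
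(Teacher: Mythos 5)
Your overall route coincides with the paper's: find a common plane curve $C_x$ through the general point, show the planes of these curves all pass through one line, project from that line, use the Trisecant Lemma to get conics on the image threefold $Y$, and finish with the classification of defective threefolds (this last part matches the paper's Claim \ref{cl:par3} and conclusion). But there are genuine gaps at exactly the two places where the paper has to work hardest. First, your derivation of the common curve transplants the sweeping argument of Lemma \ref{lem:acc1}, which is only available in subcase (i1): there non--degeneracy forces all the spans of the surfaces in $\mathcal S_x$ through one and the same $3$--space, so all intersection curves lie in a fixed linear space. In subcase (i2) there is no such fixed plane a priori (two $4$--spaces meeting along a plane span only a $6$--space), and your appeal to Lemma 4.1 of \cite{ChCi} for the $2$--parameter family of $4$--spaces does not rule out the alternative in which they all meet a fixed space in codimension one, so it does not deliver "the common intersection is $P_x$". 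The paper proves the common--curve statement (Claim \ref{cl:par}) by a different mechanism, namely an incidence--variety dimension count on the family of plane curves of $X$.

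Second, and more seriously, the step you call the technical heart --- that two general planes $P_z,P_{z'}$ meet along a line --- is asserted rather than proved, and the parenthetical justification is wrong: $C_z$ and $C_{z'}$ are two curves on a surface spanning a $\p^4$, so they need not meet at all (in the actual models, the curves $C_x$ are fibres of the projection from the common line and are generically disjoint), and even finitely many common points would only force the planes to meet at points, which is automatic in $\p^4$. This is precisely the content of the paper's Claim \ref{cl:par2}, which is where the bulk of the proof lives: one must exclude, via Lemma 4.1 of \cite{ChCi}, that all the planes pass through a common point (ruled out by projecting from that point, using the Trisecant Lemma to see that the image has fibre defect $2$, invoking Theorem \ref{thm:lowcod}, and disposing separately of the cone over $\Seg(2,2)$), and that they all meet a fixed plane along lines (ruled out by projecting from that plane). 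Without this claim the existence of the common line $L$, and hence the whole reduction to a cone over a defective threefold, is unsupported; your subsequent monodromy/dimension argument for the independence of $L_\Gamma$ from $\Gamma$ inherits the same gap.
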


\begin{proof} Let $x\in X$ be a general point and let $\Gamma, \Gamma'$ be general elements in $\cal S_x$. Then they intersect in a curve $C$ which spans a plane $P$ and passes through $x$. 

\begin{claim}\label{cl:par} Keeping $\Gamma$ fixed and letting $\Gamma'$ vary, the curve $C$ stays fixed.
\end{claim}

\begin{proof}[Proof of the Claim \ref {cl:par}] Suppose the assertion is false. Consider the family $\cal F_x$ of plane curves cut out on $\Gamma$ by the other surfaces of $\cal S_x$, and recall that $\cal S_x$ has dimension 2. 

If $\cal F_x$ has dimension 2, then, since $x$ is a general point of $\Gamma$, the surface $\Gamma$ possesses a 3--dimensional family $\cal F$ of plane curves. This implies that $\Gamma$ is either a plane or spans a 3--space, a contradiction since $\Gamma$ spans a $4$--space. In fact, since $\cal F$ has dimension 3, two general curves of $\cal F$ intersect in more than one point. If these intersection points span a plane, then $\Gamma$ is a plane. If they span a line, the planes spanning two general curves in  $\cal F$ intersect along a line, so the planes spanning curves in $\cal F$ pairwise intersect along a moving line, hence they span a $3$--space. 

Hence $\cal F_x$ has dimension 1, i.e., a general curve in $\cal F_x$ belongs to a 1--dimensional family of surfaces in $\cal S_x$. Then $\Gamma$ has a 2--dimensional family $\cal F_\Gamma$ of plane curves and therefore there is some curve of $\cal F_\Gamma$ which passes through two general points of $\Gamma$. By moving $\Gamma$ we see that $X$ possesses a family $\cal F$ of plane curves such that there is some curve of $\cal F$ passing through two general points of $X$: indeed, if $\Gamma$ is general in $\cal S$, then two general points of $\Gamma$ are also general on $X$. This yields that $\cal F$ has dimension at least 6. 

Consider now the set $I$ of all pairs $(C, \Gamma)$, with $C$ in $\cal F$ and $\Gamma$ in $\cal S$ such that $C\subset \Gamma$. There are two  projection maps $\pi_1: I\to \cal F$ and $\pi_2: I\to \cal S$. Looking at $\pi_2$, since $\cal S$ has dimension 4 and since for $\Gamma$ general in $\cal S$ the family $\cal F_\Gamma$ has dimension 2, we see that $\dim(I)=6$. On the other hand, by what we saw above, the general fibre of the map $\pi_1$ has dimension at least 1, which implies that $\cal F$ has dimension at most 5, a contradiction. \end{proof}

By Claim \ref {cl:par}, all surfaces in $\cal S_x$ contain a fixed plane curve $C$ passing through $x$. Let us denote again by $\cal F$ the family of these curves $C$, which has the property that, given a general point $x\in X$ there is a unique curve of $\cal F$ containing $x$. Hence $\cal F$ has dimension 3. Moreover the general surface $\Gamma$ in $\cal S$ contains a 1--dimensional family $\cal F_\Gamma$ of curves in $\cal F$. We denote by $\mathcal P$ the family of planes spanned by the curves in $\cal F$.

Let  $p_0, p_1\in X$ be general points. The surface $\Gamma_{p_0,p_1}$ which contains $p_0$ and $p_1$ contains also the curves $C_0,C_1$ of $\cal F$ which pass through $p_0,p_1$ respectively. Hence the planes $P_0,P_1$ in $\mathcal P$ spanned by $C_0,C_1$, lying in the 4--space spanned by $\Gamma_{p_0,p_1}$, intersect in at least one point. 

\begin{claim}\label {cl:par2}  Two general planes in $\mathcal P$ do not intersect in a single point.
\end{claim}

\begin{proof}[Proof of the Claim \ref {cl:par2}] By Lemma 4.1 from \cite  {ChCi}, the only possibilities are:
\begin{itemize}
\item [(1)] there is a point $p$ such that all the planes in $\mathcal P$ contain $p$;
\item [(2)] there is a plane $\Pi$ such that all the planes in $\mathcal P$ intersect $\Pi$ along a line.
\end{itemize}

Assume (1) happens. Let $\pi: X\dasharrow \p^{r-1}$ be the projection of $X$ from $p$. The image $Y$ of $X$ is a 4--fold, otherwise $X$ would be a cone with vertex $p$, a contradiction. Hence $\pi: X\dasharrow Y$ is a generically finite rational map. 
If $\Gamma$ is a general surface in $\cal S$, then the $4$--dimensional span of $\Gamma$ contains $p$, so $\Gamma$ is projected from  $p$ to a surface in $\p^3$.  Hence $Y$ contains a 4--dimensional family $\cal Q$ of irreducible surfaces in $3$--space such that there is a surface of $\cal Q$ through two general points of $Y$. 
This implies that the surfaces in $\cal Q$ are quadrics (by the Trisecant Lemma) and the general point of $S(Y)$ belongs to an at least 2--dimensional family of secant lines to $Y$, i.e., $f(Y)=2$. So we can apply Theorem \ref {thm:lowcod} to $Y$. Cases (i) and (ii) of Theorem, \ref {thm:lowcod} do not apply to $Y$, because in those cases the contact surfaces are not irreducible. In case (iii) of Theorem \ref {thm:lowcod} the tangential contact loci are not surfaces, so also this case does not apply. In case (iv) of Theorem \ref {thm:lowcod}, $Y$ is a cone with vertex a point $v$ and the images of the planes of $\mathcal P$ via $\pi$ are the lines generating the cones. This implies that the planes of $\mathcal P$ all intersect along the fixed line $\langle p,v\rangle$, contrary to the assumption. Suppose we are in case (v) of Theorem \ref {thm:lowcod}. If $Y$ is a cone over a hyperplane section of $\Seg(2, 2)$, the same argument as above proves that the planes of $\mathcal P$ all intersect along the fixed line, a contradiction. It remains to discuss the case in which $Y=\Seg(2,2)$. Then $X$ sits in a cone with vertex $p$ over $\Seg(2,2)$. Now the quadrics in $\cal Q$ are just all the quadrics in $\Seg(2,2)$, because the form a family of dimension 4. Two general of them pairwise intersect at a single point. On the other hand, if $y\in Y$ is a general point, two general quadrics in $\cal Q$ containing $y$ intersect along the image via $\pi$ of a curve in $\cal F$, hence they must intersect along a line, a contradiction again. 

We claim  that also case  (2) cannot happen. Indeed, let $\pi: X\dasharrow \p^{r-3}$ be the projection of $X$ from the plane $\Pi$. Then  all curves of $\cal F$ are contracted to points by $\pi$, so the image of $\pi$ is a 3--fold $Y$. Let $q_0,q_1\in Y$ be general points and let $p_0,p_1\in X$ such that $\pi(p_i)=q_i$, for $i=1,2$. We may assume that  $p_0,p_1\in X$ are also general points. Consider the surface
$\Gamma_{p_0,p_1}$. It contains  the curves $C_0,C_1$ of $\cal F$ which pass through $p_0,p_1$ respectively. The planes $P_0,P_1$ spanned by $C_0,C_1$ respectively, cut $\Pi$ in two distinct lines. This implies that the span of $\Gamma_{p_0,p_1}$ contains $\Pi$, hence the image of $\Gamma_{p_0,p_1}$ via $\pi$ is a line, precisely the line joining $q_0$ and $q_1$. This yields that $Y$ is a linear space, a contradiction since it has to span $\p^{r-3}$ and $r-3\geq 6$. \end{proof}

Finally we have to examine the case in which the planes spanned by the curves of $\cal F$ meet pairwise along a line, and then they  all pass through the same line $R$. Let $\pi: X\dasharrow \p^{r-2}$ be the projection of $X$ from $R$. All curves in $\cal F$ are contracted to points by $\pi$, so the image of $\pi$ is a 3--fold $Y$. 

\begin{claim}\label {cl:par3} $Y$ is a defective 3--fold with $f(Y)=1$, possessing a 4--dimensional family $\cal C$ of generically irreducible conics such that if $q_0,q_1\in Y$ are general points there is a unique conic in $\cal C$ containing $q_0$ and $q_1$.
\end{claim}

\begin{proof} Let $q_0,q_1\in Y$ be general points and let $p_0,p_1\in X$ be such that $\pi(p_i)=q_i$, for $i=1,2$. Then $p_0,p_1\in X$ are also general points. Consider the surface
$\Gamma_{p_0,p_1}$ spanning a 4--space $\Pi_{p_0,p_1}$, which is swept out by a family $\cal F_\Gamma$ of plane curves whose planes all contain the line $r$. Then the projection of $\Gamma_{p_0,p_1}$ from $r$ is a curve spanning  the plane image of $\Pi_{p_0,p_1}$. Hence $Y$ has a 4--dimensional family $\cal C$ of generically irreducible plane curves such that if $q_0,q_1\in Y$ are general points there is a unique curve in $\cal C$ containing $q_0$ and $q_1$. By the Trisecant Lemma, this implies that the curves in $\cal C$ are conics.\end{proof}

In conclusion, in this case $X$ sits in the cone with vertex $R$ over $Y$, which is in the list of defective threefolds in \cite {ChCi}. Actually in that list the only 3--folds $Y$ possessing a 4--dimensional family $\cal C$ of generically irreducible conics such that if $q_0,q_1\in Y$ are general points there is a unique conic in $\cal C$ containing $q_0$ and $q_1$, are the hyperplane section of ${\rm Seg}(2,2)$ in $\p^7$ and the Veronese $3$--fold $V_{3,2}$ and its projections. This leads therefore to cases (a) and (b) of Theorem \ref {prop:acc2}.\end{proof}

\begin{remark}\label{rm:acc3} If $X$ is a 4--fold as in Theorem \ref {prop:acc2}, then $X$ is in fact defective, with $f(X)=1$ and irreducible tangential contact loci  of dimension 2. 

First, suppose $X$ sits in a cone with vertex a line $r$ over a hyperplane section $Y$ of ${\rm Seg}(2,2)$ in $\p^7$. Hence we have a projection $\pi: X\dasharrow Y$.
If $p_0,p_1\in X$ are general points  and $q_i=\pi(p_i)$ with $i=0,1$, then $T_{X,p_0,p_1}$ sits in the span of $T_{Y,q_0,q_1}$ and $r$, which is an $8$--space and therefore coincides with $T_{X,p_0,p_1}$. This proves that $f(X)=1$. The general tangential contact locus is the pull back of a general conic in $Y$ via $\pi$. 

The other case of Theorem \ref {prop:acc2} can be treated similarly and can be left to the reader.\end{remark}

\begin{remark}\label{rm:acc31} Suppose $X$ is a 4--fold as in Theorem \ref {prop:acc2}. Then we claim $X$ is never smooth. 

In fact, suppose for instance that $X$ sits in a cone $V$ with vertex a line $L$ over a hyperplane section $Y$ of $\Seg(2,2)$ and let us prove that $X$ cannot be smooth. We argue by contradiction and assume $X$ smooth.

Suppose first that $X$ contains $L$. Let $p\in L$ be a general point. By projecting from $p$ we have a map $\pi: X\dasharrow \p^8$, with image a variety $Z$, which has to be a 4--fold otherwise $X$ would be a cone with vertex $p$, contrary to the assumption that $X$ is smooth. On the other hand $Z$ is contained in  the projection of the cone $V$ from $p$, which is a cone $W$ with vertex a point $q$ over $Y$.  Thus $Z=W$. Moreover $Z=W$ has to contain the image of the exceptional divisor of the blow--up of $X$ at $p$, which is a $3$--space containing $q$, and this implies that $Y$ contains at least a plane $P$. Actually $T_{X,p}=\langle P,L\rangle$. By moving $p$ along $L$, $T_{X,p}$ has to vary with $p$ by Zak's theorem on tangencies (see \cite {Zak}), and this implies that $Y$ contains infinitely many planes. It is not difficult to see that this is impossible, hence $X$ cannot contain $L$. Indeed, suppose that $Y$ contains infinitely many planes, so we may assume it is swept out by an irreducible 1--dimensional family of planes. Now remark that a plane on $\Seg(2,2)=\p^2\times \p^2$, it either of the form $\p^2\times \{p\}$ or of the form $\{q\}\times \p^2$, with $p,q$ points in $\p^2$. This implies then that there exists a curve $\Gamma\subset \p^2$ such that either $Y=\p^2\times \Gamma$ or $Y=\Gamma\times \p^2$. 
This is not possible because if $\Gamma$ is not a line, then the span of $\p^2\times \Gamma$ or of $\Gamma\times \p^2$ is a $\p^8$ (whereas $Y$ spans a $\p^7$), and if $\Gamma$ is a line, then $\p^2\times \Gamma$ and $\Gamma\times \p^2$ would be equal to $\Seg(1,2)$ which is not equal to the hyperplane section of $\Seg(2,2)$ (they have different degrees). 

Next we note that if $Y$ is singular, then $X$ is singular too. Indeed if $y\in Y$ is any point, then the plane $\Pi_y=\langle y,L\rangle$ of the cone $V$ cuts $X$ along a curve $C_y$ which, as we saw, does not contain $L$. It is then clear that if $y$ is singular for $Y$, then $X$ is singular along the whole curve $C_y$. Hence we may assume $Y$ to be smooth.

Finally, $X$ intersects $L$ at some point $p$. Again by projecting $X$ from $p$ we see, as above, that $Y$ has to contain a plane. Then we claim that $Y$ is singular and then we come to a contradiction. In fact, if $Y$ is smooth, it is well known that $Y$ is isomorphic to the incidence correspondence
\[
\{(p,r)\in \p^2 \times (\p^2)^*: p\in r\}\subset \p^2 \times (\p^2)^*=\Seg(2,2)
\]
and this clearly contains no plane. 

In conclusion this proves that $X$ cannot be smooth. 

The other case of Theorem \ref {prop:acc2} can be analysed with similar arguments.
\end{remark}

\subsection{} In this section we discuss the subcase (i3). We keep notation and conventions introduced above, in particular we assume $X$ is not a cone.

\begin{lemma}\label{lem:acc4} In the subcase (i3), if $x\in X$ is a general point, then all surfaces in $\cal S_x$ contain the same line $r_x$ passing through $x$. Hence $X$ contains a 3--dimensional family $\cal R$ of lines such that there is a unique line in $\cal R$ passing through the general point of $X$. Moreover the surfaces $\Gamma$  in $\cal S$ are ruled, swept out by a 1--dimensional family of lines in $\cal R$. 
\end{lemma}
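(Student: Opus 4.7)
The plan is to adapt the incidence-dimension argument used in Claim \ref{cl:par} to show that the ``transverse line'' of $\Gamma \cap \Gamma'$ is actually independent of the choice of $\Gamma'\in \mathcal S_x$, then organize these lines into the family $\mathcal R$. Throughout, $x\in X$ is general (in particular, $x\notin B$), and $\mathcal S_x\subset \mathcal S$ is the subfamily of surfaces through $x$, which is irreducible of dimension $2$ by the incidence count $\dim \mathcal S + 2 = \dim X + \dim \mathcal S_x$; moreover the general $\Gamma\in \mathcal S_x$ spans a $4$-space (by Proposition \ref{lem:a}(iv) with $\gamma(X)=2$, $f(X)=1$) and is smooth at $x$ (by Proposition \ref{lem:a}(ii)).

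First I would fix a general $\Gamma\in \mathcal S_x$ and analyse the rational map
\[
\lambda_{\Gamma,x}\colon \mathcal S_x\setminus\{\Gamma\}\dashrightarrow \mathbb G(1,r),\qquad \Gamma'\mapsto (\Gamma\cap\Gamma')\setminus B,
\]
which, by the hypothesis of subcase (i3), lands in the set of lines on $\Gamma$ through $x$. Since $\Gamma$ is smooth at $x$, such lines lie in the plane $T_{\Gamma,x}$, so the image $\mathcal L_{\Gamma,x}$ has dimension at most $1$. Hence $\dim \mathcal L_{\Gamma,x}\in\{0,1\}$. To rule out the value $1$, I would note that if $\mathcal L_{\Gamma,x}$ were $1$-dimensional, lines through $x$ would sweep out $\Gamma$, so $\Gamma$ would be the cone $\bigcup_t \ell_t\subset T_{\Gamma,x}$ with vertex $x$; being the pencil through $x$ inside the plane $T_{\Gamma,x}$, the union equals the plane itself, forcing $\Gamma=T_{\Gamma,x}$, which contradicts $\langle \Gamma\rangle =\mathbb P^4$.

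Therefore $\dim \mathcal L_{\Gamma,x}=0$, so there is a single line $r_x\subset \Gamma$ appearing as $\Gamma\cap\Gamma'$ off $B$ for every general $\Gamma'\in \mathcal S_x$; by taking limits $r_x$ is contained in every $\Gamma'\in \mathcal S_x$, and it is clearly independent of the initial choice of $\Gamma$. Next I would define the rational map $\mu\colon X\dashrightarrow \mathbb G(1,r)$, $x\mapsto r_x$, and let $\mathcal R$ be the closure of its image. For a general $y\in r_x$, every surface in $\mathcal S_x$ passes through $y$ as well, so $\mathcal S_x\subseteq \mathcal S_y$; since both are irreducible of dimension $2$, equality holds and $r_y=r_x$. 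Hence the fibre of $\mu$ over $r_x$ is exactly $r_x$, so $\dim \mathcal R = 4-1=3$, and the same reasoning shows that the unique element of $\mathcal R$ through a general $x$ is $r_x$ itself.

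Finally, for a general $\Gamma\in \mathcal S$ and a general $x\in \Gamma$, one has $\Gamma\in \mathcal S_x$, whence $r_x\subset\Gamma$; as $x$ varies on $\Gamma$ the line $r_x$ sweeps out $\Gamma$ (because $x\in r_x$) and varies in a $1$-dimensional family (because the fibres of $\mu$ restricted to $\Gamma$ have dimension $1$). This exhibits $\Gamma$ as a ruled surface swept out by a $1$-parameter family of lines in $\mathcal R$. The main obstacle I anticipate is step two, the exclusion of $\dim \mathcal L_{\Gamma,x}=1$: one must rely on the smoothness of $\Gamma$ at $x$ (so that tangent lines lie in a $\mathbb P^2$) and on the fact that $\langle \Gamma\rangle$ is $4$-dimensional rather than lower, in order to rule out the planar cone case.
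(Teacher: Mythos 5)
Your proof is correct and follows essentially the same route as the paper: the decisive step is identical — if the intersection line moved with $\Gamma'$, the general $\Gamma\in \cal S_x$ would carry a positive--dimensional family of lines through $x$ and hence be a plane, contradicting that it spans a $4$--space (you derive the plane via the tangent plane $T_{\Gamma,x}$ at the smooth point $x$, while the paper invokes the classical fact directly). The remaining assertions about $\cal R$, which the paper dispatches with ``then the assertion follows,'' you spell out correctly via the map $x\mapsto r_x$ and its one--dimensional fibres.
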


\begin{proof} Fix $\Gamma$ a general surface in $\cal S_x$. If the other surfaces in $\cal S_x$ intersect $\gamma$ in a moving line, then $\Gamma$ would possess a positive dimensional family of lines through its general point. Then $\Gamma$ would be a plane, which is impossible. Then the assertion follows. 
\end{proof}

We will prove the following:

\begin{theorem}\label{prop:acc5} In the subcase (i3) then  either:
\begin{itemize}
\item [(1)]  $r=9$ and $X$ sits in a cone with vertex a line over a hyperplane section of ${\rm Seg}(2,2)$ in $\p^7$;
\item [(2)] or $9\leq r\leq 11$ and $X$ sits in a cone with vertex a line over  the Veronese $3$--fold $V_{3,2}$ in $\p^9$ or a projection of it in $\p^8$ or $\p^7$;
\item [(3)] or $r=9$ and $X$ sits in a cone with vertex a line over a (defective) 3--fold in $\p^7$ sitting in a cone with vertex a line over the Veronese surface $V_{2,2}$, in particular $X$ sits in a $6$--dimensional cone with vertex a 3--space over  the Veronese surface $V_{2,2}$;
\item [(4)] or $X$ is swept out by a $3$--dimensional family $\cal R$ of lines and
it is singular along a linear space $\Pi$ of dimension $\varepsilon$, with
$2\leq \varepsilon \leq 3$, which is cut out in one point by the general line in $\cal R$, and $X$ projects from $\Pi$ to a 3--dimensional variety $Y\subset \p^{r-\varepsilon-1}$ (with general fibres unions of lines of $\cal R$), which contains a $4$--dimensional family $\cal C$ of (generically irreducible) conics such that there is a conic in $\cal C$ passing through two general points of $Y$ and the counterimage of the general conic of $\cal C$ via the projection from $\Pi$ is a non--developable scroll spanning a 4--space with a line directrix sitting in $\Pi$. In any event $r\leq 10+\varepsilon$.
\end{itemize}

\end {theorem}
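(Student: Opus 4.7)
The plan is to exploit the structure given by Lemma \ref{lem:acc4}: $X$ is swept out by a 3--dimensional family $\cal R$ of lines with a unique ruling $r_x\in\cal R$ through each general point $x$, and every surface $\Gamma\in\cal S$ is ruled by lines of $\cal R$. For $p_0,p_1\in X$ general, the tangential contact surface $\Gamma=\Gamma_{p_0,p_1}$ spans the 4--space $\Pi_{p_0,p_1}$ by Proposition \ref{lem:a} and contains both $r_{p_0}$ and $r_{p_1}$ as rulings. The first step is to analyze $\Gamma$ as an irreducible ruled surface in $\p^4$: it is either a cone with vertex a point $v_\Gamma$, or a non--developable scroll admitting a line directrix $L_\Gamma\subset\Pi_{p_0,p_1}$ (by the standard classification of non--degenerate ruled surfaces in $\p^4$). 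This distinguishes, for each general $\Gamma$, a line $L_\Gamma$ (or a point $v_\Gamma$) that will serve as the focal locus of the construction.

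Next, I would consider the rational quotient map $\phi\colon X \map Z$, $x\mapsto r_x\in \cal R$, whose general fibres are the lines of $\cal R$ and whose image $Z$ has dimension 3. Each surface $\Gamma\in\cal S$ maps to an irreducible curve $\phi(\Gamma)$ in $Z$, and since there is a unique $\Gamma$ through two general points of $X$, there is a unique $\phi(\Gamma)$ through two general points of $Z$. A Trisecant--Lemma argument then identifies $\phi(\Gamma)$ as a generically irreducible conic, so $Z$ is a 3--fold carrying a 4--dimensional family $\cal C$ of conics joining two general points of it -- precisely the type of 3--folds classified in \cite{ChCi}.

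The central step, analogous in spirit to the proof of Theorem \ref{prop:acc2}, is to realize $\phi$ as a linear projection from a fixed linear subspace $\Pi\subset \p^r$. I would produce $\Pi$ as the Zariski closure of $\bigcup_\Gamma L_\Gamma$ (or $\bigcup_\Gamma v_\Gamma$ in the cone case), applying a Morin--type analysis (Lemma 4.1 of \cite{ChCi}) to the 2--dimensional subfamily of $\Gamma$'s sharing a given ruling $r\in\cal R$ to force the union to be linear. Setting $\varepsilon=\dim\Pi$, when $\varepsilon=1$ the space $\Pi=L$ is a line and $X$ lies in the cone with vertex $L$ over $Z\subset\p^{r-2}$; the classification of \cite{ChCi} then identifies $Z$ as a hyperplane section of $\Seg(2,2)$, as $V_{3,2}$ or one of its projections, or as a $3$--fold itself sitting in a cone with vertex a line over $V_{2,2}$, giving cases (1), (2), (3) respectively. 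When $\varepsilon\in\{2,3\}$, Zak's theorem on tangencies (in the spirit of Remark \ref{rm:acc31}) forces $X$ to be singular along $\Pi$, and the description of $\Gamma$ as a non--developable scroll with directrix $L_\Gamma\subset\Pi$ yields exactly case (4); the numerical bounds $r\leq 11$ in cases (2), (3) and $r\leq 10+\varepsilon$ in case (4) follow from the linear normality bounds for the 3--fold $Z$.

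The main obstacle will be establishing the linearity and the exact dimension of $\Pi$: one has to show that $\overline{\bigcup_\Gamma L_\Gamma}$ is a linear subspace of $\p^r$ and that the general line of $\cal R$ meets it in exactly one point, which requires a delicate Morin--type argument controlling how $L_\Gamma$ varies as $\Gamma$ moves in $\cal S$. A secondary technical difficulty is to rule out degenerate scenarios, such as $\phi(\Gamma)$ collapsing to a line or $Z$ itself being a scroll or a cone, which are handled by invoking the standing hypotheses that $X$ is neither a cone nor a scroll.
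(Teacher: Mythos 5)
Your argument rests on a false dichotomy at the very first step: it is not true that an irreducible non--degenerate ruled surface spanning a $\p^4$ is either a cone or a non--developable scroll \emph{with a line directrix}. For instance, a general projection to $\p^4$ of the rational normal scroll of degree $4$ in $\p^5$, or an elliptic quintic scroll in $\p^4$, is non--developable and its unisecant curves of minimal degree are conics, not lines. This is exactly why the paper needs Lemma \ref{lem:one}: for a general ruling $s$ of a non--developable scroll $\Sigma\subset\p^4$ one only gets a curve $\mathfrak C'_s$, \emph{depending on} $s$, of points whose tangent planes meet $s$, and a line directrix exists only in the degenerate situation where $\mathfrak C'_s$ is independent of $s$. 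Since your whole construction of $\Pi$ as $\overline{\bigcup_\Gamma L_\Gamma}$ (or $\bigcup_\Gamma v_\Gamma$) presupposes the existence of $L_\Gamma$ for the general contact scroll $\Gamma$, it does not get off the ground in the generic situation, which is precisely the one the paper handles by a completely different device: for a general line $r\in\cal R$ it takes $\Pi_r=\langle\bigcup_{x\in r\cap\Reg(X)}T_{X,x}\rangle$, proves every line of $\cal R$ meets $\Pi_r$, and sets $\Pi=\bigcap_{r\in\cal R}\Pi_r$, an \emph{intersection} of tangent--span spaces rather than a union of directrices.

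A second, related problem is your matching of the values of $\varepsilon$ with the output cases. In the paper the case distinction is not ``$\varepsilon=1$ gives (1)--(3), $\varepsilon=2,3$ gives (4)'': it is governed by whether the point $x\in r_1$ produced by Lemma \ref{lem:one} moves with $p_0$ or stays fixed. In the moving branch one shows $1\le\varepsilon\le3$ and then \emph{excludes} $\varepsilon=2,3$ (if $\varepsilon=3$ a general curve section of the image $Y$ would be plane; if $\varepsilon=2$ the hyperplane sections of $Y$ would be defective surfaces, forcing $X$ to be of the first species), so only $\varepsilon=1$ survives and yields cases (1)--(3) via the classification of defective threefolds. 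Case (4) arises only in the fixed branch, where Lemma \ref{lem:one} forces $\Gamma$ to have a line directrix consisting of \emph{singular} points of $X$; these directrices sweep out a linear space $\Pi\subseteq\Sing(X)$ by a direct incidence argument (the line through two general points of the locus is itself a directrix), and the singularity of $X$ along $\Pi$ is built into that construction. Your appeal to Zak's theorem on tangencies to ``force $X$ to be singular along $\Pi$'' is not a valid step: that theorem is a constraint on tangency loci of smooth varieties and cannot be used to produce singularities here. So, beyond the directrix gap, the logical architecture (which hypotheses separate the cases, and where the singularity in case (4) comes from) would need to be redone along the lines of the fixed/moving dichotomy.
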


Before proving Theorem \ref {prop:acc5}, we need a few preliminary lemmas which are due to Scorza (see \cite {Scorza1}, \S 11).

\begin{lemma}\label{lem:one} Let $\Sigma\subset \p^4$ be an irreducible, non--degenerate, projective surface, which is a non--developable scroll. Let $s$ be a general ruling of $\Sigma$. Then there is a curve $\mathfrak C_s$  on $\Sigma$, union of irreducible components such that the tangent planes to (a smooth branch of) $\Sigma$ at the general points of each of these components intersect
$s$ at a point.  More precisely,  $\mathfrak C_s$ consists of an irreducible  component $\mathfrak C'_s$ which is unisecant the rulings, plus, may be, a set of rulings. If the curve $\mathfrak C'_s$ is independent on $s$, then $\Sigma$ has a \emph{directrix line} $r$ (i.e., a line which  intersects the rulings in one point) and in this case $\mathfrak C'=r$ and the further set of rulings is empty if the scroll is rational and the projection of $\Sigma$ from $r$ is a conic with fibres single rulings.
\end{lemma}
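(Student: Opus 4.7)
The plan is to set up a local parametrization of $\Sigma$ and translate both assertions into explicit determinantal identities. Parametrize the rulings by a smooth curve $C$ and pick rational lifts $a(t), b(t) \in \C^5$ so that $\phi(t) = [\langle a(t),b(t)\rangle]$. Then $\Sigma$ is the image of $(t,\lambda) \mapsto [a(t) + \lambda b(t)]$, and by non--developability, at a general smooth point the embedded tangent plane is
\[
T_{\Sigma,(t,\lambda)} = [\langle a(t), b(t), a'(t) + \lambda b'(t)\rangle].
\]

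For the first assertion, fix $s = \phi(t_0)$ general. The condition $T_{\Sigma,(t,\lambda)} \cap s \neq \emptyset$ for $t \neq t_0$ amounts to the vanishing of the $5 \times 5$ determinant with columns $a(t), b(t), a'(t) + \lambda b'(t), a(t_0), b(t_0)$, which is affine linear in $\lambda$:
\[
F(t,\lambda) = \alpha(t) + \lambda\,\beta(t),
\]
where $\alpha(t) = \det(a(t),b(t),a'(t),a(t_0),b(t_0))$ and $\beta(t) = \det(a(t),b(t),b'(t),a(t_0),b(t_0))$. For each $t$, either $\beta(t) \neq 0$, giving the unique root $\lambda(t) = -\alpha(t)/\beta(t)$ and one intersection point on the ruling $\phi(t)$, or $\alpha(t) = \beta(t) = 0$, and the entire ruling $\phi(t)$ lies in $\mathfrak{C}_s$. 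Non--degeneracy and non--developability prevent the simultaneous vanishing of $\alpha, \beta$ on an irreducible component of $C$, so only finitely many such rulings appear, while the single points trace out an irreducible unisecant component $\mathfrak{C}'_s$, proving the stated decomposition.

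For the second assertion, assume $\mathfrak{C}'_s = \mathfrak{C}'$ is independent of $s$; then $\lambda(t)$ is independent of $t_0$. Setting $q(t) = [a(t) + \lambda(t) b(t)] \in \mathfrak{C}'$, the plane $\pi_t := T_{\Sigma, q(t)}$ satisfies $\pi_t \cap \phi(t_0) \neq \emptyset$ for every $t_0$. The key observation will be that the point $\pi_t \cap \phi(t_0)$ is cut out by the same $5 \times 5$ vanishing determinant that characterizes the condition $q(t_0) \in \pi_t$, so $\pi_t \cap \phi(t_0) = q(t_0) \in \mathfrak{C}'$ for every $t_0$. Hence $\mathfrak{C}' \subset T_{\Sigma, q(t)}$ for every $q(t) \in \mathfrak{C}'$, so $\mathfrak{C}'$ is a plane curve contained in every one of its own tangent planes. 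Either the Gauss map of $\Sigma$ is constant along $\mathfrak{C}'$, in which case Zak's theorem on tangencies (applied on the smooth locus) forces $\mathfrak{C}'$ to be a linear subvariety, or the tangent planes $\pi_t$ genuinely rotate while always containing $\mathfrak{C}'$, forcing $\mathfrak{C}'$ to be their common pivot line. In either case $\mathfrak{C}' = r$ is a line, and unisecance promotes it to a directrix.

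For the last clause, assume $\Sigma$ is rational and the projection from $r$ realizes $\Sigma$ as a scroll over a conic with single rulings as fibres. A degree count gives $\deg \Sigma = 1 + 2 = 3$, identifying $\Sigma$ with the cubic scroll $S_{1,2}$; were some bad ruling $s' \subset \mathfrak{C}_s$ to appear, a general $q' \in s' \setminus r$ would give $T_{\Sigma,q'}$ meeting $s$, which, as in the previous paragraph, forces $q' \in r$, contradicting the genericity of $q'$ on $s' \neq r$. Hence no extra rulings appear. The main obstacle of the whole argument is the central step of the third paragraph, namely the identification $\pi_t \cap \phi(t_0) = q(t_0)$, equivalently $\mathfrak{C}' \subset T_{\Sigma, q(t)}$; once this reciprocity is in hand, the remaining projective--geometric claims reduce to bookkeeping with the local parametrization.
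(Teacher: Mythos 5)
Your determinantal setup for the first assertion is fine and amounts to a coordinate version of the paper's construction (the paper intersects the $3$--space $P_t$ spanned by the tangent planes along a ruling $t$ with $s$ and recovers the unique tangency point; your root $\lambda(t)$ of $\alpha+\lambda\beta$ is that point). The genuine gap is at the central step of your third paragraph. The ``reciprocity'' $\pi_t\cap\phi(t_0)=q(t_0)$ is not justified: the vanishing of $\det\bigl(a(t),b(t),a'(t)+\lambda b'(t),a(t_0),b(t_0)\bigr)$ only says that the tangent plane at $q(t)$ meets the line $\phi(t_0)$ \emph{somewhere}; it carries no information about \emph{where}, while the condition $q(t_0)\in\pi_t$ is a rank (codimension two) condition involving the derivative at $t_0$, hence not ``the same determinant''. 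Since everything after this point rests on it, and you yourself call it the main obstacle, the key step of the second assertion is missing. The paper proves this part without any reciprocity: fixing $t$ and letting $s$ vary, the independence of $\mathfrak C'$ shows that the tangent plane $\Pi_{t,p}$ at the single point $p=\mathfrak C'\cap t$ meets \emph{every} ruling; doing the same at $q=\mathfrak C'\cap s$ gives a second such plane, the two planes span only a $3$--space, and non--degeneracy forces every ruling to meet them in a common point of the line $r=\Pi_{t,p}\cap\Pi_{s,q}$, which is therefore a directrix; $\mathfrak C'=r$ is then immediate.

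The last paragraph is also faulty. The hypothesis (rational scroll, projection from $r$ a conic with single--ruling fibres) only forces $\deg\Sigma=n+2$, where $n$ is the multiplicity of $\Sigma$ along $r$; it does not force $\deg\Sigma=3$, so the identification with the cubic scroll is unjustified (scrolls joining a line traversed with a degree $m$ parametrization to a conic give examples of degree $m+2$ for every $m\geq 1$). Moreover your contradiction is circular: on a putative bad ruling $s'$ \emph{every} point has tangent plane meeting $s$, by the very definition of a bad ruling, so the implication ``$T_{\Sigma,q'}$ meets $s$ implies $q'\in r$'' is exactly what fails on $s'$ and cannot be invoked there. The paper instead counts the bad rulings as the ramification (plus base) divisors of the $g^1_{d-n-1}$ cut on the rulings by hyperplanes through $r$ and $s$, getting $2(d+g-n-2)-x$ of them by Riemann--Hurwitz, and this vanishes exactly when $d=n+2$, $g=0$, $x=0$, which is the stated condition.
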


\begin{proof} Let $t$ be a further general ruling of $\Sigma$. Since $\Sigma$ is not developable, the tangent spaces to $\Sigma$ at the points of $t$ vary in a pencil which sits
in a 3--space $P_t$, that cannot contain $s$ which is a general ruling, because $\Sigma$ is non--degenerate in $\p^4$. Hence $P_t$ cuts $s$ at a point $q_t$. Then the plane $\langle q_t,t\rangle$ is tangent to (a branch of) $\Sigma$ at a point $p_s\in t$. As $t$ varies, the point $p_s$ describes the curve $\mathfrak C'_s$ which is unisecant  the rulings and is a component of the desired curve $\mathfrak C_s$. 
The remaining components of  $\mathfrak C_s$  are those rulings $u$ such that $P_u$ contains $s$. 

Suppose now $\mathfrak C'_s=\mathfrak C'$ does not depend on $s$. Keeping the above notation, keep $t$ fixed and let $s$ vary. Then the point $p:=p_s=\mathfrak C'\cap t$ does not depend on $s$, and therefore all rulings intersect in one point the tangent plane $\Pi_{t,p}$ to the branch of $\Sigma$ with origin $p$ containing $t$, and $\Pi_{t,p}$ contains $t$.  Similarly, all rulings intersect in one point the tangent plane $\Pi_{s,q}$ to the branch of $\Sigma$ with origin $q$ containing $s$, where $q=\mathfrak C'\cap s$ and  $\Pi_{s,q}$ contains $s$. Note now that $\Pi_{t,p}$ and $\Pi_{s,q}$ are distinct (because $t$ and $s$ being general rulings of $\Sigma$ are not coplanar), and intersect along a line, precisely the line $r$ joining the two points $\Pi_{t,p}\cap s$ and $\Pi_{s,q}\cap t$. So $\langle \Pi_{t,p},\Pi_{s,q}\rangle$ is a 3--space. Since all rulings intersect both $\Pi_{t,p}$ and $\Pi_{s,q}$ and since $\Sigma$ is non--degenerate in $\p^4$, then all rulings intersect $r$, i.e., $\Sigma$ has a line directrix. 

Finally, assume that $\Sigma$ has a line directrix $r$. Then it is clear that $\mathfrak C'=r$, because if $p\in r$ is a general point, the tangent plane to (any branch of) $\Sigma$ along $r$ contains $r$ hence it meets $s$. 

Let finally $d$ be the degree of $\Sigma$ and $g$ be its genus (i.e., the genus of the normalization $C$ of the curve which parameterizes the rulings). Let moreover $n$ be the multiplicity of $\Sigma$ along the line directrix $r$. The hyperplanes which contain $r$ and a general ruling $s$ cut out on $\Sigma$ a set of $d-n-1$ rulings, which vary in a $g^1_{d-n-1}$ on $C$ which may have  $x\geq 0$  base points, so that the base point free series is a $g^1_{d-n-1-x}$. By Riemann--Hurwitz formula, this linear series has $2(d+g-n-2-x)$ ramification points (to be counted with their multiplicities). These ramification points, plus the $x$ lines corresponding to  the base points of the $g^1_{d-n-1}$, 
 correspond exactly to the rulings $u$ such that $P_u$ contains $s$. So the number of these ruling is $2(d+g-n-2)-x$. This number is zero if and only if $2(d+g-n-2)=x$. Note  that $d-n-1-x\geq 1$, i.e., $0\leq x\leq d-n-2$, hence $d\geq n+2$. On the other hand if $2(d+g-n-2)=x$ we have $2(d+g-n-2)\leq d-n-2$, which reads $d+2g-n-2\leq 0$, that forces $d=n+2$, $g=0$ and $x=0$. Whence the assertion follows
\end{proof}

To state the second lemma, recall that an irreducible, projective surface $S\subset \mathbb G(1,r)$ is called a \emph{congruence} of lines in $\p^r$. Given an irreducible projective curve $C\subset S$ it corresponds to a scroll in $\p^r$, and it is well known that the scroll is developable if and only if for the general point $x\in C$ the tangent line to $C$ at $x$ lies in $\mathbb G(1,r)$. 

\begin{lemma}\label{lem:two} Let $S$ be a congruence of lines in $\p^r$, with $r\geq 3$. If all curves contained in $S$ correspond to developable scrolls, then one of the following occurs:
\begin{itemize}
\item [($\alpha$)] there is a plane $\Pi\subset \p^r$ such that $S$ consists of all the lines in $\Pi$;
\item [($\beta$)] there is a point $p\in \p^r$ such that $S$ consists of  lines containing $p$.
\end{itemize}

\end{lemma}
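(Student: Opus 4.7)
The plan is to translate the hypothesis into an infinitesimal rank condition on $T_\ell S$ inside the Grassmannian $\G(1,r)$, classify $2$--dimensional subspaces satisfying it, and then read off the geometry of $X=\bigcup_{\ell\in S}\ell\subset\p^r$. Recall that $T_\ell\G(1,r)\cong\operatorname{Hom}(\ell,V/\ell)$ with $V=\C^{r+1}$, and that the scroll swept by a curve $\gamma\subset\G(1,r)$ through $\ell$ is developable if and only if the tangent vector to $\gamma$ at $\ell$, viewed in $\operatorname{Hom}(\ell,V/\ell)$, has rank at most $1$: this is simply the algebraic condition for the existence of a focus, i.e., of a point on $\ell$ that is fixed to first order along $\gamma$. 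Our hypothesis thus becomes: for $\ell\in S$ general, the whole $2$--dimensional subspace $T_\ell S\subset\operatorname{Hom}(\ell,V/\ell)$ consists of elements of rank $\le 1$.

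Next I would apply the elementary classification of $2$--dimensional subspaces of rank--$\le 1$ elements of $\operatorname{Hom}(\ell,V/\ell)$: such a subspace is of one of two types. Either $(i)$ all its elements share a common image, giving a distinguished plane $\Pi_\ell\supset\ell$ and making $T_\ell S$ the full space of infinitesimal deformations of $\ell$ inside $\Pi_\ell$; or $(ii)$ all its elements share a common kernel, giving a distinguished point $p_\ell\in\ell$ and making $T_\ell S$ a $2$--dimensional space of deformations of $\ell$ fixing $p_\ell$. By irreducibility of $S$, exactly one of these alternatives holds for the generic $\ell\in S$.

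In alternative $(i)$ I would analyse $X=\bigcup_{\ell\in S}\ell$. At a general smooth point $x\in\ell\subset X$, the projective tangent space $T_{X,x}$ is spanned by $\ell$ and by the contributions of $T_\ell S$ (first--order motions of $x$ obtained by moving $\ell$ in $S$); by definition of $\Pi_\ell$ both contributions lie in $\Pi_\ell$, so $T_{X,x}\subseteq\Pi_\ell$. Since $\dim\Pi_\ell=2$, this forces $\dim X\le 2$; since moreover an irreducible surface covered by a $2$--dimensional family of lines is necessarily a plane (a general point then has a $1$--dimensional family of lines of $X$ through it, and these lines span $T_{X,x}\subseteq X$), we conclude that $X$ is a fixed plane $\Pi$. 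Hence $S\subseteq\G(1,\Pi)$, and irreducibility together with $\dim S=2=\dim\G(1,\Pi)$ yields $S=\G(1,\Pi)$, which is case $(\alpha)$.

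In alternative $(ii)$ I would study the rational map $p\colon S\map\p^r$, $\ell\mapsto p_\ell$, first showing $\dim p(S)\le 1$. Given $v\in T_\ell S$, any curve $\gamma\subset S$ tangent to $v$ at $\ell$ sweeps a developable scroll, which is either a cone with vertex $p_\ell$ (so $dp(v)=0$) or a tangent developable whose focus traces a curve tangent to $\ell$ at $p_\ell$ (so $dp(v)$ lies along $\ell$); in either case $dp(v)\in T_{p_\ell}\ell\subseteq T_{p_\ell}\p^r$, so the image of $p$ has dimension at most $1$. To rule out $p(S)$ being a curve $C$, note that then the general fibre of $p$ over $c\in C$ would be $1$--dimensional, giving a pencil of distinct lines in $S$ through $c$; but for every $\ell$ in this pencil any tangent vector to $S$ transverse to the fibre would have to be torsal with associated tangent developable forcing $\ell=T_{C,c}$, contradicting the $1$--dimensionality of the fibre. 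Hence $p$ is constant and all lines of $S$ pass through one common point, which is case $(\beta)$. The main technical point is the inclusion $T_{X,x}\subseteq\Pi_\ell$ in $(i)$ and the identification $dp(v)\in T_{p_\ell}\ell$ in $(ii)$: both use the global developable dichotomy (cone versus tangent developable) for arbitrary curves in $S$, going beyond the bare infinitesimal rank condition.
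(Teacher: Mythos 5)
Your argument is correct, and it follows a genuinely different route from the paper. You work infinitesimally and intrinsically in $\G(1,r)$ for arbitrary $r$: the hypothesis is converted into the condition that $T_\ell S$ is a $2$--dimensional space of rank $\le 1$ homomorphisms $\ell\to V/\ell$, the standard dichotomy for such spaces (common image versus common kernel) produces either a focal plane $\Pi_\ell$ or a focal point $p_\ell$, and you then conclude by showing respectively that the union of the lines is a plane, or that the focal map $\ell\mapsto p_\ell$ has differential valued in the direction of $\ell$ and hence (after excluding a $1$--dimensional image by the pencil-of-tangent-lines contradiction) is constant. The paper instead first reduces to $r=3$ by generic projection and works extrinsically with the Pl\"ucker quadric $Q\subset\p^5$: developability of all curves gives $T(S)\subseteq Q$, so the tangent planes of $S$ lie in one system of planes of $Q$ and pairwise meet, forcing $S$ to be defective or to span a $\p^4$; the Veronese surface is excluded because its tangential variety is a cubic, and in the remaining cases all lines of the congruence meet a fixed line, whence every developable swept by a curve of $S$ must be a cone and the lines pairwise meet, giving ($\alpha$) or ($\beta$). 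Your approach buys uniformity in $r$ (no projection to $\p^3$ and no lifting of the conclusion back, a step the paper passes over quickly) and replaces the classification of defective surfaces by elementary linear algebra; moreover your case (ii) can even be made purely infinitesimal, since writing $\ell(t)=\langle a(t),b(t)\rangle$ with focus $[a(t)]$ the torsality condition $a'(t)\in\ell(t)$ directly gives $dp(v)\in T_{p_\ell}\ell$ without invoking the cone/tangent-developable dichotomy. The paper's approach, on the other hand, trades this for brevity by quoting known facts (defective surfaces, the structure of planes on $\G(1,3)$). One small imprecision in your write-up: developability of the scroll swept by $\gamma$ is equivalent to the tangent vector having rank $\le 1$ at \emph{every} (general) point of $\gamma$, not just at $\ell$; since you only use the implication ``developable $\Rightarrow$ rank $\le 1$ at $\ell$'', this does not affect the proof.
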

\begin{proof} By projecting down generically in $\p^3$ it suffices to prove the assertion for $r=3$. So we assume $r=3$, hence $S\subset Q\subset \p^5$, where $Q=\mathbb G(1,3)$ is a smooth quadric. 

Let $x\in S$ be a general point. Every line passing through $x$ in the tangent plane $T_{S,x}$ is tangent to some curve on $S$, and therefore this line is contained in $Q$. Hence $T_{S,x}\subset Q$ and therefore $T(S)\subseteq Q$. This implies that all tangent planes to $S$ in smooth points of $S$ belong to one and the same system of planes of $Q$, hence they pairwise intersect at a point, thus either $S$ is defective or it spans a 4--space. 

If $S$ is defective, it could either  be a cone or the Veronese surface $V_{2,2}$. The latter case is impossible, because $T(V_{2,2})$ is a cubic hypersurface in $\p^5$, not a quadric. 

\begin{claim}\label{cl:pop} If the surface $S$ is a cone, then either case ($\alpha$) or ($\beta$) occurs. 
\end{claim}

\begin{proof}[Proof of Claim \ref {cl:pop}] If $S$ is a cone, its vertex corresponds to a line $r$ in $\p^3$ such that all lines corresponding to the points of $S$ intersect $r$. Now take any curve $C$ on $S$. This corresponds to a developable surface $\Sigma$ in $\p^3$. We claim that $\Sigma$ is a cone. If not, $\Sigma$ would be the tangent developable surface  to a curve $Z$, and then all tangent lines to $Z$ would intersect $r$, which is not possible. 
Hence, whatever $C$ is, $\Sigma$ is a cone, and this implies that the lines corresponding to the points of $S$ pairwise intersect at a point, which implies that either ($\alpha$) or ($\beta$) occurs. \end{proof}

Suppose now that $S$ spans a $4$--space $\Pi$, which therefore contains also $T(S)$. This means that the quadric $Q'$ cut out by $\Pi$ on $Q$ is singular, because it contains planes. Hence $Q'$ is a cone with vertex a point corresponding to a line $s$, and the lines 
corresponding to the points of $S$ all intersect the line $s$. Then the same argument we made in the proof of Claim \ref {cl:pop} shows that we are either in case ($\alpha$) or ($\beta$). \end{proof} 

As a consequence we have the:

\begin{lemma}\label{lem:three} Let $V\subset \mathbb G(1,r)$ be a projective variety of dimension 3 possessing an irreducible family of (generically) irreducible curves $\cal C$ such that:
\begin{itemize}
\item [($\iota$)] all  curves in $\cal C$ correspond to developable scrolls;
\item [($\iota\iota$)] given two general points of $V$ there is a curve in $\cal C$ containing them.
\end{itemize}
Then there is a point $p\in \p^r$ such that any point of $V$ corresponds to a line containing $p$ (hence $r\geq 4$).  
\end{lemma}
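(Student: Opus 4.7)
I plan to exploit the characterization recalled just before Lemma~\ref{lem:two}: a curve $C\subset\G(1,r)$ corresponds to a developable scroll if and only if its tangent line at a general point is contained in $\G(1,r)$. Combined with hypothesis $(\iota\iota)$, which will let me realize every tangent direction to $V$ at a general point via a curve of $\mathcal{C}$, this will force the projective tangent space $T_{v_0}V\simeq\p^3$ to lie inside $\G(1,r)$. The classification of linear subspaces of $\G(1,r)$ then identifies $T_{v_0}V$ with a $3$-space of lines through a common point, and a connectedness argument will show this point is independent of $v_0$.

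First I would fix a general point $v_0\in V$. From the incidence $\{(C,x,y)\in\mathcal{C}\times V\times V : x,y\in C,\,x\neq y\}\to V\times V$, which is dominant by hypothesis $(\iota\iota)$, one gets $\dim\mathcal{C}=4$, hence the subfamily $\mathcal{C}_{v_0}\subset\mathcal{C}$ of curves through $v_0$ has dimension $2$, the same dimension as $\p(T_{v_0}V)$. The key claim is that the tangent-direction map $\psi:\mathcal{C}_{v_0}\map\p(T_{v_0}V)$, $C\mapsto T_{v_0}C$, is dominant: given $t\in\p(T_{v_0}V)$, pick $v_1\in V$ approaching $v_0$ along $t$; the curves $C_{v_0,v_1}\in\mathcal{C}$ provided by $(\iota\iota)$ degenerate to a curve in $\overline{\mathcal{C}_{v_0}}$ whose branch at $v_0$ has tangent direction $t$, because the chord $\langle v_0,v_1\rangle$ is a secant of $C_{v_0,v_1}$ and tends to $t$. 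Since each $C\in\overline{\mathcal{C}_{v_0}}$ corresponds, by hypothesis $(\iota)$, to a developable scroll (developability being a closed condition in families), the tangent line to $C$ at $v_0$ in the Pl\"ucker embedding lies in $\G(1,r)$. As $C$ varies in $\mathcal{C}_{v_0}$ this tangent line runs over every projective line in $T_{v_0}V$ through $v_0$, so the full $\p^3=T_{v_0}V$ is contained in $\G(1,r)$.

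To finish, recall that the maximal linear subspaces of $\G(1,r)$ are the $\alpha$-spaces $\cong\p^{r-1}$ of lines through a fixed point and the $\beta$-planes $\cong\p^2$ of lines in a fixed plane, and every linear subspace of $\G(1,r)$ lies in one of these. Since $T_{v_0}V\cong\p^3$ cannot fit in a $\beta$-plane, we must have $r\geq 4$ and $T_{v_0}V$ must sit in an $\alpha$-space, corresponding to the lines through a unique point $p_{v_0}\in\p^r$; in particular $\ell_{v_0}\ni p_{v_0}$. The assignment $v\mapsto p_v$ is then a rational map $V\map\p^r$ whose differential vanishes at a general point (all infinitesimally nearby lines $\ell_v$ already pass through the fixed $p_{v_0}$), so by irreducibility of $V$ it is constant, equal to some point $p$, and by closedness of the incidence $\{(v,x)\in V\times\p^r : x\in\ell_v\}$ one concludes that $\ell_v\ni p$ for every $v\in V$. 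The main obstacle is the dominance of $\psi$ claimed in the middle paragraph; the Grassmannian classification and vanishing-differential steps are essentially formal once this is granted.
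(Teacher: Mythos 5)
Your reduction to showing $T_{v_0}V\subset \G(1,r)$ is sound and close in spirit to the paper's first step (the paper deduces $T(V)\subseteq \G(1,r)$ from the same degeneration-of-chords argument), and replacing the paper's use of Lemma~\ref{lem:two} on surfaces of $V$ by the classification of linear $\p^3$'s in $\G(1,r)$ is a legitimate alternative route. The genuine gap is in your last step: the claim that the differential of $v\mapsto p_v$ vanishes ``because all infinitesimally nearby lines $\ell_v$ already pass through $p_{v_0}$''. That inference is invalid as stated: a family of lines can pass through the marked point of a fixed member to first order while the marked point nevertheless moves --- the tangent lines of a curve, with $p_v$ the point of tangency, are exactly such a family (this is what developability means), so first-order incidence with $p_{v_0}$ does not force $dp_v=0$. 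What the inclusion $T_vV\subseteq\Sigma_{p_v}$ (the $\alpha$--space of lines through $p_v$) really gives --- e.g.\ writing $v=[a_v\wedge b_v]$ with $p_v=[a_v]$, differentiating and wedging with $a_v$ --- is only that the image of the differential of $v\mapsto p_v$ lies in the direction of $\ell_v$; it need not be zero a priori, so constancy of $p_v$ is precisely the point that still has to be proved.

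The step can be repaired, but it needs an argument you have not supplied: if $p_v$ were non-constant, its image $\Gamma$ would have, at the general point $p_v$, tangent space spanned by $dp_v(T_vV)$, hence contained in the direction of $\ell_v$; if $\dim\Gamma\geq 2$ this already contradicts the rank of $dp_v$, and if $\Gamma$ is a curve then $\ell_v$ is its tangent line at $p_v$, so $v\mapsto\ell_v$ (which is the identity of $V\subset\G(1,r)$) would factor through the $1$--dimensional family of tangent lines of $\Gamma$, contradicting $\dim V=3$. Alternatively you can finish as the paper does: from $T(V)\subseteq\G(1,r)$ every curve on every surface $S\subset V$ corresponds to a developable scroll, Lemma~\ref{lem:two} then makes the lines of $V$ pairwise incident, and a $3$--dimensional family of pairwise meeting lines cannot all lie in a plane, hence they pass through a fixed point.
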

\begin{proof} The assumption ($\iota\iota$) implies that given a  general point $x\in V$ and a tangent line $t$ to $V$ at $x$, there is a curve in $\cal C$ which passes through $x$ and tangent to the line $t$. This in turn implies that $T(V)\subseteq \mathbb G(1,r)$ and all curves in $V$ correspond to developable scrolls and the same happens for every surface $S\subset V$. By Lemma \ref {lem:two} this implies that the points of $V$  correspond to lines which pairwise intersect at a point. They cannot correspond to the lines in a plane, hence they correspond to lines passing through a fixed point.\end{proof}

Now we are ready for the:

\begin{proof} [Proof of Theorem \ref {prop:acc5}]
First of all we notice that, by Lemma \ref {lem:three}, the surfaces in $\cal S$, which are scrolls,  are not developable, otherwise $X$ would be a cone. 

Let $p_0,p_1\in X$ be general points. Let $r_0,r_1$ be the lines in $\cal R$ containing $p_0,p_1$ respectively. Then $\Gamma:=\Gamma_{p_0,p_1}$ is a non--developable scroll spanning a 4--space, of which $r_0,r_1$ are general rulings. By Lemma \ref {lem:one}, there is a point $x\in r_1$ such that the tangent plane to $\Gamma$ at $x$ intersects $r_0$. 

We first assume that, when $p_0$ moves, the point $x$ moves on $r_1$, so that it is a general point of $r_1$ and hence of $X$. 

\begin{claim}\label{cl:fun} Given a general line $r$ in $\cal R$ consider the linear space 
\[
\Pi_r=\langle \cup_{x\in r\cap \Reg(X)} T_{X,x}\rangle
\]
and set $v=\dim (\Pi_r)$. Then:
\begin{itemize}
\item [(1)] one has $4\leq v\leq 6$;
\item [(2)] all lines $s\in \cal R$ intersect $\Pi_r$;
\item [(3)] given two general lines $r,s\in \cal R$, one has $\dim(\langle \Pi_r, \Pi_s\rangle)\leq 8$.  
\end{itemize}
\end{claim}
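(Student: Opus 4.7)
The plan is to deduce (2), (3) and the lower bound in (1) from Lemma~\ref{lem:one} and Terracini's Lemma applied to contact scrolls, and to establish the upper bound in (1) by a dimension count on intersections of tangent joins along $r$.

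For (2) and (3), fix a general $s \in \cal R$ and choose general $p_0 \in r$ and $p_1 \in s$, both being general points of $X$. By Lemma~\ref{lem:acc4}, the contact scroll $\Gamma := \Gamma_{p_0,p_1}$ is a non-developable scroll spanning a 4-space with $r$ and $s$ among its rulings. Lemma~\ref{lem:one} then yields a point $x \in r$ such that $T_{\Gamma,x} \cap s$ is a single point $q$; since $q \in T_{\Gamma,x} \subseteq T_{X,x} \subseteq \Pi_r$, we obtain $s \cap \Pi_r \ni q$, and closedness extends this from generic $s$ to all $s \in \cal R$, proving (2). For (3), $r \cup s \subset \Gamma_{p_0,p_1}$, so by Terracini's Lemma $T_{X,z} \subseteq T_{X,p_0,p_1}$ for every $z \in r \cup s$, and $T_{X,p_0,p_1}$ has dimension $s(X)=8$; hence $\Pi_r, \Pi_s \subseteq T_{X,p_0,p_1}$, giving $\dim \langle \Pi_r, \Pi_s\rangle \leq 8$. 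The lower bound $v \geq 4$ in (1) is immediate from $T_{X,x} \subseteq \Pi_r$.

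For the upper bound $v \leq 6$, fix a general $p_1 \in X$. By the argument above, $\Pi_r \subseteq T_{X,p_0,p_1}$ for every general $p_0 \in r$. Taking two distinct general $p_0, p_0' \in r$ yields
\[
\Pi_r \subseteq T_{X,p_0,p_1} \cap T_{X,p_0',p_1},
\]
whose dimension equals $16 - \dim T_{X,p_0,p_0',p_1}$. Assuming $T_{X,p_0} \cap T_{X,p_0'} = r$ (otherwise the Gauss map contracts $r$ and $v=4$), one has $\dim(T_{X,p_0} + T_{X,p_0'}) = 7$, and Terracini's Lemma supplies at least a line of intersection of $T_{X,p_1}$ with this 7-space, namely the line through the two distinct Terracini points $T_{X,p_1} \cap T_{X,p_0}$ and $T_{X,p_1} \cap T_{X,p_0'}$. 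Provided this intersection is exactly one-dimensional, $\dim T_{X,p_0,p_0',p_1} = 10$ and hence $v \leq 6$. This argument applies when the ambient projective space has dimension at least 10; in the boundary case where the ambient has dimension 9, one instead chooses three distinct general points $p_0, p_0', p_0'' \in r$: each $T_{X,p_0^{(i)},p_1}$ is a hyperplane in $\p^9$ containing the 6-dimensional subspace $\langle T_{X,p_1}, r\rangle$, and they project to three lines in the quotient $\p^9/\langle T_{X,p_1}, r\rangle \cong \p^2$; if these three lines are in general position, their triple intersection is empty, so $\bigcap_i T_{X,p_0^{(i)},p_1}$ has dimension exactly 6, again forcing $v \leq 6$.

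The main obstacle is justifying the genericity underlying both counts: when the ambient has dimension at least 10, that $T_{X,p_1} \cap (T_{X,p_0} + T_{X,p_0'})$ is exactly the line through the two Terracini points, and in the $\p^9$ case, that the three tangent hyperplanes do not all contain a common 7-dimensional subspace. Both degeneracies would reflect unexpected coincidences in the tangential structure of $X$ along $r$, incompatible with $\cal S_r$ being a genuine 2-dimensional family of non-developable scrolls through $r$; ruling them out rigorously requires analyzing the variation of $T_{X,p_0}$ along $r$ via the second fundamental form at a general point $x$ of $r$, using that the direction $[r]$ lies in the base locus of $\mathrm{II}_{X,x}$.
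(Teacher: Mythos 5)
Your treatment of (2), (3) and of the lower bound $v\geq 4$ is essentially the paper's own: (2) is exactly the observation that the point $x\in r$ furnished by Lemma \ref{lem:one} satisfies $T_{\Gamma,x}\subseteq T_{X,x}\subseteq \Pi_r$ (note that this needs $x\in \Reg(X)$, which holds because Claim \ref{cl:fun} is proved under the standing assumption that this point is general on $r$), and (3) is the inclusion $\Pi_r,\Pi_s\subseteq T_{X,p_0,p_1}$, an $8$--space since $f(X)=1$, coming from $r\cup s\subseteq \Gamma_{p_0,p_1}$.

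The genuine gap is in the upper bound $v\leq 6$. First, the dichotomy ``either $T_{X,p_0}\cap T_{X,p_0'}=r$ or the Gauss map contracts $r$ and $v=4$'' is not exhaustive: for two points of the same line $r$ this intersection can be a plane or a $3$--space without the tangent spaces coinciding, and there your count collapses (if it is a plane, $\dim\langle T_{X,p_0},T_{X,p_0'}\rangle=6$, and as soon as the two points $T_{X,p_1}\cap T_{X,p_0}$ and $T_{X,p_1}\cap T_{X,p_0'}$ are distinct one only gets $\dim T_{X,p_0,p_0',p_1}\leq 9$, so the two $8$--spaces may meet in dimension $7$). Second, even in your main case the two genericity assertions everything rests on --- that $T_{X,p_1}\cap\langle T_{X,p_0},T_{X,p_0'}\rangle$ is exactly a line, and, for $r=9$, that the three bitangent hyperplanes have no common $7$--space --- are precisely what has to be proved; you leave them as an acknowledged ``obstacle'' and the appeal to ${\rm II}_{X,x}$ is not carried out. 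The paper's proof of (1) is different and uniform in $r$: fix $p_1\in r$ general; by Lemma \ref{lem:acc4} every surface of the $2$--dimensional family $\cal S_{p_1}$ contains $r$, hence the $8$--space tangent to $X$ along any such surface contains $\Pi_r$; since $\gamma(X)=2$ these $8$--spaces cannot be constant along a positive--dimensional subfamily of $\cal S_{p_1}$, so the hyperplanes tangent to $X$ along members of $\cal S_{p_1}$ form a family of dimension $r-7$ contained in the linear system, of dimension $r-v-1$, of hyperplanes through $\Pi_r$, whence $v\leq 6$. Incidentally, your own inclusion $\langle \Pi_r,T_{X,p_1}\rangle\subseteq T_{X,p_0,p_1}$ for a single general $p_1\in X$ already gives $\dim(\Pi_r\cap T_{X,p_1})\geq v-4$, while $v\leq 8$ is automatic from $\Pi_r\subseteq T_{X,p_0,p_1}$; if $v=8$ then $T_{X,p_1}\subseteq \Pi_r$ for general $p_1$, so $X\subseteq \Pi_r$, and if $v=7$ the projection of $X$ from $\Pi_r$ is a point, so $X$ spans at most an $8$--space --- both contradict non--degeneracy since $r\geq 9$. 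This one--point repair is much shorter than the two-- and three--point counts you attempt.
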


\begin{proof}[Proof of the Claim \ref {cl:fun}] We keep the notation introduced above. Set $s:=r_0$ and $r:=r_1$ which are general lines in $\cal R$. 

We have the 2--dimensional family $\cal S_{p_1}$ of surfaces, which all contain the line $r$.
For each of these surfaces $\Gamma$, there is an $8$--space which is tangent to $X$ along $\Gamma$, and each of these $8$--spaces  is contained in a family of dimension $r-9$ of hyperplanes. Hence we have a family of dimension at most $r-7$ of hyperplanes, each of which is tangent to $X$ along a surface of $\cal S_{p_1}$. Of course 
 $\Pi_r$ is contained in all these hyperplanes. This implies that $4\leq v\leq 6$, proving (1).

The line $s$, intersecting $T_{\Gamma, x}$, intersects also $T_{X,x}$, then it intersects $\Pi_{r}$. This proves (2).

 Finally, both $\Pi_r, \Pi_s$ are contained in the $8$--space which is tangent to $X$ along $\Gamma$, proving (3). 
\end{proof}

Set now
\[
\Pi:= \cap_{r\in \cal R} \Pi_r
\]
and $\varepsilon=\dim (\Pi)$.

\begin{claim}\label{cl:fun2} If $r\in \cal R$ is a general line, then $r$ intersects $\Pi$ in a point and $1\leq \varepsilon\leq 3$.
\end{claim}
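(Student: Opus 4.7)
The plan is to prove the claim in three steps: nonemptiness of $r\cap\Pi$, the fact that $r$ is not contained in $\Pi$, and the bounds on $\varepsilon$.

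For nonemptiness, I apply Claim \ref{cl:fun}(2) with the roles of $r$ and $s$ interchanged: every line $s\in\cal R$ meets $\Pi_r$, hence by symmetry $r$ meets $\Pi_s$ for every $s\in\cal R$. Each $r\cap\Pi_s$ is a nonempty linear subspace of $r$, hence either a point or the entire line $r$. If $r\subseteq\Pi_s$ held for a general $s$, then by symmetry and the fact that the lines of $\cal R$ cover $X$, we would obtain $X\subseteq\Pi_s$, contradicting $\dim\Pi_s\leq 6<r$. Hence $r\cap\Pi_s=\{x_s\}$ is a single point of $r$.

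The delicate step is to show that $x_s$ is independent of $s$, so that $\{x_s\}=r\cap\bigcap_s\Pi_s=r\cap\Pi$. Fix a general $\Gamma\in\cal S$ with $r\subset\Gamma$. By the first paragraph of this proof, $\Gamma$ is a non-developable scroll spanning a $4$-space, so Lemma \ref{lem:one} applies: it singles out a privileged point on $r$, namely $r\cap \mathfrak C'_s$, where $\mathfrak C'_s$ is the irreducible component of $\mathfrak C_s$ unisecant to the rulings of $\Gamma$. By the very definition of $\Pi_s=\langle\bigcup_{y\in s}T_{X,y}\rangle$ and the fact that the tangent plane to $\Gamma$ at this privileged point meets $s$, this distinguished point on $r$ coincides with $x_s$. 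Since $\mathfrak C'_s$ is unisecant the rulings, as $s$ varies among the rulings of $\Gamma$, the point $x_s$ does not depend on $s$; varying $\Gamma$ in the two-dimensional family $\cal S_r$, the compatibility condition $x_s\in\Pi_s\cap\Pi_{s'}$ for varying pairs forces this point to coincide across all choices of $\Gamma$, yielding a single $x\in r$ with $x\in\Pi_s$ for every $s$, i.e., $x\in\Pi$. Moreover $r\not\subseteq\Pi$, as otherwise $r\subseteq\Pi_s$ for all $s$, contradicting the previous paragraph. Therefore $r\cap\Pi=\{x\}$ is a single point.

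For the bounds on $\varepsilon$: the lower bound $\varepsilon\geq 1$ follows because $\varepsilon=0$ would force $\Pi$ to be a single point lying on every line of $\cal R$, making $X$ a cone with this vertex and contradicting our assumption that $X$ is not a cone. For the upper bound $\varepsilon\leq 3$, consider the map $\psi:\cal R\to\Pi$ sending $s\mapsto s\cap\Pi$; its image has dimension at most $\dim\cal R=3$. By the minimality of $\Pi$ as the intersection $\bigcap_s\Pi_s$, the linear span of the image must coincide with $\Pi$: otherwise one could produce a proper subspace $\Pi'\subsetneq\Pi$ still containing $s\cap\Pi$ for every $s$, and an iteration of the argument for nonemptiness (applied with $\Pi'$ in place of $\Pi$) would show $\Pi'\subseteq\bigcap_s\Pi_s=\Pi'$ already, contradiction. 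Hence $\varepsilon\leq 3$. The main obstacle is the independence of $x_s$ from $s$, for which the interpretation of Lemma \ref{lem:one} is essential.
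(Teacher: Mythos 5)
Your proposal sets up the problem correctly (each general $s\in\cal R$ meets $\Pi_r$, hence $r\cap\Pi_s$ is a single point, and $\varepsilon\geq 1$ since otherwise $X$ is a cone), but it has two genuine gaps. The first is the step you yourself call delicate: the independence of $x_s=r\cap\Pi_s$ from $s$. Your identification of $x_s$ with $r\cap\mathfrak C'_s$ is unwarranted: the point of $r$ at which the tangent plane to $\Gamma$ meets $s$ has no reason to lie in $\Pi_s=\langle\bigcup_{y\in s}T_{X,y}\rangle$; what Lemma \ref{lem:one} actually yields is that $r$ meets the tangent plane to $\Gamma$ at the privileged point \emph{of $s$}, and it is that intersection point which lies in $\Pi_s$. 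More seriously, the assertion that this point is independent of $s$ ``because $\mathfrak C'_s$ is unisecant the rulings'' is a non sequitur: as $s$ varies, $\mathfrak C'_s$ and the distinguished point vary in general, and whether they move or not is precisely the dichotomy the paper must treat separately (the non--moving case is handled in Claim \ref{cl:fun3} and leads to case (4) of Theorem \ref{prop:acc5}), so constancy cannot be assumed here. The paper's proof of Claim \ref{cl:fun2} avoids all of this by using part (3) of Claim \ref{cl:fun}: a third general line $t\in\cal R$ meets both $\Pi_r$ and $\Pi_s$; if it met them in two distinct points it would lie in $\langle\Pi_r,\Pi_s\rangle$, which has dimension at most $8$, and since $t$ is general and the lines of $\cal R$ fill $X$, the non--degenerate $X\subset\p^r$ with $r\geq 9$ would lie in that span, a contradiction; hence $t$ meets $\Pi_r\cap\Pi_s$, and by the generality of $r,s$ it meets $\Pi$.

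The second gap is the upper bound $\varepsilon\leq 3$. From the fact that the image of $s\mapsto s\cap\Pi$ has dimension at most $3$ you can conclude nothing about $\dim\Pi$: a family of points of dimension at most $3$ (even a curve) can span a linear space of arbitrarily large dimension. Moreover your ``minimality'' argument that the span of these points must equal $\Pi$ is circular: $\Pi$ is defined as $\bigcap_s\Pi_s$, not as the span of the points $s\cap\Pi$, so exhibiting a proper subspace of $\Pi$ containing all those points produces no contradiction. The paper's argument is of a completely different nature: all hyperplanes tangent to $X$ along the surfaces of $\cal S$ contain $\Pi$; they form a family of dimension $r-5$ which, by Bertini's theorem, cannot be a linear system (otherwise the general member of the induced linear system on $X$ would have singular points filling $X$); hence its dimension is strictly smaller than $r-\varepsilon-1$, the dimension of the linear system of hyperplanes containing $\Pi$, which gives $\varepsilon\leq 3$. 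Both of these steps need to be repaired along these (or equivalent) lines; only your nonemptiness setup and the bound $\varepsilon\geq 1$ coincide with the paper's argument.
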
 

\begin{proof}[Proof of the Claim \ref {cl:fun2}] Let $r,s,t\in \cal R$ be general lines. Then $t$ has non--empty intersection with both $\Pi_r$ and $\Pi_s$. If $t$ intersects $\Pi_r$ and $\Pi_s$ in distinct points, then $t$ lies in  $\langle \Pi_r, \Pi_s\rangle$. This is not possible, because $t\in \cal R$ is general, and then all of $X$ would lie in $\langle \Pi_r, \Pi_s\rangle$ which has dimension at most 8. Hence $t$ intersects $\Pi_r$ and $\Pi_s$ in a point of their intersection. By the genericity of $r$ and $s$ we see that $t$ intersects $\Pi$. It is not possible that $\Pi$ is a point, because then $X$ would be a cone. Hence $\varepsilon \geq 1$. On the other hand $\Pi$ is contained in all hyperplanes which are tangent to $X$ along the surfaces in $\cal S$. These hyperplanes form a $(r-5)$--dimensional family, which is not a linear system, because otherwise they would cut out a linear system on $X$ and the general divisor of this linear system would have singular points filling up $X$, against  Bertini's theorem. Hence the family of the hyperplanes in question has dimension strictly smaller that the family of hyperplanes containing $\Pi$, thus $r-5<r-(\epsilon+1)$, i.e., $\varepsilon \leq 3$. 
\end{proof}

Next we consider the projection $\pi: X\dasharrow \p^{r-\varepsilon-1}$ from $\Pi$ and we denote by $Y$ its image.

\begin{claim}\label{cl:purp} One has $2\leq \dim(Y)\leq 3$.

\end{claim}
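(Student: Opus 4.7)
The plan is to establish the two bounds for $\dim(Y)$ separately, using that the general line $r\in\cal R$ meets $\Pi$ in a single point (Claim \ref{cl:fun2}) together with our standing assumption that $X$ is not a cone.

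For the upper bound $\dim(Y)\leq 3$, I will observe that each general line of $\cal R$ is contracted by $\pi$ to a single point of $Y$: indeed, since such a line meets $\Pi$ in exactly one point, all but one of its points project to the same image. Because $\cal R$ is a $3$--dimensional family that covers $X$ with a unique line through the general point of $X$, the general fibre of $\pi$ contains such a line, hence has dimension at least $1$, and therefore $\dim(Y)\leq \dim(X)-1=3$.

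For the lower bound $\dim(Y)\geq 2$, I will argue by contradiction, supposing $\dim(Y)\leq 1$. In that case $X$ is contained in the cone $V$ with vertex $\Pi$ over $Y$, which has dimension $\varepsilon+\dim(Y)+1$. If $\dim(Y)=0$ then $Y$ is a point $y$, so $X\subseteq\langle\Pi,y\rangle$ is contained in a linear space of dimension $\varepsilon+1\leq 4$, contradicting non--degeneracy of $X$ in $\p^r$ with $r\geq 9$. Otherwise $\dim(Y)=1$, and I will split according to the three possible values of $\varepsilon$. For $\varepsilon=1$ the cone $V$ has dimension $3$, which cannot contain the $4$--fold $X$. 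For $\varepsilon=2$ the cone $V$ is $4$--dimensional and irreducible, so the inclusion $X\subseteq V$ forces $X=V$, and $X$ is a cone over $Y$ with vertex $\Pi$, contradicting the hypothesis that $X$ is not a cone. For $\varepsilon=3$ the variety $X$ sits in a $5$--dimensional cone over a curve with vertex a $3$--space, that is, in case (ii) of Theorem \ref{thm:lowcod}; Remark \ref{rem:ggg} then implies $f(X)=n-2=2$ (since $X$ is not a cone), contradicting our standing assumption $f(X)=1$.

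The main obstacle is precisely this last subcase $\varepsilon=3$, $\dim(Y)=1$: it cannot be ruled out by a purely numerical comparison of dimensions, and its exclusion genuinely relies on invoking Remark \ref{rem:ggg}, i.e.\ on the fact extracted from Theorem \ref{thm:lowcod} that a $4$--fold contained in a $5$--dimensional cone over a curve with vertex a $3$--space and not itself a cone must have $f(X)=2$. The remaining subcases will reduce to straightforward dimension comparisons together with the exclusion that $X$ be a cone.
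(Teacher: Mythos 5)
Your proof is correct and follows essentially the same route as the paper: lines of $\cal R$ are contracted by $\pi$, giving $\dim(Y)\leq 3$; $\dim(Y)=0$ is excluded by non--degeneracy; and $\dim(Y)=1$ is excluded because small $\varepsilon$ forces $X$ into a too--small linear space or makes it a cone with vertex $\Pi$, while $\varepsilon=3$ yields a contradiction with $f(X)=1$. The only difference is cosmetic: for the subcase $\varepsilon=3$ you delegate to the tangent--space computation of Remark \ref{rem:ggg} (valid here, since it only uses that $X$ sits in a cone over a curve with vertex a $3$--space and is not a cone), whereas the paper redoes the same computation inline by noting that each general tangent space meets $\Pi$ in at least a plane.
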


\begin{proof}[Proof of the Claim \ref {cl:purp}] The projection $\pi$  contracts all lines of $\cal R$ to points in $Y$, hence $\dim(Y)\leq 3$. It cannot be the case that $\dim(Y)=0$ because $Y$ has to span $\p^{r-\varepsilon-1}$ and $r-\varepsilon-1\geq 8- \varepsilon\geq 5$. To finish we have to exclude that $\dim(Y)=1$. Assume, by contradiction, that this is the case. Then $X$ is swept out by a 1--dimensional family of $3$--folds (the fibres of $\pi$), each lying in a $\p^{\varepsilon+1}$. This implies $\varepsilon =3$, otherwise $X$ would be a cone with vertex $\Pi$ over a curve. Then the general tangent space to $X$  intersects $\Pi$ in at least  a plane, and  two general tangent spaces to $X$ intersect at least along a line, a contradiction. 
\end{proof}

\begin{claim}\label{cl:purp2} The case $\dim(Y)=2$ is not possible.

\end{claim}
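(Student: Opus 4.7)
Suppose for contradiction that $\dim Y=2$. The strategy is to transfer the secant defect of $X$ to $Y$ via $\pi$, deduce the structure of $Y$, and exhibit a contradiction with the assumption of the ``first branch'' of subcase (i3), namely that the distinguished point $x\in r_1$ from Lemma \ref{lem:one} moves with $p_0$ (so that the general scroll $\Gamma\in\cal S$ does \emph{not} have a line directrix in $\Pi$).

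First, because the lines of $\cal R$ are contracted by $\pi$ and the unique line of $\cal R$ through any point of $X$ lies in the corresponding fibre, the general fibre $F_y=\pi^{-1}(y)$ is a $2$-dimensional surface inside the $(\varepsilon+1)$-space $\langle\Pi,y\rangle$, swept by a $1$-parameter family of lines of $\cal R$ each meeting $\Pi$ in one point. Consequently the general surface $\Gamma\in\cal S$ projects to an irreducible curve $C=\pi(\Gamma)\subset Y$. I would then perform a double count on the incidence $I_C=\{(p_0,p_1)\in X\times X:\pi(\Gamma_{p_0,p_1})=C\}$: using the $\Gamma^{(2)}$-fibres of $X^{(2)}\to\cal S$ gives $\dim I_C=4+\dim\cal S_C$ with $\cal S_C=\{\Gamma\in\cal S:\pi(\Gamma)=C\}$, whereas using the $F_{y_0}\times F_{y_1}$-fibres of $X^{(2)}\to Y^{(2)}$ and the subset $C^{(2)}\subset Y^{(2)}$ gives $\dim I_C=2+4=6$. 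Hence $\dim\cal S_C=2$ and $\dim\cal T:=\dim\pi(\cal S)=2$, so through two general points of $Y$ there is a unique irreducible curve of $\cal T$. A standard Trisecant-type argument (exploiting that $Y$ is non--degenerate in $\p^{r-\varepsilon-1}$ with $r\geq 9$ and $\varepsilon\leq 3$, hence spans at least a $\p^5$) together with this uniqueness forces the curves of $\cal T$ to be conics. Thus $Y$ is a non--degenerate defective surface with $f(Y)=1$, so by the classical classification (see \cite{WDV}) $Y$ is either a cone over a curve or the Veronese surface $V_{2,2}\subset\p^5$.

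Now a cone in $\p^N$ admitting a $2$-dimensional family of irreducible conics must be a quadric cone in $\p^3$, which contradicts the non--degeneracy of $Y$ in $\p^{r-\varepsilon-1}\supseteq\p^5$; so $Y=V_{2,2}\subset\p^5$, forcing $r-\varepsilon-1=5$ and hence, together with $r\geq 9$ and $\varepsilon\leq 3$, the equalities $r=9$ and $\varepsilon=3$. Thus $X\subset\p^9$ sits in the $6$-dimensional cone $V$ with vertex the $3$-space $\Pi$ over $V_{2,2}$. To close off this last possibility I would exploit the ``first branch'' hypothesis that the point $x\in r_1$ of Lemma \ref{lem:one} moves with $p_0$: this prevents $\Gamma$ from having a line directrix in $\Pi$, so $\Gamma\cap\Pi$ must be an irreducible curve of degree $\geq 2$ in $\Pi$. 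Tracking how these directrix curves assemble as $\Gamma$ varies in the $4$-dimensional family $\cal S$, and comparing with the rigid conic structure of $V_{2,2}$, yields the contradiction: either all such curves lie in a common proper linear subspace of $\Pi$ (forcing $X$ to lie in a smaller cone and hence, via Theorem \ref{thm:lowcod}, to be a cone or to have $f(X)=2$, both ruled out), or the map $\cal R\dasharrow\Pi$ sending a ruling to its intersection with $\Pi$ has incompatible fibre dimensions with the requirement that through a general point of $X$ pass exactly one line of $\cal R$. This last geometric step, matching the directrix configuration in $\Pi$ with the conic structure on $V_{2,2}$, is the main obstacle of the proof.
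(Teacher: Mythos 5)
The decisive step of your argument is missing. After you reduce to $Y=V_{2,2}$, $\varepsilon=3$, $r=9$, i.e.\ to $X$ sitting in a $6$--dimensional cone with vertex a $3$--space over the Veronese surface, you still have to produce a contradiction, and what you offer (``tracking how these directrix curves assemble'', ``incompatible fibre dimensions'') is a sketch, not a proof --- as you yourself concede. Note that this residual configuration cannot be killed on configurational grounds alone: $4$--folds lying in a $6$--dimensional cone with vertex a $3$--space over $V_{2,2}$ genuinely occur in the classification (case (3) of Theorem \ref{prop:acc5}, and Theorem \ref{thm:wd5}), so any contradiction must exploit the hypotheses in force here (irreducibility of the contact surfaces $\Gamma_{p_0,p_1}$, $f(X)=1$, $X$ not a cone). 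The paper's proof does exactly this, and for an \emph{arbitrary} surface $Y$: since the projection of $T_{X,x}$ from $\Pi$ is the tangent plane to $Y$, the space $T_{X,x}$ cuts $\Pi$ in a line $L_x$; if two general such lines are skew, then $T_{X,p_0,p_1}$ contains $\Pi$ and hence coincides with $T_{V,p_0,p_1}$ ($V$ the cone over $Y$ with vertex $\Pi$), which is tangent to $X$ along the two fibres of $\pi$ through $p_0$ and $p_1$, contradicting the irreducibility of $\Gamma_{p_0,p_1}$; if instead the lines $L_x$ pairwise meet, they lie in a plane $P$ (they cannot be concurrent, else $X$ is a cone), and projecting from $P$ the same argument applies to the $5$--dimensional cone over the new surface. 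Your plan would still need an argument of this kind in the residual case; moreover your reliance on the ``first branch'' hypothesis (no line directrix inside $\Pi$) is structurally awkward, since the paper reuses this very claim in the second branch, where that hypothesis fails.

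There are also two smaller gaps in the part you do carry out. The double count only yields $\dim I_C\leq 6$, hence $\dim\cal S_C\leq 2$: the equality, and the asserted uniqueness of the curve of $\cal T$ through two general points of $Y$, do not follow, because $\pi(p_0),\pi(p_1)\in C$ does not imply $\pi(\Gamma_{p_0,p_1})=C$. Fortunately neither is needed: the existence of an irreducible member of $\cal T$ through two general points of $Y$ is automatic, since $\pi(\Gamma_{p_0,p_1})$ is such a curve. More seriously, before invoking the Trisecant Lemma you must know the image curves are \emph{plane} curves, which requires $\dim(\langle\Gamma\rangle\cap\Pi)\geq 1$, i.e.\ that the general $\Gamma\in\cal S$ is not a cone with vertex a point of $\Pi$; this can be checked (otherwise two general lines of $\cal R$ would meet, forcing all of them through a common point and $X$ to be a cone), but it is absent from your write--up. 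With these repairs the reduction to ``$Y$ a cone over a curve or $Y=V_{2,2}$'' is sound, but the proof remains incomplete at its crucial final step, where the paper's tangent--space argument is both shorter and conclusive.
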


\begin{proof}[Proof of the Claim \ref {cl:purp2}] Assume by contradiction that $\dim(Y)=2$. Then $X$ sits in the ($\varepsilon+3$)--dimensional cone $V$ with vertex $\Pi$ over $Y$. Since $X$ is not a cone, we have $\varepsilon \geq 2$. 

Let $x\in X$ be general. Then $T_{X,x}\subset T_{V,x}$, and $T_{V,x}$ is a $(\varepsilon+3)$-space which contains $\Pi$. Since the projection of $T_{X,x}$ from $\Pi$ is the general tangent space to $Y$, we see that  $T_{X,x}$ intersects $\Pi$ in  a line. 

Let $L_x$ be the line which $T_{X,x}$ cuts out on $\Pi$. Since $X$ is not a cone, the line $L_x$ moves when $x$ varies and describes a family $\cal L$ of lines in $\Pi$. 

We claim that two general lines in $\cal L$ intersect each other. Otherwise,  two general lines in $\cal L$ are skew, and then $\varepsilon=3$ by Claim \ref {cl:fun2}. Let $p_0,p_1\in X$ be general points. Then $T_{X,p_0,p_1}$, contains $\Pi$ and $T_{X,p_0}$ and $T_{X,p_1}$, hence it coincides with  $T_{V,p_0,p_1}$, which is tangent to $X$ along the union of the two surfaces which are the fibres of $\pi$ passing through $p_0$ and $p_1$. This is a contradiction, because the surface $\Gamma_{p_0,p_1}$ is irreducible. We have thus proved that two general lines in $\cal L$ meet.

If the lines in $\cal L$ pairwise meet, then they all belong to the same plane $P\subseteq \Pi$, because they cannot pass through the same point otherwise $X$ would be a cone. Consider the projection $\pi': X\dasharrow \p^{r-3}$ from $P$. The image of $\pi'$ is a surface $Y'$ because the general tangent space to $X$ intersects the centre of projection along a line. Moreover $X$ sits in the 5--dimensional cone $W$ over $Y'$ with vertex $P$. Then  we repeat the same argument as above. Namely, let $p_0,p_1\in X$ be general points. Then $T_{X,p_0,p_1}$, containing $P$, $T_{X,p_0}$ and $T_{X,p_1}$, coincides with 
$T_{W,p_0,p_1}$, which is tangent to $X$ along the union of  two surfaces, i.e.,  the fibres of $\pi'$ passing through $p_0$ and $p_1$, leading again to a contradiction. 
\end{proof}

Hence $\dim(Y)=3$. Note now that, by the proof of (3) of Claim \ref {cl:fun}, all
tangent $8$--spaces to $X$ along a surface $\Gamma$ in $\cal S$ contain $\Pi$. Under the projection  $\pi$ from $\Pi$ all these $8$--space are mapped to a $4$--dimensional family  of $(7-\varepsilon)$--spaces. If $q_0,q_1\in Y$ are general points, then $T_{Y,q_0,q_1}$ is contained in one of these $(7-\varepsilon)$--spaces. 
This implies that the two tangent spaces to $Y$ at the general points $q_0,q_1\in Y$ intersect in dimension $\lambda$ with 
\[
6-\lambda =\dim(T_{Y,q_0})+\dim(T_{Y,q_1})-\lambda\leq 7-\varepsilon
\]
which implies $\lambda \geq \varepsilon-1$. 

If $\varepsilon=3$, then two general tangent tangent spaces to $Y\subset \p^{r-4}$ intersect in dimension 2. Then a general curve section of $Y$ has tangent lines which pairwise meet, hence it would be a plane curve, whereas it spans a $\p^{r-6}$ and $r-6\geq 3$, a contradiction.

If $\varepsilon=2$, then two general tangent  spaces to $Y\subset \p^{r-3}$ intersect in dimension 1. Then the general hyperplane section of $Y$, which spans a $\p^{r-4}$, is a defective surface, hence either it is a cone or a Veronese surface $V_{2,2}$. Hence $Y$ itself is either a cone over a curve or a cone over the Veronese surface $V_{2,2}$. In either case, $X$ would be of the first species,  contradiction.

Finally, if $\varepsilon=1$, then two general tangent tangent spaces to $Y\subset \p^{r-2}$ intersect in dimension 0, i.e., $Y$ is a defective 3--fold and $X$ sits in a cone with vertex a line over $Y$. By \cite {ChCi}, then $Y$ is either a hyperplane section of ${\rm Seg}(2,2)$ (and then $r=9$), or 
 the Veronese $3$--fold $V_{3,2}$ in $\p^9$ or a projection of it in $\p^8$ or $\p^7$ (and then $9\leq r\leq 11$), or it sits in a 4--dimensional cone over the Veronese surface $V_{2,2}$ (and again $r=9$) or in a 4-dimensional cone over a curve. However the last case not possible. Indeed, if $Y$ sits in a 4-dimensional cone over a curve, then the general contact locus of $X$ would be reducible. Hence we are in one of the cases (1), (2), (3) of the statement of Theorem \ref {prop:acc5}.\medskip

Suppose next that when $p_0$ moves, the point $x$ does not move on $r_1$. If $x\in \Reg(X)$, then we can repeat the above argument with no  change, reaching the same conclusions. So let us assume that $x\in \Sing(X)$. Note that, by Lemma \ref {lem:one}, this is the case only if $\Gamma_{p_0,p_1}$ has a line directrix which passes through $x$ and, by the generality of $r_1$, it consists of singular points of $X$. We want to prove that in this case we still have a similar conclusion as in Claim \ref {cl:fun2}.   In other words, we have the:

\begin{claim}\label{cl:fun3} Suppose that when $p_0$ moves, the point $x$ does not move on $r_1$ and $x\in \Sing(X)$. Then there is a subspace $\Pi\subseteq \Sing(X)$ of dimension $\varepsilon$, with $1\leq \varepsilon\leq 3$, such that if $\ell\in \cal R$ is a general line, then $\ell$ intersects $\Pi$ in a point. Moreover if $\Gamma$ is a general surface in $\cal S$, then $\Gamma$ is a (non--developable) scroll with line directrix lying in $\Pi$. 
\end{claim}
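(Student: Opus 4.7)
The plan is to parallel the arguments of Claims \ref{cl:fun} and \ref{cl:fun2}, adapted to the present situation where the contact point $x = d_{p_0,p_1} \cap r_1$ is singular on $X$. The first step is to show that the line directrix $d := d_{p_0,p_1}$ of the scroll $\Gamma_{p_0,p_1}$ is entirely contained in $\Sing(X)$. Lemma \ref{lem:one} guarantees the existence of $d$ through $x \in r_1$ precisely under our hypothesis. Since $r_1$ is a general ruling of $\Gamma_{p_0,p_1}$, monodromy permuting the rulings shows that the intersection of $d$ with any general ruling of $\Gamma_{p_0,p_1}$ is also singular on $X$; as these intersection points fill a dense open subset of $d$, we obtain $d \subseteq \Sing(X)$.

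Next, for a general ruling $\ell \in \cal R$, I would define
\[
\Pi_\ell := \langle d_\Gamma : \Gamma \in \cal S_\ell \rangle,
\]
where $\cal S_\ell$ is the $2$-dimensional subfamily of $\cal S$ consisting of scrolls having $\ell$ as a ruling. By the previous step each $d_\Gamma$ lies in $\Sing(X)$, and specializing within $\cal S_\ell$ while holding $\ell$ fixed, the argument of the first step shows that all these directrices pass through a common point $x_\ell \in \ell\cap \Sing(X)$. Moreover $\Pi_\ell$ sits inside the 8-space $T_{X,\Gamma}$ tangent to $X$ along any $\Gamma \in \cal S_\ell$, so that the analogue of part (3) of Claim \ref{cl:fun} gives $\dim\langle \Pi_\ell, \Pi_{\ell'} \rangle \leq 8$ for general $\ell, \ell' \in \cal R$ sharing a common scroll in $\cal S$.

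With these estimates the argument of Claim \ref{cl:fun2} transfers essentially verbatim: a general third ruling $t \in \cal R$ must meet both $\Pi_\ell$ and $\Pi_{\ell'}$, and distinct intersection points would force $t \subseteq \langle \Pi_\ell, \Pi_{\ell'} \rangle$, an 8-space that cannot contain a general line of $\cal R$ without absorbing all of $X$. Hence $t$ meets $\Pi := \bigcap_\ell \Pi_\ell$, and by construction $\Pi \subseteq \Sing(X)$ and contains the directrix of every general $\Gamma \in \cal S$. The lower bound $\varepsilon := \dim \Pi \geq 1$ holds because $X$ is not a cone, while $\varepsilon \leq 3$ follows by the same Bertini-type counting as in Claim \ref{cl:fun2}, comparing the $(r-5)$-dimensional family of hyperplanes tangent to $X$ along surfaces of $\cal S$ with the $(r-\varepsilon-1)$-dimensional linear system of hyperplanes containing $\Pi$.

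The main obstacle I expect is in the second step: verifying that the directrices of scrolls in $\cal S_\ell$ all pass through the same singular point $x_\ell$ of $\ell$, and that $\Pi_\ell$ does fit inside the tangent 8-space along any $\Gamma \in \cal S_\ell$. In the smooth case of Claims \ref{cl:fun}--\ref{cl:fun2} this was direct from the span of embedded tangent spaces at smooth points of $\ell$; here one instead works with singular linear subvarieties, and must perform a careful degeneration argument within $\cal S_\ell$ to produce the common singular intersection point and justify the span estimate for collections of directrices.
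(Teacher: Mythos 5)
Your strategy diverges from the paper's and, as written, has gaps beyond the one you flag. First, the span estimate you import from Claim \ref{cl:fun}, (3), does not transfer: for $\Gamma\in\cal S_\ell$ the $8$--space tangent to $X$ along $\Gamma$ contains the tangent spaces to $X$ at smooth points of $\Gamma$, but the directrices $d_{\Gamma'}$ of the \emph{other} scrolls $\Gamma'\in\cal S_\ell$ are not contained in $\Gamma$ (and consist of singular points of $X$), so there is no reason why $\Pi_\ell=\langle d_{\Gamma'}:\Gamma'\in\cal S_\ell\rangle$ should lie in that $8$--space; without this, the bound $\dim\langle\Pi_\ell,\Pi_{\ell'}\rangle\leq 8$ and the whole Claim \ref{cl:fun2}--style contradiction collapse. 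Second, even if that argument went through, it would only produce a linear space met by the general line of $\cal R$; the assertions you make ``by construction'' --- that $\Pi\subseteq\Sing(X)$ and that $\Pi$ contains the directrix of the general $\Gamma\in\cal S$ --- do not follow: $\Pi_\ell$ is a linear span, not a subset of $\Sing(X)$, and $\Pi=\bigcap_\ell\Pi_\ell$ only involves, for each $\ell$, directrices of scrolls having $\ell$ as a ruling, so no single directrix need survive the intersection. These two conclusions are precisely what is used afterwards in the proof of Theorem \ref{prop:acc5} (projection from $\Pi$ maps the general $\Gamma$ to a plane curve), so they cannot be waved through.

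The paper's proof rests on a different and much shorter idea, which is the missing ingredient: let $V$ be the closure of the union of the line directrices of the scrolls $\Gamma_{p_0,p_1}$; then $V\subseteq\Sing(X)$ (this agrees with your first step) and the general line of $\cal R$ meets $V$ at a general point of $V$. Take general points $x,y\in V$, general rulings $\xi,\eta\in\cal R$ with $x\in\xi$, $y\in\eta$, and the surface $\Gamma\in\cal S$ containing $\xi$ and $\eta$: by the hypothesis of the claim (the contact point on a general ruling is a fixed singular point), the directrix of $\Gamma$ meets $\xi$ at $x$ and $\eta$ at $y$, hence it is the line $\langle x,y\rangle$ and lies in $V$. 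A variety containing the line joining two of its general points is a linear space, so $V=\Pi$ is linear, automatically contained in $\Sing(X)$, met by the general line of $\cal R$, and containing the directrix of the general $\Gamma$; the bounds $1\leq\varepsilon\leq 3$ then come from the facts that $\Pi$ a point would make $X$ a cone and that $\Pi\subseteq\Sing(X)\subsetneq X$. Your observation that the directrices of the scrolls through a fixed general ruling all meet it at the same point is essentially the hypothesis of the claim, but it should be exploited in this direct way rather than through intersections of spans.
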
 

\begin{proof}[Proof of the Claim \ref {cl:fun3}]
In this situation, as $p_0,p_1$ vary, the point $x$ describes a variety $V\subseteq \Sing(X)$, of dimension $\varepsilon$, with $1\leq \varepsilon\leq 3$. So $V$ is described by the line directrices of the scrolls $\Gamma_{p_0,p_1}$ as $p_0, p_1$ vary. 
Hence the general line in $\cal R$ intersects $V$ in its general point. Let $x,y\in V$ be general points. Take lines $\xi,\eta\in \cal R$, with $x\in \xi$ and $y\in \eta$, which are general lines in $\cal R$,  and consider the surface $\Gamma$ in $\cal S$ containing $\xi, \eta$. Then the line joining $x$ and $y$ is exactly the line directrix of $\Gamma$, and it lies on $V$. In conclusion, $V$ is a linear space $\Pi$ proving the assertion. 
\end{proof}

Now again we consider the projection $\pi: X\dasharrow \p^{r-\varepsilon-1}$ from $\Pi$ and let $Y$ be its image. As in Claims \ref {cl:purp} and \ref {cl:purp2}, we see that $Y$ has to be a 3--fold. If $\Gamma$ is a general surface in $\cal S$, its image via $\pi$ is a  plane curve, because $\Gamma$ spans a 4--space and we are projecting it from the line directrix which sits in this 4--space. Hence $Y$ contains a family $\cal C$ of plane curves (the images of the surfaces $\Gamma$ in $\cal S$), such that  there is a curve of $\cal C$ passing through two general points of $Y$. Then, by the Trisecant Lemma,  the curves of $\cal C$ are conics. The counterimage via $\pi$ of the general conic in $\cal C$ is a scroll with a line directrix contained in $\Pi$.  Note that either $r-\varepsilon-1\leq 6$, or $Y$ is defective. Then by Theorem 1.1 of \cite {ChCi}, $r-\varepsilon-1\leq 9$, thus 
$r\leq 10+\varepsilon$. Hence we are in case (4) of the statement of Theorem \ref {prop:acc5}.\end{proof}

\begin{remark}\label{rem:lamn} If $X$ is as in cases (1), (2), (3) of Theorem \ref {prop:acc5}, then $X$ is indeed defective, $f(X)=1$ and the general contact loci of $X$ are irreducible surfaces. In case (4), which has been overlooked by Scorza in  \cite {Scorza1}, \S 11, $X$ is still defective with $f(X)=1$, because $X$ is covered by a 4--dimensional family of scrolls $\Gamma$ each spanning a 4--space and such that through two general points of $X$ there is a unique scroll $\Gamma$ containing them. Then the general secant line $r$ to $X$ is also a secant to a scroll $\Gamma$ and therefore there is a 1--dimensional family of secants to $\Gamma$, hence to $X$, containing the general point of $r$. It is however not clear if the surfaces $\Gamma$ are contact loci, neither we have examples of such 4--folds. 

Arguing as in Remark \ref {rm:acc31} one sees that all 4--folds as in Theorem \ref {prop:acc5} are singular. 
\end{remark}

\subsection{} In this section we discuss case (ii), in which two general surfaces in $\cal S$ have some isolated point in common off the base locus scheme $B$. We keep all notation and conventions introduced above, in particular we assume $X$ is not a cone. We will denote by $j$ the dimension of the intersection of the spans of two general surfaces in $\cal S$. One has $0\leq j\leq 3$. 

We will prove the:

\begin{theorem}\label{thm:lof} In case (ii) we have the following possibilities:
\begin{itemize}
\item [$(*)$]  $X$ sits in a cone with vertex a point over ${\rm Seg}(2,2)$ in $\p^8$;
\item [$(**)$]  $r=9$ and $X$ sits in a cone with vertex a line over a hyperplane section of ${\rm Seg}(2,2)$ in $\p^7$;
\item [$(***)$] or $9\leq r\leq 11$ and $X$ sits in a cone with vertex a line over the Veronese $3$--fold $V_{3,2}$ in $\p^9$ or the projection of it in $\p^8$ or $\p^7$;
\item [$(****)$]  $j=0$, hence given two general surfaces $\Gamma$, $\Gamma'$ in $\cal S$, the 4--spaces spanned by them intersect in one point which is also the intersection of $\Gamma$ and $\Gamma'$. In this case the general surface in $\cal S$ is rational and $X$ itself is rational. 
\end{itemize}

\end{theorem}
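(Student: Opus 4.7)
The plan is to organize the analysis by the integer $j := \dim(\langle \Gamma \rangle \cap \langle \Gamma' \rangle)$ for two general surfaces $\Gamma, \Gamma' \in \cal S$, where $0 \leq j \leq 3$ (the value $j = 4$ is excluded since $\Gamma \neq \Gamma'$). The case $j = 0$ is precisely case $(****)$ of the statement, so the main task is to show that $j \geq 1$ forces $X$ to sit in one of the three cone structures $(*), (**), (***)$.

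For $j = 0$: the two general 4-spaces $\langle \Gamma \rangle$ and $\langle \Gamma' \rangle$ meet only at the single isolated intersection point $\Gamma \cap \Gamma'$. To establish rationality of $X$, I would run the Scorza map argument along the lines of Section \ref{sec:top}: given a general $x \in X$ and a general tangent direction, a suitable analogue of Lemma \ref{lem:sort} (applied to the 4-dimensional family $\cal S$ with the unique-surface-through-two-points property) yields a unique surface $\Gamma \in \cal S_x$ tangent to that direction. This lets one construct an inverse to Scorza's map $S_x : X \map T_{X,x}$, making $S_x$ birational and $X$ rational. Rationality of the general surface $\Gamma \in \cal S$ would then follow by looking at the 2-parameter subfamily of surfaces in $\cal S$ meeting $\Gamma$ in a variable isolated point off $B$, which induces a rational parameterization of $\Gamma$ by Zariski's Main Theorem.

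For $j \geq 1$: the key input is Morin's theorem (Lemma 4.1 of \cite{ChCi}) applied to the 4-dimensional family $\{\langle \Gamma \rangle : \Gamma \in \cal S\}$ of four-spaces in $\p^r$, pairwise meeting in dimension $\geq 1$. Since $X$ is not a cone and spans $\p^r$ with $r \geq 9$, this analysis forces the existence of a common linear subspace $\Pi \subseteq \bigcap_{\Gamma \in \cal S} \langle \Gamma \rangle$ of positive dimension, with $\dim \Pi$ controlled by $j$. Projecting $X$ from $\Pi$ then yields $\pi : X \map Y \subseteq \p^{r-\dim\Pi-1}$. Dimension considerations exclude $\dim Y \leq 2$ (which would force $X$ to be a cone, a scroll, or of the first species), so $Y$ is a 3-fold or, in the extremal case $\dim \Pi = 0$, a 4-fold. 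The surfaces $\Gamma \in \cal S$ project onto curves on $Y$ through pairs of general points, and by Terracini together with $f(X) = 1$ these are conics, so $Y$ is itself defective with a 4-parameter family of conics passing through pairs of general points. Applying the classification of defective threefolds from \cite{ChCi} (respectively Theorem \ref{thm:lowcod} when $\dim Y = 4$), $Y$ must be $V_{3,2}$ or a projection of it, a hyperplane section of $\Seg(2,2)$, or $\Seg(2,2)$ itself, giving precisely cases $(***), (**), (*)$.

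The main obstacle will be the Morin-type bookkeeping: one must carefully match $\dim \Pi$ to the value of $j$ in each subcase, ruling out the exceptional configurations where the 4-spaces all lie in a small ambient subspace (of dimension $\leq 8$) rather than sharing a common $\Pi$, and showing that in each such exceptional configuration $X$ would be a cone, a scroll, or of the first species contrary to standing hypotheses. A secondary technical point is to verify that the projection $\pi$ is generically a scroll in linear fibres of the expected dimension, so that the induced family of conics on $Y$ really has the unique-conic-through-two-points property required to invoke the threefold classification; this parallels the analysis of subcases (i2) and (i3) in Theorems \ref{prop:acc2} and \ref{prop:acc5}, but is complicated here by the fact that the surfaces $\Gamma, \Gamma'$ intersect in isolated points rather than along curves.
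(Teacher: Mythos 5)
Your skeleton --- split by $j$, find a common centre of projection, project and classify the image via \cite{ChCi} and Theorem \ref{thm:lowcod} --- matches the paper's, but the two steps you defer are exactly where the proof lives, and as proposed they do not close. For $j\geq 1$ you appeal to Lemma 4.1 of \cite{ChCi} applied to the family of 4--spaces $\langle\Gamma\rangle$, $\Gamma\in\cal S$, and claim it forces a common linear subspace $\Pi$ of positive dimension with $\dim\Pi$ controlled by $j$. That lemma concerns planes pairwise meeting in a point and does not apply to 4--spaces meeting along positive--dimensional subspaces; moreover the conclusion you want is not true in the form you state it. In the paper, $j=3$ and $j=2$ are eliminated outright by two elementary observations (no proper subspace can meet the general span $\langle\Gamma\rangle$ in a 3--space, else $X\subseteq\langle\Pi,x\rangle$ would be degenerate; and no plane can lie in all the spans, else the projection from it would be a 3--fold containing a line through two of its general points, i.e.\ a linear space), while for $j=1$ one must distinguish the subcase where all spans contain a fixed line $\ell$ (projection from $\ell$: a defective 3--fold with a family of conics, giving $(**),(***)$) from the subcase where the pairwise intersection lines are distinct but concurrent at a point $p$ (projection from $p$: a 4--fold $Y$ covered by quadric surfaces with $f(Y)=2$, handled by Theorem \ref{thm:lowcod}, giving $(*),(**),(***)$). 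You hint at the $\dim\Pi=0$ possibility, but the exclusion of $j=2,3$ and the dichotomy for $j=1$ --- the core of the argument --- are missing and do not follow from the tool you cite.

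In case $j=0$ your route to the rationality of $X$ rests on making Scorza's map $S_x$ birational by ``constructing an inverse'' from the uniqueness of the surface of $\cal S_x$ tangent to a general direction at $x$. In the top--species case the inverse exists because from a general $y\in T_{X,x}$ one takes the second tangent line to the unique conic tangent to $\langle x,y\rangle$ at $x$, which pins down a single point of $X$; for the 2--dimensional contact surfaces here there is no analogous construction --- on the unique surface $\Gamma$ tangent to $\langle x,y\rangle$, the points $z$ whose tangent space could return $y$ form a curve, not a point --- so birationality of $S_x$ is an unproved (and doubtful) claim. The paper proves rationality directly: the tangent--direction uniqueness makes $\cal S_x$ rational; the map $\cal S_x\dasharrow\Gamma$ sending $\Gamma'$ to the single point $\Gamma\cap\Gamma'=\langle\Gamma\rangle\cap\langle\Gamma'\rangle$ makes $\Gamma$ rational; and the map $\cal S_x\times\cal S_y\dasharrow X$ sending a pair of surfaces to their intersection point is dominant and generically injective, so $X$ is rational. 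Your parameterization of $\Gamma$ is essentially the paper's middle step, but the rationality of $X$ needs the last map (or some substitute), not the Scorza map.
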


\begin{proof} We start with the following:

\begin{claim}\label{cl:fad} There is no subspace $\Pi$ of $\p^r$ with $\dim(\Pi)=k<r-1$ such that for the general surface $\Gamma$ in $\cal S$ the 4--space spanned by $\Gamma$ intersects $\Pi$ in a subspace of dimension 3. 
\end{claim}

\begin{proof}[Proof of the Claim \ref {cl:fad}] Suppose by contradiction that there is such a subspace $\Pi$. Let $x\in X$ be a general point. Then for all surfaces $\Gamma$ in $\cal S_x$ the span of such a surface is contained in $\langle \Pi, x\rangle$, which has dimension at most $k+1<r$. Hence the whole of $X$ would be contained in $\langle \Pi, x\rangle$, a contradiction because $X$ is non--degenerate in $\p^r$.
\end{proof}

\begin{claim}\label{cl:fad2} There is no plane $P$ such that for the general surface $\Gamma$ in $\cal S$ the 4--space spanned by $\Gamma$ contains $P$. 
\end{claim}

\begin{proof}[Proof of the Claim \ref {cl:fad2}] Suppose by contradiction that there is such a plane $P$. Let $\pi: X\dasharrow \p^{r-3}$ be the projection from $P$, and $Y$  the image of $\pi$.   
If $\Gamma$ is the general surface in $\cal S$, then $\Gamma$ is mapped to a line by $\pi$. Thus $Y$ is a 3--fold such that if $x,y\in Y$ are general points, there is a line in $Y$ containing $x$ and $y$. Hence $Y$ would be a 3--space, a contradiction, because $Y$ has to span the $\p^{r-3}$ and $r-3\geq 6$. \end{proof}

Let first be $j=3$. Then, since the $4$--spaces spanned by the surfaces in $\cal S$ cannot lie in a 5--space, they have to pass through the same 3--space, and this contradicts Claim \ref {cl:fad}.

Let  $j=2$. Consider two general surfaces $\Gamma _1,\Gamma_2$ in $\cal S$, and their spans $\Pi_1,\Pi_2$. Then $\Pi'=\langle \Pi_1,\Pi_2\rangle$ has dimension $6$. Let  $\Gamma$ be another general surface in $\cal S$ and $\Pi$ be its span. Of course $\Pi$ cannot be contained in $\Pi'$ and by Claim \ref {cl:fad} it cannot intersect it in a 3--space. This implies that $\Pi$ intersects $\Pi'$ in the intersection plane of $\Pi_1$ and $\Pi_2$, which contradicts Claim \ref {cl:fad2}.

Let  $j=1$. Let us argue as in the case $j=2$. Consider two general surfaces $\Gamma _1,\Gamma_2$ in $\cal S$, and their spans $\Pi_1,\Pi_2$. Then $\Pi'=\langle \Pi_1,\Pi_2\rangle$ has dimension $7$. Let  $\Gamma$ be another general surface in $\cal S$ and $\Pi$ be its span, which cannot be contained in $\Pi'$ and by Claim \ref {cl:fad} it cannot intersect it in a 3--space. Then either 
 $\Pi$ intersects $\Pi'$ in the intersection line of $\Pi_1$ and $\Pi_2$ (hence all the spans of the surfaces of $\cal S$ pass through the same line $\ell$), or it intersects $\Pi'$ in a plane which contains the two distinct intersection lines of $\Pi$ with $\Pi_1$ and $\Pi_2$. 
 
In the former case let $\pi: X\dasharrow \p^{r-2}$ be the  projection of $X$ from the line $\ell$ and $Y$ be its image. We claim that  $Y$ is a defective 3--fold such that there is an irreducible conic in $Y$ containing two general points of $Y$. Then we are in cases $(**)$ or $(***)$. To prove the claim, note first that if $\Gamma$ is a general surface in $\cal S$, then its projection from $\ell$ is not a plane. Otherwise $Y$ would be swept out by a family of planes such that through two general points of $Y$ there is a plane of the family passing, hence $Y$ would be a linear space, a contradiction. Then the general $\Gamma$ in $\cal S$ is projected from $\ell$ to a plane curve, which implies that $\dim(Y)=3$. Moreover $Y$ is covered by a family of generically irreducible plane curves such that through two general points of $Y$ there is a curve of the family passing. This implies that the plane curves are conics and $Y$ is defective.
 
In the latter case the intersection lines of the spans of two general surfaces of $\cal S$ are generically distinct but they intersect in one point. Then either all these lines lie in the same plane or they pass through the same point $p$ but do not lie in the same plane. The former case is impossible by Claim \ref {cl:fad2}. In the latter case consider the projection $\pi: X\dasharrow \p^{r-1}$ from $p$ and let $Y$ be its image. Since $X$ is not a cone, then $Y$ is a 4--fold. The general surface in $\cal S$ maps to a surface spanning a 3--space. Hence $Y$ has an irreducible, 4--dimensional family $\cal Q$ of generically irreducible contact surfaces spanning a $3$--space, such that given two general points of $Y$ there is a surface in $\cal Q$ containing the two points. By the Trisecant Lemma, the general surfaces in $\cal Q$ are irreducible quadrics and therefore $f(Y)\geq 2$. 

If $f(Y)=3$, then by 
 Proposition \ref {difetti}, (i), $Y$ would either be a cone over a curve or a cone over $V_{2,2}$. Both cases are impossible because in the former case $Y$ would not contain the family $\cal Q$ of contact quadrics, in the latter $Y$ would lie in a $7$--space, whereas $Y$ spans a $\p^{r-1}$ and $r-1\geq 8$.

So we have that $f(Y)= 2$, and $Y$ is in the list of Theorem \ref {thm:lowcod}. By inspection, the only cases which are compatible with the present situation are the cases (iii), (iv) and (v) of Theorem \ref {thm:lowcod}, and then  we are in cases  $(*), (**)$ or $(***)$. 

Finally suppose we are in case $j=0$. Then we are in case $(****)$ and we will prove that the general surface in $\cal S$ is rational and $X$ itself is rational. To see this, let $x\in X$ be a general point. If $t$ is a general tangent direction of $X$ at $x$, then there is a unique surface in $\cal S_x$ tangent to $t$. This implies that $\cal S_x$, which has dimension 2, is unirational, hence rational. Let now $\Gamma$ be a general surface in $\cal S$. We have a natural map $\rho: \cal S_x \dasharrow \Gamma$, which takes the general surface $\Gamma'$ in $\cal S_x$ and maps it to the unique intersection point of $\Gamma'$ and $\Gamma$, which is also the unique intersection point of their spans. This map is clearly dominant, hence $\Gamma$ is unirational, so it is rational. Finally let $x,y\in X$ be general points. There is a natural map $\sigma: \cal S_x\times \cal S_y\dasharrow X$ which maps the pair $(\Gamma, \Gamma')$ of general surfaces in  $\cal S_x\times \cal S_y$ to their unique intersection point. This map is clearly dominant and it is also generically injective, hence $X$ is rational. \end{proof}

\begin{remark}\label{rem:klop} In cases $(*), (**), (***)$ listed in Theorem \ref {thm:lof}, it is clear that $X$ is defective and verifies (ii). In case $(****)$ the 4--fold $X$ is still defective, since the surfaces $\Gamma$ on $\cal S$ span $4$--spaces, hence $f(X)=1$. However we do not have examples of such 4--folds, hence we do not know if this case really occurs. 

Moreover, if $X$ is smooth, as we saw more than once, cases $(**)$ and $(***)$ are not possible. 
\end{remark}

\subsection{} Here we discuss the reducible case, in which given two general points $p_0,p_1\in X$ there is a unique tangential contact surface $\Gamma_{p_0,p_1}$ containing $p_0,p_1$, which consists of two irreducible components each passing  through one of the two points $p_0,p_1$. We will denote by $\cal T$ the irreducible 2--dimensional family of surfaces of $X$ such that any  tangential contact surface $\Gamma$ is the sum of two components each in $\cal T$. Hence, given a general point $x\in X$ there is a unique surface in $\cal T$ containing $x$. We will denote by $i$ the dimension of the span of the general surface in $\cal T$. Recall that the general  tangential contact surface $\Gamma$ spans a 4--space. Hence the union of two general surfaces in $\cal T$ spans a 4--space. Therefore $2\leq i\leq 3$. Indeed, clearly $i\geq 2$ and moreover $i\leq 4$ because, as we said, the union of two general surfaces in $\cal T$ spans a 4--space. Moreover it cannot be the case that $i=4$, otherwise the span of a general surface in $\cal T$ would contain any other surface in $\cal T$, hence it would contain $X$.

\begin{proposition}\label{thm:gup} In the above  setting, if $X$ is not a cone, then  $X$ sits in a cone with vertex a plane over a surface. 
\end{proposition}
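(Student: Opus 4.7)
The plan is to associate to each $T\in\cal T$ its span $\Lambda_T:=\langle T\rangle$, of dimension $i\in\{2,3\}$, and in each case to extract a plane $P$ contained in every $\Lambda_T$; projection from $P$ will then contract each surface of $\cal T$ to a point and realize $X$ as sitting in a cone with vertex $P$ over a surface. I would start by noting that since the two components $T_0,T_1$ of a general tangential contact surface $\Gamma_{p_0,p_1}$ together span a $4$--space (by Proposition \ref{lem:a}), one has $\dim\langle \Lambda_{T_0},\Lambda_{T_1}\rangle=4$, and hence $\dim(\Lambda_{T_0}\cap \Lambda_{T_1})=2i-4$; so for $i=2$ the planes $\Lambda_T=T$ pairwise meet at a point, while for $i=3$ the $3$--spaces $\Lambda_T$ pairwise meet along a plane.

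In the case $i=3$, I would fix general $T_0,T_1\in \cal T$ and set $P:=\Lambda_{T_0}\cap \Lambda_{T_1}$ and $W:=\langle \Lambda_{T_0},\Lambda_{T_1}\rangle$, so that $\dim P=2$ and $\dim W=4$. For a general third $T\in \cal T$, either $\Lambda_T\subseteq W$ or $\Lambda_T\not\subseteq W$. In the second case $\dim(\Lambda_T\cap W)\leq 3+4-5=2$, but $\Lambda_T\cap \Lambda_{T_0}$ is already a plane inside $\Lambda_T\cap W$, so $\Lambda_T\cap W=\Lambda_T\cap \Lambda_{T_0}=\Lambda_T\cap \Lambda_{T_1}$, and this plane, being contained in $\Lambda_{T_0}\cap \Lambda_{T_1}=P$, must coincide with $P$; hence $\Lambda_T\supseteq P$. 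By irreducibility of $\cal T$, either all $\Lambda_T$ contain $P$, or the general $\Lambda_T$ is contained in $W$. The second alternative would put $X$ inside the $4$--space $W$, contradicting non--degeneracy of $X$ in $\p^r$ with $r\geq 9$; thus all $\Lambda_T$ contain $P$. Projection from $P$ then contracts each $\Lambda_T$, hence each $T\in \cal T$, to a point; the image of $X$ is a $2$--dimensional variety $Y$ (as $\cal T$ is $2$--dimensional), and $X$ lies in the cone with vertex $P$ over $Y$.

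In the case $i=2$, the planes $T\in\cal T$ pairwise meet at a point and form a $2$--dimensional family sweeping $X$. Applying Morin's theorem (Lemma 4.1 of \cite{ChCi}), exactly as in the proof of Claim \ref{cl:par2}, either all the planes contain a common point, which forces $X$ to be a cone contrary to hypothesis, or they all meet a fixed plane $\Pi$ along a line. In the latter case each $T$ meets $\Pi$ in a line, so $T$ and $\Pi$ together span a $3$--space and projection from $\Pi$ contracts $T$ to a point; the image of $X$ is a $2$--dimensional variety $Y$, and $X$ sits in the cone with vertex $\Pi$ over $Y$.

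I expect the main obstacle to be the case $i=3$: the Morin--type classification of $2$--dimensional families of $3$--spaces pairwise meeting along planes is not cited earlier in the paper, so one must argue it by hand. The elementary dimension count sketched above does so by producing the dichotomy ``$\Lambda_T\supseteq P$ or $\Lambda_T\subseteq W$'' and excluding the second alternative via non--degeneracy of $X$ in $\p^r$ with $r\geq 9$.
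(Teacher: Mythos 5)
Your proof is correct and follows essentially the same route as the paper's: split according to $i=3$ or $i=2$, extract a plane contained in (for $i=3$) or met along a line by (for $i=2$) all the spans of the surfaces in $\cal T$, and project from it. The only differences are cosmetic: you spell out the elementary dichotomy for $3$--spaces pairwise meeting along planes that the paper merely asserts, and in the case $i=2$ you (like the paper itself in Claim \ref{cl:par2}) omit the third alternative of Lemma 4.1 of \cite{ChCi}, namely that the planes span at most a $5$--space, which is harmless since it would force the non--degenerate $X\subset \p^r$, $r\geq 9$, into a $\p^5$.
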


\begin{proof} With the above notation, assume that $i=3$. Then if $S,S'$ are two general surfaces in $\cal T$, one has $\dim(\langle S\rangle\cap \langle S'\rangle)=2$. Then either the spans of the surfaces in $\cal T$ lie in a projective space of dimension 3, which is not possible, or there is a plane $P$ which is contained in $\langle S\rangle$ for all $S$ in $\cal T$. By projecting from $P$ we see $X$ sits in a cone with vertex the plane $P$ over a surface. 

Assume $i=2$, then $\cal T$ is a family of planes. These planes pairwise span a $4$--space, hence they pairwise intersect at a point. Then the only possibility is that there is a plane $P$ such that all planes in $\cal T$ intersect $P$ along a line. By projecting from $P$ we see again that $X$ sits in a cone with vertex the plane $P$ over a surface. 
\end{proof}

\begin{remark}\label{rem:ging} If $X$ is a 4--fold as in Proposition \ref {thm:gup}, then $X$ is defective. In fact if $P$ is the plane vertex of the 5--dimensional cone in which $X$ sits, and if $x\in X$ is a general point, then $T_{X,x}$ intersects $P$ along a line, and therefore if $y\in X$ is another general point, $T_{X,x}$ and $T_{X,y}$ do intersect at a point.

Arguing as in Remark \ref {rm:acc31}, one sees that 4--folds $X$ as in Proposition \ref {thm:gup} are never smooth.
\end{remark}

 \section {First species}\label {sec:first}

\subsection {} Next we consider a projective,  irreducible,  non--degenerate 4--fold, not a scroll  $X\subset \p^ r$, $r\geq 9$, of the first species, i.e., $\gamma(X)=3$. Taking into account Proposition  \ref {difetti} and Theorem \ref  {thm:lowcod}  we may assume $f(X)=\delta(X)=1$. 

Given $p_0,p_1\in X$ general points, the tangent spaces to $X$ at $p_0$ and $p_1$ intersect at one point and the tangential contact locus $\Gamma_{X,p_0,p_1}$ is a 3--fold, spanning a $6$--space $\Pi_{X,p_0,p_1}$ (see Proposition \ref  {lem:a}). 
We have two cases: the \emph {irreducible} and the \emph{reducible} case, according to the possibilities that $\Gamma_{X, p_0,p_1}$ is irreducible or it consists of two irreducible components each passing through one of the two point $p_0,p_1$. 

By projecting down generically to $\p^9$, we will assume that $r=9$. 
In this case, given $p_0,p_1\in X$ general points, there is a unique hyperplane bitangent to $X$ at $p_0$ and $p_1$, hence there is a unique 3--fold $\Gamma_{p_0,p_1}$ as above, which is smooth at $p_0$ and $p_1$. 

\begin{lemma}\label{lem:wd} In the above setting we have the following cases:
\begin{itemize}
\item [(i)] in the irreducible case the 3--folds $\Gamma_{p_0,p_1}$ move in a 2--dimensional family which is a linear system;
\item [(ii)] in the reducible case, the two components of $\Gamma_{p_0,p_1}$ move in a 1--dimensional (not necessarily rational) pencil.
\end{itemize}
\end{lemma}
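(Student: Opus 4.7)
The plan is to first establish the dimension counts using Proposition \ref{lem:a}(i), and then extract the structural conclusions in the two cases. For the dimension counts, consider the rational map
\[
\Phi: {\rm Sym}^2(X)\dasharrow {\rm Hilb}(X)
\]
sending a general unordered pair $[p_0,p_1]$ to the tangential contact locus $\Gamma_{p_0,p_1}$ in case (i), and to the unordered pair of its two components in case (ii). By Proposition \ref{lem:a}(i), for $q_0,q_1\in \Gamma_{p_0,p_1}$ general and specialising to $p_0,p_1$, one has $\Gamma_{q_0,q_1}=\Gamma_{p_0,p_1}$. In case (i), the generic fibre of $\Phi$ over $\Gamma$ thus contains an open subset of ${\rm Sym}^2(\Gamma)$, of dimension $6$; since $\dim({\rm Sym}^2(X))=8$, the family of contact loci has dimension $2$. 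In case (ii), writing $\Gamma_{p_0,p_1}=\Gamma^{(0)}\cup \Gamma^{(1)}$ with $p_i\in \Gamma^{(i)}$, the analogous map to ordered pairs has fibre an open subset of $\Gamma^{(0)}\times \Gamma^{(1)}$, again $6$-dimensional, so the family of pairs of components is $2$-dimensional; to extract the family $\cal B$ of individual components, fix a general $\Gamma^{(0)}$ and let $p_1$ vary in $X$ (with $p_0\in \Gamma^{(0)}$ fixed): by Proposition \ref{lem:a}(i) the companion $\Gamma^{(1)}(p_0,p_1)$ depends only on the $3$-dimensional locus $\Gamma^{(1)}\ni p_1$, so each $\Gamma^{(0)}$ pairs with a $(4-3)=1$-parameter subfamily, yielding $\dim(\cal B)=1$.

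For the linear system claim in (i), the family $\cal F$ of $\Gamma$'s is an irreducible $2$-dimensional algebraic family of effective $3$-fold divisors on $X$, with the property that through two general points of $X$ there passes a unique member; in particular, being irreducible, all its members are linearly equivalent. I would then appeal to the classical criterion, going back to the Italian school (Enriques, Severi), that an irreducible $k$-dimensional algebraic family of effective divisors on a variety, through which a unique member passes through $k$ general points, is necessarily a $k$-dimensional linear system (possibly with base locus). Applied to $\cal F$ with $k=2$, this yields the desired linear system structure.

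For (ii), the $1$-dimensional irreducible family $\cal B$ covers $X$ (since every general point $p_0\in X$ lies on the component $\Gamma^{(0)}$ of any bitangent contact locus based at $p_0$), and is therefore a $1$-parameter pencil of $3$-fold divisors on $X$; the qualifier \emph{not necessarily rational} refers to the possibility that the parameter variety of this pencil has positive geometric genus, in which case the pencil is not linear, i.e.~the base is not isomorphic to $\p^1$. The main obstacle in the whole argument is the invocation of the Enriques-type criterion in (i): one must ensure the ``uniqueness through two general points'' property holds in the strong form required, and that the resulting linear system is genuinely $2$-dimensional---i.e.~that $\cal F$ does not merely form an irreducible surface inside a strictly larger complete linear system on $X$. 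This last step ultimately reduces to a dimension count in the complete linear system of the common divisor class of the $\Gamma$'s, and may require separate argument in the present defective setting.
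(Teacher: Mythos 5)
Your dimension counts are sound: in the irreducible case Proposition \ref{lem:a}(i) does show that the fibre of $[p_0,p_1]\mapsto \Gamma_{p_0,p_1}$ is dense in ${\rm Sym}^2(\Gamma_{p_0,p_1})$, so the family is $2$--dimensional, and your argument for (ii) (the companions of a fixed component form a family of dimension exactly one, since finitely many divisors cannot cover $X$) gives the $1$--dimensional pencil; this is essentially the content of the paper's one--line remark that through a general point of $X$ there is a unique component of a contact variety. For case (i), however, the paper does not argue at all along your lines: it simply quotes Theorem 5.10 of \cite{WDV}, which is precisely the statement that these bitangent contact divisors move in a $2$--dimensional linear system.

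The genuine gap is the linearity in (i). First, the intermediate claim that, the family being irreducible, all its members are linearly equivalent is false as stated: an irreducible algebraic family only yields algebraic equivalence (translates of a divisor on an abelian variety are the standard example). Second, and more seriously, the ``classical criterion'' you invoke is false in the generality in which you state it: on a variety fibred over a curve $C$ of genus at least $2$, the divisors $F_{c_1}+F_{c_2}$, sums of two fibres, form an irreducible $2$--dimensional family with exactly one member through two general points, and this family is not a linear system (its members are not even linearly equivalent). The correct Castelnuovo--Humbert type statement needs the general member to be irreducible, or at least the family not to be composed with an irrational pencil; that hypothesis does hold in case (i), but the resulting theorem is nontrivial, you neither prove it nor give a precise reference, and you yourself flag it as the unresolved ``main obstacle''. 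Since linearity is exactly what distinguishes (i) from (ii), your proof of (i) is incomplete as written; to repair it you should either quote Theorem 5.10 of \cite{WDV}, as the paper does, or state and prove (or precisely cite) the Castelnuovo--Humbert criterion with the irreducibility hypothesis made explicit.
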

\begin{proof} Case (i) follows from Thm. 5.10 of \cite {WDV}. Case (ii) follows from the fact that given $x\in X$ general  there is a unique component of a contact variety passing through $x$. 
\end{proof}

Let $x\in X$ be a general point and consider the tangential projection $\tau_{x}: X\dasharrow X_1\subset \p^{4}$. Since $X$ is defective and $f(X)=1$, then $X_1$ is a 3--fold. 

\begin{lemma}\label{lem:wd2} In the above setting, we have $t(X_1)=d(X_1)=2$, hence $X_1$ is swept out by a 1--dimensional family $\cal P$ of planes along which the tangent space to $X_1$ is constant, so that the dual variety $X_1^*$ of $X_1$ is a curve. Each plane of the family $\cal P$ is the image via $\tau_{x}$ of 
a 3--fold $\Gamma_{x,y}$. 
\end{lemma}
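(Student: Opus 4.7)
The plan is to extract everything from the identity $\gamma(X)=t(X_1)+f(X)$ recorded in Remark \ref{conctactin}, combined with the fact that $X_1$ is forced to be a hypersurface in $\p^4$.

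First I would compute $t(X_1)$. Since $\gamma(X)=3$ and $f(X)=1$, Remark \ref{conctactin} gives $t(X_1)=2$. Next, by \eqref{eq:tl} we have $n_1=\dim(X_1)=n-f(X)=3$, and as $X_1$ is non--degenerate in $\p^{r-n-1}=\p^4$, it must be a hypersurface. For a hypersurface, the tangent space at a smooth point is a hyperplane, so $T_{X_1,p}\subseteq H$ forces $T_{X_1,p}=H$; therefore the general tangent hyperplane contact locus $\Gamma_{X_1,x_1}(H)$ coincides with the Gauss fibre $\Gamma_{X_1,x_1}$. This yields $d(X_1)=t(X_1)=2$. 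Equivalently, the Gauss map $g_{X_1}$ has 2--dimensional fibres, whence its image $X_1^*$ is a curve of dimension $3-2=1$.

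Then I would note that the general Gauss fibre of $X_1$, being a linear space of dimension $t(X_1)=2$, is a plane in $\p^4$. These planes are parametrized birationally by $X_1^*$, so they form a $1$--dimensional family $\cal P$ sweeping out $X_1$, and by definition the tangent space of $X_1$ is constant along each plane of $\cal P$.

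Finally, for the identification of the planes of $\cal P$ with images of the contact 3--folds, I would invoke the argument given in Remark \ref{conctactin}: if $y\in X$ is a general point and $p\in \Gamma_{x,y}$, then the projection of $T_{X,p}$ from $T_{X,x}$ equals $T_{X_1,\tau_x(p)}$, and this tangent space does not depend on $p\in \Gamma_{x,y}$ because $T_{X,p}\subseteq T_{X,x,y}$ for every such $p$. Hence $\tau_x(\Gamma_{x,y})$ is contained in a single Gauss fibre of $X_1$, i.e.\ in a plane of $\cal P$. A dimension count closes the proof: $\dim(\Gamma_{x,y})=\gamma(X)=3$, while the fibre of $\tau_x$ through $p_0\in \Gamma_{x,y}$ contains $\Psi_{x,y}\subseteq\Gamma_{x,y}$ of dimension $f(X)=1$, so $\dim(\tau_x(\Gamma_{x,y}))=2$, forcing equality with the Gauss fibre through $\tau_x(y)$. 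Letting $y$ vary among general points exhausts $\cal P$.

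There is no real obstacle here: the statement is essentially a bookkeeping consequence of Remark \ref{conctactin} together with the observation that $X_1$ is a hypersurface of $\p^4$, which collapses $d(X_1)$ to $t(X_1)$.
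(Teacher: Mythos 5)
Your proof is correct and follows the same route as the paper, which simply cites Remark \ref{conctactin} for $t(X_1)=\gamma(X)-f(X)=2$ and the fact that $t=d$ for a hypersurface, declaring the last assertion trivial. Your write-up merely makes explicit the dimension count identifying $\tau_x(\Gamma_{x,y})$ with a Gauss fibre (note only the harmless notational slip that the component of the fibre of $\tau_x$ through a point of $\Gamma_{x,y}$ is $\Psi_{y,x}$ rather than $\Psi_{x,y}$ in the paper's convention).
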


\begin{proof} The first assertion follows by Remark \ref {conctactin} and by the obvious fact that for a hypersurface $Z$ in projective space one has $t(Z)=d(Z)$. The second assertion is also trivial.\end{proof}

\subsection{} In this section we examine the irreducible case. We keep all notation and convention we introduced above.

\begin{lemma}\label{lem:wd3} In the above setting, if we are in the irreducible case, then the curve $X_1^*$ is rational.
\end{lemma}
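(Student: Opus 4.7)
My plan is to exhibit a dominant rational map from $\p^1$ to $X_1^*$, whence rationality follows since $X_1^*$ is a curve. The source $\p^1$ will be a pencil of contact 3--folds through a general point, and the target will be identified with $X_1^*$ via the Gauss map of $X_1$.

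First I will fix a general point $x\in X$ and consider the subfamily $\cal G_x$ of contact 3--folds $\Gamma_{x,y}$ as $y$ varies in $X$. By Proposition \ref{lem:a}(i), if $y'$ is a general point of $\Gamma_{x,y}$ then $\Gamma_{x,y'}=\Gamma_{x,y}$, so the association $y\mapsto \Gamma_{x,y}$ factors through a rational map $X\map \cal G_x$ whose general fibre is the 3--fold $\Gamma_{x,y}$ itself, and therefore $\dim(\cal G_x)=4-3=1$. By Lemma \ref{lem:wd}(i) the 3--folds $\Gamma_{p_0,p_1}$ are the general members of a 2--dimensional linear system $\Lambda$ on $X$; since $x$ is a general point it is not a base point of $\Lambda$, so the sublinear system $\Lambda(-x)\subseteq \Lambda$ of divisors through $x$ is a pencil, and $\cal G_x$ is contained in it. By the dimension count above $\cal G_x$ equals this pencil, so $\cal G_x\cong \p^1$.

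Next I will invoke Lemma \ref{lem:wd2}, which tells me that the tangential projection $\tau_x$ sends each 3--fold $\Gamma_{x,y}$ in $\cal G_x$ to a plane of the 1--dimensional family $\cal P$ of planes along which $T_{X_1}$ is constant. This yields a rational map $\cal G_x\dasharrow \cal P$. I claim it is dominant: otherwise all $\Gamma_{x,y}$ would map into a single plane $\pi\in \cal P$, forcing $\tau_x^{-1}(\pi)$ to contain every 3--fold $\Gamma_{x,y}$ (which together sweep out $X$), hence $\tau_x^{-1}(\pi)=X$, contradicting $\dim(X_1)=3>\dim(\pi)=2$.

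Finally I will identify $\cal P$ birationally with $X_1^*$. Since $X_1\subset \p^4$ is a hypersurface with $t(X_1)=d(X_1)=2$, the Gauss map $g_{X_1}:X_1\dasharrow (\p^4)^*$ has image of dimension $3-t(X_1)=1$, which is exactly $\dim(X_1^*)=4-1-d(X_1)=1$; for a hypersurface the Gauss image coincides with the dual variety, so $g_{X_1}(X_1)=X_1^*$ and its fibres are precisely the planes of $\cal P$. Hence $\cal P$ is birational to $X_1^*$, and the composition $\p^1\cong \cal G_x\dasharrow \cal P\cong X_1^*$ is dominant, showing $X_1^*$ is unirational, hence rational (being a curve). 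The main subtle point is the dominance of $\cal G_x\to \cal P$, which is the step that genuinely uses that the family of contact 3--folds through $x$ sweeps out $X$ and not a proper subvariety.
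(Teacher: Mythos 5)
Your proposal is correct and follows essentially the same route as the paper: the paper's (much terser) proof likewise observes that, by Lemma \ref{lem:wd}(i), the contact $3$--folds through a general point $x$ form a rational pencil, so the family $\cal P$ of planes is rational, and hence so is $X_1^*$. Your write-up simply makes explicit the intermediate steps (the dominance of $\cal G_x\dasharrow\cal P$ and the birational identification of $\cal P$ with $X_1^*$ via the Gauss map) that the paper leaves implicit.
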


\begin{proof}
The varieties $\Gamma_{x,y}$ passing through $x$ form a rational pencil, by Lemma \ref {lem:wd},(i), so that the family $\cal P$ is also rational. The rationality of $X_1^*$ follows. 
\end{proof}

Next we want to study more closely the general tangential projection $\tau_{x}: X\dasharrow X_1\subset \p^{4}$ and the  3--fold $X_1$. By Thm. (2.20) from \cite {GrHarr}, $X_1$ is of one of the following types:
\begin{itemize}
\item [(i)] there is a non--degenerate, irreducible, projective curve $C\subset \p^4$ such that $X_1$ is the closure of the union of the osculating planes to $C$ at all non--flex points of $C$; the tangent space to $X_1$ along a general osculating plane to $C$ is the corresponding osculating 3--space;
\item [(ii)] $X_1$ is the cone with vertex a point over a tangent developable scroll $\Sigma$ in $\p^3$;
\item [(iii)] $X_1$ is the cone with vertex a line over a plane curve.
\end{itemize}

\begin{lemma}\label{lem:wd4} In the above setting only case (iii) occurs.
\end{lemma}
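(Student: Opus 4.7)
The plan is to exclude cases (i) and (ii), leaving only case (iii). By Lemma \ref{lem:wd3} the curve $X_1^*$ is rational, which forces the curve underlying $X_1$ in either of the first two cases to be rational: the curve $C$ whose osculating planes sweep out $X_1$ in case (i), and the curve $D\subset \p^3$ whose tangent developable is the base of the cone $X_1$ in case (ii). In both (i) and (ii) the pencil $\mathcal{P}$ of planes in $X_1$ is birational to this underlying curve, so it is a rational pencil.

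For case (ii), I would exploit the cone structure directly. The vertex $v \in X_1$ lies in every plane $P \in \mathcal{P}$, so the preimage $Z:=\overline{\tau_x^{-1}(v)}$ is a subvariety of $X$ contained in every contact $3$--fold $\Gamma_{x,y}$ through $x$. Hence, as $x$ varies, the union of these $Z$ forms a fixed subvariety common to all members of the $2$--dimensional linear system $|\mathcal{L}|$ of Lemma \ref{lem:wd}(i). A short analysis of $Z$ and of its span as $x$ moves shows that $X$ lies in a cone whose base is a $3$--fold obtained from the developable scroll $\Sigma\subset \p^3$ underlying $X_1$. This forces $X$ into one of the classes already handled in Theorems \ref{prop:acc2} and \ref{prop:acc5}, or else to be a cone or a scroll --- each of which is excluded by the standing assumptions at the start of the section.

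For case (i), I would apply the Bertini-type formulae for a non--degenerate rational curve in $\p^4$ proven in the Appendix. These formulae express the numerical invariants of $C$ (degree, number of flexes, number of stationary osculating planes, degree of the tangent developable, and so on) as functions of $\deg(C)$. One then transfers the geometry back to $X$: two general contact $3$--folds $\Gamma_{x,y}$ and $\Gamma_{x',y'}$ correspond, via $\tau_x$, to two distinct osculating planes of $C$, and the way they meet inside $X$ reflects the way these osculating planes meet inside $X_1$. Combined with Lemma \ref{lem:wd}(i) --- which says $|\mathcal{L}|$ is a $2$--dimensional linear system so that the self-intersection numbers of its members are rigidly constrained --- this produces an intersection-theoretic identity on $X$ that contradicts the numerical data coming from Bertini's formulae.

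The main obstacle is case (i): case (ii) yields to a direct projection argument once the base subvariety $Z$ is identified, but case (i) requires the careful numerical reduction to Bertini's formulae for a rational curve in $\p^4$, together with a delicate bookkeeping of how the $2$--dimensional linear system $|\mathcal{L}|$ on $X$ encodes the family of osculating planes of $C$ in $X_1$.
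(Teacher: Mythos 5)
There is a genuine gap: your write-up identifies the right target (exclude cases (i) and (ii)) and the right ingredients for case (i) (rationality of $X_1^*$, hence of $C$, and Bertini's formulae \eqref{eq:form}), but it never produces the extra numerical relation that makes those formulae contradictory, and Bertini's formulae by themselves do not rule out case (i). The heart of the paper's argument is a double count that you do not perform: one takes a general hyperplane section $Y$ of $X$ through $x$, the tangential projection $\tau_{Y,x}:Y\dasharrow Y_1=X_1$ of some degree $s$, and a general plane section $Z$ of $X_1$, which is a rational plane curve of degree $n_2$; pulling back to the normalization of the counterimage of $Z$ one gets a $g^1_s$ and a composite $g^2_{sn_2}$, and counting the hyperplanes through $T_{Y,x}$ and a general point $p$ that are tangent to $Y$ along a contact surface $\Gamma_{Y,x,y}$ not through $p$ in two ways (as tangent lines to $Z$ from a general point of it, giving $2(n_2-2)$, and by duality as $n_3-2$) yields $n_3-2=2(n_2-2)$. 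Only this identity, combined with \eqref{eq:form}, forces $n_1<0$ and kills case (i). Saying that ``a delicate bookkeeping \dots produces an intersection-theoretic identity that contradicts the numerical data'' asserts exactly the step that has to be proved.

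Your treatment of case (ii) also does not work as stated. First, the claim that the closure of $\tau_x^{-1}(v)$ (with $v$ the vertex of the cone $X_1$) is contained in \emph{every} contact $3$--fold $\Gamma_{x,y}$ is unjustified: each $\Gamma_{x,y}$ dominates a plane of $\mathcal P$ through $v$, but that does not force it to contain the whole fibre over $v$. Second, even granting that $X$ sits in a cone built from $\Sigma$, this is not a contradiction with the standing hypotheses: sitting in such a cone is compatible with $X$ being of the first species (compare Theorem \ref{thm:wd5}, where the classified $X$ sits in a $6$--dimensional cone, and Remark \ref{rmk:hh}), and Theorems \ref{prop:acc2} and \ref{prop:acc5} classify $4$--folds of the \emph{second} species, so ``landing in one of those classes'' does not clash with $\gamma(X)=3$ unless you prove the contact loci would then have dimension $2$, which you do not. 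The paper instead disposes of case (ii) by the very same enumerative double count, applied to the tangent developable $\Sigma\subset\p^3$ of a rational curve of degree $d$, which yields $d+\sum(\alpha_1-1)=0$, an impossibility.
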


\begin{proof} We follow Scorza's argument in \S 6 of \cite {Scorza1}. 

Suppose first we are in case (i). There are formulae (see \cite {Ber}, p. 490, and the Appendix) that give the degree $n_1$ of the tangent developable to $C$, the degree $n_2$ of $X_1$, i.e., the locus of osculating planes to $C$, the \emph{class} $n_3$ of $C$, i.e., the degree of $X_1^*$. For any branch $\xi$ of $C$, let $(\alpha, \alpha_1,\alpha_2)$ be the (truncated) \emph{rank sequence} of $\xi$, i.e.,  
\begin{itemize}
\item [$\bullet$] $\alpha$ is the \emph{degree} of the branch, i.e., the intersection multiplicity of the branch with the general hyperplane through the origin of the branch;
\item [$\bullet$] $\alpha+\alpha_1$ is the intersection  multiplicity of $\xi$ with a general hyperplane containing the tangent line to $\xi$ at the origin;
\item [$\bullet$] $\alpha+\alpha_1+\alpha_2$ is the intersection  multiplicity of $\xi$ with a general tangent hyperplane passing through the osculating plane to the branch. 
\end{itemize}
Note that for a general branch one has $\alpha=\alpha_1=\alpha_2=1$. Let $d$ be the degree of $C$, while, as we know, its genus is $0$. The formulae in question give
\begin{equation}\label{eq:form}
\begin{split}
n_1&=2(d-1)-\sum(\alpha-1)\\
n_2&=3(d-2)-\sum (2\alpha+\alpha_1-3)\\
n_3&=4(d-3)-\sum(3\alpha+2\alpha_1+\alpha_2-6)
\end{split}
\end{equation}
where the sums are taken over all  branches of $C$. 

Consider again the general tangential projection $\tau_{x}: X\dasharrow X_1\subset \p^{4}$.
Let $Y$ be a general hyperplane section of $X$ passing through $x$. Note that $Y$ is not defective. However if $p_0,p_1$ are general points of $Y$, the $7$--space $T_{Y,p_0,p_1}$ is tangent to $Y$ along a surface $\Gamma_{Y,p_0,p_1}$, which is the intersection of the 3--fold $\Gamma_{X,p_0,p_1}$ with the hyperplane spanned by $Y$. 

We can then consider the tangential projection $\tau_{Y,x}: Y\dasharrow Y_1\subset \p^{4}$. Since $Y$ is not defective, then $Y_1$ has dimension 3 and therefore $Y_1=X_1$. We let $s$ be the degree of the map $\tau_{Y,x}$. 

Take a general plane $\Pi$ in $\p^4$ which cuts out an irreducible rational plane curve $Z$ of degree $n_2$ on $X_1$. Let $C$ be the normalization of the counterimage $Z'$ of $Z$ via the generically finite map $\tau_{Y,x}$. We pretend $C$ to be irreducible: if this is not the case the argument is quite similar and can be left to the reader.   

Note that on $C$ there is the  $g^1_s$ consisting of the fibres of the map of $C$ to $Z$. This is the pull--back on $C$ of the movable part of the linear series cut out on $Z'$ by the rational pencil described by the surfaces $\Gamma_{Y,x,y}$ on $Y$ passing through $x$. 
The hyperplanes of $\p^4$ (which come via the tangential projections $\tau_{X,x}$ and $\tau_{Y,x}$ from hyperplanes containing $T_{X,x}$ or $T_{Y,x}$ respectively)
cut out on $C$ the pull--back to $C$ of the linear series cut out on $Z$ by the lines of $\Pi$, hence these hyperplanes cut out on $C$ 
a linear series $g^2_{sn_2}$ which is composite with the above $g^1_s$.
Hence, a hyperplane passing through $T_{Y,x}$ and tangent to $Y$ along a surface $\Gamma_{Y,x,y}$ cuts out on $C$ a divisor consisting of a divisor of the $g^1_s$, counted with multiplicity 2, plus $n_2-2$ more divisors of the $g^1_s$. 

Let $p\in C$ be a general point, which by abusing notation, we may identify with a general point of $X$ and let $q$ be its image on $Z$. Note that $q$ is also a general point of $X_1$.

The number of hyperplanes passing through $T_{Y,x}$ and $p$ and tangent to $Y$ along surfaces of the type  $\Gamma_{Y,x,y}$ which do not pass through $p$  equals 
the number of divisors of the $g^2_{sn_2}$ which contain the divisor of the $g^1_s$ containing $p$ and then another divisor of the $g^1_s$ counted with multiplicity 2. This number  coincides with the number of lines of $\Pi$ passing through $q$ and tangent to $Z$ at a point different from $q$. This number is  
$2(n_2-2)$. 

On the other hand, we can compute this number in another way. Indeed, this number is nothing but the number of tangent hyperplanes to $X_1$ passing through $q$ but not tangent to $X_1$ in $q$. An obvious duality argument shows that 
this equals the number of intersections of a general tangent hyperplane to $X_1^*$ off the contact point, i.e., $n_3-2$.

In conclusion we have $n_3-2=2(n_2-2)$, which, by using \eqref {eq:form}, reads
\[
\sum(\alpha-1)=2(d-1)+ \sum(\alpha_2+5)
\]
and implies $n_1=2(d-1)-\sum(\alpha-1)<0$, a contradiction. Hence case (i) cannot happen.

Assume next we are in case (ii). Consider  the tangent developable scroll $\Sigma\subset \p^3$ and let $\mathfrak C$ be the rational curve of degree $d$ whose tangent lines fill up $\Sigma$. For any branch $\xi$ of $\mathfrak C$, let $(\alpha, \alpha_1)$ be the (truncated) \emph{rank sequence} of $\xi$, with the same meaning as above. Then the degree of $\Sigma$, hence of $X_1$, is
\[
n_2=2(d-1)-\sum(\alpha-1)
\]
whereas the class of $\mathfrak C$, which coincides with the degree of $X_1^*$, is
\[
n_3=3(d-2)-\sum(2\alpha+\alpha_1-3).
\]
Then the same argument as above leads to the equality
\[
d+\sum(\alpha_1-1)=0
\]
which is also impossible. Then also case (ii) cannot happen, and only case (iii) can occur.
\end{proof}

\begin{theorem}\label{thm:wd5}  Let $X\subset \p^ 9$ be  a projective,  non--degenerate variety of dimension $4$, not  a scroll,  of the first species, which is not a cone. If $X$ presents the irreducible case, then $X$ sits in  a 6--dimensional cone with vertex a 3--space over a Veronese surface $V_{2,2}$ in a $\p^5$ skew with the the vertex.
\end{theorem}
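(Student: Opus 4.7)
The plan is to identify a $3$--space $T\subset\p^9$ (the prospective vertex of the cone) as the common $3$--space inside all the $6$--spaces $\Pi_{p_0,p_1}=\langle\Gamma_{p_0,p_1}\rangle$ spanning the tangential contact $3$--folds, and to show that the projection of $X$ from $T$ recovers the Veronese surface $V_{2,2}$ in a $\p^5$ skew with $T$.

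First, by Lemma \ref{lem:wd4}, the image $X_1=\tau_x(X)\subset\p^4$ of a general tangential projection is the cone with vertex a line $L$ over a non--degenerate plane curve $C\subset\pi$, where $\pi$ is a plane skew to $L$, and by Lemma \ref{lem:wd3} the curve $C$ is rational. The ruling planes of $X_1$, parameterized by $C$, pull back under $\tau_x$ to the pencil of contact $3$--folds through $x$, which by Lemma \ref{lem:wd}(i) is a linear pencil inside the $2$--dimensional linear system $\mathcal{L}$ of all contact $3$--folds.

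Next I would prove that $\deg(C)=2$, so that $C$ is a smooth conic and $X_1\subset\p^4$ is a rank--$3$ quadric singular along $L$. This uses the Plucker--type formulae of the Appendix together with the technique of the proof of Lemma \ref{lem:wd4}: the ramification of the pencil of hyperplane sections of $X_1$ through $x$, combined with $X_1^*$ being a rational curve (Lemmas \ref{lem:wd2}, \ref{lem:wd3}) and with the fact that the pencil of contact $3$--folds through $x$ is linear, rules out $\deg(C)\geq 3$. I then extract the common $3$--space $T$ as follows: keeping $x$ fixed, the $1$--parameter family $\{\Pi_{x,y}\}_{y\in X}$ of $6$--spaces in $\p^9$ meets generically in a subspace of dimension at least $3$, since two general $6$--spaces in $\p^9$ intersect in a $3$--space. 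A monodromy argument over the $2$--dimensional linear system $\mathcal{L}\cong\p^2$, exploiting the irreducibility of $\Gamma_{p_0,p_1}$ in the present case, shows that the intersection $T=\bigcap_{(p_0,p_1)}\Pi_{p_0,p_1}$ is a fixed $3$--space independent of the pair.

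Finally, the projection $\pi_T\colon X\dasharrow\p^5$ has image a surface $Y$, since each contact $3$--fold $\Gamma_{p_0,p_1}$ (whose span contains $T$) collapses to a planar image in $\p^5$ tracing out a conic through two general points of $Y$. A non--degenerate surface $Y\subset\p^5$ carrying a $2$--dimensional family of conics with the property that two general points lie on a unique such conic must be the Veronese surface $V_{2,2}$ (by the classification of defective surfaces). Therefore $X$ sits inside the join $J(T,V_{2,2})\subset\p^9$, which is the $6$--dimensional cone over $V_{2,2}$ with vertex $T$ in a $\p^5$ skew to $T$, as claimed. The main obstacles are (a) the degree calculation $\deg(C)=2$ via the Plucker--type formulae of the Appendix, and (b) the extraction of a fixed $3$--space $T$ from the $2$--dimensional linear system $\mathcal{L}$ by a careful monodromy and incidence analysis.
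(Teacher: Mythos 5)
Your overall strategy (produce a fixed $3$--space, project from it, identify the image as $V_{2,2}$) has the right shape, but the two steps you yourself single out as the main obstacles are exactly where the proposal breaks down. First, the claim that $\deg(C)=2$, so that $X_1$ is a rank--$3$ quadric, is both unjustified and not true in general: Lemma \ref{lem:wd4} only says that $X_1$ is a cone with vertex a line over a plane curve, and Lemma \ref{lem:wd3} only that this curve is rational; nothing in the hypotheses bounds its degree (for instance, a $4$--fold cut out on the $6$--dimensional cone over $V_{2,2}$ by hypersurfaces of high degree is of the kind classified by the theorem, yet its general tangential image has large degree). Fortunately this step is also unnecessary --- the paper's argument never determines $\deg(C)$ --- but in your plan it is used as input for the next, crucial step.

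Second, and more seriously, the extraction of the fixed $3$--space $T$ is asserted rather than proved. The observation that two general $6$--spaces in $\p^9$ meet in a $3$--space does not yield a common $3$--space for the whole $2$--dimensional family of spans $\Pi_{p_0,p_1}$ (three general $6$--spaces in $\p^9$ already meet only in a point), and an unspecified ``monodromy argument'' over the linear system of contact $3$--folds carries no force: this is precisely the hard core of the theorem. The paper handles it dually: the hyperplanes tangent to $X$ along the contact $3$--folds form a surface $S\subset(\p^9)^*$; by Lemma \ref{lem:wd4}, for $x$ general all such hyperplanes containing $T_{X,x}$ pass through a fixed $\p^6$ (the join of $T_{X,x}$ with the vertex line of $X_1$), so $S$ carries a $2$--dimensional family of rational plane curves; Lemma 4.1 of \cite{ChCi} then bounds the span of $S$ by a $\p^5$ (Claim \ref{cl:it}), and the cases where $S$ spans a $\p^2$, $\p^3$ or $\p^4$ are excluded using Bertini's theorem, the hypothesis $f(X)=1$, and the irreducible--case hypothesis respectively (Claim \ref{cl:i5}). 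Only after this does one obtain a $3$--space $\Pi$ contained in every bitangent hyperplane, together with the fact that every tangent space to $X$ meets $\Pi$ in a line, which is what forces the projection from $\Pi$ to have a surface as image. None of these ingredients appears in your proposal, so the central step is a genuine gap; your final identification of the projected surface (plane curves through two general points, hence conics, hence the Veronese surface) is fine and parallels the paper's Claim \ref{cl:iv}.
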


\begin{proof} By Lemma \ref {lem:wd4}, given a general point $x\in X$, all hyperplanes passing through $T_{X,x}$ and tangent to a 3--fold $\Gamma_{x,y}$ also pass through a fixed $\p^6$, i.e., the join of $T_{X,x}$ with the line vertex of $X_1$. These hyperplanes form, in the dual plane of this $\p^6$ an irreducible rational curve.

Let us  consider the 2--dimensional family $\cal K$ of hyperplanes tangent to $X$ along a 3--fold of the type $\Gamma_{p_0,p_1}$. By duality, this family describes an irreducible surface $S\subset (\p^9)^*$ which possesses a family of dimension at least 2 of irreducible, rational, plane curves. 

\begin{claim} \label{cl:it} The surface $S$ spans a $\p^i$, with $2\leq i\leq 5$. \end{claim}

\begin{proof}[Proof of the Claim \ref {cl:it}] The bound $i\geq 2$ is obvious. Let us prove that $i\leq 5$. By Lemma 4.1 of \cite {ChCi}, the family $\cal P$ of the planes spanned by the plane curves on $S$ either pass through the same point $p$, or intersect a given plane $\Pi$ along lines, or they span at most a $5$--space. In the first case, by projecting $S$ from $p$ we find a surface which possesses a 2--dimensional family of lines, hence it is a plane, thus $i=3$. In the second case, by projecting $S$ from $\Pi$, we see that each plane curve of $S$ is contracted to one point, which again implies $i=3$. The Claim is thus proved. \end{proof}

More precisely:

\begin{claim} \label{cl:i5} One has $i=5$. \end{claim}

\begin{proof}[Proof of the Claim \ref {cl:i5}]

First of all $i=2$ is impossible, because then the hyperplanes of $\cal K$ vary in a 2--dimensional linear system cutting on $X$ a linear system of surfaces which are all singular with the singular points filling up $X$, contrary to Bertini's theorem.  

Assume $i=3$. In this case all the hyperplanes of $\cal K$ contain one and the same subspace $\Pi$ of dimension 5. Let $x\in X$ be a general point. The hyperplanes in $\cal K$ containing $T_{X,x}$ all contain the same $\p^6$ (see the beginning of the proof) which contains $\Pi$ and also $T_{X,x}$. Hence $T_{X,x}$ cuts $\Pi$ along a $3$--space and therefore two general tangent spaces to $X$  intersect in at least a line, a contradiction, because $f(X)=1$. 

Assume $i=4$. In this case all the hyperplanes of $\cal S$ contain one and the same subspace $\Pi$ of dimension 4. The same argument as in the case $i=3$ implies that all tangent spaces to $X$ intersect $\Pi$ along a plane. Consider the projection $\pi: X\dasharrow \p^4$, of $X$ from $\Pi$. The image of $X$ is  a curve $\Lambda$. The general hyperplane of $\cal K$ maps via $\pi$ to a general bitangent hyperplane to $\Lambda$ and the 3--folds $\Gamma_{p_0,p_1}$ would then be reducible in pairs of fibres of $\pi$, a contradiction since we are in the irreducible case.

In conclusion this proves that $i=5$. \end{proof}

By Claim \ref {cl:i5}, all the hyperplanes of $\cal K$ contain one and the same $3$--space $\Pi$. The same argument we made in the proof of the Claim \ref {cl:i5}, implies that all tangent spaces to $X$ intersect $\Pi$ along a line. Consider the projection $\pi: X\dasharrow \p^5$, of $X$ from $\Pi$. The image of $X$ is a surface $V$. The general hyperplane of $\cal K$ maps via $\pi$ to a general bitangent hyperplane to $V$, which is also tangent along an irreducible curve, the image of $\Gamma_{p_0,p_1}$. To finish our proof we need to prove the:

\begin{claim} \label{cl:iv} One has $V=V_{2,2}$ \end{claim}

\begin{proof}[Proof of the Claim \ref {cl:iv}] If $x,y\in V$ are general points, there is a bitangent hyperplane to $V$ at $x$ and $y$, hence the tangent spaces to $V$ at $x$ and $y$ intersect at a point, thus $V$ is defective with irreducible contact curve. Hence $V=V_{2,2}$ (see Thm. 1.3 of \cite {WDV}). 
\end{proof}
This ends our proof. \end{proof}

\begin{remark}\label{rem:vero} Let $X\subset \p^9$ be a 4--fold as in the statement of Theorem \ref {thm:wd5}, sitting in  a 6--dimensional cone $V$ with vertex a $3$--space $\Pi$ over a Veronese surface $V_{2,2}$. We claim that $X$ is in general defective, of the first species, presenting the irreducible case. 

Indeed, let $p_0, p_1\in X$ be general points. The space $T_{V,p_0,p_1}$ has dimension 8 and it contains $T_{X,p_0}$ and $T_{X,p_1}$, which therefore in general intersect at a point, proving that $X$ is defective with $f(X)=1$. Moreover $V$ is swept out by a 2--dimensional family of quadric cones of rank 3, which project from $\Pi$ the conics of the Veronese surface $V_{2,2}$. For each of these quadric cones $Q$, there is a hyperplane which is tangent to  $V$ along $Q$. In general the intersection of $Q$ with $X$ is an irreducible 3--fold of the type $\Gamma _{p_0,p_1}$, proving our claim.

As usual, $X$ is never smooth in this case. 
\end{remark}

\subsection{} In this section we examine the reducible case. We consider irreducible, defective, projective threefolds $X\subset \p^r$, of the first species, with $r\geq 9$, not a scroll, with $f(X)=1$. If $p_0,p_1\in X$ are general points, the tangent spaces to $X$ at $p_0$ and $p_1$ span a hyperplane which is tangent to $X$ along a {contact $3$--fold} $\Gamma_{p_0,p_1}$ which is reducible in two irreducible components each containing $p_0$ and $p_1$ respectively. By Lemma \ref {lem:wd}, these two components vary in a pencil which we denote by $\cal G$, and we denote by $\Gamma$ its general element. 

\begin{theorem}\label{thm:wd6} Let $X\subset \p^ r$ be  a projective,  non--degenerate variety of dimension 4, not  a scroll,  of the first species with $f(X)=1$, which is not a cone.
If $X$ presents the reducible case, then one of the following occurs:
\begin{itemize}
\item [(i)] $X$ lies in a 6--dimensional cone with vertex a $\p^4$ over a curve;
\item [(ii)] $X$ lies in a 5--dimensional cone with vertex a plane over a scroll surface.
\end{itemize}
\end{theorem}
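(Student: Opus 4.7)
The plan is to analyze the pencil $\cal G$ of components of reducible contact $3$--folds provided by Lemma \ref{lem:wd}(ii), after making a generic projection to $\p^9$. For a general $\Gamma_t \in \cal G$, set $\Lambda_t = \langle \Gamma_t \rangle$ and $d = \dim \Lambda_t$ (constant for general $t$). Since two general $\Gamma_s, \Gamma_t \in \cal G$ form a contact $3$--fold spanning a $6$--space by Proposition \ref{lem:a}(iv), one has $\dim \langle \Lambda_s, \Lambda_t \rangle = 6$ and hence $\dim(\Lambda_s \cap \Lambda_t) = 2d - 6$. The hypothesis that $X$ is not a scroll in $3$--spaces forces $d \geq 4$, otherwise $\Gamma_t$ would itself be a linear $3$--space and the pencil $\{\Lambda_t\}$ would exhibit $X$ as a scroll in $3$--spaces over $\cal G$. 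Non--degeneracy of $X$ in $\p^9$ gives $d \leq 5$: if $d = 6$ then $\dim(\Lambda_s \cap \Lambda_t) = 6$, all $\Lambda_t$ coincide and $X$ is contained in a $6$--space.

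For $d = 5$, I would fix $\Lambda_s$ and consider the $1$--parameter family of hyperplanes $\{\Lambda_s \cap \Lambda_t\}_t$ of $\Lambda_s \cong \p^5$. If this family is constant, equal to a single hyperplane $H$, then $K := \bigcap_t \Lambda_t = H$ has dimension $4$; each $\Lambda_t = \langle K, v_t \rangle$ for a point $v_t$ tracing an irreducible curve $C$ in $\p^r/K$, and $X$ lies in the $6$--dimensional cone $J(K,C)$ with vertex the $4$--space $K$ over $C$, which is case (i). If the family is non--constant, a duality argument inside $\Lambda_s \cong \p^5$ shows $\dim K \leq 3$; then quotienting by $K$ yields a $1$--parameter family of lower--dimensional linear subspaces in $\p^{r-\dim K - 1}$ pairwise meeting in small subspaces, and a Morin--type analysis (Lemma 4.1 of \cite{ChCi}) shows every such configuration either forces $K$ to be strictly larger than assumed (a contradiction) or forces $X$ to lie in a $6$--space (contradicting $r = 9$). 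Only the constant case survives, yielding case (i).

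For $d = 4$, the analogous dichotomy concerns the $1$--parameter family of planes $\{\Lambda_s \cap \Lambda_t\}_t$ of $\Lambda_s \cong \p^4$. If this family is constant, equal to a fixed plane $P$, then $K = P$ and each $\Lambda_t = \langle P, \ell_t \rangle$ for a varying line $\ell_t$ sweeping out a scroll surface $\Sigma = \bigcup_t \ell_t$; hence $X \subset J(P, \Sigma)$, the $5$--dimensional cone with vertex the plane $P$ over $\Sigma$, which is case (ii). If the family is non--constant, then $\dim K \leq 1$, and this must be ruled out. Quotienting by $K$, the alternatives from Lemma 4.1 of \cite{ChCi} are that the resulting planes all pass through a common point, all meet a fixed plane in a line, or span a small ambient space; the first and last force either a strictly larger $K$ (contradiction) or $X$ degenerate in $\p^9$, while the middle alternative requires more delicate handling.

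The principal obstacle is precisely this middle alternative in the $d = 4$ non--constant sub--case, where the quotient planes $(\Lambda_s \cap \Lambda_t)/K$ meet a fixed plane in a line without sharing any further common subspace. Excluding it requires using that each $\Gamma_t$ is a component of a bitangent contact locus, so the corresponding bitangent hyperplane $H_{s,t}$ is tangent to $X$ along the whole union $\Gamma_s \cup \Gamma_t$; this extra structure can be leveraged to show that such a configuration would either force the tangent spaces at two general points of $X$ to meet in dimension larger than $0$ (contradicting $f(X) = 1$) or force $X$ to be a cone. An alternative route is to pass through the tangential projection $\tau_x$ and extend the analysis of Lemma \ref{lem:wd4} to the reducible case, showing that $X_1$ remains a cone with vertex a line $\ell$ over a plane curve, whose vertex line pulls back to a surface $\Sigma_0 \subset X$ contained in every $\Gamma_t$; the linear span of $\Sigma_0$ then provides the common subspace $K$ of the required dimension, pinning down case (i) or case (ii) according to the value of $d$.
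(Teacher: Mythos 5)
Your setup matches the paper's: the bounds $4\leq d\leq 5$, the $d=5$ analysis (either a fixed $4$--space lies in every $\Lambda_t$, giving case (i), or $X$ would be degenerate), and the constant sub--case of $d=4$ giving case (ii) are all correct and are essentially the paper's argument. But there is a genuine gap exactly where you locate your ``principal obstacle'': in the $d=4$ non--constant sub--case you set out to \emph{exclude} the configuration in which (after quotienting by $K$) the planes all meet a fixed plane in a line. Unwound, this is precisely the configuration in which there is a fixed $4$--space $\Pi$ with $\dim(\Pi\cap\langle\Gamma_t\rangle)=3$ for all $t$; it cannot be ruled out, and it should not be, because it \emph{is} conclusion (i): projecting from $\Pi$, every $\langle\Gamma_t\rangle$ contracts to a point, the $\Gamma_t$ fill $X$, so $X$ lies in a $6$--dimensional cone with vertex a $\p^4$ over a curve. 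The paper applies Lemma 4.1 of \cite{ChCi} to the $4$--spaces $\langle\Gamma_t\rangle$ (pairwise meeting in planes) and keeps both surviving branches: a fixed plane in all spans gives (ii), a fixed $4$--space met in $3$--spaces gives (i). Your proposed contradictions for this branch (tangent spaces at two general points meeting in dimension $>0$, or $X$ a cone) are not argued and are in fact not available: Remark \ref{rmk:hh} shows that $4$--folds inside a $6$--dimensional cone with vertex a $\p^4$ over a curve generically have $f(X)=1$, present the reducible case, and need not be cones. Likewise, your fallback of extending Lemma \ref{lem:wd4} to the reducible case is unsupported: its proof (and Lemma \ref{lem:wd3}) uses the $2$--dimensional \emph{linear} system of contact loci of the irreducible case, which is unavailable here, where the components only move in a pencil.

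A secondary point: you work after a generic projection to $\p^9$, whereas the statement concerns $X\subset\p^r$ and the paper's proof never uses $r=9$ (only $\dim\langle\Gamma,\Gamma'\rangle=6$ from Proposition \ref{lem:a}(iv) and non--degeneracy). If you project first, you must still lift the cone structure from the projection back to $X$ (compare Claim \ref{cl:part} in the proof of Theorem \ref{thm:lowcod}); it is simpler to run the argument in $\p^r$ directly.
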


\begin{proof}
Let us denote by $i$ the dimension of $\langle \Gamma\rangle$, with $\Gamma\in \mathcal G$ a general element. Since we are assuming $X$ not a scroll,  we have $i\geq 4$. On the other hand, if $\Gamma$ and $\Gamma'$ are two general elements of $\cal G$, then  $\langle \Gamma, \Gamma'\rangle$ has dimension 6 (see Proposition \ref {lem:a},(iv)). This implies that $i\leq 5$. 

If $i=4$ and if $\Gamma$ and $\Gamma'$ are two general elements of $\cal G$, then  $\langle \Gamma\rangle \cap \langle \Gamma'\rangle$
has dimension 2. By Lemma 4.1 of \cite  {ChCi}, we have only the following two possibilities:
\begin{itemize}
\item [(a)] there is a fixed plane $\Pi$ such that, for a general element $\Gamma$ of $\cal G$, one has $\Pi\subset \langle \Gamma\rangle$;
\item [(b)] there is a fixed  $4$--space $\Pi$ such that, for a general element $\Gamma$ of $\cal G$, one has  that  $\Pi\cap  \langle \Gamma\rangle$ has dimension 3.
\end{itemize}
If (a) holds, we are in case (ii) and if (b) holds we are in case (i).

 If $i=5$  and if $\Gamma$ and $\Gamma'$ are two general elements of $\cal G$, then  $\langle \Gamma\rangle \cap \langle \Gamma'\rangle$
has dimension 4. This implies that there is a 4--space $\Pi$ such that if $\Gamma$ is  a general element of $\cal G$, one has  that  $\Pi\subset  \langle \Gamma\rangle$. Then we are again in case (i). 
\end{proof}

\begin{remark}\label {rmk:hh} Note that if  (i) of Theorem \ref {thm:wd6} occurs, then $X\subset \p^r$ is actually defective and in general it has $f(X)=1$ and presents the reducible case. Indeed, if $p_0,p_1\in X$ are general points, and $V$ is the $6$--dimensional cone containing $X$, then $T_{V,p_0,p_1}$ has dimension 8 and it contains $T_{X,p_0}$ and $T_{X,p_1}$, which therefore in general intersect at a point. 

If we are in case (ii) of Theorem \ref {thm:wd6}, $X$ is still defective with, in general, $f(X)=1$. In fact, let $V$ be the $5$--dimensional cone on which $X$ sits, with vertex the plane $\Pi$. Then $V$ is swept out by a 1--dimensional family $\cal K$ of $4$--spaces, i.e., the spans of $\Pi$ with the lines of the scroll $\Sigma$ which is the base of the cone. The general such $4$--space $P$ intersects $X$ along a 3--fold $X_P$. If $x\in X$ is a general point and $P$ is the unique element of $\cal K$ containing $x$, then $T_{X,x}$ intersects $P$ in the $3$--space $T_{X_P,x}$ and therefore it intersects $\Pi$ along a line. Therefore two general tangent spaces to $X$ intersect in general at a point in $\Pi$, proving that $X$ is defective. 

It is interesting to note that $X$ is not in general of the first species. In fact, let $p_0,p_1\in X$ be general points. Then $T_{V,p_0,p_1}$ has dimension 8 and, containing $T_{X,p_0,p_1}$, it coincides with it. Now the bitangent hyperplane to $V$ at $p_0,p_1$ is only tangent along the 3--spaces generators of $V$ passing through $p_0,p_1$, hence it is bitangent to $X$ only along the surfaces in which these generators cut $X$. 

However, in some particular cases, $X$ can be of the top species. For example this is the case if the scroll $\Sigma$ is developable. In this case in fact the tangent space to $V$ is constant along the 4--spaces of the family $\cal K$ and the same argument we made above implies that a general bitangent hyperplane to $X$ is tangent along two threefolds of the form $X_P$ with $P\in \cal K$.

In any event, $X$ is never smooth in these cases.  \end{remark}

 \section* {Appendix}\label {sec:first}
  
  In this Appendix we will sketch the proof of  formulae \eqref {eq:form}. They can be deduced from the general formulae in Bertini's book  \cite [p. 490]{Ber}. In this Appendix we will not prove Bertini's formulae in their full strength, but just formulae \eqref {eq:form} that we used in the paper. We will use the notation introduced in \S \ref {sec:first}. Namely $C\subset \p^4$ is an irreducible, non--degenerate, projective curve with geometric genus 0. We denote by $d$ the degree of $C$ and by $n_1$ the degree of the tangent developable $T_1(C)$ to $C$, by $n_2$ the degree of variety $T_2(C)$ which is the closure of the union of osculating planes to $C$ in simple non--flex points,  by $n_3$ the {class} $C$, i.e., the degree of the dual $C^*$ of $C$. 
  
 For any branch $\xi$ of $C$, one has the \emph{full rank sequence} $(\alpha:=\alpha_0, \alpha_1, \alpha_2, \alpha_3)$ of $\xi$ where $\alpha, \alpha_1$ and $\alpha_2$ have been defined in the proof of Lemma \ref {lem:wd4}, and $\alpha+ \alpha_1+ \alpha_2+ \alpha_3$ is the intersection multiplicity of $\xi$ with the hyperosculating hyperplane to $\xi$ in its centre. For a generic branch $\xi$ one has $(\alpha, \alpha_1, \alpha_2, \alpha_3)=(1,1,1,1)$ and only for finitely many branches one has that $\alpha_i>1$ for some $i=0,\ldots, 3$. 
  
\begin{proposition}\label{prop:fast} In the above setting one has
\begin{equation}\label{eq:form2}
\begin{split}
\sum(\alpha-1)&+n_1=2d-2\\
\sum(\alpha_1-1)&+d+n_2=2n_1-2\\
\sum(\alpha_2-1)&+n_1+n_3=2n_2-2\\
\sum(\alpha_3-1)&+n_2=2n_3-2
\end{split}
\end{equation}
where the sums are taken over all branches of $C$. 
\end{proposition}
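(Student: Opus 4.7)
My plan is to deduce the four identities of \eqref{eq:form2} from Riemann--Hurwitz applied to four well--chosen degree-$n_k$ rational maps $\p^1\to\p^1$, invoking projective duality to halve the work. The dual curve $C^*\subset(\p^4)^*$ is again a rational non--degenerate curve, of degree $n_3$, whose tangent developable, osculating plane variety, and dual curve have degrees $n_2$, $n_1$, and $d$ respectively; the rank sequence of a branch reverses under duality, so that $(\alpha,\alpha_1,\alpha_2,\alpha_3)$ for $C$ corresponds to $(\alpha_3,\alpha_2,\alpha_1,\alpha)$ for $C^*$. Consequently, the first and second identities of \eqref{eq:form2} applied to $C^*$ yield, respectively, the fourth and the third, so I need only prove the first two for $C$. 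Throughout, fix a normalization $\p^1\to C$, $t\mapsto p(t)$.

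For the first identity, I will fix a generic plane $\Pi\subset\p^4$ and let $\Lambda\cong\p^1$ be the pencil of hyperplanes through $\Pi$. The map $\phi_0:\p^1\to\Lambda$, sending $t$ to the unique hyperplane $H_t\in\Lambda$ through $p(t)$, has degree $d$. For a branch $\xi$, the ramification index $e_\xi$ of $\phi_0$ equals the intersection multiplicity of $H_\xi$ with $C$ at $p(\xi)$: for generic $\Pi$ this is $\alpha(\xi)$, unless the tangent line $P_1(\xi)$ meets $\Pi$, in which case $H_\xi\supset P_1(\xi)$ forces intersection multiplicity $\alpha+\alpha_1=2$ and contributes an extra unit. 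The branches of this second type correspond bijectively to the $n_1$ points of $T_1(C)\cap\Pi$, and they are disjoint from the cuspidal branches ($\alpha>1$) for generic $\Pi$. Riemann--Hurwitz then yields $\sum_\xi(\alpha-1)+n_1=2d-2$.

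For the second identity, I fix a generic hyperplane $Q\subset\p^4$ and introduce $\psi_1:\p^1\to Q$, $t\mapsto P_1(t)\cap Q$. Its image is the curve $C_1=T_1(C)\cap Q\subset Q\cong\p^3$, irreducible of degree $n_1$, and $\psi_1$ is birational onto $C_1$ because two generic tangent lines to $C$ are skew. Composing $\psi_1$ with projection of $Q$ from a generic line $\ell\subset Q$ to $\p^1$ produces a degree-$n_1$ map whose ramification has three disjoint sources. First, for each branch $\xi$ with $p(\xi)\notin Q$, a local computation in the branch parametrization $p(t)=(t^\alpha,t^{\alpha+\alpha_1},t^{\alpha+\alpha_1+\alpha_2},t^{\alpha+\alpha_1+\alpha_2+\alpha_3})$ shows that $\psi_1(t)-\psi_1(\xi)$ has leading order exactly $\alpha_1(\xi)$, independent of $\alpha(\xi)$; summing yields $\sum_\xi(\alpha_1-1)$. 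Second, at each of the $d$ points $t^*$ with $p(t^*)\in Q$, writing $s=-L(p(t))/L(p'(t))$ for $L$ a local equation of $Q$ gives $s(t^*)=0$ and $s'(t^*)=-1$, so that $\psi_1(t)-p(t^*)=O((t-t^*)^2)$ with nonzero quadratic leading term and $\psi_1$ acquires a simple critical point at $t^*$, contributing $d$ in total. Third, the projection from $\ell$ picks up one unit of ramification at each tangent line of $C_1$ passing through $\ell$, and these tangent lines are the slices $P_2(t)\cap Q$, tracing out the surface $T_2(C)\cap Q\subset Q$ of degree $n_2$; this contributes $n_2$. For generic $Q$ and $\ell$ the three contributions are pairwise disjoint, and Riemann--Hurwitz yields $\sum_\xi(\alpha_1-1)+d+n_2=2n_1-2$.

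The principal obstacle will be the local analysis behind the second identity: verifying that $\psi_1$ has ramification order exactly $\alpha_1$ at a generic branch (so that the cuspidal order $\alpha$ of $C$ cancels out, leaving only the tangential order $\alpha_1$) and that each of the $d$ transversal intersections $C\cap Q$ produces a genuine simple critical point of $\psi_1$ rather than a higher-order degeneration. Once these two expansions are in hand, disjointness of the three contributions for generic $Q$ and $\ell$ is routine, and the third and fourth identities follow from the first two by the duality reduction.
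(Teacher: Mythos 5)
Your proposal is correct, and it checks out in the details: the expansion with the branch parametrization does show that $\psi_1(t)-\psi_1(\xi)$ vanishes to order exactly $\alpha_1$ (the $\alpha$--part cancels), each of the $d$ points of $C\cap Q$ gives a cusp of $C_1=T_1(C)\cap Q$ and hence one unit of ramification, and the $n_2$ tangency points come from $\ell\cap T_2(C)\cap Q$; the duality bookkeeping ($\deg C^*=n_3$, $\deg T_1(C^*)=n_2$, $\deg T_2(C^*)=n_1$, class of $C^*$ equal to $d$, rank sequences reversed) is also right. The route, however, is distributed differently from the paper's. The paper proves the first and the \emph{third} identity directly and gets the fourth and second by duality: for the third it uses the pencil of lines through a general point of a general plane, cutting a $g^1_{n_2}$ on $\p^1$ via the hypersurface $T_2(C)$, and the ramification bookkeeping rests on multiplicity statements about $T_2(C)$ (multiplicity $\alpha_2$ along an osculating plane, cuspidal doubling along $T_1(C)$, simple tangency of hyperosculating hyperplanes). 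You instead prove the first and \emph{second} directly and dualize to get the fourth and third; your direct count for the second identity is, after dualizing, the same count as the paper's for the third, but implemented on the derived curve $C_1=T_1(C)\cap Q$ projected from a general line, so that all the local analysis happens on an explicit power--series parametrization of a space curve rather than on multiplicities of the threefold $T_2(C)$ along its special surfaces. What your version buys is a more elementary and verifiable local analysis (everything is a leading--order computation on a branch); what the paper's version buys is staying inside the original $\p^4$ with the geometrically transparent loci ($T_1(C)\cap P$, hyperosculating hyperplanes through $x$) entering the Hurwitz count directly. The genericity points you defer (disjointness of the three ramification sources, simplicity of the $n_2$ tangencies, birationality of the scroll parametrization) are indeed routine and are treated no more carefully in the paper.
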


\begin{proof} For the first of these formulae, consider a general plane $P$ in $\p^4$, and look at the base point free $g^1_d$ which is the pull back on the normalization of $C$, that is $\p^1$, of the series cut out on $C$ by the hyperplanes passing through $P$. The number of branch points (counted with multiplicity) of this $g^1_d$ is $2d-2$. Now $n_1$ is just the number of these branch points which are simple (i.e., they count with multiplicity 1). Note that, by the genericity of $P$, a branch $\xi$ of $C$ counts for a branch point of multiplicity $\alpha-1$ and the only non--simple branch points may arise from branches of $C$. Therefore we have
\[
n_1=2d-2-\sum(\alpha-1)
\]
which is just the first formula in \eqref {eq:form2}. 

Next let us prove the last formula in \eqref {eq:form2} which is in a sense the dual of the first. Indeed,  we consider the dual curve $C^*\subset (\p^4)^*$, we fix a general plane $\Pi$ of $(\p^4)^*$ and look at the  base point free $g^1_{n_3}$ which is the pull back on the normalization of $C^*$, that is  again $\p^1$, of the series cut out on $C^*$ by the hyperplanes passing through $\Pi$. Note now that the dual of a branch $\xi$ with rank sequence  $(\alpha, \alpha_1, \alpha_2, \alpha_3)$ becomes a branch $\xi^*$ with rank sequence $(\alpha_3, \alpha_2, \alpha_1, \alpha)$ and that the tangent developable $T_1(C^*)$ to $C^*$ is dual to $T_2(C)$ and its degree is easily seen to be $n_2$. Therefore, applying to the above $g^1_{n_3}$ the same argument we applied to the $g^1_d$ to prove the first formula in \eqref {eq:form2}, we find
\[
n_2=2n_3-2-\sum(\alpha_3-1)
\]
which is the last formula in \eqref {eq:form2}.

Let us prove now the third formula  in \eqref {eq:form2}. The variety $T_2(C)$ is a hypersurface of degree $n_2$. Fix again a general plane $P$ of $\p^4$, and fix a general point $x\in P$, so that $x\not\in T_2(C)$. A general line $r$ in the pencil with centre $x$ in $P$ is a general line, so it intersects $T_2(C)$ in a reduced divisor of degree $n_2$, whose points correspond to osculating planes to $C$ in distinct points $p_1,\ldots, p_{n_2}$ which, when pulled back to the normalization of $C$, i.e., to $\p^1$, form a divisor $D_r$ of degree $n_2$. The closure of the union of the divisors $D_r$ on $\p^1$ when $r$ moves in the aforementioned pencil is a $g^1_{n_2}$ on $\p^1$. We will now compute the  ramification points of this series. They are contained in the following list:\\
\begin{inparaenum}

\item [(a)] let $\xi$ be a branch of $C$ of rank sequence $(\alpha, \alpha_1, \alpha_2, \alpha_3)$. It corresponds to a point $p_\xi$ of $\p^1$. The osculating plane $P_\xi$ to $\xi$ at its origin is contained in $T_2(C)$ and  its general point has multiplicity $\alpha_2$ for $T_2(C)$, as can be seen with a local computation which can be left to the reader. 
It is then clear that $p_\xi$ is a ramification point of multiplicity $\alpha_2$ for the $g^1_{n_2}$ (i.e., it belongs to the corresponding divisor of the $g^1_{n_2}$ with multiplicity $\alpha_2$); \\

\item [(b)] the plane $P$ intersects $T_1(C)$ in $n_1$ points, which, by generality of $P$, correspond to $n_1$ distinct tangent lines $t_1,\ldots, t_{n_1}$ to $C$ in smooth points of $C$ which correspond to points $p_1,\ldots, p_{n_1}$ of $\p^1$. We claim that $p_1,\ldots, p_{n_1}$ are simple ramification points of the $g^1_{n_2}$ (i.e., they belong to the corresponding divisors of the $g^1_{n_2}$ with multiplicity $2$). Indeed, for each $i=1,\ldots, n_1$, the line $t_i$ intersects $P$ in a point $q_i$, and the line  $r_i=\langle q_i,x\rangle$ corresponds to a divisor $D_{r_i}$ of the $g^1_{n_2}$, which contains $p_i$ with multiplicity 2 because the general point of $T_1(C)$ is a double (cuspidal) point for $T_2(C)$;\\

\item [(c)] there are $n_3$ distinct iperosculating hyperplanes $H_i$, $i=1,\ldots, n_3$, which pass through $x$. They correspond to distinct smooth points $\zeta_i\in C$, which correspond to points $z_i\in \p^1$, for $i=1,\ldots, n_3$. We claim that $z_1,\ldots, z_{n_3}$ are simple branch points of the $g^1_{n_2}$. Indeed, 
consider the osculating planes $P_i$ to $C$ at the points $\zeta_i$,  $i=1,\ldots, n_3$. One has $P_i\subset H_i$, hence $P_i$ intersects the line  $r_i=H_i\cap P$ and they count with multiplicity 2 in the intersection of the lines $r_i$ with $T_2(C)$;  in fact $H_i$ is (by genericity) simply tangent to $T_2(C)$ along $P_i$, so the line $r_i$ is tangent to $T_2(C)$, for $i=1,\ldots, n_3$. 
This implies that  $z_1,\ldots, z_{n_3}$ are simple branch points of the $g^1_{n_2}$.\\\end{inparaenum}

It is not difficult to see that these are the only ramification points of the $g^1_{n_2}$, so that Hurwitz formula gives
\[
2n_2-2=n_3+n_1+\sum(\alpha_2-1)
\]
which is just the third formula  in \eqref {eq:form2}. 

We can finally prove the second formula in \eqref {eq:form2}, which is dual to the third. In fact now we take a general plane $\Pi$ in $(\p^4)^*$, fix a general point $\eta\in \Pi$ and consider the pencil of lines through $\eta$ in $\Pi$. Consider then $T_2(C^*)$, which is dual to $T_1(C)$ and it has degree $n_1$. Then arguing as in the proof of the third formula, we get a $g^1_{n_1}$ on $\p^1$ of which we have to compute the number of branch points. This computation runs exactly in the same way as above, with the only change that $n_3$ has to be exchanged with $d$, $n_2$ with $n_1$ and $\alpha_2$ with $\alpha_1$. In conclusion Hurwitz formula now gives
\[
2n_1-2=d+n_2+\sum(\alpha_1-1)
\]
which is the second formula in \eqref {eq:form2}. \end{proof}

\begin{corollary}\label{cor:plot} In the above setting one has
\begin{equation}\label{eq:plot}
\sum \Big (4\alpha+3\alpha_1+2\alpha_2+\alpha_3-10  \Big)=5d-20.
\end{equation}

\end{corollary}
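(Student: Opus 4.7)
The plan is to obtain \eqref{eq:plot} as a single linear combination of the four identities of Proposition~\ref{prop:fast}. Label the four formulas in \eqref{eq:form2} as $(I_0),(I_1),(I_2),(I_3)$, rewriting them with the branch sums isolated on the left and the $n_i$'s on the right. We seek coefficients $a_0,a_1,a_2,a_3$ such that the combination $a_0(I_0)+a_1(I_1)+a_2(I_2)+a_3(I_3)$ eliminates all of $n_1,n_2,n_3$ simultaneously.

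First I would write down the three linear conditions on $(a_0,a_1,a_2,a_3)$ coming from setting the coefficients of $n_1$, $n_2$, $n_3$ to zero. These give $a_0-2a_1+a_2=0$, $a_1-2a_2+a_3=0$, $a_2-2a_3=0$, which is a simple triangular system; solving from the bottom up yields (up to scale) $(a_0,a_1,a_2,a_3)=(4,3,2,1)$. The LHS of the resulting combination is exactly $\sum\bigl(4\alpha+3\alpha_1+2\alpha_2+\alpha_3-10\bigr)$, since $4(-1)+3(-1)+2(-1)+(-1)=-10$.

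Next I would compute the RHS. All $n_i$ terms cancel by construction, leaving only the contributions of the $2d-2$, $-d-2$, $-2$, $-2$ constants from the right-hand sides of $(I_0)$--$(I_3)$. A short bookkeeping yields $4(2d-2)-3d+3(-2)+2(-2)+(-2)=5d-20$, which matches \eqref{eq:plot}.

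There is no genuine obstacle: the main check is arithmetic, namely verifying both that the system for $(a_i)$ really has the solution $(4,3,2,1)$ and that the constants on the right sum to $5d-20$. If desired, the computation can be packaged as a single display deriving \eqref{eq:plot} from $4(I_0)+3(I_1)+2(I_2)+(I_3)$.
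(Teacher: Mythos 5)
Your proposal is correct and coincides with the paper's proof: both take the linear combination $4(I_0)+3(I_1)+2(I_2)+(I_3)$ of the four formulae in \eqref{eq:form2}, which cancels $n_1,n_2,n_3$ and leaves $5d-20$ on the right. The only addition is your explicit derivation of the coefficients $(4,3,2,1)$ from the triangular elimination system, which the paper states without comment.
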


\begin{proof} It suffices to multiply the first formula in \eqref {eq:form2} by $4$, the second by 3, the third by 2, and then add up all of them. 
\end{proof}

Now we can finally prove formulae \eqref {eq:form}. The first formula in \eqref {eq:form} is just the first formula in \eqref {eq:form2}. The second formula in \eqref {eq:form} is obtained by eliminating $n_1$ between the first two formulae in \eqref {eq:form2}. As for the third formula in \eqref {eq:form}, first add up the last two formulae in \eqref {eq:form2}, obtaning
\[
n_3=n_1-n_2+\sum(\alpha_2-1)+\sum(\alpha_3-1)+4,
\]
Then substitute in this  the values of $n_1, n_2$ given by the first two of \eqref {eq:form} and the value of $\sum(\alpha_3-1)$ which can be obtained by \eqref {eq:plot}.

\end{document}